\crefname{equation}{}{}
\DeclareSymbolFont{symbolsC}{U}{pxsyc}{m}{n}
\DeclareMathSymbol{\medcircle}{\mathbin}{symbolsC}{7}
\crefname{algocf}{Algorithm}{Algorithms}
\colorlet{refkey}{orange!20}
\colorlet{labelkey}{blue!30}
\crefname{algocf}{Algorithm}{Algorithms}
\numberwithin{equation}{section}
\newtheorem{theorem}{Theorem}[section]
\newtheorem{lemma}[theorem]{Lemma}
\newtheorem{proposition}[theorem]{Proposition}
\newtheorem{claim}[theorem]{Claim}
\newtheorem{conjecture}[theorem]{Conjecture}
\newtheorem{corollary}[theorem]{Corollary}
\theoremstyle{remark}
\newtheorem{remark}[theorem]{Remark}
\theoremstyle{definition}
\newtheorem{definition}[theorem]{Definition}
\newcommand{\mb}{\mathbb}
\newcommand{\mbm}{\mathbbm}
\newcommand{\mc}{\mathcal}
\newcommand{\mr}{\mathrm}
\newcommand{\floor}[1]{\left\lfloor {#1} \right\rfloor}
\newcommand{\ceil}[1]{\left\lceil {#1} \right\rceil}
\newcommand{\abs}[1]{\left\vert {#1}\right\vert}
\DeclareMathOperator{\ex}{ex}
\let\R\relax
\newcommand*{\R}{{\mathds{R}}}
\newcommand{\clique}[2]{{#1}^{(#2)}}
\newcommand{\ramsey}[2]{L_{#1}(#2)}
\newcommand{\countPat}[2]{N_{#2}(#1)}
\newcommand{\countClique}[3]{N_{#2,#3}(#1)}
\DeclareRobustCommand{\stirling}{\genfrac\{\}{0pt}{}}
\newcommand{\block}{{\textfrak{B}}}
\newcommand{\blockA}{{\mathrm{A}}}
\newcommand{\blockB}{{\mathrm{B}}}
\newcommand{\AB}{{\blockA\blockB}}
\newcommand{\BA}{{\blockB\blockA}}
\let\originalleft\left
\let\originalright\right
\renewcommand{\left}{\mathopen{}\mathclose\bgroup\originalleft}
\renewcommand{\right}{\aftergroup\egroup\originalright}
\global\long\def\zj#1{\textcolor{cyan}{\textbf{[ZJ comments:} #1\textbf{]}}}
\global\long\def\ma#1{\textcolor{blue}{\textbf{[MA comments:} #1\textbf{]}}}
\newcommand{\noop}[1]{}
\title{Extremal, enumerative and probabilistic results on ordered hypergraph matchings}
\author[Anastos]{Michael Anastos}
\address{Institute of Science and Technology Austria (ISTA).}
\email{michael.anastos@ist.ac.at}
\author[Jin]{Zhihan Jin}
\address{ETH Z\"urich}
\email{zhihan.jin@math.ethz.ch}
\author[Kwan]{Matthew Kwan}
\address{Institute of Science and Technology Austria (ISTA).}
\email{matthew.kwan@ist.ac.at}
\author[Sudakov]{Benny Sudakov}
\address{ETH Z\"urich}
\email{benjamin.sudakov@math.ethz.ch}
\thanks{
Michael Anastos is supported by the European Union’s Horizon 2020 research and innovation
programme under the Marie Sk\l{}odowska-Curie grant agreement No.\ 101034413.
%\includegraphics[width=4.5mm, height=3mm]{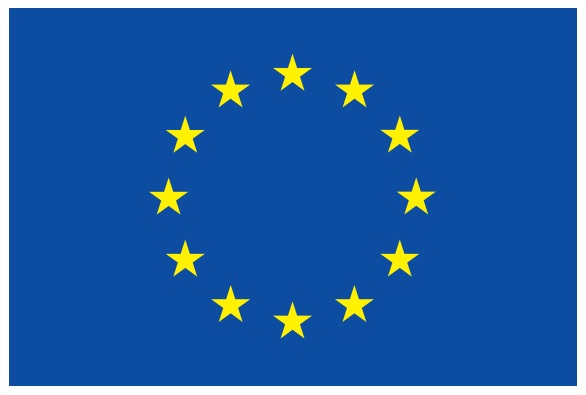}.}
%\thanks{
Matthew Kwan is supported by ERC Starting Grant ``RANDSTRUCT'' No.\ 101076777, also funded by the European Union
\includegraphics[width=4.5mm, height=3mm]{eu_flag.jpg}. Zhihan Jin and Benny Sudakov are supported by SNSF grant 200021\_1969.
%Michael Anastos is supported by the European Union’s Horizon 2020 research and innovation programme under the Marie Sk\l{}odowska-Curie grant agreement No. 101034413 \includegraphics[width=4.5mm, height=3mm]{eu_flag.jpg}.
%Zhihan Jin is supported by SNSF grant 200021\_1969.
%Matthew Kwan is supported by ERC Starting Grant ``RANDSTRUCT'' No.\ 101076777. 
%Benny Sudakov is supported by SNSF grant 200021\_1969.
}
\begin{document}

\maketitle

\newif\ifarxiv
\newif\ifjournal
\arxivtrue
%\journaltrue
\begin{abstract}
    An \emph{ordered $r$-matching} is an $r$-uniform hypergraph matching equipped with an ordering on its vertices. These objects can be viewed as natural generalisations of \emph{$r$-dimensional orders}. The theory of ordered 2-matchings is well-developed and has connections and applications to extremal and enumerative combinatorics, probability, and geometry. On the other hand, in the case $r \ge 3$ much less is known, largely due to a lack of powerful bijective tools. Recently, Dudek, Grytczuk and Ruci\'nski made some first steps towards a general theory of ordered $r$-matchings, and in this paper we substantially improve several of their results and introduce some new directions of study. Many intriguing open questions remain.
\end{abstract}

\section{Introduction}
A \emph{matching} is a graph with the property that every vertex is
incident to exactly one edge (equivalently, a matching is a partition of the vertex set into pairs). Given a partition of the vertex set
into two equal-size parts $V_{1},V_{2}$, we say that a matching is \emph{bipartite}
if every edge is between the two parts.

Matchings are fundamental objects in graph theory (and beyond), where
one is usually interested in the existence and identification of matchings
inside larger graphs (see for example the monograph of Lov\'asz and
Plummer~\cite{LP86}). However, matchings can be very interesting objects
in their own right, if one puts an \emph{ordering} on the set of vertices
(for example, we could restrict our attention to matchings on vertex
sets of the form $\{1,\dots,2n\}$, and consider the natural ordering of the integers).

In particular, in an ordered matching there are three different ways
that a pair of distinct edges $e,e'$ can ``interact with each other'', as
follows (write $e[1]<e[2]$ for the two vertices of $e$, write $e'[1]<e'[2]$
for the two vertices of $e'$, and assume without loss of generality
that $e[1]<e'[1]$).
\begin{compactitem}
\item We could have $e[1]<e[2]<e'[1]<e'[2]$ (that is to say, one edge fully
comes before the other one). This configuration is called an \emph{alignment}.
\item We could have $e[1]<e'[1]<e[2]<e'[2]$ (that is to say, the two edges
are interleaved with each other). This configuration is called a \emph{crossing}.
\item We could have $e[1]<e'[1]<e'[2]<e[2]$ (that is to say, one of the
two edges is ``within'' the other). This configuration is called
a \emph{nesting}.
\end{compactitem}
It is a classical result (first
proved by Errera~\cite{Err31}) that the crossing-free matchings on $\{1,\dots,2n\}$ are enumerated by the Catalan numbers $C_{n}$, and an ingenious
bijection (see \cite{Sta}) shows that there are also exactly $C_{n}$
nesting-free matchings on $\{1,\dots,2n\}$. Note that a matching $M$ on $\{1,\dots,2n\}$
is alignment-free if and only if every edge is between $\{1,\dots,n\}$
and $\{n+1,\dots,2n\}$ (i.e., if $M$ is a bipartite matching with
these two parts). Such matchings are in correspondence with permutations
$\sigma\in\mathcal{S}_{n}$, and there are therefore $n!$ of them.

Two of the most important parameters of a permutation $\sigma\in\mathcal{S}_{n}$ are the length $L_{\nearrow}(\sigma)$ of its longest increasing subsequence and the length $L_{\searrow}(\sigma)$ of its longest decreasing subsequence (see for example \cite{Rom15,Sta07} for surveys on the study of these parameters). Two of the highlights in this area are the \emph{Erd\H os--Szekeres theorem}~\cite{ES35}, which says that we always have $L_{\nearrow}(\sigma)\ge \sqrt n$ or $L_{\searrow}(\sigma)\ge \sqrt n$ (i.e., it is not possible to simultaneously avoid long decreasing subsequences and long increasing subsequences), and the \emph{Robinson--Schensted--Knuth correspondence}~\cite{Rob39,Sch61,Knu70} between permutations and Young tableaux, which can be used to enumerate permutations $\sigma\in \mc S_n$ by their values of $L_{\nearrow}(\sigma)$ and $L_{\searrow}(\sigma)$
(and in particular, to study the behaviour of $L_{\nearrow}(\sigma)$ and $L_{\searrow}(\sigma)$ for a \emph{random} permutation $\sigma\in \mc S_n$).

Recalling that permutations $\sigma\in\mathcal{S}_{n}$ are in correspondence with bipartite matchings between $\{1,\dots,n\}$ and $\{n+1,\dots,2n\}$, it turns out that increasing and decreasing subsequences can be described in the language of configurations in matchings: an increasing subsequence corresponds to a set of edges which are pairwise crossing (which we call a \emph{crossing-clique}), and a decreasing subsequence corresponds to a set of edges which are pairwise nesting (which we call a \emph{nesting-clique}).

Extending the huge body of work on increasing and decreasing subsequences, there has been quite some work (see for example \cite{Sta07,CDDSY07,BR01,BJ13,JSW90,KZ06,Kla06,Kra06}), studying nesting-cliques and crossing-cliques (and alignment-cliques, which have the obvious definition) in general matchings. It turns out that many of the techniques that are effective for permutations have natural analogues for general matchings (in particular, there is a variant of the Robinson--Schensted--Knuth correspondence relating matchings to \emph{oscillating tableaux}; see \cite{Sta07}).

Very recently, Dudek, Grytczuk and Ruci\'nski~\cite{DGJR23-r-matching,DGR23-later} made some first steps towards extending the theory of ordered matchings to ordered \emph{hypergraph} matchings (in an $r$-uniform hypergraph matching, every edge contains exactly $r$ vertices, and every vertex is incident to exactly one edge). The jump from graphs to hypergraphs seems to introduce a number of serious difficulties, which should perhaps not be surprising: in much the same way that an ordered matchings generalise permutations, ordered $r$-uniform hypergraph matchings generalise $(r-1)$-tuples of permutations (which are sometimes called ``$r$-dimensional orders'', as they can be described by sets of points in $r$-dimensional space). When $r\ge 3$, there is no known analogue of the Robinson--Schensted--Knuth correspondence for $r$-dimensional orders, and there are a number of longstanding open problems (see for example the survey \cite{Bri93}).

In this paper we substantially extend and improve the results in \cite{DGJR23-r-matching,DGR23-later}, discovering some surprisingly intricate phenomena and moving towards a more complete theory of ordered hypergraph matchings. We also draw attention to a large number of compelling open problems.

\subsection{Ordered hypergraph matchings: basic notions}
We say that a (hyper)graph $H$ is \emph{ordered} if its vertex set $V(H)$ is equipped with a total order (one may think of graphs and hypergraphs which have vertex set $\{1,\dots,N\}$ for some $N$).
\begin{definition}
An ordered hypergraph is said to be an \emph{$r$-matching} if every edge has
exactly $r$ vertices, and every vertex is contained in exactly one
edge. An $r$-matching is said to be \emph{$r$-partite} if, when we divide the vertex set into $r$ contiguous intervals of equal length, every edge of the matching has exactly one vertex in each interval. The \emph{size} of a matching $M$ 
is its number of edges.
\end{definition}

Note that there are 
\begin{equation}
    \frac{(rn)!}{(r!)^{n}n!}\label{eq:count-matchings}
\end{equation}
different $r$-matchings on the vertex set $\{1,\dots,rn\}$. 
The $r$-partite $r$-matchings on $\{1,\dots,rn\}$ are in correspondence with $(r-1)$-tuples of permutations $(\sigma_1,\dots,\sigma_{r-1})\in \mc S_n^{r-1}$, and there are therefore exactly $(n!)^{r-1}$ of them.

\begin{definition}
An \emph{$r$-pattern} is an $r$-matching of size 2 (on the vertex
set $\{1,\dots,2r\}$, say). We can represent an $r$-pattern by a
string of ``$\blockA$''s and ``$\blockB$''s starting with ``$\blockA$'' (where
the vertices from one edge are represented with ``$\blockA$'', and the
vertices of the other edge are represented with ``$\blockB$''). 
\end{definition}
Note that there are exactly $(2\cdot2)!/(2^{2}\cdot2)=3$ different 2-patterns: the alignment (represented by $\blockA\blockA\blockB\blockB$), the crossing (represented by $\AB\AB$) and the nesting (represented by $\AB\BA$). The crossing and nesting are $2$-partite, but the alignment is not (in fact, as we discussed earlier in the introduction, avoidance of the alignment pattern is equivalent to $2$-partiteness).
\begin{definition}
For an $r$-pattern $P$, an $r$-matching $M$ is said to be a \emph{$P$-clique}
if every pair of edges of $M$ are order-isomorphic to $P$.
For any $r$-pattern $P$ and $r$-matching $M$, let $L_{P}(M)$
be the size of the largest $P$-clique in $M$.
\end{definition}

In total, there are $(2r)!/(2(r!)^2)$ different $r$-patterns, but there is a subtlety that occurs only for uniformities $r\ge 3$: for some $r$-patterns $P$ it is simply not possible to have a large $P$-clique.
\begin{definition}
We say that an $r$-pattern $P$ is \emph{collectable} if there are arbitrarily large
$P$-cliques.
\end{definition}

For example, it is easy to see that the 3-pattern $\blockA\blockA\blockB\blockA\blockB\blockB$ is not
collectable. 
In fact, Dudek, Grytczuk and Ruci\'nski~\cite{DGR23-later} showed for every non-collectable pattern, the largest possible clique size is 2. To this end, they gave a characterisation of the collectable
patterns: they are precisely those patterns that are \emph{splittable},
as follows.
\begin{definition}\label{definition: splittable}
A \emph{run} in a pattern $P$ is a sequence of consecutive vertices
in the same edge of $P$. A pattern $P$ is \emph{splittable} if it
can be partitioned into \emph{blocks} of consecutive vertices, each consisting
of two runs of the same length coming from different edges of $P$.
If $P$ is splittable, then this partition is uniquely determined;
we call it the \emph{block partition} of $P$.
\end{definition}

For example, the pattern $\blockA\blockA\blockB\blockB\blockA\blockB\blockB\blockA$ is splittable (the divisions
between the blocks are described by $|\blockA\blockA\blockB\blockB|\blockA\blockB|\blockB\blockA|$ and the block partition has parts $\{1,2,3,4\}$, $\{5,6\}$ and $\{7,8\}$). 
It is easy to check that there are exactly $3^{r-1}$ different collectable $r$-patterns, exactly $2^{r-1}$ of which are $r$-partite (note that the $r$-partite $r$-patterns are precisely the collectable $r$-patterns whose block partition has $r$ blocks).

For every collectable $r$-pattern $P$, there is only one way to form a $P$-clique on a given set of vertices. For example, in the case $r=2$, the only way to form an alignment-clique on $\{1,\dots,2n\}$ is with the edges 
\[\{1,2\},\{3,4\},\dots,\{2n-1,2n\},\]
the only way to form a crossing-clique is with the edges
\[\{1,n+1\},\{2,n+2\},\dots,\{n,2n\},\]
and the only way to form a nesting-clique is with the edges
\[\{1,2n\},\{2,2n-1\},\dots,\{n,n+1\}.\]

We are now ready to present the main results of the paper. To briefly summarise: in \cref{subsec:intro-ramsey} we consider Ramsey--type questions (generalising the Erd\H os--Szekeres theorem), in \cref{subsec:intro-random} we study the size of the largest $P$-clique in a \emph{random} ordered matching, and in \cref{subsec:intro-enumeration} we give estimates for the number of ordered hypergraph matchings avoiding $P$-cliques of a given size. Throughout, we discuss a number of auxiliary results we prove along the way; in particular, in \cref{subsec:intro-extremal} we discuss some contributions to the extremal theory of ordered hypergraphs, which we believe to be of independent interest. In \cref{subsec:further-directions} we present a large number of open problems and directions for further study.

\subsection{Ramsey-type questions}\label{subsec:intro-ramsey}
Recall that the Erd\H os--Szekeres theorem says that every permutation $\sigma\in \mc S_n$ has an increasing or decreasing subsequence of length at least $\sqrt n$ (and it is easy to construct a permutation showing that this is best-possible, by taking a certain type of ``product'' of an increasing sequence of length $\sqrt n$ with a decreasing sequence of length $\sqrt n$). The Erd\H os--Szekeres theorem falls under the umbrella of \emph{Ramsey theory}: no matter how ``disordered'' a permutation is, it must have a long subsequence which is ``completely homogeneous''.

In \cite{DGJR23-r-matching}, Dudek, Grytczuk and Ruci\'nski adapted the Erd\H os--Szekeres theorem to ordered (2-uniform) matchings. They showed that in every 2-matching $M$ of size $n$,
one can find an alignment-clique, a crossing-clique or a nesting-clique of size at least $n^{1/3}$.
They also showed that this result is best-possible, with the same type of product construction used to demonstrate optimality of the Erd\H os--Szekeres theorem.

To discuss the situation for higher-uniformity hypergraphs, we introduce some notation.
\begin{definition}
Let $L(M)=\max_P L_P(M)$ be the size of the largest clique (of any pattern) in $M$, and let $\ramsey r n$ be the minimum value of $L(M)$ among all ordered
$r$-matchings of size $n$.
\end{definition}
In this notation, the aforementioned result of Dudek, Grytczuk and Ruci\'nski~\cite{DGJR23-r-matching} is $\ramsey 2 n=\lceil n^{1/3}\rceil$.

As first observed by Burkill and Mirsky~\cite{BM73} and Kalmanson~\cite{Kal73}, it is actually very easy to iterate the Erd\H os--Szekeres theorem to obtain an optimal bound for \emph{$r$-partite} $r$-matchings: every $r$-partite $r$-matching $M$ of size $n$ has $L(M)\ge n^{1/2^{r-1}}$, and a product construction shows that this is best possible\footnote{In the language of permutations, this can be equivalently formulated as the fact that in any $(r-1)$-tuple of length-$n$ permutations there is a set of at least $n^{1/2^{r-1}}$ indices in which each of our permutations is monotone. This is one of many different ways to generalise the Erd\H os--Szekeres theorem to ``higher dimensions''; for example, see also \cite{ST01,FPSS12,BM14,LS18,BST22,GKS}.}. It is not quite so obvious how to prove a bound on $\ramsey r n$ by iterating the bound $\ramsey 2 n\ge  n^{1/3}$, but in a follow-up paper~\cite{DGR23-later}, Dudek, Grytczuk and Ruci\'nski showed that this is indeed possible if one is willing to give up a constant factor: they proved the general lower bound $\ramsey r n\ge c_r n^{1/3^{r-1}}$ (for some $c_r>0$ depending on $r$).

Na\"ively, this seems to suggest that $\ramsey r n$ is of order $n^{1/3^{r-1}}$ (viewing $r$ as a constant, while $n$ is large). However, it is not clear how to prove a corresponding upper bound using the usual ``product-type'' constructions, because cliques can interact with each other in surprisingly intricate ways. Via product-type constructions, Dudek, Grytczuk and Ruci\'nski were only able to prove the upper bound $\ramsey r n \le n^{1/(2^{r-1}+2)}$.

As our first result, we significantly improve both the lower and upper bounds for $\ramsey r n$, showing that the exponent scales roughly like $1/2^r$.

\begin{theorem}\label{thm:ES}
For $r \ge 2$, we have 
\[ \frac{1}{r-1}\cdot n^{1/((r+1)2^{r-2})} \le \ramsey r n\le \lceil n^{1/(2^r-1)}\rceil.\]
\end{theorem}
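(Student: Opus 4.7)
The plan is to prove the two bounds by separate arguments: the upper bound via an explicit recursive product construction, and the lower bound by induction on $r$, with the base case $r=2$ supplied by the Dudek--Grytczuk--Ruci\'nski bound $\ramsey{2}{n}\ge\lceil n^{1/3}\rceil$.

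For the lower bound, let $M$ be an ordered $r$-matching of size $n$, and consider the partial order on the edges of $M$ defined by alignment: $e\prec e'$ iff $e[r]<e'[1]$. A Mirsky-type dichotomy applied with threshold $t = n^{1/((r+1)2^{r-2})}$ either produces a chain of size $t$ (an alignment-clique of the desired size, and we are done), or an antichain $M'\subseteq M$ of size at least $n/t = n^{1-1/((r+1)2^{r-2})}$. In the antichain case, the intervals $[e[1],e[r]]$ for $e\in M'$ pairwise overlap, so by Helly's theorem on the line they share a common point $x$. For each $e\in M'$ let $\ell(e)\in\{1,\dots,r-1\}$ be the number of vertices of $e$ lying before $x$; by pigeonhole, some sub-matching $M''\subseteq M'$ of size $|M'|/(r-1)$ has $\ell(\cdot)$ equal to a constant $\ell$. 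The vertices of $M''$ lying before (respectively after) $x$ form an ordered $\ell$-matching (respectively $(r-\ell)$-matching) of size $|M''|$. Applying the inductive hypothesis to each side in turn extracts a further sub-matching on which both halves realise monochromatic cliques of prescribed collectable patterns, whose concatenation (using that all left vertices precede $x$ and all right vertices follow $x$) yields a collectable $r$-pattern clique of the claimed size. The prefactor $1/(r-1)$ in the statement is designed to absorb the pigeonhole loss at each level of the induction.

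For the upper bound, set $k = \lceil n^{1/(2^r-1)}\rceil$ so that $n \le k^{2^r-1}$, and construct by induction on $r$ an ordered $r$-matching $M^{(r)}$ of size $k^{2^r-1}$ with $L(M^{(r)})\le k$. The base case $r=2$ is the classical three-way product of monotone sequences yielding $k^3$ edges with $L(M)\le k$. For the inductive step, exploit the identity $2^r-1 = 2(2^{r-1}-1)+1$: take two copies of the optimal $(r-1)$-matching $M^{(r-1)}$ (each of size $k^{2^{r-1}-1}$) together with one additional ``layer'' of $k$ elements, and combine them via a product operation in which each edge of $M^{(r)}$ is indexed by a triple -- one edge from each component -- and the vertex set is linearly ordered so that the relative pattern between any two edges of $M^{(r)}$ is determined by the relative patterns of the corresponding edges in each component. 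Then $|M^{(r)}| = k^{(2^{r-1}-1)+(2^{r-1}-1)+1} = k^{2^r-1}$, and the main point is that a $P$-clique of size exceeding $k$ in $M^{(r)}$ would force a clique of size exceeding $k$ in at least one of the three components, contradicting the inductive hypothesis.

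The principal obstacle is verifying that the product construction simultaneously blocks all $3^{r-1}$ collectable $r$-patterns. For the $2^{r-1}$ $r$-partite patterns this follows from standard iterated Erd\H os--Szekeres analysis applied independently to each ``direction''; the genuinely new work is the non-partite patterns, whose block partitions mix alignment behaviour with crossings and nestings, and which require a careful case analysis based on the block structure of $P$. On the lower-bound side, a parallel subtlety is that the Mirsky/Helly/pigeonhole/induction scheme must yield a concatenated left/right pattern that is itself collectable, which may require restricting the inductive claim to an appropriate sub-family of collectable patterns or interposing an extra Erd\H os--Szekeres step to synchronise the two sides.
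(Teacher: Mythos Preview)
Both parts of your proposal contain genuine gaps.

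\textbf{Lower bound.} Your split-and-induct scheme does not reach the target exponent. After passing to a $P_L$-clique on the left $\ell$-matching and then to a $P_R$-clique on the right $(r-\ell)$-matching, the full $r$-pattern of a pair $(e,e')$ still depends on whether the clique orderings by $e[1]$ and by $e[\ell+1]$ agree, so one must extract a monotone subsequence (the ``extra Erd\H os--Szekeres step'' you mention), at the cost of a square root. The resulting recursion for the reciprocal exponent is $h(r)=1+2\max_{1\le\ell\le r-1}h(\ell)h(r-\ell)$, which with $h(1)=1$ gives $h(2),\dots,h(6)=3,7,19,43,115$; the target $(r+1)2^{r-2}$ is $3,8,20,48,112$. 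Thus your method is in fact \emph{stronger} for $r\le5$ but strictly weaker from $r=6$ onward, and hence does not prove the stated bound for all $r$. The paper's argument is entirely different: it introduces \emph{weak patterns} $\phi(P)\in\{\alpha,\kappa,\nu\}^{r-1}$ and groups all $r$-patterns by their \emph{signature} (the number of $\alpha$'s together with the set of positions of $\nu$'s), obtaining exactly $(r+1)2^{r-2}$ groups; it shows that, processed in order of decreasing $\alpha$-count, each group induces a partial order once the previous groups are forbidden, and that any clique with respect to a single group contains a genuine pattern-clique of at least $1/(r-1)$ of its size. One Mirsky step per group then gives the bound directly, with no recursive Helly splitting.

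\textbf{Upper bound.} The triple-product you describe is not a well-defined $r$-matching: you never say what the $r$ vertices of an edge indexed by $(e_1,e_2,i)$ are, nor how the vertex set is linearly ordered, so the claim that a large clique must project to a large clique in some factor cannot even be formulated. The paper uses a different decomposition, $2^r-1=(2^{r-1}-1)+2^{r-1}$. From the inductive $(r-1)$-matching $M_1$ it forms an $r$-matching $M_2$ of the same size by inserting, for each edge $e\in E(M_1)$, a single new vertex immediately to the right of $e[r-1]$; this extension enlarges the last block of every pair-pattern, so $M_2$ contains no $r$-partite pattern. Then $M=M_2[M_3]$, where $M_3$ is the standard $r$-partite construction of size $n^{2^{r-1}}$. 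Since $M_2$-pairs are never $r$-partite and $M_3$-pairs always are, every clique in $M$ lies entirely in $M_2$ or entirely in one copy of $M_3$, and in either case has size at most $n$.
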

The upper bound in \cref{thm:ES} comes from a product-like construction, and the lower bound in our proof of \cref{thm:ES} is proved via an argument that partitions the possible $r$-patterns into roughly $2^r$ subsets with ``poset-like'' structure. It is possible to push this method further, obtaining slightly stronger lower bounds by considering more intricate partitions of the $r$-patterns, and we are actually not sure what the limit of the method is. As an illustration, with some intricate combinatorial analysis we managed to use our method to obtain the correct order of magnitude of $\ramsey 3 n$ and $\ramsey 4 n$.

\begin{theorem}\label{thm:ES-3-4}We have
    \[
      \frac{1}{2}\cdot n^{1/7}\le \ramsey 3 n\le n^{1/7},\qquad
      \frac{1}{4}\cdot n^{1/15}\le \ramsey 4 n\le n^{1/15}.
    \]\label{eq:ES-3-4}
\end{theorem}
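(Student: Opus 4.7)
For both $r=3$ and $r=4$, the upper bound is a specialization of the product-type construction underlying \cref{thm:ES}. I would write out a recursive construction that, for $n$ of the form $k^{2^r-1}$, builds an $r$-matching of size $n$ whose maximum $P$-clique (over any collectable $r$-pattern $P$) is exactly $k$, by iteratively interleaving alignment-, crossing-, nesting-type blocks across the $r$-partite and non-$r$-partite pattern classes. For general $n$, the clean bound $\ramsey r n\le n^{1/(2^r-1)}$ follows by an elementary interpolation from the $\lceil n^{1/(2^r-1)}\rceil$ bound in \cref{thm:ES}.

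\textbf{Lower bound.} The strategy is to refine the partition-of-patterns argument used for the lower bound of \cref{thm:ES}. That argument covers the $3^{r-1}$ collectable patterns by roughly $2^r$ subsets with poset-like structure and then runs an iterated Dilworth/Erd\H os--Szekeres extraction, losing a factor of $n^{1/2^r}$ per step; for $r=3,4$ this yields the weaker exponents $1/8$ and $1/20$. To reach $1/7$ and $1/15$, the plan is to reanalyse the $9$ (respectively $27$) collectable patterns directly. For $r=3$, I would list the patterns by block structure --- the single-block pattern $\blockA\blockA\blockA\blockB\blockB\blockB$, the four two-block patterns, and the four three-block ($3$-partite) patterns --- and inspect their pairwise compatibility. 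The goal is to show that two of the nominal extraction steps can be merged into one, cutting the total step count from $8$ to $7$. For $r=4$, an analogous but substantially more intricate grouping of the $27$ patterns according to their block partitions compresses $20$ steps down to $15$. The slightly worse constants $1/2$ and $1/4$ in the lower bounds reflect minor factor-of-$2$ losses introduced by these consolidations.

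\textbf{Main obstacle.} The hard part will be the combinatorial case analysis: for each proposed merger of extraction steps, one must verify that the sub-matching produced so far still carries enough pattern-structure to support the remaining extractions. For $r=3$ the verification is short, amounting to a handful of cases involving the $9$ patterns and the compatibility of their block partitions; for $r=4$ it involves a significantly larger number of cases and is almost certainly the most delicate part of the proof. I expect this method can in principle be pushed further---towards a conjectural $\ramsey r n = \Theta(n^{1/(2^r-1)})$ matching the upper bound---but the bookkeeping grows rapidly with $r$ and a fundamentally new idea may be needed to handle $r\ge 5$ uniformly.
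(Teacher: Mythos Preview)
Your outline matches the paper's approach closely: the upper bounds are indeed immediate specialisations of the construction behind \cref{thm:ES}, and the lower bounds do come from refining the signature-partition argument of \cref{lem:key-ES} so as to reduce the number of Mirsky extractions from $(r+1)2^{r-2}$ (which is $8$ for $r=3$ and $20$ for $r=4$) down to $2^r-1$.

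That said, your sketch is missing the specific combinatorial insights that make the mergers go through, and for $r=4$ these are genuinely new ingredients rather than just more casework. For $r=3$, the single merge is not organised by block structure but by weak-pattern signature: the two patterns $\psi(\line\stack)=|\AA\BB|\BA|$ and $\psi(\stack\line)=|\AB|\BB\AA|$ (with \emph{different} signatures $(1,\{2\})$ and $(1,\{1\})$) can be treated together because both are ``generalised nestings'' (one edge lies entirely between two consecutive vertices of the other), and this geometric fact is what makes $\preceq_{\mathcal P_3}$ transitive. The factor $1/2$ then comes from a pigeonhole step inside the resulting $\mathcal P_3$-clique, not from the merger itself.

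For $r=4$, the paper does \emph{not} simply find five pairs of signatures to merge. Instead it introduces several additional devices: (i) the two awkward non-collectable patterns $P^*_{\line\line\stack},P^*_{\stack\line\line}$ are eliminated by a separate pigeonhole argument that costs only a constant factor and no Mirsky step; (ii) certain pattern-sets are shown to be \emph{left-} or \emph{right-dominated} (if $e,f$ form $P$ and $f,g$ form $Q$ then $e,g$ must form $P$, say), which lets a single Mirsky step handle the whole set; and (iii) for pairs $(P,Q)$ whose composition set $(P\circ Q)$ is empty modulo already-eliminated patterns, every edge is left-$P$-free or right-$Q$-free, so a further pigeonhole halving removes one of $P,Q$ for free. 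These three mechanisms, combined with extensive composition tables, are what bring the count down to $15$; a plan phrased purely as ``group the $27$ patterns by block partition and merge five pairs'' would not, by itself, locate them.
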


In \cref{sec:ramsey-lower}, we describe our strategy to prove lower bounds, and show how to use it to prove the general lower bound on $\ramsey r n$ in \cref{thm:ES} and the sharp lower bound on $\ramsey 3 n$ in \cref{eq:ES-3-4}.  The proof of the sharp lower bound on $\ramsey 4 n$ follows a similar approach but has much more complicated casework;
\ifarxiv
we defer it to \cref{appendix: ES}.
\fi
\ifjournal
the details appear in the appendix of the arXiv version of this paper.
\fi
In \cref{sec:ramsey-upper} we prove the upper bound in \cref{thm:ES} (which also implies the upper bounds in \cref{thm:ES-3-4}).

\subsection{Random matchings}\label{subsec:intro-random}

One of the most notorious problems in the theory of permutations is the \emph{Ulam--Hammersley problem}, to describe the distribution of the longest increasing permutation $L_{\nearrow}(\sigma)$ in a random permutation $\sigma\in \mc S_n$. This problem was famously resolved by Baik, Deift and Johansson~\cite{BDJ99}, but one of the most important milestones along the way was a theorem of Logan and Shepp~\cite{LS77} and Vershik and Kerov~\cite{VK77} (see also the alternative proofs \cite{AD95,Sep98,Joh98,Gro01}), establishing that the expected value of $L_{\nearrow}(\sigma)$ is asymptotically $2\sqrt n$.

These results were extended to random matchings by Baik and Rains~\cite{BR01} (see also \cite{Sta07,BJ13}). Specifically, they
proved that in a random matching $M$ on $\{1,\dots,2n\}$, the expected values of 
$L_{\mathrm{crossing}}(M)$ and $L_{\mathrm{nesting}}(M)$ are both $(\sqrt{2}+o(1))\sqrt{n}$ (as part of a tour-de-force where they found the asymptotic distribution of these quantities).

All this work uses variations on the Robinson--Schensted--Knuth correspondence (or related bijective tools), and does not easily generalise to hypergraphs (or to higher dimensions). For example, if $M$ is a random $r$-partite $r$-matching, then for any $r$-partite $r$-pattern $P$, the random variable $L_P(M)$ has the same distribution as the length of the longest common increasing subsequence in $r-1$ independent random permutations $\sigma_1,\dots,\sigma_{r-1}\in \mc S_n$. This random variable is notoriously hard to study: Bollob\'as and Winkler~\cite{BW88} proved that its expected value is asymptotic to $c_rn^{1/r}$ for some constant $c_r>0$, but the value of $c_r$ is unknown for all $r\ge 3$.

In \cite{DGR23-later} (improving their results in \cite{DGJR23-r-matching}), Dudek, Grytczuk and Ruci\'nski proved that there are constants $c_{r}',c_{r}''\ge0$
such that for any collectable $r$-pattern $P$, and a random $r$-matching
$M$, we have $c_{r}'n^{1/r}\le L_{P}(M)\le c_{r}''n^{1/r}$ whp\footnote{We say that an event holds \emph{with high probability}, or \emph{whp
}for short, if it holds with probability $1-o(1)$. Here and for the
rest of the paper, asymptotics are as $n\to\infty$, unless explicitly
stated otherwise.}. They also conjectured that $L_{P}(M)/n^{1/r}$ converges in probability,
to a constant that only depends on $r$. However, this conjecture is already false for $r=2$: via analysis of a simple renewal process, Justicz, Scheinerman and Winkler~\cite{JSW90} proved that for a random matching $M$ on $\{1,\dots,2n\}$, the size of the largest alignment-clique is $(2/\sqrt{\pi}+o(1))\sqrt{n}$ whp, while the work of Baik and Rains described above implies that the largest crossing-clique and nesting-clique both have size $(\sqrt{2}+o(1))\sqrt{n}$. (This was likely missed by Dudek, Grytczuk and Ruci\'nski on account of the result in \cite{JSW90} being stated in the language of \emph{random interval graphs}, but the equivalence to random matchings is straightforward).

Combining a subadditivity argument with Talagrand's concentration inequality, we are able to show that for a collectable pattern $P$ and a random matching
$M$, the random variable $L_{P}(M)/n^{1/r}$ does converge to a limit. This limit may depend on $P$, but only through the \emph{type} of $P$, as follows.
\begin{definition}
Let $P$ be an $r$-pattern with block partition $J_{1}\cup\dots\cup J_{\ell}$.
The \emph{type} of $P$ is the partition $|J_{1}|/2+\dots+|J_{\ell}|/2$
of $r$ (in the number-theoretic sense).
\end{definition}

For example, $|\blockA\blockA\blockB\blockB|\AB|\BA|$ and $|\AB|\blockA\blockA\blockB\blockB|\AB|$ both have type $2+1+1$.
\begin{theorem}\label{thm:limit-exists}
Fix an $r$-pattern $P$, and let $M$ be a random $r$-matching of size
$n$. Then we have 
\[
\frac{L_{P}(M)}{n^{1/r}}\overset p\to b_{P}
\]
 for some $b_{P}>0$ depending only on the type of $P$.
\end{theorem}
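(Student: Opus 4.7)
The plan is to follow the classical recipe of subadditivity (for convergence of the mean) combined with Talagrand-type concentration (for concentration around the mean), as is standard for longest-increasing-subsequence-type problems. A key input is the Dudek--Grytczuk--Ruci\'nski bound $\mathbb{E}[L_P(M)] = \Theta(n^{1/r})$ from \cite{DGR23-later}, which provides both the correct normalization and positivity of the eventual limit.

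For concentration, I would represent a uniformly random $r$-matching $M$ as a sequence of $n$ near-independent choices (for example, sequentially reveal the edge containing the smallest unmatched vertex). The functional $L_P$ is $O(1)$-Lipschitz under a single-choice modification, and certifiable in the Talagrand sense ($L_P(M)\ge k$ is witnessed by $k$ specific edges forming a $P$-clique). Talagrand's convex-distance inequality then yields
\[
\Pr\big(|L_P(M) - \mathbb{E}[L_P(M)]| \ge t\big) \le C_1\exp\!\big(-C_2\, t^2/\mathbb{E}[L_P(M)]\big),
\]
producing fluctuations of order $O(n^{1/(2r)}\sqrt{\log n}) = o(n^{1/r})$. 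Hence $L_P(M)/n^{1/r}$ converges in probability if and only if $a_n := \mathbb{E}[L_P(M_n)]$ satisfies $a_n/n^{1/r} \to b_P$.

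For convergence of the normalized mean I would prove an approximate super-additivity of $a_n^{\,r}$ of the form $a_{n+m}^{\,r} \ge a_n^{\,r} + a_m^{\,r} - o(n+m)$, from which a quantitative Fekete lemma gives $a_n^{\,r}/n \to b_P^{\,r}$. The required combinatorial construction should exploit the rigid block structure of \cref{definition: splittable}: a $P$-clique of type $\tau = r_1+\cdots+r_\ell$ is organized into $\ell$ blocks of prescribed sizes with a layout depending only on~$\tau$, and one would splice two large $P$-cliques taken from approximately independent sub-matchings on two disjoint sub-intervals of $[r(n+m)]$ by interleaving their blocks. The consistency of this target with the known constants in the $r=2$ case (where $b_{\textup{crossing}}=b_{\textup{nesting}}=\sqrt{2}$ and $b_{\textup{alignment}}=2/\sqrt{\pi}$ all satisfy $a_n^r\sim b_P^r\cdot n$) is a sanity check for the exponent $r$.

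The main obstacle is precisely this splicing step. A naive disjoint concatenation of $P$-cliques on two intervals yields only a clique of size $\max(a,b)$, since any pair of edges straddling the interval boundary forms an alignment rather than $P$; a genuinely larger $P$-clique must come from a more delicate interlacing that uses only the information encoded by the type. The same block-level symmetry should then deliver the final claim that $b_P$ depends only on the type: for any two patterns $P,P'$ sharing a type, an explicit measure-preserving bijection on $r$-matchings should map $P$-cliques to $P'$-cliques of the same size, generalising the Chen--Deng--Du--Stanley--Yan equidistribution of crossings and nestings in the $r=2$ case and forcing $a_n(P)=a_n(P')$.
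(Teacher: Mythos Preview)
Your concentration step is fine, but the subadditivity step has exactly the gap you flag and never resolve. Splicing two $P$-cliques from disjoint sub-matchings into a single larger $P$-clique is genuinely problematic: the block structure of $P$ forces the vertices of a $P$-clique to occupy $\ell$ contiguous ``windows'' whose positions and widths are global data of the clique, and there is no local operation that merges two such windowed objects. The paper does not attempt this.

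Instead, the paper changes coordinates so that superadditivity becomes trivial. The key observation is that every $P$-clique in $M$ is consistent with \emph{some} partition $\mathcal B=(Q_1,\dots,Q_\ell)$ of $\{1,\dots,rn\}$ into contiguous intervals (one per block of $P$). Once you fix $\mathcal B$ and condition on the number $N=|M_{\mathcal B}|$ of edges respecting it, those $N$ edges are distributed like $N$ i.i.d.\ uniform points in a product of intervals, and under an explicit isometry a $P$-clique becomes exactly a chain with respect to the partial order $\preceq_{\mathcal A}$ of \cref{def:BW-variant}, where $\mathcal A$ is the partition of $\{1,\dots,r\}$ recording the block sizes (i.e., the type). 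In this point-set world the Poissonised version is \emph{exactly} superadditive (a chain in $[0,m]^r$ and a chain in $[m,n]^r$ concatenate to a chain in $[0,n]^r$), so Kingman's subadditive ergodic theorem gives $L_{\mathcal A}(\mathcal S_N)/N^{1/r}\to a_{\mathcal A}$, and Talagrand gives concentration strong enough to union-bound over the polynomially many choices of $\mathcal B$. The limit comes out as $b_P=a_{\mathcal A}\,\alpha_P^{1/r}$, where $\alpha_P$ maximises a polynomial in the relative block widths; both factors visibly depend only on the type, so no separate bijective argument is needed for the ``only on the type'' claim.

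In short: the missing idea is not a clever splice of matchings, but a translation to chains in random point sets where superadditivity is free. Your proposed bijection generalising Chen--Deng--Du--Stanley--Yan is unnecessary (and likely hard; cf.\ \cref{rem:computer-check}).
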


Actually determining the constants $b_{P}$ seems to be a hard problem (in particular, if $P$ is an $r$-partite pattern then our proof shows that $b_P$ can be expressed in terms of the constant $c_r$ in the Bollob\'as--Winkler theorem described above). However, we are able to prove some bounds on the $b_{P}$ in some special cases. In particular, we show that the conjecture of Dudek, Grytczuk and Ruci\'nski is false for all $r\ge 2$. (Recall that the $\Gamma$ function is the analytic continuation of the factorial function).
\begin{proposition}\label{prop:limit-information}
Let the constants $b_{P}$ be as in \cref{thm:limit-exists}.
\begin{enumerate}
\item If $P$ has type $r$ (i.e., if the block partition of $P$ has a
single block), then $b_{P}=1/\Gamma((r+1)/r)$.
\item If $P$ has type $1+\dots+1$ (i.e., if $P$ is $r$-partite), then $b_{P}=c_r(r!)^{1/r}/r>1/\Gamma((r+1)/r)$, where $c_r$ is the constant from the Bollob\'as--Winkler theorem mentioned earlier in this section.
\end{enumerate}
\end{proposition}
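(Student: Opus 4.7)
The plan is to handle the two parts with quite different tools.

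\textbf{Part (1).} The type-$r$ pattern is the alignment pattern $A=\blockA^r\blockB^r$, and a set of edges $e_1,\dots,e_k\in M$ forms an $A$-clique precisely when they can be ordered so that $\max(e_i)<\min(e_{i+1})$ for every $i$. By a standard exchange argument, $L_A(M)$ equals the number of steps of the greedy renewal process $X_0=0$, $X_i=\min\{\max(e):e\in M,\ \min(e)>X_{i-1}\}$ before exiting $\{0,1,\dots,rn\}$. I would analyse its step distribution via a Poisson estimate: since the expected number of edges of $M$ lying inside $\{1,\dots,t\}$ is $(1+o(1))\,t^r/(r^r n^{r-1})$, a standard Poisson-approximation argument yields
\[
\Pr[X_1>t]=\exp\!\bigl(-(1+o(1))\bigl(t/(rn^{1-1/r})\bigr)^r\bigr)
\]
for $t$ of order $n^{1-1/r}$. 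Integrating, the expected first step is asymptotic to $r n^{1-1/r}\int_0^\infty e^{-u^r}\,du=r\,\Gamma((r+1)/r)\,n^{1-1/r}$. A strong renewal argument (or, more robustly, the subadditivity-plus-Talagrand framework already used in the proof of \cref{thm:limit-exists}) then gives $L_A(M)=n^{1/r}/\Gamma((r+1)/r)+o(n^{1/r})$ in probability, yielding $b_A=1/\Gamma((r+1)/r)$.

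\textbf{Part (2).} Fix an $r$-partite pattern $P$, and partition $[rn]$ into $r$ contiguous blocks $B_1,\dots,B_r$ of size $n$. Let $M'\subseteq M$ be the sub-matching of edges that are $r$-partite with respect to this partition; a direct computation gives $\mathbb{E}|M'|\sim m:=nr!/r^r$, with Talagrand concentration. Crucially, by symmetry, conditional on the vertex sets $V_i:=V(M')\cap B_i$ (each of size $|M'|$), $M'$ is uniformly distributed among $r$-partite matchings on $V_1\cup\dots\cup V_r$. Applying the Bollob\'as--Winkler theorem to $M'$ gives $L_P(M')/|M'|^{1/r}\overset{p}{\to}c_r$, whence $L_P(M)/n^{1/r}\ge L_P(M')/n^{1/r}\overset{p}{\to}c_r(r!)^{1/r}/r$. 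For the matching upper bound, observe that any $P$-clique of size $k$ in $M$ determines chunk boundaries $0=x_0<x_1<\dots<x_r=rn$ with $(x_{i-1},x_i]$ containing the $i$-th batch of $k$ vertices of the clique, so the clique lies inside $M'(\mathbf{x})$, the $r$-partite sub-matching of $M$ with respect to these chunks. Hence $L_P(M)=\max_{\mathbf{x}}L_P(M'(\mathbf{x}))$. Writing $n_i=x_i-x_{i-1}$, one has $\mathbb{E}|M'(\mathbf{x})|\sim r!\prod_i n_i/(r^r n^{r-1})\le m$ by AM--GM. Uniform Azuma concentration of $|M'(\mathbf{x})|$ (across the only polynomially many $\mathbf{x}$), combined with a similarly uniform Bollob\'as--Winkler upper bound $L_P(M'(\mathbf{x}))\le(c_r+o(1))|M'(\mathbf{x})|^{1/r}$ (Bollob\'as--Winkler on the mean plus Talagrand for concentration), then yields $L_P(M)\le(c_r(r!)^{1/r}/r+o(1))n^{1/r}$ in probability. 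Combining with the lower bound gives $b_P=c_r(r!)^{1/r}/r$. Finally, the strict inequality $c_r(r!)^{1/r}/r>1/\Gamma((r+1)/r)$ is a purely numerical fact: for $r=2$ it is $\sqrt{2}>2/\sqrt{\pi}$, and more generally it can be verified using the quantitative lower bound on $c_r$ from~\cite{BW88}.

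\textbf{Main obstacle.} The most delicate step is the upper bound in Part (2), which requires \emph{uniform} control of both $|M'(\mathbf{x})|$ and $L_P(M'(\mathbf{x}))$ over all chunk tuples $\mathbf{x}$. Since there are only polynomially many $\mathbf{x}$, a union bound against the sub-Gaussian tails of Talagrand's inequality is comfortable; the real care lies in keeping the additive $o(n^{1/r})$ error uniform even when the block sizes $n_i$ are allowed to be highly unbalanced, and in making the symmetry-based reduction to a uniformly random $r$-partite sub-matching rigorous in the presence of conditioning.
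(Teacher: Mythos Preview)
Your overall approach is essentially the one the paper takes, just unpacked: the paper's proof simply plugs into the identity $b_P=a_{\mathcal A}\,\alpha_P^{1/r}$ obtained in the proof of \cref{thm:limit-exists} (where $\alpha_P$ is the maximum of the ``consistency probability'' $f_P$ over partitions, and $a_{\mathcal A}$ is the Poisson-process constant from \cref{thm:BW-variant}). Your Part~(1) is the Justicz--Scheinerman--Winkler renewal calculation applied directly to the matching; the paper does the same calculation but in the cleaner Poisson setting (\cref{prop:BW-limit-information}(1)), which makes the steps exactly i.i.d.\ and avoids the delicate ``strong renewal argument'' you flag. Your Part~(2) upper bound (union bound over chunk tuples~$\mathbf x$, AM--GM to see the balanced partition is best, Talagrand for uniformity) is exactly the mechanism inside the proof of \cref{thm:limit-exists}; you are re-deriving the $r$-partite special case rather than citing the general formula.

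There is, however, a genuine gap in your treatment of the strict inequality $c_r(r!)^{1/r}/r>1/\Gamma((r+1)/r)$. You say it ``can be verified using the quantitative lower bound on $c_r$ from~\cite{BW88}'', but the Bollob\'as--Winkler greedy algorithm gives precisely $c_r\ge r^2/((r!)^{1/r}\Gamma(1/r))$, and plugging this in yields only
\[
c_r\,\frac{(r!)^{1/r}}{r}\ \ge\ \frac{r}{\Gamma(1/r)}\ =\ \frac{1}{\Gamma((r+1)/r)},
\]
i.e.\ equality at the threshold, not strict inequality. (Your $r=2$ check works only because $c_2=2$ is known exactly from other sources, not from the Bollob\'as--Winkler bound.) The paper supplies the missing ingredient in \cref{prop:BW-limit-information}(2): it observes that the Bollob\'as--Winkler greedy is \emph{strictly} suboptimal (e.g.\ by choosing two points at a time rather than one), whence $c_r>r^2/((r!)^{1/r}\Gamma(1/r))$ strictly, and the desired inequality follows. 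You should add this argument.
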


We prove \cref{thm:limit-exists,prop:limit-information} in \cref{sec:random}, after proving a certain necessary generalisation of the Bollob\'as--Winkler theorem in \cref{sec:BW-variants}.

\subsection{Enumeration}\label{subsec:intro-enumeration}

For an $r$-pattern $P$, let $\countPat{n}{P}$ denote the number of ordered $r$-matchings on the vertex set $\{1,\dots,rn\}$ which are $P$-free (i.e., no two edges form $P$). 
First notice that if $P$ is not $r$-partite then every $r$-partite matching is $P$-free. 
In particular, $\countPat{n}{P}$ is at least the number of $r$-partite matchings of size $n$, which is exactly $(n!)^{r-1}$. 
In combination with \cref{eq:count-matchings} (and Stirling's approximation) this is already enough to approximate $\countPat{n}{P}$ up to exponential factors (for constant $r$).

\begin{proposition}\label{prop:non-r-partite-count}
Fix a constant $r\in \mb N$. If $P$ is not $r$-partite then
\[\countPat{n}{P}=e^{O_r(n)} n^{(r-1)n}.\]
\end{proposition}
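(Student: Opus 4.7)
The proof sketch is essentially already outlined in the paragraph preceding the statement: we simply need to sandwich $\countPat{n}{P}$ between $(n!)^{r-1}$ and the total count $(rn)!/((r!)^n n!)$, and observe that both are $e^{O_r(n)} n^{(r-1)n}$.

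For the lower bound, I would note that by hypothesis $P$ is not $r$-partite, hence some pair of vertices in some edge of $P$ lies inside the same one of the $r$ contiguous intervals of $\{1,\dots,2r\}$. In any $r$-partite matching on $\{1,\dots,rn\}$, every edge has exactly one vertex in each of the $r$ contiguous intervals of length $n$, so no pair of edges can be order-isomorphic to $P$. Thus every $r$-partite $r$-matching of size $n$ is $P$-free, and in particular
\[
\countPat{n}{P} \;\ge\; (n!)^{r-1}.
\]

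For the upper bound, $\countPat{n}{P}$ is trivially at most the total number of $r$-matchings on $\{1,\dots,rn\}$, which by \cref{eq:count-matchings} equals $(rn)!/((r!)^n n!)$.

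It then remains to apply Stirling's approximation $m! = e^{\Theta(m)} m^m$ (valid with constants absorbed into $O_r(\cdot)$ since $r$ is fixed). On the one hand,
\[
(n!)^{r-1} \;=\; e^{O(n)} \cdot n^{(r-1)n} \;=\; e^{O_r(n)}\, n^{(r-1)n}.
\]
On the other hand, using $(rn)^{rn} = r^{rn} n^{rn} = e^{O_r(n)} n^{rn}$ and $(r!)^n = e^{O_r(n)}$,
\[
\frac{(rn)!}{(r!)^n \, n!} \;=\; \frac{e^{O_r(n)} n^{rn}}{e^{O_r(n)} \cdot e^{O(n)} n^n} \;=\; e^{O_r(n)}\, n^{(r-1)n}.
\]
Combining these two bounds yields the claim. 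There is no real obstacle here — the only thing to check carefully is that the $r$-partiteness hypothesis is used correctly to produce a genuinely $P$-free family of the right cardinality, and that the constants hidden in Stirling's formula depend only on $r$ (which is immediate since $r$ is constant).
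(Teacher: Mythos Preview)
Your proof is correct and follows exactly the argument the paper sketches in the paragraph immediately preceding the proposition: lower-bound by the count $(n!)^{r-1}$ of $r$-partite $r$-matchings (all of which are $P$-free since every pair of edges in such a matching forms an $r$-partite pattern), upper-bound by the total count \cref{eq:count-matchings}, and conclude via Stirling's approximation.
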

(In this paper, subscripts on asymptotic notation indicate quantities that should be viewed as constants: the constant factor implicit in ``$O_r(n)$'' is allowed to depend on $r$).

The case where $P$ is $r$-partite is more delicate (and the value of $\countPat{n}{P}$ is quite different), but we are able to obtain estimates of similar quality, as follows.

\begin{theorem}
\label{thm:count-basic}
Fix a constant $r\in \mb N$. If $P$ is $r$-partite then
\[\countPat{n}{P}=e^{O_r(n)} n^{(r-1-1/(r-1))n}.\]
\end{theorem}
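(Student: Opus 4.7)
The plan is to establish matching lower and upper bounds on $N_P(n)$.

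For the lower bound, I would construct at least $e^{-O_r(n)} n^{(r-1-1/(r-1))n}$ distinct $P$-free $r$-matchings via a structured family. A starting attempt is a block construction: partition $\{1,\dots,rn\}$ into intervals, place local $r$-matchings inside each, and note that any pair of edges from different intervals forms an alignment pattern (which differs from any $r$-partite $P$), so it suffices that each local matching be $P$-free. A naive equal-blocks version only yields the self-referential recursion $N_P(n) \ge N_P(n/(r-1))^{r-1}$, which does not determine the exponent, so a more refined construction is needed. One natural refinement is to allow each edge to be ``almost interval-local'' with one vertex deviating into a neighbouring interval, giving enough extra flexibility to reach the desired count while still guaranteeing $P$-freeness by construction.

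For the upper bound, I must show that $P$-freeness cuts the trivial count $n^{(r-1)n}$ by a factor of $n^{n/(r-1)}$. The key structural fact is that a pair of edges in an $r$-matching forms an $r$-partite pattern precisely when their $2r$ sorted vertices alternate one-per-edge in each consecutive block of size $2$; the specific $r$-partite pattern is then determined by a sign vector $\varepsilon = (\varepsilon_1,\dots,\varepsilon_{r-1}) \in \{+,-\}^{r-1}$, and $P$-freeness means that the sign vector of $P$ never arises on any such ``well-split'' pair. My strategy is to convert this structural constraint into a quantitative count using the extremal theory of ordered hypergraphs developed in \cref{subsec:intro-extremal}. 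Specifically, an extremal result bounding the number of edges in a $P$-clique-free ordered hypergraph, combined with a container-style or entropy-based argument, should yield the bound $N_P(n) \le e^{O_r(n)} n^{(r-1-1/(r-1))n}$. Alternatively, one could expose edges one at a time and argue that the average number of consistent placements of each new edge is $e^{O_r(1)} n^{r-1-1/(r-1)}$.

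The main obstacle is the upper bound: the precise saving of $n^{n/(r-1)}$ cannot come from crude counting and requires genuine use of the $r$-partiteness of $P$. I expect this to rely essentially on the extremal theory results stated earlier in the paper, together with a careful accounting of how the single forbidden sign vector propagates to global restrictions on the matching.
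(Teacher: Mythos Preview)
Your upper-bound outline is essentially the paper's approach. The paper proves a general transference lemma (\cref{thm:extremal-to-counting}): if $\ex_<(n,Q)\le \alpha n^{r-1}$ then the number of $Q$-free ordered $r$-matchings of size $n$ is at most $e^{O(n)}(\alpha r^r n^{r-2})^{(r/(r-1))n}$. Its proof is a two-stage contraction argument in the style of Cibulka--Kyn\v{c}l rather than containers or entropy, but the mechanism is the one you describe: the extremal bound limits the possible contracted ``skeletons'', and one then counts how many matchings refine each skeleton. Plugging in $\ex_<(n,P)=\Theta_r(n^{r-1})$ from \cref{theorem: turan number for m=2} gives the upper bound.

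Your lower-bound proposal has a real gap. You correctly note that the pure block recursion $N_P(n)\ge N_P(n/k)^k$ is vacuous, but the proposed fix (``one vertex deviates into a neighbouring interval'') is not a construction: you have not said how many blocks, which patterns two straddling edges can form, nor why the count hits the exponent $(r-1)-1/(r-1)$; there is no visible mechanism by which a local tweak of an alignment-of-blocks picture produces that particular fractional exponent. The paper's lower bound is entirely different. It quotes a theorem of Brightwell on the number of linear extensions of a random $(r-1)$-dimensional poset, which via the bijection between $r$-partite $r$-matchings and $(r-1)$-tuples of permutations directly yields at least $e^{-O_r(n)}n^{(r-1-1/(r-1))n}$ many $P$-free $r$-partite matchings. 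For the more general \cref{thm:count} the paper also gives an explicit deterministic construction (\cref{thm:count-r-partite}), obtained by reverse-engineering the contraction: fix a single $r$-partite $r$-graph $G$ whose edges are the tuples $(j_1,\dots,j_r)$ with $j_1+\cdots+j_r$ lying in a short window, and count the $\prod_v d_G(v)!$ ordered matchings that ``uncontract'' to $G$; here $P$-freeness is enforced by the global sum constraint (two coordinatewise-comparable edges would have strictly ordered sums), not by any block-locality.
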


\begin{remark}\label{rem:computer-check}
    In the case $r=2$ there are only two possibilities for an $r$-partite pattern $P$: a crossing $|\AB|\AB|$ or a nesting $|\AB|\BA|$. As discussed in the introduction, $\countPat{n}{P}$ is \emph{exactly} the same in both cases (in particular, there is a well-known bijection between crossing-free and nesting-free matchings of a given size). However, this does \emph{not} generalise straightforwardly to higher dimensions: a computer search shows that if $P$ is the 3-uniform pattern represented by $|\AB|\AB|\AB|$, then $\countPat 4 P=8626$, whereas if $P$ is the 3-uniform pattern represented by $|\AB|\BA|\BA|$ we have $\countPat 4 P=8630$.
\end{remark}

The lower bound in \cref{thm:count-basic} is a direct consequence of a result due to Brightwell~\cite{Brightwell92} on linear extensions of $r$-dimensional posets, but the upper bound is new (it is obtained by a general connection to extremal ordered hypergraph theory).

We are also interested in enumerating ordered matchings by the size of their largest $P$-clique. We write $\countClique{n}{P}{m}$ for the number of ordered $r$-matchings $M$ on the vertex set $\{1,\dots,rn\}$ which satisfy $L_P(M)<m$ (i.e., they do not contain a $P$-clique of size $m$). 
Note that if $P$ is not $r$-partite, then the same considerations as for \cref{prop:non-r-partite-count} show that $\countClique{n}{P}{m}=e^{O_r(n)} n^{(r-1)n}$ for all $m$ (i.e., varying $m$ can only affect $\countClique{n}{P}{m}$ by a factor of $e^{O_r(n)}$). However, if $P$ is $r$-partite, we get a significant dependence on $m$, as per the following generalisation of \cref{thm:count-basic} (note that \cref{thm:count-basic} corresponds to the case $m=2$).

\begin{theorem}
    \label{thm:count}
    Fix a constant $r\in \mb N$. If $P$ is $r$-partite then for any $2 \le m\le n^{1/r}$ we have 
\[\countClique{n}{P}{m}= e^{O_r(n)}(m-1)^{(r/(r-1))n} n^{(r-1-1/(r-1))n}.\]
\end{theorem}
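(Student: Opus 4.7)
The theorem generalises \cref{thm:count-basic} (the case $m=2$), which will serve as the main black-box input. Throughout, set $\alpha = r-1-1/(r-1)$ and $\beta = r/(r-1)$, so that $\alpha + \beta = r$; the target bound is thus $e^{O_r(n)}\,(m-1)^{\beta n}\, n^{\alpha n}$.

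For the upper bound, my plan is to decompose any $r$-matching $M$ with $L_P(M)<m$ into at most $m-1$ $P$-free sub-matchings. To this end, define $e\preceq_P f$ iff $\{e,f\}$ forms the pattern $P$ with $e$'s leftmost vertex smaller. Since $P$ is $r$-partite, on an $r$-partite sub-matching the relation $\preceq_P$ reduces to a coordinate-wise comparison with signs determined by $P$ and is therefore a partial order; for general $M$ one exploits the structural observation that any $P$-clique is itself an $r$-partite sub-matching, which suffices to carry out Mirsky's theorem (assigning each edge a ``height'' equal to the length of the longest $\preceq_P$-chain ending there gives a colouring with $m-1$ classes, each $P$-free). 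Granted such a decomposition,
\[
\countClique{n}{P}{m} \;\le\; \sum_{n_1+\cdots+n_{m-1}=n}\binom{rn}{rn_1,\ldots,rn_{m-1}}\prod_{i=1}^{m-1}\countPat{n_i}{P}.
\]
Substituting $\countPat{k}{P}\le e^{O_r(k)}\,k^{\alpha k}$ from \cref{thm:count-basic} and applying Stirling to write $\binom{rn}{rn_1,\ldots,rn_{m-1}} = e^{rn H(\vec n/n) + O(\log n)}$ (with $H$ the Shannon entropy), the whole sum becomes $e^{\beta n H(\vec n/n)+\alpha n\log n + O_r(n)}$. Since $H\le \log(m-1)$ with equality at the uniform split $n_i=n/(m-1)$, the sum is maximised there and yields exactly the desired bound; the crucial reduction from the ``naive'' multinomial exponent $r$ to the sharper $r/(r-1)$ is the cancellation $r-\alpha = \beta$.

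For the lower bound, my plan is to reverse this construction: partition $[rn]$ into $m-1$ subsets $V_1,\ldots,V_{m-1}$ of size $rn/(m-1)$, and place an arbitrary $P$-free $r$-matching on each $V_i$. Any resulting $M$ satisfies $L_P(M)\le m-1$, since an $m$-element $P$-clique would force two edges into the same (P-free) part. By \cref{thm:count-basic}, each $V_i$ supports at least $e^{-O_r(n/(m-1))}(n/(m-1))^{\alpha n/(m-1)}$ such matchings, so the number of triples $(V_\bullet,M_\bullet)$ already matches the target up to an $e^{O_r(n)}$ factor.

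The main obstacle is controlling the over-counting in the lower bound: different triples may yield the same $M$. I expect to absorb this into the $e^{O_r(n)}$ factor by associating each $M$ with the canonical partition induced by the height function from the upper bound proof, which then pins down the triple up to polynomially many choices. A secondary obstacle is the failure of $\preceq_P$ to be transitive on general (non-$r$-partite) matchings in the upper bound; this is only a technical nuisance, resolvable via the structural fact that $P$-cliques are $r$-partite.
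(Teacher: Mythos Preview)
Your upper-bound strategy has a genuine gap that is not a technical nuisance. For $r=2$ and $P$ the crossing pattern, take the matching on $\{1,\dots,10\}$ with edges $\{1,4\},\{2,9\},\{3,6\},\{5,8\},\{7,10\}$: its crossing graph is $C_5$, so $L_P(M)=2$, yet $\{1,4\}\prec_P\{3,6\}\prec_P\{5,8\}\prec_P\{7,10\}$ is a $\preceq_P$-chain of length $4$, and $M$ cannot be decomposed into $m-1=2$ crossing-free parts since $\chi(C_5)=3$. The observation that $P$-cliques are $r$-partite does not help: Mirsky needs global transitivity, and here $\preceq_P$-chains are strictly longer than $P$-cliques. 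Worse, the crossing graph of a matching is an arbitrary circle graph, and circle graphs are known to have chromatic number as large as $\Omega(\omega\log\omega)$; so no decomposition into $O(m)$ $P$-free parts exists in general, and redoing your calculation with $\Theta(m\log m)$ parts produces an extra factor of $(\log m)^{\beta n}$, which is not $e^{O_r(n)}$ when $m\asymp n^{1/r}$.

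The lower bound has a matching problem. If the $V_i$ are contiguous intervals, edges in different parts form generalised alignments, so the assembled $M$ is itself $P$-free; then \emph{every} equitable $(m-1)$-colouring of its edges is a valid decomposition, and the over-count is of order $(m-1)^n$---precisely the factor separating your triple count from the target. Your proposed fix via a canonical height partition fails for the reason above (the height function is not bounded by $m-1$), and in any case its level sets need not be equitable.

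The paper proceeds quite differently. The upper bound goes through a transference lemma (\cref{thm:extremal-to-counting}): if $\ex_<(n,\clique{P}{m})\le\alpha n^{r-1}$ then $\countClique{n}{P}{m}\le e^{O(n)}(\alpha r^r n^{r-2})^{\beta n}$, proved by contracting $[rn]$ into blocks of size roughly $(\alpha n^{r-2})^{1/(r-1)}$ and bounding both the number of contracted $r$-graphs (via an iterated halving argument, \cref{lemma: count ordered graphs without M}) and the number of preimages of each. One then inputs $\alpha=O_r(m-1)$ from \cref{thm:extremal-clique}(2). The lower bound (\cref{thm:count-r-partite}) builds an explicit $r$-partite $r$-graph $G$ whose edges are the lattice points in a diagonal slab of width $m-1$, then ``uncontracts'' each vertex of $G$ into an interval of length equal to its degree; the $\prod_v d_G(v)!$ choices of bijections yield distinct matchings with no over-counting at all.
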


We remark that the condition $m\le n^{1/r}$ in \cref{thm:count} is not just an artifact of the proof: recall from \cref{subsec:intro-random} that for almost every size-$n$ ordered matching $M$ we have $L_P=O(n^{1/r})$.

We prove the upper bound in \cref{thm:count} via a general lemma (\cref{thm:extremal-to-counting}) which estimates $\countClique{n}{P}{m}$ in terms of a certain \emph{extremal} parameter (namely, the maximum number of edges in an ordered $r$-uniform hypergraph on $\ceil{n/2}$ vertices with $L_P(M)<m$). This type of reduction goes back to Alon and Friedgut~\cite{AF00} (see also \cite{Kla00,Fox13}); in particular, we adapt a proof in a similar high-dimensional situation due to Cibulka and Kyncl~\cite{CK17}. We believe that the relevant extremal parameter is of independent interest, and we discuss it further in \cref{subsec:intro-extremal}.

For the lower bound in \cref{thm:count}, we obtain a new estimate on the number of $(r-1)$-tuples of length-$n$ permutations which have no common increasing subsequence of length $m$ (see \cref{rem:common-increasing}), by ``reverse-engineering'' the techniques in the upper bound. 
This lower bound, and the ideas in its proof, may be of independent interest (in particular, our approach is very different to the probabilistic approach of Brightwell~\cite{Brightwell92} for the case $m=2$).

We prove \cref{thm:count} (which implies \cref{thm:count-basic}) in \cref{sec:counting}.

\subsection{Extremal results}\label{subsec:intro-extremal}
As briefly mentioned in \cref{subsec:intro-enumeration}, in our study of enumerative questions for ordered matchings we encounter some extremal problems for ordered hypergraphs. We believe our extremal results to be of independent interest so we take a moment to discuss them here. First, we define \emph{ordered extremal numbers}, which are variants of the classical \emph{extremal numbers} (or \emph{Tur\'an numbers}) for unordered graphs.

\begin{definition}
Let $G,F$ be ordered $r$-uniform hypergraphs (ordered $r$-graphs, for short). We say $G$ is \emph{$F$-free} if it contains no subgraph isomorphic to $F$ (where the isomorphism must preserve the order of the vertices). Let $\ex_<(n,F)$ denote the maximum number of edges in an $F$-free $n$-vertex ordered $r$-graph.
\end{definition}

In the case $r=2$ (i.e., the case of graphs), ordered extremal numbers have been extensively studied (see the survey by Tardos~\cite{Tardos18} and the references within).
For general $r\ge 2$, much less is known, though there is literature on similar problems for \emph{cyclically} ordered hypergraphs, motivated by geometric considerations (in particular, due to applications to \emph{convex geometric hypergraphs}; see for example \cite{CP92, BRS01, PP03, BKV03, Brass04, ADMOS17, FJKMV20-tight-path-cgg, FJKMV21-cgg, GL21,FMOV22}), and much of this work has implications for ordered extremal numbers $\ex_<(n,F)$.

% Notice that if we partition $\{1,\dots,n\}$ into $r$ contiguous intervals, each of length at least $\floor{n/r}$, and put all edges that has exactly one incident vertex in each of the $r$ intervals, then this graph $G$ contains no $r$-pattern $P$ which is not $r$-partite.
% Thus, $\ex_<(n,P)\ge\floor{n/r}^r$ and $\ex_<(n,P)=\Theta_r(n^r)$ for every $r$-pattern $P$ that is not $r$-partite.
Our first contribution in this direction is that we are able to nail down the exact value $\ex_<(n, P)$ for any $r$-partite $r$-pattern $P$ (this extremal parameter is relevant for \cref{thm:count-basic}). 
For convenience, we define $(x)_+ := \max(x,0)$.
\begin{theorem} \label{theorem: turan number for m=2}
  Let $r, n\ge 1$ and $P$ be any $r$-partite $r$-pattern.
  Then, 
  \[\ex_<(n, P) = \binom{n}{r}-\binom{(n-r)_+}{r}.\]
\end{theorem}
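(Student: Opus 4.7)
I would split the proof into lower and upper bounds.

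(Lower bound.) I exhibit an explicit $P$-free ordered $r$-graph on $[n]$ of size $\binom{n}{r}-\binom{(n-r)_+}{r}$. Writing the signature of $P$ as $(s_1,\dots,s_r)$ with $s_1=0$ (so block $i$ is $\AB$ when $s_i=0$ and $\BA$ when $s_i=1$), set $c_i:=(i-1)+s_i$ and define $\iota\colon\binom{[n-r]}{r}\to\binom{[n]}{r}$ by
\[
\iota(\{b_1<\dots<b_r\}):=\{b_1+c_1,\dots,b_r+c_r\}.
\]
The inequalities $c_{i+1}-c_i=1+s_{i+1}-s_i\ge 0$ and $b_r+c_r\le (n-r)+(r-1)+s_r\le n$ ensure $\iota$ is well-defined and injective, so $G:=\binom{[n]}{r}\setminus\iota(\binom{[n-r]}{r})$ has the required edge count. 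For $P$-freeness, I check that for every $V=\{v_1<\dots<v_{2r}\}\in\binom{[n]}{2r}$ the A-side of the canonical $P$-pair on $V$ is $A_V=\{v_{2i-1+s_i}\}_{i=1}^r$, and that $S_i:=v_{2i-1+s_i}-c_i$ is a valid preimage in $\binom{[n-r]}{r}$ under $\iota$ (using $v_{p}\ge p$ and the spacing $p_{i+1}-p_i=2+s_{i+1}-s_i$), so $A_V\notin E(G)$ and every $P$-pair is blocked.

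(Upper bound.) I reformulate via vertex covers. Let $H$ be the ``$P$-pair graph'' on vertex set $\binom{[n]}{r}$ whose edges are the canonical $P$-pairs $(A_V,B_V)$ for $V\in\binom{[n]}{2r}$. If $G$ is $P$-free then $\binom{[n]}{r}\setminus E(G)$ is a vertex cover of $H$, so the upper bound amounts to $\tau(H)\ge\binom{(n-r)_+}{r}$ (with matching upper bound $\tau(H)\le\binom{(n-r)_+}{r}$ from the construction above). I would prove this inequality by induction on $n$: the base case $n<2r$ is trivial since $\tau(H)=0=\binom{(n-r)_+}{r}$. For the inductive step on a $P$-free $G$, writing $|E(G)|=|E(G\setminus\{n\})|+d_n$, the inductive hypothesis applied to $G\setminus\{n\}$ together with Pascal's rule reduces the target to bounding the degree $d_n\le\binom{n-1}{r-1}-\binom{(n-r-1)_+}{r-1}$.

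The main obstacle is this degree bound. The link $L_n:=\{S\in\binom{[n-1]}{r-1}:S\cup\{n\}\in G\}$ is an ordered $(r-1)$-graph on $[n-1]$; although its edges share $n$ in $G$ and therefore do not themselves form patterns, the $P$-freeness of $G$ forces joint constraints between $L_n$ and $G\setminus\{n\}$: any pair $(S,e')\in L_n\times (G\setminus\{n\})$ with $S\cup\{n\}$ and $e'$ disjoint and interleaving appropriately would realise $P$ in $G$. The plan is to argue that these constraints imply $L_n$ is itself $P'$-free for the $(r-1)$-partite $(r-1)$-pattern $P'$ obtained from $P$ by deleting the block containing $n$ (the first block if $s_r=0$, when $n$ plays the role of B, and the last block if $s_r=1$, when $n$ plays the role of A). Induction on $r$ within the theorem then yields $|L_n|\le\ex_<(n-1,P')=\binom{n-1}{r-1}-\binom{(n-r-1)_+}{r-1}$, as required. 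The technical heart is making this reduction rigorous — in particular handling both possible roles of $n$ uniformly and using the inductive bound on $|E(G\setminus\{n\})|$ to supply enough edges of $G\setminus\{n\}$ to witness the $P'$-forbidden configuration whenever $L_n$ contains one.
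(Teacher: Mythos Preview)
Your lower bound construction is correct and essentially equivalent to the paper's (phrased via an explicit injection $\iota$ rather than via gap conditions on consecutive vertices).

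The upper bound, however, has a genuine gap: the degree bound you reduce to, $d_n \le \binom{n-1}{r-1} - \binom{(n-r-1)_+}{r-1}$, is simply false. Take $r=2$, $P$ the crossing pattern, $n=4$, and $G = K_4 \setminus \{\{1,3\}\}$. This $G$ is crossing-free with $5$ edges (matching the extremal number), yet $d_4 = 3 > 2$, while $G\setminus\{4\}$ has only $2<3$ edges. So the link $L_n$ is \emph{not} $P'$-free in general, and no argument ``supplying witnesses from $G\setminus\{n\}$'' can rescue this, because extremal $P$-free graphs need not have bounded degree at the last vertex --- the slack is absorbed by $G\setminus\{n\}$ being sub-extremal. (There is also an arithmetic slip: $\ex_<(n-1,P')$ equals $\binom{n-1}{r-1}-\binom{(n-r)_+}{r-1}$, not $\binom{(n-r-1)_+}{r-1}$, so even if $L_n$ were $P'$-free the bound would not be the one you need.)

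The paper's upper bound also reduces to uniformity $r-1$ via links, but with a crucial preprocessing step you are missing. For each $(r-1)$-set $f$, delete from $G$ the edge $e$ with $(e[1],\dots,e[r-1])=f$ having the \emph{largest} value of $e[r]$; this costs at most $\binom{n-1}{r-1}$ edges and yields $G'$. Now every edge of $G'$ has a surviving ``witness'' in $G$ with the same first $r-1$ vertices and a strictly larger last vertex. For this pruned graph the link $\hat G_v=\{(e[1],\dots,e[r-1]):e\in E(G'),\, e[r]=v\}$ \emph{is} $\hat P$-free (where $\hat P$ drops the last block of $P$): if two elements of $\hat G_v$ formed $\hat P$, then one of the corresponding edges of $G'$ together with the other's witness would form $P$ in $G$. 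Summing $e(\hat G_v)\le\ex_<(v-1,\hat P)$ over $v\le n-1$ (induction on $r$; note $\hat G_n=\emptyset$ by construction) and adding back the deleted edges gives the bound. The deletion trick is precisely what manufactures the witnesses your plan was hoping to find.
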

We remark that in the case where $P$ is the $r$-uniform ``generalised crossing'' pattern represented by $|\AB|\AB|\cdots|\AB|$, the result of \cref{theorem: turan number for m=2} was already known, thanks to recent work of F{\"u}redi, Jiang, Kostochka, Mubayi and Verstra{\"e}te~\cite{FJKMV21-cgg}. (It was also known in the case where $P$ is a 2-uniform nesting, as we will discuss after the next theorem).

For an $r$-pattern $P$ and a positive integer $m$, we use $\clique{P}{m}$ to denote the $P$-clique of size $m$ on the vertex set $[rm]$ (so $\clique{P}{2}=P$).
We are not quite able to pin down the values of the ordered extremal numbers $\ex_<(n, \clique{P}{m})$, but we are able to obtain some quite strong bounds (which are an ingredient in our proof of \cref{thm:count}).
\begin{theorem}\label{thm:extremal-clique}
    Let $r,n,m\ge 1$ and let $P$ be an $r$-partite $r$-pattern.
    \begin{enumerate}
        \item In general, we have
        \[\ex_<(n, \clique{P}{m}) \ge \binom{n}{r}-\binom{(n-r(m-1))_+}{r}.
        \]
        \item \smallskip If $n$ is sufficiently large (in terms of $r$) then
        \[
        \ex_<(n, \clique{P}{m})\le O\left(r^2(m-1)\binom n {r-1}\right).
        \]
         \item If $P$ is the "alternating" $r$-pattern represented by $|\AB|\BA|\AB|\BA|\cdots|$, then
         \[\ex_<(n, \clique{P}{m}) = \binom{n}{r}-\binom{(n-r(m-1))_+}{r}.\]
    \end{enumerate}
\end{theorem}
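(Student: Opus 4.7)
The plan is to treat the three parts separately but with a unifying theme: for every $r$-partite pattern $P$, the clique $\clique{P}{m}$ has a distinguished ``innermost'' edge whose vertex sets, as the clique varies, are in bijection with non-decreasing $r$-tuples in $\{0,1,\dots,n-rm\}$ and hence number exactly $\binom{(n-r(m-1))_+}{r}$.

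For part~(1), I would construct the desired $\clique{P}{m}$-free graph by removing from the complete ordered $r$-graph on $[n]$ all candidate innermost edges of $\clique{P}{m}$. For the alternating $r$-pattern, the innermost edge is $e_m=\{v_m,v_{m+1},v_{3m},v_{3m+1},\dots\}$; for general $r$-partite $P$, it is defined analogously via the position structure of the block pattern. If I write $w_i=v_{p_i}-p_i$ for the positions $p_i$ of the innermost edge, the monotonicity $v_1<v_2<\dots<v_{rm}$ forces $(w_1,\dots,w_r)$ to be a non-decreasing sequence in $\{0,\dots,n-rm\}$, while a direct construction shows every such sequence is realizable. Hence there are exactly $\binom{n-r(m-1)}{r}$ candidate innermost edges, and removing them leaves a graph with $\binom{n}{r}-\binom{(n-r(m-1))_+}{r}$ edges that avoids every $\clique{P}{m}$.

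For part~(2), I would induct on $m$. The base case $m=2$ is \cref{theorem: turan number for m=2}, giving $\ex_<(n,P)=O(r\binom{n}{r-1})$. For the inductive step, given a $\clique{P}{m}$-free graph $G$, assign to each edge $e$ a \emph{depth} $d(e)\in\{1,\dots,m-1\}$, the length of the longest $P$-chain of edges in $G$ ending at $e$; this is well-defined because $G$ avoids $\clique{P}{m}$. When the $P$-relation is transitive (e.g.\ nesting), Mirsky's theorem partitions $E(G)$ into $m-1$ $P$-free classes, giving $|E(G)|\le(m-1)\cdot\ex_<(n,P)$. For non-transitive $P$, since any two incomparable edges at the same depth would extend each other's chains by one, one can still decompose $E(G)$ into $P$-free pieces, losing at most a factor of $O(r)$ from the branching behavior of $P$-chains; this yields the stated $O(r^2(m-1)\binom{n}{r-1})$ bound. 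The main obstacle is making the non-transitive decomposition rigorous without incurring an $m$-dependent overhead.

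For part~(3), the lower bound is part~(1), so only the matching upper bound $\ex_<(n,\clique{P_{\mathrm{alt}}}{m})\le\binom{n}{r}-\binom{(n-r(m-1))_+}{r}$ requires proof. I would argue by induction on $n$, with the base case $n\le r(m-1)$ being trivial. For the inductive step, let $G$ on $[n]$ be an extremal alt-$m$-clique-free graph; then $G-\{n\}$ on $[n-1]$ is also alt-$m$-clique-free, so by induction $|E(G-\{n\})|\le\binom{n-1}{r}-\binom{n-1-r(m-1)}{r}$. By the Pascal identities $\binom{n}{r}-\binom{n-1}{r}=\binom{n-1}{r-1}$ and $\binom{n-r(m-1)}{r}-\binom{n-1-r(m-1)}{r}=\binom{n-1-r(m-1)}{r-1}$, it suffices to prove $d_n\le\binom{n-1}{r-1}-\binom{n-1-r(m-1)}{r-1}$. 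To this end, I would analyze the link $L_n$: each $(r-1)$-subset in $L_n$ is of the form $e\setminus\{n\}$ for an edge $e\ni n$, and in any alt-$m$-clique containing $n$ this corresponds (for $r$ odd) to $e_m\setminus\{n\}$, the ``truncated'' innermost edge, whose positional structure mimics the innermost edge of an alt-$m$-clique of uniformity $r-1$. A secondary induction on $r$---using the fact that the alternating pattern is self-similar under removing the last vertex---then yields the desired bound on $|L_n|$. The main obstacle is verifying the precise inheritance of structure by the link, ensuring that the two inductions (on $n$ and on $r$) dovetail correctly with matching extremal constants.
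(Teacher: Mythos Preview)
Your Part~(1) is essentially correct and matches the paper's approach in spirit: both constructions identify a distinguished edge of the hypothetical $\clique{P}{m}$ whose positional constraints force it to lie in a set of size exactly $\binom{(n-r(m-1))_+}{r}$, and then remove all such candidates from $K_n^{(r)}$. The paper phrases this via the edge $e_1$ (the one with smallest first vertex) rather than your ``innermost'' edge, which makes the gap conditions slightly cleaner to state, but the content is the same.

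Parts~(2) and~(3), however, each have a genuine gap, and in both cases the paper takes a fundamentally different route.

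For Part~(2), your depth-based decomposition only works when the relation $\preceq_P$ is transitive. Your key step is: if $d(e)=d(f)$ and $e,f$ form $P$, then appending $f$ to a maximum $P$-chain ending at $e$ gives a longer $P$-clique. But that requires $f$ to form pattern $P$ with \emph{every} edge in the chain, not just with $e$; without transitivity this fails (already for the $2$-uniform crossing). Your suggestion that the branching costs only a factor $O(r)$ is not substantiated, and I do not see how to make it work. The paper instead applies its partitioning lemma (\cref{thm:mk-partitioning}) to pass to a $(\ge r{-}1)$-equipartite subgraph of comparable density, and then proves sharp extremal bounds in the $r$-equipartite and $(r{-}1)$-equipartite settings separately (\cref{lemma: turan number for r-partite graphs,lemma: turan number for r-1 partite}), the latter by reducing to the known $r=2$ results.

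For Part~(3), your double induction on $n$ and $r$ breaks down because the link $L_n$ does not inherit a $\clique{P'}{m}$-free condition for any $(r{-}1)$-pattern $P'$: edges $f_1\cup\{n\},\dots,f_m\cup\{n\}$ all share the vertex $n$ and hence can never form a $P$-clique regardless of how $f_1,\dots,f_m$ sit in $L_n$, so there is no forbidden configuration in the link to exploit. The paper's argument is entirely different and crucially exploits the alternating pattern: it defines the equivalence $e\sim f$ when $f[i]=e[i]+(-1)^{i+1}\ell$ for some $\ell$, observes that each equivalence class is itself a $P$-clique (this is the special property of the alternating pattern), and then shows class-by-class that the extremal construction $\tilde G$ dominates any $\clique{P}{m}$-free $G$.
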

\begin{remark}
    If we view $r$ as a constant, and assume $m=o_r(n)$, then one can check that
    \[\binom{n}{r}-\binom{(n-r(m-1))_+}{r}=(1+o(1))r(m-1)\binom{n}{r-1}.\]
    So, our upper and lower bounds in (1) and (2) differ by a factor of $O(r)$.
\end{remark}
The $r=2$ case of \cref{thm:extremal-clique}(3) (i.e., an exact result in the case where $P$ is the 2-unifom nesting $|\AB|\BA|$) can be derived from the known results on graphs with bounded \emph{queue number} due to Pemmaraju~\cite{Pemmaraju92} and Dujmovi\'c and Wood~\cite{DW04}. We also remark that Capoyleas and Pach~\cite{CP92} previously studied the case where $P=|\AB|\AB|$ is a 2-uniform crossing; in this case they obtained the exact result
\[\ex_<(n,\clique{P}{m})=\binom{n}{2}-\binom{(n-2(m-1))_+}{2}.\]
For general $r\ge 2$, F{\"u}redi, Jiang, Kostochka, Mubayi and Verstra{\"e}te~\cite{FJKMV21-cgg} studied the case where $P$ is the $r$-uniform ``generalised crossing'' pattern represented by $|\AB|\AB|\cdots|\AB|$. We have already mentioned their exact result in the case $m=2$; for all $m$ they proved 
\begin{equation}\label{eq: extremal number for ABAB}
(1-o_{r,m}(1))r(m-1)\binom{n}{r-1}\le \ex_<(n,\clique{P}{m}) \le 2(r-1)(m-1)\binom{n}{r-1}.
\end{equation}
(Recall that subscripts on asymptotic notation indicate quantities that should be viewed as constants, so the ``$o_{r,m}(1)$'' term goes to zero as $n\to\infty$, holding $r,m$ fixed).
Their lower bound construction is the same as our construction for (1) (in the case where $P$ is a generalised crossing), but they only analysed its number of edges asymptotically. 
Their upper bound for this specific case is stronger than our general bound in (2). 

A variety of techniques are involved in the proof of \cref{theorem: turan number for m=2} and the various parts of \cref{thm:extremal-clique}. In particular, for our proof of \cref{thm:extremal-clique}(2) we refine a partitioning lemma for ordered hypergraphs due to F\"uredi, Jiang, Kostochka, Mubayi and  Verstra\"ete~\cite{FJKMV21-partition}. We present this refinement, and discuss it further, in \cref{sec:partitioning}.

The proofs of \cref{theorem: turan number for m=2,thm:extremal-clique} appear in \cref{sec:extremal}.

\subsection{Further directions}\label{subsec:further-directions}
There are a great number of compelling further directions for study.
\subsubsection{Ramsey-type questions} The most obvious open question in this direction is to close the gap between the lower and upper bound in \cref{thm:ES}. It is tempting to guess that the upper bound is sharp, and that $\ramsey r n$ has order of magnitude $n^{1/(2^r-1)}$ for any constant $r$ (this is true for $r\in\{2,3,4\}$). It is even possible that such a bound can be proved via our ``poset partititioning'' method, with judicious choice of the relevant partitions (perhaps for small $r$, appropriate partitions can be found via computer search). 
However, our investigations do not suggest any general structural reason why a bound of this form should hold.

Even in the cases $r\in\{2,3,4\}$ where we know the order of magnitude of $\ramsey r n$, it would be interesting to obtain the exact value (or at least an asymptotic estimate). Also, it is possible to consider ``off-diagonal'' versions of the problem, where one treats different patterns differently. For example, fixing a positive integer $k_P$ for each $r$-pattern $P$, what is the maximum possible size of an $r$-matching which has no $P$-clique of size $k_P$ for any $P$? Actually, this general setting was considered in \cite{DGJR23-r-matching,DGR23-later}, and optimal bounds were obtained in the case $r=2$.
Our methods seem to be suitable for attacking the larger-$r$ case as well, but the relevant casework seems likely to be even more fiddly than for \cref{thm:ES-3-4} (one would need to completely characterise how all the different subsets of patterns can interact with each other).

\subsubsection{Random ordered matchings} The obvious open question in this direction is to determine the values of the constants $b_P$ in \cref{thm:limit-exists}. As we have discused, this is a difficult problem in the case where $P$ is $r$-partite (in which case it amounts to determination of the Bollob\'as--Winkler constant $c_r$), but it may be tractable for certain special $P$.

For example, in the case where $P$ is a 3-pattern which has type $2+1$, the problem of determining $b_{2+1}:=b_P$ seems like it might be of ``intermediate difficulty'' between the Ulam--Hammersley problem (of determining $c_2=2$) and the problem of determining the Bollob\'as--Winkler constant $c_3$. For the Ulam--Hammersley problem, there is a well-known interacting particle process (\emph{Hammersley's interacting particle process}) which captures the limiting behaviour of the longest increasing subsequence of a random permutation, and Aldous and Diaconis~\cite{AD95} managed to give a fairly ``soft'' proof that $c_2=2$ by studying this process. It seems that one can design a variant of Hammersley's process which is suitable for studying $b_{2+1}$, though this process lacks certain symmetries that Aldous and Diaconis used in their analysis.

\subsubsection{Enumeration} The results in \cref{subsec:intro-enumeration} have exponential error terms, and of course it would be interesting to sharpen these. However, there may be some limitations to what is possible to accomplish without determining the limits $b_P$ in \cref{thm:limit-exists}: in the regime where $m$ is of order $n^{1/r}$, studying $\countClique n Pm$ amounts to studying the large deviations of $L_P(M)$ in a random $r$-matching $M$.

Even if exponential error terms cannot be eliminated, it might be interesting to determine their dependence on $r$ (recall that in \cref{thm:count,thm:count-basic} we treat $r$ as a constant).

\subsubsection{Extremal problems}
Given the discussion in \cref{subsec:intro-extremal}, it is natural to conjecture that the lower bound in \cref{thm:extremal-clique}(1) is always sharp, as follows.
\begin{conjecture}\label{conjecture: turan numbers are the same}
  Let $r,n,m \ge 1$ and $P$ be an $r$-partite $r$-pattern.
  Then,
  \[\ex_<(n, \clique{P}{m}) = \binom{n}{r}-\binom{(n-r(m-1))_+}{r}.\]
\end{conjecture}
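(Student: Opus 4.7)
The plan is to attempt induction on $m$ (with $n$ and $r$ free), using \cref{theorem: turan number for m=2} as the base case $m=2$. Fix an $r$-partite pattern $P$ and suppose $G$ is an ordered $r$-graph on $[n]$ with $|E(G)| > \binom{n}{r} - \binom{(n-r(m-1))_+}{r}$; the goal is to locate a $\clique{P}{m}$ in $G$. A secondary induction on $n$ (trivial for $n \le r(m-1)$) combined with Pascal's identity forces every vertex $v \in [n]$ to satisfy
\[
d_G(v) \;=\; |E(G)| - |E(G-v)| \;>\; \binom{n-1}{r-1} - \binom{(n-1-r(m-1))_+}{r-1},
\]
so the link $L_v$ is an ordered $(r-1)$-graph with more edges than the conjectured extremal bound for $(r-1)$-partite patterns of clique-size $m$. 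Running a simultaneous induction on $r$, each link $L_v$ would contain a $\clique{P'}{m}$ for some $(r-1)$-partite $(r-1)$-pattern $P'$ determined by the position of $v$ relative to the blocks of $P$.

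The main obstacle is that such a clique in $L_v$ corresponds to $m$ edges of $G$ all containing the vertex $v$, which is never a matching. One must therefore transfer this ``local star'' structure at $v$ into a genuinely disjoint $\clique{P}{m}$ elsewhere in $G$. A natural strategy is to combine the uniform minimum-degree bound above with a pattern-sensitive refinement of the partitioning lemma of \cref{sec:partitioning}, which currently underlies the asymptotic bound in \cref{thm:extremal-clique}(2) and loses a factor of $O(r)$ there. An alternative is to define a shifting/compression operation on ordered $r$-graphs that preserves edge-counts and $\clique{P}{m}$-freeness and reduces the problem to the alternating pattern handled in \cref{thm:extremal-clique}(3); however, the pattern-dependence exhibited in \cref{rem:computer-check} shows such a compression cannot act naively at the matching level, and would instead have to operate on $r$-graphs while exploiting that a single $r$-graph can simultaneously host cliques of many distinct patterns.

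The hardest aspect is the pattern-independence itself: the conjecture asserts that all $r$-partite patterns $P$ give identical \emph{exact} extremal numbers, whereas the only uniform-in-$P$ upper bound currently known (\cref{thm:extremal-clique}(2)) is off by an $O(r)$ factor, and no structural reason for exact pattern-independence is currently visible.
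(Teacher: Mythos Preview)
This statement is a \emph{conjecture} in the paper, not a theorem; the paper does not prove it. It appears in the ``Further directions'' subsection as an open problem, with the comment that the only known exact cases are $m=2$ (\cref{theorem: turan number for m=2}), the alternating pattern for all $m$ (\cref{thm:extremal-clique}(3)), and the $r=2$ crossing case of Capoyleas--Pach. So there is no paper proof to compare against.

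Your proposal is not a proof either, and you are essentially candid about this. The double induction you set up is reasonable as a first probe: the inner induction on $n$ does legitimately give the uniform minimum-degree bound you state, and passing to the link $L_v$ does drop the uniformity by one. But the obstacle you then name is fatal for this line: an $(r-1)$-pattern clique of size $m$ inside $L_v$ corresponds to $m$ edges of $G$ \emph{all containing $v$}, hence a star rather than a matching, and there is no known way to disentangle such a star into a genuine $\clique{P}{m}$. The two escape routes you sketch --- sharpening the partitioning lemma of \cref{sec:partitioning} to remove the $O(r)$ loss, or designing a pattern-preserving compression that reduces to the alternating case --- are sensible research directions, but neither is carried out, and your final paragraph correctly concedes that the exact pattern-independence (cf.\ the discrepancy in \cref{rem:computer-check}) is the real mystery.

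In short: the paper leaves this open, and what you have written is an honest inventory of obstacles rather than a proof. The gap is not a missing technical step but the core difficulty of the conjecture itself.
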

In particular, given \cref{thm:extremal-clique}(3), an indirect route to prove \cref{conjecture: turan numbers are the same} would be to prove that $\ex_<(n,\clique{P}{m})$ is the same for every $r$-partite $r$-pattern $P$. In the case $r=2$, there is a bijective proof of (a vast generalisation of) this fact, due to Jonsson and Welker~\cite{JW07} (see also \cite[Corollary~2.5]{Mier07}). However, the considerations in \cref{rem:computer-check} suggest some difficulties in generalising such bijective proofs to higher uniformities.

Also, although there is no obvious connection to enumeration, it may still be of interest to determine $\ex_<(n, \clique{P}{m})$ in the case where $P$ is not $r$-partite. It is easy to show that $\ex_<(n, \clique{P}{m})\ge (n/r)^r$ when $n$ is a multiple of $r$, but this does not seem to be best-possible.

\subsection{Notation} 
We use standard asymptotic notation throughout, as follows. For functions $f=f(n)$ and $g=g(n)$, we write $f=O(g)$ to mean that there is a constant $C$ such that $|f(n)|\le C|g(n)|$ for sufficiently large $n$. Similarly, we write $f=\Omega(g)$ to mean that there is a constant $c>0$ such that $f(n)\ge c|g(n)|$ for sufficiently large $n$. We write $f=\Theta(g)$ to mean that $f=O(g)$ and $f=\Omega(g)$, and we write $f=o(g)$ or $g=\omega(f)$ to mean that $f(n)/g(n)\to0$ as $n\to\infty$. Subscripts on asymptotic notation indicate quantities that should be treated as constants.

For $n, r \ge 1$, we use $K_n^{(r)}$ to indicate the ordered $r$-uniform clique on the vertex set $\{1,\dots,n\}$.
For an ordered hypergraph $H$, we write $V(H)$ for its vertex set, $E(H)$ for its edge set and $e(H)=|E(H)|$ for its number of edges. For $e\in E(H)$, we write $e[1],\dots,e[r]$ for the vertices of $e$ (ordered according to the vertex ordering of $H$).
When we say that an ordered hypergraph $H'$ is a \emph{subgraph} of another ordered hypergraph $H$, we mean that the order of the vertices is maintained. (We use the notation $H' \subseteq H$).

We also remind the reader that $(x)_+$ is defined to be $\max(x,0)$.

\section{Preliminaries}
We first recall the following corollary of Mirsky's theorem~\cite{Mirsky71}, for our proof of \cref{thm:ES}.
\begin{theorem} \label{thm:mirsky}
  Suppose $P$ is a partially ordered set (poset) with $n$ elements, that contains no chain of length $x$ for some $x \in \mb{R}_+$.
  Then, $P$ contains an antichain of size at least $n/x$.
\end{theorem}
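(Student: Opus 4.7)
The plan is to invoke the standard rank-function proof of Mirsky's theorem. For each $v \in P$, define $h(v)$ to be the maximum number of elements in a chain of $P$ whose top element is $v$. The key observation is that $h$ is strictly increasing along the order: if $u <_P v$, then any chain ending at $u$ can be extended by appending $v$ to produce a strictly longer chain ending at $v$, so $h(u)<h(v)$. Consequently, no two distinct elements with the same value of $h$ are comparable, and each level set $h^{-1}(k)$ is an antichain.

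Under the hypothesis that $P$ contains no chain of length $x$, every $h(v)$ is a positive integer strictly less than $x$, so $h$ takes at most $|\{k\in\mathbb{Z}_{\ge 1}:k<x\}|\le x$ distinct values. The $n$ elements of $P$ are therefore partitioned into at most $x$ antichains, and by pigeonhole at least one of them has size $\ge n/x$.

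This is a textbook argument, so I do not anticipate any genuine obstacle; the only minor point requiring attention is that $x$ is permitted to be a non-integer positive real, but this is handled automatically by the estimate on the number of positive integers below $x$ used above.
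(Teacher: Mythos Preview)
Your proof is correct; this is precisely the standard height-function argument underlying Mirsky's theorem, and you have handled the real-valued $x$ cleanly. Note that the paper does not actually supply its own proof of this statement---it is recorded in the Preliminaries section as a known corollary of Mirsky's theorem with a citation, so there is nothing further to compare against.
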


At several points in the paper (related to the directions introduced in \cref{subsec:intro-extremal}) we will need the following enumerative identity.
\begin{lemma}\label{lemma: count the number of r-tuples with given gap}
  Let $n \ge r \ge 1$ and $\delta_1,\dots,\delta_{r-1} \in \mb{Z}_{\ge 0}$. Then, the number of $r$-tuples $(a_1,\dots,a_r)\in \{1,\dots,n\}^r$ satisfying $a_{i+1}-a_i>\delta_i$ for all $1 \le i \le r-1$ is exactly
  \[\binom{(n-\sum_{k=1}^{r-1}\delta_k)_+}{r}.\]
\end{lemma}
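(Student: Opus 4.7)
The plan is a simple change of variables that turns the gap constraints into a strict-increase condition. Specifically, for a tuple $(a_1,\dots,a_r) \in \{1,\dots,n\}^r$, I would set
\[
b_i := a_i - \sum_{k=1}^{i-1} \delta_k \qquad (1 \le i \le r),
\]
so that $b_{i+1} - b_i = (a_{i+1} - a_i) - \delta_i$. Hence the condition $a_{i+1}-a_i > \delta_i$ (i.e.\ $a_{i+1}-a_i \ge \delta_i + 1$, since the $a_i$ are integers) is equivalent to $b_{i+1} > b_i$. This map is clearly a bijection from tuples $(a_1,\dots,a_r) \in \mb{Z}^r$ to tuples $(b_1,\dots,b_r) \in \mb{Z}^r$.

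Next I would identify the range. Since the $\delta_k$ are nonnegative, the sequence $\sum_{k=1}^{i-1}\delta_k$ is nondecreasing in $i$, so $b_1 \le b_2 \le \dots$ under the original bounds on the $a_i$ is not quite what we need; rather, I would use the original constraints $1 \le a_i \le n$ together with $b_i < b_{i+1}$. Observe that $b_1 = a_1 \ge 1$, and $b_r = a_r - \sum_{k=1}^{r-1}\delta_k \le n - \sum_{k=1}^{r-1}\delta_k$. Conversely, for any strictly increasing $b_1 < b_2 < \dots < b_r$ with $b_1 \ge 1$ and $b_r \le n - \sum_{k=1}^{r-1}\delta_k$, the intermediate values satisfy $1 \le b_1 < b_i < b_r \le n - \sum_{k=1}^{r-1}\delta_k \le n$, which (after adding back $\sum_{k=1}^{i-1}\delta_k \ge 0$) yields $1 \le a_i \le n$ for all $i$. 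Thus the bijection restricts to a bijection between the set we are counting and strictly increasing sequences in $\{1,\dots,N\}$ of length $r$, where $N := n - \sum_{k=1}^{r-1}\delta_k$.

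To finish, I would note that the number of strictly increasing $r$-tuples in $\{1,\dots,N\}$ equals $\binom{N}{r}$ when $N \ge 0$ (with $\binom{N}{r}=0$ if $0 \le N < r$), and is $0$ when $N < 0$ (as the range is empty). In all cases this coincides with $\binom{(N)_+}{r} = \binom{(n-\sum_{k=1}^{r-1}\delta_k)_+}{r}$, completing the proof.

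There is essentially no obstacle here; the only thing to be careful about is the sign of $N$ and the boundary case $N < r$, which is precisely what the $(\cdot)_+$ notation is designed to handle.
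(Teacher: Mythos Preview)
Your proof is correct and is essentially identical to the paper's: both set up the bijection $b_i = a_i - \sum_{k<i}\delta_k$ so that the gap conditions become $b_1<b_2<\dots<b_r$, and then count strictly increasing sequences in $\{1,\dots,n-\sum_k\delta_k\}$ as $r$-subsets. The only (very minor) slack in your write-up is the upper-bound check in the converse direction: the inequality $a_i\le n$ follows not from $b_i\le n$ but from $b_i\le N$ together with $\sum_{k<i}\delta_k\le\sum_{k<r}\delta_k$; your phrasing ``after adding back $\sum_{k<i}\delta_k\ge0$'' reads as if nonnegativity alone suffices, so just make that step explicit.
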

\begin{proof}
We may assume $n\ge \sum_{k=1}^{r-1}\delta_k$, or else the quantity under consideration is zero.
For an $r$-tuple $(a_1,\dots,a_r)$ as in the lemma statement, let $f(a_1,\dots,a_r)=(b_1,\dots,b_r)$, where $b_1=a_1$ and $b_i=a_i-\delta_{i-1}$ for $2\leq  i \leq r$. 
The desired result follows by noting that $f$ is a bijection between the set of tuples under consideration and the set of unordered $r$-tuples in $\{1,2,\dots,n-\sum_{k=1}^{r-1}\delta_k\}$.
% number of nonegative integer solutions to the equation $\sum_{i=1}^{r-1} x_i= n-r-\sum_k\delta_k$.  The desired result follows.
\end{proof}

For \cref{thm:limit-exists} we need a number of probabilistic tools. First, we need a bounded-differences concentration inequality for permutations.
\begin{theorem}\label{thm:mcd-permutation}
Let $f:\mathcal S_n\to \mb R$ be a function which has ``bounded differences'' in the sense that for any $\sigma\in \mc S_n$ and any transposition $\tau\in \mc S_n$, we have $|f(\tau\circ\sigma)-f(\sigma)|\le c$. Then, if $\sigma\in \mathcal S_n$ be a uniformly random permutation of length $n$, we have
\[\Pr[|f(\sigma)-\mb Ef(\sigma)|\ge t]\le \exp\left(-\frac{2t^2}{c^2 n}\right).\]
\end{theorem}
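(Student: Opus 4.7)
The plan is to invoke the classical martingale method of Maurey: I would build $\sigma$ by exposing its values $\sigma(1), \sigma(2), \dots, \sigma(n)$ one at a time, and consider the Doob martingale $X_i = \mathbb{E}[f(\sigma) \mid \sigma(1), \dots, \sigma(i)]$, so that $X_0 = \mathbb{E}f(\sigma)$ and $X_n = f(\sigma)$. Once the increments $|X_i - X_{i-1}|$ (and more carefully, their conditional range) are controlled by $c$, Azuma--Hoeffding delivers the stated tail bound.

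The main step is a coupling argument to bound each $|X_i - X_{i-1}|$ by $c$. Fixing $\sigma(1),\dots,\sigma(i-1)$, let $R$ be the remaining set of possible values for $\sigma(i)$, and for each $a \in R$ set $g(a) = \mathbb{E}[f(\sigma) \mid \sigma(1),\dots,\sigma(i-1),\sigma(i)=a]$. For any $a,b \in R$, I would couple the conditional distribution given $\sigma(i)=a$ to that given $\sigma(i)=b$ as follows: sample a uniformly random completion $\pi$ with $\pi(i)=a$, find the unique position $j>i$ with $\pi(j)=b$, and swap the values at positions $i$ and $j$ to obtain $\pi'$. A routine counting check shows $\pi'$ is uniform over completions with $\pi'(i)=b$, and $\pi' = \tau \circ \pi$ for the transposition $\tau = (a\,b)$ acting on values. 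The hypothesis then gives $|f(\pi)-f(\pi')|\le c$, and taking expectations yields $|g(a)-g(b)|\le c$. Since $X_i = g(\sigma(i))$ and $X_{i-1} = |R|^{-1}\sum_{a\in R} g(a)$, the increment $X_i - X_{i-1}$ lies (conditionally on the past) in an interval of length at most $c$.

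To close, I would apply Hoeffding's lemma to each increment conditionally: for a mean-zero variable supported in an interval of length at most $c$, the conditional moment generating function is at most $e^{\lambda^2 c^2/8}$. Iterating via the tower property gives $\mathbb{E}[e^{\lambda(X_n - X_0)}] \le e^{n\lambda^2 c^2/8}$, and optimising $\lambda = 4t/(nc^2)$ in Markov's inequality yields $\Pr[X_n - X_0 \ge t] \le \exp(-2t^2/(nc^2))$; the symmetric bound is identical by applying the same argument to $-f$.

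I do not anticipate any serious obstacle here: the only subtlety is the verification that the swap operation genuinely preserves uniform measure on completions, and this is a one-line bijection argument. The proof is essentially a standard application of the Maurey coupling for symmetric groups, and the theorem is included as a black-box preliminary rather than a novel contribution.
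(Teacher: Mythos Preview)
Your proof is correct and is precisely the standard Maurey martingale argument for concentration on the symmetric group. The paper does not actually prove this theorem: it is stated as a preliminary result attributed to McDiarmid, exactly as you anticipate in your final sentence, so there is no in-paper proof to compare against. One minor quibble: combining your two one-sided tail bounds by a union bound yields $2\exp(-2t^2/(nc^2))$ rather than $\exp(-2t^2/(nc^2))$ as stated, but this constant factor is immaterial for the paper's applications.
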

This inequality seems to have first been proved by McDiarmid in the case where $c=1$ (see \cite[p.~18]{McD98}\footnote{There is a typo in this book; in the inequality we are citing, the exponent should really be $2t^2/n$, not $2t^2/n^2$.}).

\begin{remark}\label{rem:permutation-matching}Note that one can obtain a random ordered $r$-matching on the vertex set ${1,\dots,rn}$ via a random permutation $\sigma\in \mc S_{rn}$: simply take the matching whose edges are \[\{\sigma(1),\dots,\sigma(r)\},\; \{\sigma(r+1),\dots,\sigma(2r)\},\;\dots,\;\{\sigma(rn-r+1),\dots,\sigma(rn)\}.\]
\end{remark}

We also need a version of Talagrand's concentration inequality (see
for example \cite[Theorem~2.29 and~Equation~(2.43)]{JLR00}
\begin{theorem}\label{thm:talagrand}
Consider a function $X=f(Z_{1},\dots,Z_{n})$ of independent random
objects $Z_{1}\in\Lambda_{1},\dots,Z_{n}\in\Lambda_{n}$. Suppose that the following conditions are satisfied.
% Suppose that there is a ``certifying function'' $\psi$ such that the following conditions are satisfied.
\begin{compactitem}
\item If $z,z'\in\prod_{i=1}^{n}\Lambda_{i}$ differ only in the $i$th
coordinate, then $|f(z)-f(z')|\le1$.
\item \smallskip For any $z\in\prod_{i=1}^{n}\Lambda_{i}$, there is a subset of indices
$J\subseteq\{1,\dots,n\}$ with $|J|\le f(z)$, such that for
any $y\in\prod_{i=1}^{n}\Lambda_{i}$ which agrees with $z$ on the
coordinates indexed by $J$, we have $f(y)\ge f(z)$.
\end{compactitem}
Then, for some universal constant $\gamma>0$, we have
\[
\Pr[|X-\mb E X|\ge t]\le4\exp\left(-\frac{\gamma t^{2}}{\mb E X+t}\right)
\]
for any $t\ge0$.
\end{theorem}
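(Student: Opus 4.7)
The plan is to derive this from Talagrand's \emph{convex distance inequality}, which underlies essentially every proof of a bound of this shape. For $A\subseteq \prod_{i=1}^n \Lambda_i$ and $z\in \prod_{i=1}^n \Lambda_i$, define the convex distance
\[
d_T(A,z) \;=\; \sup_{\|\alpha\|_2 \le 1,\ \alpha\ge 0}\ \inf_{y\in A}\ \sum_{i:\, y_i\ne z_i}\alpha_i.
\]
The core estimate, proved by induction on $n$ via a one-coordinate tensorization argument, asserts that for independent $Z=(Z_1,\dots,Z_n)$,
\[
\mb E\!\left[\exp\!\left(\frac{d_T(A,Z)^2}{4}\right)\right] \;\le\; \frac{1}{\Pr[Z\in A]}.
\]
I would invoke this as a black box, since it is precisely the tool packaged for us in \cite{JLR00}.

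Next, I would apply it to the sublevel set $A_m := \{z : f(z)\le m\}$. The ``certificate'' hypothesis is tailored exactly to this step: given $z$ with $f(z)=k$, let $J(z)$ be the certifying index set, of size at most $k$. If $y\in A_m$ with $m<k$, then $y$ must disagree with $z$ on some index in $J(z)$, since otherwise the hypothesis would force $f(y)\ge k>m$. Plugging the test vector $\alpha_i = \mathbbm{1}[i\in J(z)]/\sqrt{|J(z)|}$ into the supremum defining $d_T$, one reads off
\[
d_T(A_m,z)\ \ge\ \frac{f(z)-m}{\sqrt{f(z)}},
\]
after using the bounded-differences hypothesis to interpolate for $y$'s that lie just across the boundary of $A_m$. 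Squaring gives the pointwise bound $(f(z)-m)_+^2 \le f(z)\,d_T(A_m,z)^2$.

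With these ingredients, I would choose $m$ to be a median $M$ of $X=f(Z)$. Markov's inequality applied to the convex distance inequality, combined with the pointwise bound above, yields
\[
\Pr[X\ge M+t]\ \le\ 2\exp\!\left(-\frac{t^2}{4(M+t)}\right),
\]
and a symmetric argument (taking as the target set $\{X\le M\}$ and running the same derivation in reverse) handles the lower tail. To transfer from median to mean, I would integrate these tail bounds to obtain $|\mb E X - M| = O(\sqrt{\mb E X})$, absorb the resulting shift into the exponent, and rescale constants to reach the form $4\exp(-\gamma t^2/(\mb E X + t))$.

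The main obstacle is the convex distance inequality itself, whose tensorization step is delicate; but that is precisely the classical ingredient we are borrowing, so no new work is needed there. The only genuinely new bookkeeping is interpolating near the boundary of $A_m$ via the bounded-differences hypothesis and then converting between median and mean while preserving the shape of the exponent, which is standard but slightly fiddly.
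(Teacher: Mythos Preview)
The paper does not give its own proof of this statement: it is recorded in the preliminaries section as a known tool, with a citation to \cite[Theorem~2.29 and Equation~(2.43)]{JLR00}. So there is no ``paper's proof'' to compare against.

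That said, your outline is the standard derivation of this form of Talagrand's inequality and is essentially correct. One small remark: your lower-tail sentence is slightly misleading. The symmetric argument does not ``run the same derivation in reverse''; rather, one takes $A=\{f\le M-t\}$, observes that every $z$ with $f(z)\ge M$ (a set of probability at least $1/2$) has $d_T(A,z)\ge t/\sqrt{f(z)}$ by the same certificate computation, and then uses the product form $\Pr[A]\cdot\Pr[d_T(A,Z)\ge s]\le e^{-s^2/4}$ to bound $\Pr[A]$. The dependence on $f(z)$ in the denominator means one has to be a touch careful here (e.g., first controlling the event $\{f(Z)\ge M+t\}$ via the upper-tail bound), but this is exactly the ``slightly fiddly'' bookkeeping you already flagged, and the cited reference carries it out.
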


We will also need a version of Kingman's subadditive ergodic theorem
(see for example \cite[Theorem~A.3]{Rom15}). We have ``flipped''
the conditions to make it apply under a \emph{superadditivity} condition
rather than a subadditivity condition.
\begin{theorem}\label{thm:kingman}
Let $(X_{m,n})_{0\le m<n}$ be a family of nonnegative random variables,
defined on some common probability space, such that the following
conditions are satisfied.
\begin{compactitem}
\item $X_{0,n}\ge X_{0,m}+X_{m,n}$ for all $m<n$.
\item \smallskip For any $k\ge1$, $(X_{nk,(n+1)k})_{n=0}^{\infty}$ is a sequence
of i.i.d.\ random variables.
\item For any $m\ge1$, we have $(X_{0,k})_{k=0}^{\infty}\overset{d}{=}(X_{m,m+k})_{k=0}^{\infty}$.
\item \smallskip There exists a constant $M>0$ such that $\mb E X_{0,n}\le Mn$ for all
$n$.
\end{compactitem}
Then, $\mb E X_{0,n}/n$ converges to a limit $\gamma$ as $n\to \infty$. Also, $X_{0,n}/n$ converges almost surely (therefore in probability) to $\gamma$.
\end{theorem}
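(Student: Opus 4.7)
The plan is to prove convergence of $\mb E X_{0,n}/n$ via Fekete's lemma, and almost-sure convergence of $X_{0,n}/n$ via a two-part argument (using the i.i.d.\ block structure for the liminf and a Kingman-style ergodic argument for the limsup); convergence in probability then follows automatically. Setting $a_n := \mb E X_{0,n}$, taking expectations in the superadditivity condition~(1) and applying the stationarity condition~(3) (which gives $\mb E X_{m,m+n}=\mb E X_{0,n}$) yields $a_{m+n}\ge a_m+a_n$, so $(a_n)$ is superadditive. Combined with $a_n\le Mn$, Fekete's lemma gives $a_n/n\to\gamma:=\sup_n (a_n/n)\in[0,M]$.

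For the almost-sure liminf, fix $k\ge 1$ and iterate condition~(1):
\[X_{0,Nk}\;\ge\;\sum_{i=0}^{N-1}X_{ik,(i+1)k},\]
where by conditions~(2) and~(3) the summands are i.i.d.\ with mean $a_k$. The strong law of large numbers yields $\liminf_{N\to\infty} X_{0,Nk}/(Nk)\ge a_k/k$ almost surely. For general $n=Nk+r$ with $0\le r<k$, nonnegativity of $X_{Nk,n}$ together with condition~(1) give $X_{0,n}\ge X_{0,Nk}$, so $\liminf_{n\to\infty} X_{0,n}/n\ge a_k/k$ a.s.; letting $k\to\infty$ and using $a_k/k\to\gamma$ gives $\liminf_{n\to\infty} X_{0,n}/n\ge\gamma$ a.s.

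For the limsup, first note that condition~(1) applied to $[0,(N+1)k]$ gives $X_{0,(N+1)k}\ge X_{0,n}+X_{n,(N+1)k}$, so for $n\in[Nk,(N+1)k)$ we have $X_{0,n}/n\le ((N+1)/N)\cdot X_{0,(N+1)k}/((N+1)k)$. It thus suffices to bound $\limsup_M X_{0,Mk}/(Mk)$ for each fixed $k$. Writing $X_{0,Mk}=S_M+R_M$ with $S_M:=\sum_{i=0}^{M-1}X_{ik,(i+1)k}$ and $R_M\ge 0$, the strong law gives $S_M/(Mk)\to a_k/k$ almost surely, while $\mb E[R_M/(Mk)]=\mb E X_{0,Mk}/(Mk)-a_k/k\to\gamma-a_k/k$, which is arbitrarily small for $k$ large. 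The remaining task is to upgrade this in-mean control of $R_M$ to almost-sure control; the stationarity (condition~(3)) together with the ergodicity provided by the i.i.d.\ structure (condition~(2)) makes this possible via a maximal inequality, essentially the content of the subadditive ergodic theorem in this setting. One obtains $\limsup_M R_M/(Mk)\le\gamma-a_k/k$ a.s., and hence $\limsup_n X_{0,n}/n\le a_k/k+(\gamma-a_k/k)=\gamma$ a.s., completing the argument.

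The main obstacle is the almost-sure control of $R_M$: the mean bound $\mb E[R_M/(Mk)]=\gamma-a_k/k+o_M(1)$ is on its own too weak for Borel--Cantelli, and one genuinely needs the ergodic-theoretic machinery (either invoking Kingman's theorem as a black box, or developing a self-contained maximal inequality using conditions~(2) and~(3)) to convert in-mean control into almost-sure control.
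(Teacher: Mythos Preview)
The paper does not prove this theorem at all: it is stated in the preliminaries as a known result, with a reference to \cite[Theorem~A.3]{Rom15} (Kingman's subadditive ergodic theorem, with the inequalities flipped). So there is no ``paper's own proof'' to compare against.

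That said, your attempt is worth commenting on. The Fekete argument for $\mb E X_{0,n}/n\to\gamma$ and the liminf direction via the strong law on i.i.d.\ blocks are both correct and are indeed the easy halves of any proof of Kingman's theorem. The gap is exactly where you flag it: controlling the remainder $R_M=X_{0,Mk}-S_M$ almost surely. Your own final paragraph concedes that the in-mean bound is too weak and that one ``genuinely needs the ergodic-theoretic machinery (either invoking Kingman's theorem as a black box, or developing a self-contained maximal inequality)''. Invoking Kingman's theorem to prove Kingman's theorem is circular, and you have not actually supplied the maximal inequality, so as written this is not a proof. The limsup direction is precisely the nontrivial content of the subadditive ergodic theorem; standard proofs (e.g.\ Liggett's, or the one in Romik's book that the paper cites) handle it via a decreasing-rearrangement or maximal-type argument that does real work beyond what you have sketched. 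If you want a self-contained argument, that is the piece you would need to fill in.
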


\let\line\relax
\newcommand*{\line}{{\alpha}}
\newcommand*{\wave}{{\kappa}}
\newcommand*{\stack}{{\nu}}
\let\AA\relax
\newcommand{\AA}{{\blockA\blockA}}
\newcommand{\BB}{{\blockB\blockB}}
\newcommand{\AAA}{{\blockA\blockA\blockA}}
\newcommand{\BBB}{{\blockB\blockB\blockB}}
\newcommand{\clip}[3]{#1_{{#2}:{#3}}}

\section{Lower bounds on Ramsey parameters}\label{sec:ramsey-lower}
%In this section, we prove \cref{thm:ES,thm:ES-3-4}.
%\subsection{Lower bounds}\label{sec:ES}
In this section we will outline our general framework for proving lower bounds on $\ramsey r n$, and show how to use it to prove the lower bounds in \cref{thm:ES} and in the $r=3$ case of \cref{thm:ES-3-4}. The $r=4$ case of \cref{thm:ES-3-4} follows a similar strategy but has some very complicated casework, so we defer it to \cref{appendix: ES}.

The starting point is that patterns sometimes give rise to posets.
For an $r$-matching $M$ and an $r$-pattern $P$, we define the
relation $\preceq_{P}$ on the edges of $M$ by taking $e\preceq f$
if $e[1]<f[1]$ and $e,f$ form pattern $P$ (or if $e=f$). It is
sometimes (but not always!) the case that $\preceq_{P}$ is a partial order.
The interesting property here is \emph{transitivity}: we need to know
whether $e\preceq_{P}f$ and $f\preceq_{P}g$ implies $f\preceq_{P}g$.
For example, in the case $r=2$, the only $r$-patterns are alignments,
crossings and nestings. It is easy to see that the alignment and nesting
pattern always give rise to a poset, but the crossing pattern may
not (it is possible to have edges $e,f,g$ with $e[1]<f[1]<g[1]$
such that $e$ and $f$ form a crossing, and $f$ and $g$ form a
crossing, but $e$ and $g$ form an alignment). However, if we restrict
ourselves to matchings that are alignment-free, then the crossing
pattern \emph{does} give rise to a poset.

Now, if $\preceq_{P}$ is a poset, then Mirsky's theorem (\cref{thm:mirsky}) implies that there is a long chain (which corresponds to a large
$P$-clique) or a large antichain (which corresponds
to a $P$-free sub-matching). 
One can iterate this, in several different ways, to give a simple proof of the optimal bound $\ramsey 2 n\ge n^{1/3}$ (previously
proved with a different method in \cite{DGJR23-r-matching}). 
For example, in a size-$n$
matching $M$, first, we look for an alignment-clique of size $n^{1/3}$
or an alignment-free sub-matching $M'$ of size $n^{2/3}$, and in
the latter case, inside $M'$ we look for a nesting-clique of size
$n^{1/3}$ or a nesting-free sub-matching (therefore a crossing-clique)
of size $n^{1/3}$.

For general $r$, if one is careful about the order of operations,
one can show $\ramsey r n\ge \Omega_r(n^{1/3^{r-1}})$ (as in \cite[Theorem~1.3]{DGR23-later})
by generalising the above proof, applying Mirsky's theorem once for
each of the $3^{r-1}$ collectable $r$-patterns (one also needs some separate
arguments to handle the non-collectable patterns).

The authors of \cite{DGR23-later} did not manage to improve on the above bound, and they may have
been tempted to conjecture that the ``$1/3^{r-1}$'' exponent is best-possible. However, they
found some clues that the situation is more intricate than it may
first appear. For example, they observed that some patterns ``cannot
interact with each other'': if $P$ and $P'$ are the collectable
$r$-patterns represented by $|\AB|\blockA\blockA\blockB\blockB|$ and $|\blockA\blockA\blockB\blockB|\AB|$, and we
have a matching $M$ in which every pair of edges forms pattern $P$
or $P'$, then in fact $M$ must be a $P$-clique or a $P'$-clique.

To leverage this type of observation, instead of using Mirsky's theorem to
process patterns one-by-one, we partition the set of all $r$-patterns
into subsets, and process patterns one subset at a time (defining a poset in terms of the entire subset, instead of a single pattern). We need to choose the subsets in our partition judiciously, such that
the patterns in each subset ``cannot interact much with each other'',
and such that each subset gives rise to a poset (after eliminating
the patterns from previous subsets).

Both these properties are encapsulated in the following lemma, which is the key ingredient for the lower bound in \cref{thm:ES}.

\begin{definition}\label{def:calP-clique}
    For a set of $r$-patterns $\mc P$, say that a matching $M$ is $\mc P$-free if it is $P$-free for all $P\in \mc P$. Say that $M$ is a $\mc P$-clique if every pair of edges forms a pattern in $\mc P$.

    Also, for a matching $M$ and a set of $r$-patterns $\mc P$, define the
relation $\preceq_{\mc P}$ on the edges of $M$ by taking $e\preceq f$
if $e[1]<f[1]$ and $e,f$ form a pattern in $\mc P$ (or if $e=f$).
\end{definition}

\begin{lemma}\label{lem:key-ES}
    Let $b=(r+1)2^{r-2}$. There is a partition of the $r$-patterns into subsets $\mathcal P_1,\dots,\mathcal P_b$ such that the following properties hold for any $i\in \{1,\dots,b\}$.
    \begin{enumerate}
        \item [(A)] For any $(\mc P_1\cup \dots\cup\mc P_{i-1})$-free matching $M$, the relation $\preceq_{\mc P_{i}}$ is a partial order on $M$.
        \item [(B)] For any $\mc P_i$-clique $M$ of size $n$, we have $L(M)\ge n/(r-1)$.% for some $P\in \mc P_i$.
    \end{enumerate}
\end{lemma}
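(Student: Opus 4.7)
The plan is to build an explicit partition of the $r$-patterns into $b = (r+1)2^{r-2}$ classes $\mathcal{P}_1, \dots, \mathcal{P}_b$ via a suitable structural signature, then verify properties (A) and (B). The payoff is the iteration outlined earlier in this section: at stage $i$, applying Mirsky's theorem (\cref{thm:mirsky}) to the poset $\preceq_{\mathcal{P}_i}$ (guaranteed by (A)) on the current $(\mathcal{P}_1 \cup \cdots \cup \mathcal{P}_{i-1})$-free sub-matching either yields a chain of size $n^{1/b}$, which by (B) contains a single-pattern clique of size $n^{1/b}/(r-1)$, or produces an antichain that becomes the new sub-matching, shrinking the edge count by a factor of $n^{1/b}$. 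After $b$ iterations, the latter case would leave fewer than two edges, so a chain must have been found along the way.

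For the partition itself, I would classify each $r$-pattern $P$ by a signature $(k, \varepsilon)$ with $k \in \{0, 1, \dots, r\}$ and $\varepsilon \in \{0,1\}^{r-2}$, so that the total number of classes is exactly $(r+1)2^{r-2}$. The coordinate $k$ would capture a ``first-block'' invariant of $P$---for instance, the length of the longest initial monotone run, or the size of the first block of the block partition when $P$ is splittable (with a compatible extension to the non-splittable case)---while $\varepsilon$ would record the alternation pattern of the subsequent A-runs versus B-runs. The classes are then ordered so that more ``monotone'' signatures (smaller $k$, simpler $\varepsilon$) are processed first, which is what allows transitivity to hold once earlier patterns have been eliminated.

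To verify (A), the core task is a pattern-composition argument. Fix edges $e \preceq_{\mathcal{P}_i} f \preceq_{\mathcal{P}_i} g$ in a $(\mathcal{P}_1 \cup \cdots \cup \mathcal{P}_{i-1})$-free matching and let $P^*$ be the pattern formed by $(e,g)$. The free-ness hypothesis immediately gives $P^* \notin \mathcal{P}_1 \cup \cdots \cup \mathcal{P}_{i-1}$, so the remaining task is to rule out $P^* \in \mathcal{P}_j$ for $j > i$. I would argue, position by position, that the interleavings of $(e, f)$ and $(f, g)$ propagate to the interleaving of $(e, g)$ and force the signature of $P^*$ to coincide with the common signature of these two patterns. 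I expect the casework here---tracking how the three edges' interleavings interact and verifying that no ``later'' signature can arise---to be the principal obstacle; indeed, making this argument go through is essentially what fixes the precise definition of the signature.

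For property (B), the strategy is to show that every $\mathcal{P}_i$-clique $M$ of size $n$ decomposes into at most $r-1$ sub-cliques, each monochromatic for a single collectable pattern in $\mathcal{P}_i$. This should follow from the fact that within one signature class the collectable patterns differ only by a ``redistribution'' of internal run-lengths summing to at most $r-1$, together with the transitivity obtained in (A), which forces edges within a cluster to pairwise realise the same pattern. A pigeonhole over the at most $r-1$ clusters then yields $L(M) \ge n/(r-1)$, completing the lemma.
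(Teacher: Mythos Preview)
Your high-level strategy (partition into $b$ signature classes, order them so that transitivity holds once earlier classes are eliminated, then iterate Mirsky) is exactly right, and matches the paper. The gap is that you have not actually found a working partition, and the signature you gesture at---a ``first-block'' length $k\in\{0,\dots,r\}$ times an alternation vector $\varepsilon\in\{0,1\}^{r-2}$---is not the one that makes (A) and (B) go through. You yourself flag that the transitivity casework ``is essentially what fixes the precise definition of the signature''; this is the heart of the lemma, and your proposal does not supply it.

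The paper's key idea is to pass to \emph{weak patterns}: to each $r$-pattern $P$ one associates $\phi(P)\in\{\line,\wave,\stack\}^{r-1}$, where $\phi_i(P)$ records whether the $2$-pattern formed by the $i$th and $(i{+}1)$st vertices of each edge is an alignment, crossing, or nesting. The signature of $P$ is then the pair (number of $\line$'s in $\phi(P)$, set of positions of the $\stack$'s); enumerating over subsets $S\subseteq\{1,\dots,r-1\}$ gives $\sum_S(r-|S|)=(r+1)2^{r-2}$ classes, ordered by decreasing $\line$-count. The crucial composition fact (\cref{lemma: composition of two weak patterns}) is that when two patterns share their $\stack$-positions, the $\stack$-positions are preserved under composition and the $\line$-positions can only grow. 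Thus in a matching free of all strictly higher $\line$-count signatures, the $\line$-positions must coincide exactly, forcing the composite into the same class; this is (A). For (B), the same argument shows that a $\mathcal P_i$-clique is in fact a $W$-clique for a single weak pattern $W$, and a separate lemma (\cref{lemma: large weak clique implies large clique}) extracts a $\psi(W)$-clique of size at least $n/\delta(W)\ge n/(r-1)$.

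Your ``first-block'' invariant has no obvious monotonicity under composition analogous to ``$\line$-count can only increase'', so there is no mechanism to rule out the composite landing in a later class; and your (B) argument (pigeonhole over ``at most $r-1$ clusters'') is unsupported without knowing the structure of the class. The missing ingredient is precisely this weak-pattern framework.
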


The proofs of the sharper lower bounds in \cref{thm:ES-3-4} are a bit more delicate; they use the same idea, with some extra twists. We will discuss them at the end of this section, after the proof of \cref{lem:key-ES}.

For \cref{lem:key-ES}, our partition into subsets will be defined in terms of \emph{weak patterns}, which we define and investigate next.

\subsection{Weak patterns}\label{subsec:weak} Weak patterns measure the relationship between two edges $e,f$ in a slightly coarser way than ordinary patterns. Specifically, for weak patterns, we are only concerned with the behaviour of pairs of consecutive vertices in $e$ and in $f$.
\begin{definition}
  For an $r$-pattern $P$ and some $1\le i<j\le r$ let $\clip P i j$ be the $(j-i+1)$-pattern formed by $\{e[i],e[i+1],\dots,e[j]\}$ and $\{f[i],f[i+1],\dots,f[j]\}$.
  % Fix an $r$-pattern $P$ with edges $e,f$, and for each $i\in\{1,\dots,r-1\}$ write $e_i$ and $f_i$ for the pairs of vertices $\{e[i],e[i+1]\}$ and $\{f[i],f[i+1]\}$. These can be viewed as 2-uniform (graph) edges on the vertex set of $P$

  Then, define the \emph{weak $r$-pattern} $\phi(P)=(\phi_1(P),\dots,\phi_{r-1}(P)) \in \{\line,\wave,\stack\}^{r-1}$ corresponding to $P$, by taking
  \begin{compactitem}
    \item $\phi_i(P)=\line$ if $\clip{P}{i}{i+1}$ is the alignment 2-pattern,
    \item $\phi_i(P)=\wave$ if $\clip{P}{i}{i+1}$ is the crossing 2-pattern,
    \item $\phi_i(P)=\stack$ if $\clip{P}{i}{i+1}$ is the nesting 2-pattern.
  \end{compactitem}
  In general, a sequence $W=(W_1,\dots,W_{r-1}) \in \{\line,\wave,\stack\}^{r-1}$ is called a \emph{weak $r$-pattern}.
\end{definition}
%For example, the pattern represented by $|\AA\BB|\AB|\BA|$ has corresponding weak pattern $\line\wave\stack$. The pattern represented by $\AA\BB\BB\AB\AA$ (which is not collectable) has corresponding weak pattern $\line\stack\line\line$.
In \cref{table: 3-patterns}, we list all the 3-patterns and their corresponding weak 3-patterns.

\begin{table}[h!]
  \centering
  \setlength{\tabcolsep}{7mm}
  % \resizebox{\textwidth}{7mm}
  % \begin{tabular}{|c|c||c|c| }
  %   \hline Pattern & Weak pattern & Pattern & Weak pattern \\
  %   \hline  
  %     $|\AAA|\BBB|$ & $ \line\line $ & $\AA\BA\BB$ & $\line\line$ \\
  %   \hline 
  %     $|\AA\BB|\BA|$ & $ \line\stack $ & $%P_3=
  %     |\AA\BB|\AB|$ & $ \line\wave $ \\
  %   \hline 
  %     $%P_4=
  %     |\AB|\BB\AA|$ & $ \stack\line $ & $%P_5=
  %     |\AB|\BA|\AB|$ & $ \stack\stack $\\
  %   \hline 
  %     $%P_6=
  %     |\AB|\BA|\BA|$ & $ \stack\wave $ & $%P_7=
  %     |\AB|\AA\BB|$ & $ \wave\line $\\
  %   \hline 
  %     $%P_8=
  %     |\AB|\AB|\BA|$ & $ \wave\stack $ & $%P_9=
  %     |\AB|\AB|\AB|$ & $ \wave\wave $\\
  %   \hline
  % \end{tabular}
  \begin{tabular}{|c|c|}
\hline 
Pattern & Weak pattern\tabularnewline
\hline 
$|\AAA|\BBB|$ & $\line\line$\tabularnewline
\hline 
$\AA\BA\BB$ & $\line\line$\tabularnewline
\hline 
$|\AA\BB|\BA|$ & $\line\stack$\tabularnewline
\hline 
$%P_{3}=
|\AA\BB|\AB|$ & $\line\wave$\tabularnewline
\hline 
$%P_{4}=
|\AB|\BB\AA|$ & $\stack\line$\tabularnewline
\hline 
\end{tabular}\enskip{}%
\begin{tabular}{|c|c|}
\hline 
Pattern & Weak pattern\tabularnewline
\hline 
$%P_{5}=
|\AB|\BA|\AB|$ & $\stack\stack$\tabularnewline
\hline 
$%P_{6}=
|\AB|\BA|\BA|$ & $\stack\wave$\tabularnewline
\hline 
$%P_{7}=
|\AB|\AA\BB|$ & $\wave\line$\tabularnewline
\hline 
$%P_{8}=
|\AB|\AB|\BA|$ & $\wave\stack$\tabularnewline
\hline 
$%P_{9}=
|\AB|\AB|\AB|$ & $\wave\wave$\tabularnewline
\hline 
\end{tabular}
  \caption{All $(3\cdot 2)!/(2\cdot (3!)^2)=10$ different 3-patterns, together with their corresponding weak pattern. Note that there are two different patterns corresponding to the weak pattern $\line\line$ (one collectable, and one not).}
  %represented as where the ``In words'' row reveals the block decomposition if the corresponding 3-pattern is collectable. A pattern with superscript $*$ is non-collectable.}
  \label{table: 3-patterns}
\end{table}

%For two edges $e,f$ in some ordered $r$-matching, we say $e,f$ form an $r$-pattern $P$ if $e$ and $f$ is order-isomorphic to $P$ while $e,f$ form a weak $r$-pattern $W$ if $e,f$ form some $r$-pattern $P$ with $\phi(P)=W$.
\begin{definition}
For a weak pattern $W$, say a pair of edges ``form $W$'' if they form a pattern $P$ with $\phi(P)=W$. An ordered $r$-matching $M$ is said to be a \emph{$W$-clique} if every pair of edges form $W$.
\end{definition}
In the proof of \cref{lem:key-ES}, we will define each of our subsets $\mc P_i$ to be a set of all patterns $P$ such that $\phi(P)$ has a prescribed number of ``$\line$''s, and has its ``$\stack$''s in prescribed positions. To understand why this works, we need quite a thorough study of weak patterns.

First, we collect a few basic properties of weak patterns. A ``generalised alignment'' is a pattern with block representation $\AA\cdots\blockA\BB\cdots\blockB$ (i.e., a collectable pattern with a single block).
\begin{lemma}\label{lemma: properties of collectable patterns}
  Fix an $r$-pattern $P$ with edges $e,f$ (and assume $e[1]<f[1]$).
  \begin{enumerate}
    \item[(i)] For each $i \in [r]$, we have $e[i]<f[i]$ if and only if there are an even number of ``$\stack$''s among $\phi_1(P),\dots,\phi_{i-1}(P)$.
    \item[(ii)] $P$ is collectable if and only if:
    \begin{enumerate}
        
    \item [($*$)]for all $1 \le i< j \le r$ where
    \[\phi(\clip{P}{i}{j})=\phi_i(P)\,\phi_{i+1}(P)\dots\phi_{j-1}(P)=\line\line\dots\line,\]
    $\clip P i j$ is a generalised alignment.
    
    \end{enumerate}
    %for all $1 \le i < j \le r$ with $\phi_i(P)=\phi_{i+1}(P)=\dots=\phi_{j-1}(P)=\line$, we have $e[i]<e[i+1]<\dots<e[j]<f[i]<f[i+1]<\dots<f[j]$. %Equivalently, the sets $\{e[i],e[i+1],\dots,e[j]\}$ and $\{e[i],e[i+1],\dots,e[j]\}$ (interpreted as $(j-i+1)$-uniform edges) form the collectable pattern $\AA\cdots\blockA\BB\cdots\blockB$ or $\BB\cdots\blockB\AA\cdots\blockA$.
    \item[(iii)] For each weak $r$-pattern $W$, there is \emph{exactly one} collectable $r$-pattern $P$ satisfying $\phi(P)=W$. 
    % \item If $P$ is collectable, then the number of blocks in its block partition is equal to the number of $\wave$'s and $\stack$'s plus 1.
  \end{enumerate}
\end{lemma}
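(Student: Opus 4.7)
Proof proposal. I would prove the three parts in order, using (i) as the technical backbone for (ii) and (iii).

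For (i), induct on $i$. The base case $i=1$ gives $e[1]<f[1]$ with zero $\stack$s, an even count. For the inductive step, a short check on $\phi_i$ handles each of the three possibilities: $\phi_i=\line$ (alignment) and $\phi_i=\wave$ (crossing) preserve the relative order of $e[\cdot]$ and $f[\cdot]$ at the next index, whereas $\phi_i=\stack$ (nesting) turns the outer pair at $i$ into the inner pair at $i+1$ and thereby flips the order. Hence the order of $e[i]$ versus $f[i]$ is exactly controlled by the parity of $\stack$s in $\phi_1,\dots,\phi_{i-1}$.

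For (ii), the forward direction inspects the block partition $B_1,\dots,B_m$ of a collectable $P$. Inside a single block (a generalised alignment up to relabeling the two edges), all internal $\phi_j$'s equal $\line$. At each transition between $B_k$ and $B_{k+1}$, a short case analysis over the four possible adjacencies of $\AB$- and $\BA$-type blocks shows $\phi_j \in \{\wave, \stack\}$, never $\line$. So any all-$\line$ stretch of $\phi(P)$ lies inside some block, making $\clip{P}{i}{j}$ a subpattern of a generalised alignment, hence itself a generalised alignment. For the converse, given ($*$), I would construct the block partition directly: declare a block boundary between positions $i$ and $i+1$ precisely when $\phi_i \ne \line$, partition $\{1,\dots,r\}$ into the induced intervals $I_1,\dots,I_m$, and set $B_k = \{e[i]:i\in I_k\}\cup\{f[i]:i\in I_k\}$. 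By ($*$) each $\clip{P}{\min I_k}{\max I_k}$ is a generalised alignment, so $B_k$ splits into two equal-length runs from different edges, as required by \cref{definition: splittable}. The main obstacle is showing the $B_k$'s occupy consecutive stretches of the overall vertex order without interleaving; I would handle this by fixing $j=\max I_k$ so that $\phi_j\in\{\wave,\stack\}$, and checking in both subcases that $\max(e[j],f[j]) < \min(e[j+1],f[j+1])$, which propagates to $\max B_k < \min B_{k+1}$ and rules out interleaving.

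For (iii), I would read off a candidate collectable $P$ from a given weak $r$-pattern $W$: part (i) fixes the sign $e[i]$ vs.\ $f[i]$ at each index, the interval partition of $\{1,\dots,r\}$ induced by the non-$\line$ positions of $W$ supplies the block partition, and within each block the generalised-alignment orientation is determined by the (constant, since no internal $\stack$) sign on that interval. The reverse construction from (ii) confirms the candidate is a valid $r$-pattern, splittable hence collectable, with weak pattern $W$. Conversely, by (i) and (ii) any collectable $P$ with $\phi(P)=W$ is forced to agree with this candidate block-by-block, yielding uniqueness.
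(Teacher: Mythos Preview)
Your proposal is correct and largely parallels the paper. Parts (i) and (iii) match the paper's arguments essentially verbatim: (i) is an induction on $i$ noting that only $\stack$ flips the relative order, and (iii) reads off the unique block structure from $W$ using (i) to fix the orientation of each block.

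The one genuine difference is in the ``if'' direction of (ii). You construct the block partition directly: cut at every position $j$ with $\phi_j\ne\line$, invoke ($*$) to see each resulting interval is a generalised alignment, and then use the observation that $\phi_j\in\{\wave,\stack\}$ forces $\max(e[j],f[j])<\min(e[j+1],f[j+1])$ to conclude the blocks are contiguous in the vertex order. The paper instead argues by contraposition: it assumes $P$ is not splittable, takes the minimal $j$ for which $\clip{P}{1}{j}$ is not splittable, deduces $\phi_{j-1}(P)=\line$, then locates the maximal $\line$-run ending at $j$ and shows its subpattern cannot be a generalised alignment. Your direct route is a bit cleaner and has the bonus of explicitly exhibiting the block partition, which you then reuse in (iii); the paper's contrapositive locates an obstruction but needs slightly more casework. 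One small point to tidy in your write-up: ($*$) as stated requires $i<j$, so for singleton intervals $I_k$ you should note separately (trivially) that a single pair $\{e[i],f[i]\}$ is already a block of the required form.
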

%We provide a sketch proof of \cref{lemma: properties of collectable patterns} below, but it is perhaps easiest to convince oneself of the truth of all parts of \cref{lemma: properties of collectable patterns} by drawing out some cases on scratch paper.
\begin{proof}[Proof sketch]
  For (i), it is a simple observation (immediate from the definitions of alignments, crossings and nestings) that the relative order between $e[i]$ and $f[i]$ is different from the relative order between $e[i-1]$ and $f[i-1]$ if and only if $\phi(P)_{i-1}=\stack$. 
  The statement in (i) then follows by a straightforward induction on $i$.
  
  (ii) is a bit more involved. Recall from \cref{definition: splittable} that $P$ is collectable if and only if it is splittable (i.e., it can be partitioned into blocks that have an $\blockA$-run followed by a $\blockB$-run of the same length, or vice versa).
  
  For the ``only if'' direction, it is easy to see that if $P$ is collectable (splittable) then it satisfies $(*)$. Indeed, note that if $P$ is splittable then $\clip P i j$ is splittable for all $1 \le i < j \le r$ (this follows immediately
  from the definition of splittability). In the block representation of $\clip P i j$, each division between consecutive blocks gives rise to a ``$\stack$'' or ``$\wave$'', so if $\phi(\clip{P}{i}{j})=\alpha\dots\alpha$ then $\clip P i j$ must have a single block and is therefore a generalised alignment, as desired.
  
  % Note that the entries of $\phi(P_{i,j})$ are precisely So, if $\phi_i(P)=\dots=\phi_{j-1}(P)=\line$, then $\phi(P_{i,j})=\line\dots\line$, so by the above fact $P_{i,j}$ is a generalised alignment, which is equivalent to $e[i],e[i+1],\dots,e[j],f[i],f[i+1],\dots,f[j]$ form $\AA\cdots\blockA\BB\cdots\blockB$ or $\BB\cdots\blockB\AA\cdots\blockA$.
  For the ``if'' direction, suppose $P$ is not collectable (splittable). We need to prove that $(*)$ is violated. Let $j$ be minimal such that $\clip P 1 j$ is not splittable (note that every 2-pattern is splittable, so we have $j\ge 3$). Let $|\block_1|\dots|\block_k|$ be the block representation of $\clip{P}{1}{j-1}$. 
  If we had $\phi_{j-1}(P)\in\{\wave,\stack\}$, then the block representation of $\clip{P}{1}{j}$ would be $|\block_1|\dots|\block_k|\AB|$ or $|\block_1|\dots|\block_k|\BA|$ (depending on the parity of the number of ``$\stack$''s among $\phi_1(P),\dots,\phi_{j-1}(P)$).
  In either case, $\clip{P}{1}{j}$ would then be splittable (collectable), a contradiction.
  % of $\clip{P}{1}{j-1}$ is of form $|Q|\AB|$ or $|Q|\BA|$ where $Q$ is some collectable $(j-2)$-pattern (possibly empty if $j=2$). A case analysis shows that the block decomposition of $\clip P 1 j$ must be $|Q|\AA\BB|$ or $|Q|\BB\AA|$. In either case, $\clip P 1 j$ is collectable, a contradiction.
  So, we must have $\phi_{j-1}(P)=\line$.
  
  % If we had $\phi_{j-1}(P)=\wave$, we would be able to extend the block representation of the splittable pattern $P_{1,j-1}$ to a block representation of $P_{1,j}$ by simply adding the block $\AB\BA$ or $\BA\AB$ \mk{is this sufficiently obvious?}. This would contradict the non-splittability of $P_{1,j}$. Similarly, if we had $\phi_{j}(P)=\stack$, then we would obtain a contradiction by adding the block $\AB\BA$ or $\BA\AB$ to the block representation of $P_{1,j-1}$. So, we must have $\phi_{j-1}(P)=\line$.

  Now, let $i$ be minimal such that $\phi(\clip{P}{i}{j})=\line\dots\line$. We have $i \le j -1$, since $\phi_{j-1}(P)=\line$.
  In the case $i=1$, we know $\clip{P}{1}{j}=\clip{P}{i}{j}$ cannot be a generalised alignment because (by the choice of $j$) it is not splittable. This means that $P$ violates the condition in $(*)$.  So, it remains to consider the case $i \ge 2$.  By the definition of $i$, we know $\phi_{i-1}(P)\neq \line$, which implies that both $e[i-1]$ and $f[i-1]$ precede $e[i]$ and $f[i]$.
  If $\clip P i j$ were to form a generalised alignment, the block representation of $\clip P 1 j$ would be $|\block_1'|\dots|\block_k'|\AA\cdots\blockA\BB\cdots\blockB|$ or $|\block_1'|\dots|\block_k'|\BB\cdots\blockB\AA\cdots\blockA|$ (depending on the parity of the number of ``$\stack$''s among $\phi_1(P),\dots,\phi_{i-1}(P)$), where $|\block_1'|\dots|\block_k'|$ is the block representation of $\clip P 1 i$.
  In either case, $\clip P 1 j$ would be splittable, a contradiction. So, $\clip P i j$ cannot be a generalised alignment, meaning that $(*)$ is violated.
  This completes the proof for (ii).

  For (iii), fix a weak $r$-pattern $W=(W_1,\dots,W_{r-1})\in\{\line,\wave,\stack\}^{r-1}$.
  % and let $P$ be a collectable $r$-pattern with $\phi(W)=P$. 
  We can use properties (i) and (ii) to construct and force the structure of the collectable $r$-pattern $P$ with $\phi(P)=W$.
  Specifically, let $1 \le i_1<\dots<i_m < r$ be the indices $i$ with $W_i\in\{\stack,\wave\}$.
  Set $i_0=0$ and $i_{m+1}=r$ for convenience.
  For $1\le \ell\le m+1$, let $\block_\ell$ be a sequence of $(i_{\ell}-i_{\ell-1})$ ``$\blockA$''s followed by a sequence of $(i_{\ell}-i_{\ell-1})$ ``$\blockB$''s if the number of ``$\stack$''s among $W_{i_1},W_{i_2},\dots,W_{i_{\ell-1}}$ is even; let $\block_\ell$ be a sequence of $(i_{\ell}-i_{\ell-1})$ ``$\blockB$''s followed by a sequence of $(i_{\ell}-i_{\ell-1})$ ``$\blockA$''s otherwise.
  % For $\ell\in[m+1]$, let $s_\ell$ be the number of ``$\stack$''s among $W_{i_1},W_{i_2},\dots,W_{i_{\ell-1}}$. 
  % If $s_\ell$ is even, let $\block_\ell$ be a sequence of $i_{\ell}-i_{\ell-1}$ ``$\blockA$''s followed by a sequence of $i_{\ell}-i_{\ell-1}$ ``$\blockB$''s. 
  % Otherwise, let $\block_\ell$ be a sequence of $i_{\ell}-i_{\ell-1}$ ``$\blockB$''s followed by a sequence of $i_{\ell}-i_{\ell-1}$ ``$\blockA$''s. 
  Then, it is not hard to deduce from (i) and (ii) that $Q=\block_1\block_2\cdots \block_{m+1}$ is a collectable $r$-pattern with $\phi(Q)=W$, and the block representation of $P$ must be precisely the concatenation $|\block_1|\block_2|\cdots |\block_{m+1}|$. 
  % \mk{is this sufficiently obvious?}
  % For uniqueness, suppose $e,f$ form some collectable $P$ with $\phi(P)=W$.
  % (ii) implies for any $\ell\in[m+1]$, $e[i_{\ell-1}+1],e[i_{\ell-1}+2],\dots,e[i_{\ell}],f[i_{\ell-1}+1],f[i_{\ell-1}+2],\dots,f[i_{\ell}]$ form $P_\ell$.
  % Also, for all $\ell \in [m]$, since $W_{i_\ell} \in \{\stack,\wave\}$, if $s_{\ell}$ is even, then $e[i_\ell],f[i_\ell]<e[i_\ell+1],f[i_\ell+1]$, while if $s_\ell$ is odd, then $e[i_\ell+1],f[i_\ell+1]<e[i_\ell],f[i_\ell]$.
  % Thus, $e,f$ in fact forms $\tilde{P}$, i.e. $P=\tilde{P}$.
  % In other words, $\tilde{P}$ is the unique collectable $r$-pattern with $\chi(\tilde{P})=W$.
\end{proof}
Given \cref{lemma: properties of collectable patterns}(iii), it makes sense to introduce some notation for the unique collectable pattern corresponding to a particular weak pattern.

\begin{definition}
  For each weak $r$-pattern $W$, let $\psi(W)$ be the unique collectable $r$-pattern satisfying $\phi(\psi(W))=W$.
\end{definition}
Now, one advantage of considering weak patterns is that we can get a handle on how they can interact with each other by considering how their constituent alignments, crossings and nestings can interact with each other.
% To describe this, we introduce some more notation.
% \begin{definition}
%   For $r$-patterns $P,Q,R$, we write $PQ\to R$ if there exists an ordered $r$-matching with three edges $e,f,g$ such that $e[1]<f[1]<g[1]$, and $e,f$ form $P$, $f,g$ form $Q$ and $e,g$ form $R$.
  
%   Similarly, for weak $r$-patterns $W,X,Y$, we write $WX\to Y$ if there exists an ordered $r$-matching with three edges $e,f,g$ such that $e[1]<f[1]<g[1]$, and $e,f$ form $W$, $f,g$ form $X$ and $e,g$ form $Y$.
% \end{definition}
%For $r$-patterns $P,Q,R$, if $PQ\to R$, then $\phi(P)\phi(Q)\to\phi(R)$.

\begin{lemma}\label{lemma: composition of two weak patterns}
  Suppose $e,f,g$  are three edges in some ordered $r$-matching such that $e[1]<f[1]<g[1]$. 
  Let $W^{e,f},W^{f,g},W^{e,g}$ be the weak $r$-patterns formed by pairs $\{e,f\},\{f,g\},\{e,g\}$, respectively.
  % For some weak $r$-patterns $W,X,Y \in \{\line,\wave,\stack\}^{r-1}$, suppose that the pair $e,f$ forms $W$, the pair $f,g$ forms $X$, and the pair $e,g$ forms $Y$. 
  % weak $r$-patterns $W,X,Y \in \{\line,\wave,\stack\}^{r-1}$.
  %such that $WX\to Y$.% and such that the ``$\stack$''s occur in the same positions in $W$ as in $X$.
  If the ``$\stack$''s occur in the same positions in $W^{e,f}$ and in $W^{f,g}$, then, for $1 \le i \le r-1$, 
  \begin{enumerate}
    \item[(i)] if $W^{e,f}_i=W^{f,g}_i=\stack$, then $W^{e,g}_i=\stack$;
    \item[(ii)] if $W^{e,f}_i=\line$ or $W^{f,g}_i=\line$, then $W^{e,g}_i=\line$;
    \item[(iii)] if $W^{e,f}_i=W^{f,g}_i=\wave$, then $W^{e,g}_i\in\{\line,\wave\}$.
  \end{enumerate}
  Also, combining (ii) and (iii), we get:
  \begin{enumerate}
    \item[(iv)] if $W^{e,f}_i\ne\stack$ and $W^{f,g}_i\ne \stack$, then $W^{e,g}_i\ne \stack$.
  \end{enumerate}
\end{lemma}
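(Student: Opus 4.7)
The plan is a direct case analysis on the pair $(W^{e,f}_i, W^{f,g}_i)$, after first using Lemma 4.3(i) to normalise the orientation of the three edges at coordinate $i$. Let $s_i$ denote the number of ``$\stack$''s among $W^{e,f}_1,\dots,W^{e,f}_{i-1}$; by hypothesis this equals the analogous count for $W^{f,g}$. Applying Lemma 4.3(i) to both pairs $(e,f)$ and $(f,g)$ shows that $e[i] < f[i]$ and $f[i] < g[i]$ each hold iff $s_i$ is even. Hence at every coordinate $i$ the triple $(e[i], f[i], g[i])$ is strictly monotone; by reversing the ground order if necessary, I may assume $e[i] < f[i] < g[i]$ at the coordinate $i$ of interest (the 4-vertex weak pattern $W^{e,g}_i$ is invariant under global order reversal, so no generality is lost).

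Under $e[i] < f[i]$, the three weak-pattern values translate into explicit orderings of four vertices: $W^{e,f}_i = \line$ encodes $e[i] < e[i+1] < f[i] < f[i+1]$, $W^{e,f}_i = \wave$ encodes $e[i] < f[i] < e[i+1] < f[i+1]$, and $W^{e,f}_i = \stack$ encodes $e[i] < f[i] < f[i+1] < e[i+1]$. The same translations hold for $(f,g)$ under $f[i] < g[i]$. Each conclusion now reduces to a one-line substitution.

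For (i), combining $e[i] < f[i] < f[i+1] < e[i+1]$ with $f[i] < g[i] < g[i+1] < f[i+1]$ yields $e[i] < f[i] < g[i] < g[i+1] < f[i+1] < e[i+1]$, so $\{e[i], e[i+1]\}$ nests around $\{g[i], g[i+1]\}$ and $W^{e,g}_i = \stack$. For (ii), if $W^{e,f}_i = \line$ then $e[i+1] < f[i] < g[i]$, forcing the $e$-pair to precede the $g$-pair, so $W^{e,g}_i = \line$; the symmetric subcase with $W^{f,g}_i = \line$ gives $e[i+1] < f[i+1] < g[i]$ (using $W^{e,f}_i \ne \stack$) and the same conclusion. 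For (iii), the two ``$\wave$'' inequalities together place both $e[i+1]$ and $g[i]$ in the open interval $(f[i], f[i+1])$; according as $e[i+1] < g[i]$ or $e[i+1] > g[i]$, the pair $\{e[i], e[i+1]\}$ forms an alignment or a crossing with $\{g[i], g[i+1]\}$, yielding $W^{e,g}_i \in \{\line, \wave\}$.

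The argument is thus a straightforward case analysis once the sign-normalisation is in place; there is no real obstacle beyond the parity bookkeeping, which is cleanly handled by Lemma 4.3(i). I note that the hypothesis that the ``$\stack$''-positions agree in $W^{e,f}$ and $W^{f,g}$ is used precisely to force the triple $(e[i], f[i], g[i])$ to be monotone at every coordinate; without it the substitutions above would lose their meaning, since one could have, e.g., $e[i] < f[i]$ while $g[i] < f[i]$, breaking the combined chain of inequalities.
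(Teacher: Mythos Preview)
Your proof is correct and mirrors the paper's: both use the parity criterion to normalise to $e[i]<f[i]<g[i]$ and then carry out the same direct four-vertex case analysis. One caveat on your WLOG step: ``reversing the ground order'' does not reliably reduce the odd-parity case to the increasing one at the same coordinate (for instance, when $W^{e,f}_i=W^{f,g}_i=\stack$ the minima of the three pairs remain in decreasing order after reversal); the right symmetry to invoke is simply to swap the labels $e\leftrightarrow g$, which interchanges $W^{e,f}_i$ with $W^{f,g}_i$ and leaves (i)--(iv) unchanged---this is what the paper's ``the odd case is similar'' amounts to.
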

%\cref{lemma: composition of two weak patterns} is easily proved by case analysis (for each of (i--iii)there are only three edges involved, and the only thing that matters is the relative order of their vertices); we omit the details. \mk{is this ok?}
\begin{proof}
  By assumption, the number of ``$\stack$''s among $W^{e,f}_1,W^{e,f}_2,\dots,W^{e,f}_{i-1}$ and among $W^{f,g}_1,W^{f,g}_2,\dots,W^{f,g}_{i-1}$ are the same. We assume that this number is even (the odd case is similar). By \cref{lemma: properties of collectable patterns}(i), we have $e[i]<f[i]<g[i]$.
  
  If $W^{e,f}_i=W^{f,g}_i=\stack$, then $e[i]<f[i]<f[i+1]<e[i+1]$ and $f[i]<g[i]<g[i+1]<f[i+1]$, implying that $e[i]<g[i]<g[i+1]<e[i+1]$.
  This means $\{e[i],e[i+1]\}$ and $\{g[i],g[i+1]\}$ form a nesting, i.e., $W^{e,g}_i=\stack$. This proves (i).
  
  If $W^{e,f}_i, W^{f,g}_i\neq \stack$, we know that $W^{e,f}_i,W^{f,g}_i \in \{\line,\wave\}$.
  This implies
  \begin{align*}
      e[i]<\min(e[i+1],f[i])&<\max(e[i+1],f[i])<f[i+1]\text{ and}\\
     f[i]<\min(f[i+1],g[i])&<\max(f[i+1],g[i])<g[i+1],
  \end{align*}
  which in turn imply
  \[e[i]<\min(e[i+1],g[i])<\max(e[i+1],g[i])<g[i+1].\]
  That is to say, $W^{e,g}_i \in \{\wave, \line\}$, proving (iii).  If we furthermore have $W^{e,f}_i=\line$ or $W^{f,g}_i=\line$, then we know $e[i]<e[i+1]<g[i]<g[i+1]$, i.e., $W^{e,g}_i=\line$, proving (ii).
\end{proof}

Now, the following lemma will be used to handle non-collectable patterns: if we manage to find a large $W$-clique (for some weak pattern $W$), we can very efficiently drop to a $\psi(W)$-clique. A similar fact (with a slightly worse constant) can actually be deduced from the main result of \cite{DGJR23-r-matching}, but here we provide a self-contained proof.
%We write $|M|$ for the number of size (number of edges) of a matching $M$.
\begin{lemma} \label{lemma: large weak clique implies large clique}
  Let $W$ be a weak $r$-pattern, and let $M$ be a $W$-clique of size $n$. Then, $M$ contains a $\psi(W)$-clique of size at least $n/\max(1,\delta(W))$, where $\delta(W)$ is the length of the longest run of  ``$\line$''s in $W$.
\end{lemma}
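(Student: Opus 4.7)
The plan is to prove a stronger claim from which the lemma follows by pigeonhole: listing the edges of $M$ as $e_1, \dots, e_n$ in the order of their smallest vertices, I claim that \emph{every pair $(e_s, e_t)$ with $t - s \ge \delta(W)$ forms the pattern $\psi(W)$}. Granting this, partitioning the index set $\{1, \dots, n\}$ into $\delta(W)$ residue classes modulo $\delta(W)$ yields $\delta(W)$ sub-matchings, each of which is a $\psi(W)$-clique (any two of its edges have indices differing by a positive multiple of $\delta(W)$, hence by at least $\delta(W)$); by pigeonhole one class has size at least $n/\delta(W)$. In the degenerate case $\delta(W) = 0$ there are no ``$\line$''s in $W$ at all, so condition $(*)$ in \cref{lemma: properties of collectable patterns}(ii) is vacuous and every pair of edges in $M$ is already a $\psi(W)$-pair by \cref{lemma: properties of collectable patterns}(iii), giving the bound $n$ directly.

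To prove the stronger claim, by \cref{lemma: properties of collectable patterns}(ii) and (iii) it suffices to check that $(e_s, e_t)$ restricts to a generalised alignment on every maximal ``$\line$''-run $[i, j]$ of $W$; note the length $\ell := j - i$ of such a run satisfies $\ell \le \delta(W) \le t - s$. By \cref{lemma: properties of collectable patterns}(i), the relative order of $e_u[k]$ and $e_v[k]$ (for $u < v$ and $k \in [i, j]$) depends only on the parity $\pi$ of the number of ``$\stack$''s among $W_1, \dots, W_{i-1}$, and this parity is constant as $k$ varies within the run because no new ``$\stack$''s appear between positions $i$ and $j$. Assume $\pi$ is even (the odd case is symmetric). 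Then $e_{s+m}[k] < e_{s+m+1}[k]$ for all $k \in [i, j]$ and $m \ge 0$, and combining this with $W_k = \line$ applied to the consecutive pair $(e_{s+m}, e_{s+m+1})$ yields the staircase inequality
\[ e_{s+m}[k+1] < e_{s+m+1}[k] \qquad \text{for } k \in [i, j-1],\ m \ge 0.\]

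Iterating the staircase inequality along an antidiagonal produces the chain
\[ e_s[j] < e_{s+1}[j-1] < e_{s+2}[j-2] < \dots < e_{s+\ell}[j-\ell] = e_{s+\ell}[i] \le e_t[i],\]
where the last inequality uses $t \ge s + \ell$ together with the monotonicity from \cref{lemma: properties of collectable patterns}(i). Hence every vertex of $e_s$ on positions $[i, j]$ precedes every vertex of $e_t$ on those positions, so the restriction of the pair to the run is a generalised alignment. Applying the argument at each maximal ``$\line$''-run of $W$ shows that $(e_s, e_t)$ forms a collectable $r$-pattern with weak pattern $W$, which must equal $\psi(W)$ by \cref{lemma: properties of collectable patterns}(iii). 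The only real subtlety I anticipate is the parity bookkeeping: in the odd-$\pi$ case the analogous chain begins at $e_t[j]$ and ends at $e_{t-\ell}[i] \le e_s[i]$, consuming the budget $t - s \ge \ell$ in exactly the same way.
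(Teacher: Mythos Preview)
Your proof is correct and follows essentially the same approach as the paper: order the edges by their first vertex, use the staircase inequality along each $\line$-run to show that any pair with index gap at least $\delta(W)$ forms $\psi(W)$, and extract a large $\psi(W)$-clique by spacing. The only cosmetic differences are that you invoke pigeonhole on residue classes (the paper simply takes every $\delta$-th edge) and that you restrict attention to maximal $\line$-runs (which is sufficient but not explicitly justified; the paper checks all runs, though the argument is identical).
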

\begin{proof}
  By \cref{lemma: properties of collectable patterns}(ii), if $\delta(W)=0$ then there is no non-collectable $r$-pattern $P$ with $\phi(P)=W$.
  Thus, $M$ itself is a $\psi(W)$-clique.  So, we may assume $\delta:=\delta(W)\ge 1$.
  
  Let $e_1,e_2,\dots,e_n$ be the edges in $M$, ordered such that $e_1[1]<e_2[1]<\dots<e_n[1]$.
  We claim that for any $s,t$ with $t-s\ge \delta$, the edges $e_s,e_t$ form $\psi(W)$. This suffices to prove the lemma: we can simply take every $\delta$-th edge as our $\psi(M)$-clique.
  
For $1\leq a<b\leq n$ let $P^{a,b}$ be the pattern formed by $e_a,e_b$. Fix $s,t$ with $t-s\ge \delta$. We wish to show that $P^{s,t}$ is collectable (which will imply $P^{s,t}=\psi(W)$ by \cref{lemma: properties of collectable patterns}(iii)). By \cref{lemma: properties of collectable patterns}(ii), it suffices to show that for any $1\le i < j \le r$ with $W_i=W_{i+1}=\dots=W_{j-1}=\line$ (i.e., $\phi(\clip{P^{s,t}}{i}{j})=\line\dots\line$), the $(j-i+1)$-pattern $\clip{P^{s,t}}{i}{j}$ is a generalised alignment.
  
  % is $A\dots AB\dots B$ or $B\dots BA\dots A$.
  Suppose there are an even number of ``$\stack$''s among $W_1,W_2,\dots,W_{i-1}$ (the odd case is similar). 
  For $q\in\{s,s+1,\dots,t-1\}$ and $k\in\{i,i+1,\dots,j-1\}$, we have $e_q[k]<e_q[k+1]<e_{q+1}[k]<e_{q+1}[k+1]$ (since $W_k=\line$).
  This means we have 
  \[
    e_s[j]<e_{s+1}[j-1]<\dots<e_{s+j-i}[i]\le e_t[i],
  \]
  where the last inequality holds because $s+j-i \le s+\delta \le t$ and there are an even number of ``$\stack$''s among $W_1,\dots,W_{i-1}$.
  In other words, $\clip{P^{s,t}}{i}{j}$ is a generalised alignment, as desired.
  %In other words, the vertices $e_i[s],e_i[s+1],\dots,e_i[t],e_j[s],e_j[s+1],\dots,e_j[t]$ form the $(t-s)$-pattern $\AA\cdots\blockA\BB\cdots\blockB$.
  %The lemma follows by taking $M'$ formed by $\{e_{1+m\delta}: 0 \le m \le (n-1)/\delta \}$.
\end{proof}
\begin{remark}
  It is not hard to see that the constant factor $1/\max(1,\delta(M))$ cannot be improved.
  % The choice of $\delta(F)$ is tight for all $F$. by considering patterns like $AABABABABB$, say.
\end{remark}

\subsection{Proof of the key lemma}\label{subsec:key-ES-proof}
Now, we have all the preparations in place to prove \cref{lem:key-ES}. The subsets of patterns $\mc P_i$ will be defined in terms of \emph{signatures}, as follows.
\begin{definition}
  For each weak $r$-pattern $W\in\{\line,\wave,\stack\}^{r-1}$, the \emph{signature} of $W$ is defined to be
  \[
    \sigma(W)
    := \big(\abs{\{i:W_i=\line\}},\{i:W_i=\stack\}\big).
  \]
  That is to say, the signature specifies the number of ``$\line$''s and the positions of the ``$\stack$''s. We define the \emph{weight} of $\sigma(W)$ to be the number of ``$\line$''s in $W$.
  %We also say $\sigma(W)$ is the signature of any $r$-pattern $P$ with $\phi(P)=W$.
\end{definition}
The total number of signatures is %Let $\Sigma$
%=\{\sigma(W): W \text{ is a weak } r\text{-pattern}\}$
%be the set of all possible signatures; we compute
\[
%\abs{\Sigma}=
\sum_{S\subseteq\{1,\dots,r-1\}}(r-\abs{S})=\sum_{i=0}^{r-1}\binom{r-1}{i}(r-i)=(r+1)2^{r-2}=:b.\]
Let $\sigma_1,\dots,\sigma_b$ be an ordering of the signatures in descending weight (breaking ties arbitrarily). Let $\mc P_i$ be the set of all patterns $P$ such that $\sigma(\phi(P))=\sigma_i$.

%We are now able to state the main theorem for the lower bound of $\ramsey r n$, where we use the signatures to group weak $r$-patterns.
% \begin{theorem}\label{theorem: asymmetric general erdos-szekeres}
%   Suppose $(x_{\sigma})_{\sigma\in\Sigma}$ is a collection of positive real numbers.
%   Then, any ordered $r$-matching $M$ of size at least $\prod_{\sigma\in\Sigma}x_{\sigma}$ contains an $W$-clique of size $x_{\sigma(W)}$ for some weak $r$-pattern $W$.
% \end{theorem}
\begin{proof}[Proof of \cref{lem:key-ES}(A)]
With $\mc P_1,\dots,\mc P_b$ as defined above (in terms of signatures $\sigma_1,\dots,\sigma_b$), we wish to show that for each $i$, the relation $\preceq_{\mc P_i}$ is a partial order on any $(\mc P_1\cup \dots\cup \mc P_{i-1})$-free matching $M$.

It suffices to show the transitivity of $\preceq_{\mc{P}_i}$.
Fix a $(\mc P_1\cup \dots\cup \mc P_{i-1})$-free matching $M$ and consider any edges $e,f,g$ with $e[1]<f[1]<g[1]$. Let $P^{e,f},P^{f,g},P^{e,g}$ be the patterns formed by the pairs $\{e,f\}$, $\{f,g\}$ and $\{e,g\}$ respectively, and suppose that $P^{e,f},P^{f,g}\in \mc P_i$. Our objective is to show that $\mc P^{e,g}\in \mc P_i$.
We know that $\phi(P^{e,f})$ and $\phi(P^{f,g})$ have their ``$\stack$''s in the same positions (as recorded by the signature $\sigma_i$), so by \cref{lemma: composition of two weak patterns}(i,iv), the ``$\stack$''s in $\phi(\mc P^{e,g})$ must be in exactly these same positions.

Then, let $w$ be the weight of $\sigma_i$ (i.e., the number of ``$\line$''s in the weak patterns associated with $\sigma_i$). 
% Then, let $w$ be the number of ``$\line$''s in the weak patterns associated with $\sigma_i$. 
We know that $\phi(P^{e,f})$ and $\phi(P^{f,g})$ have exactly $w$ ``$\line$''s. 
Also, since $M$ is $(\mc P_1\cup \dots\cup \mc P_{i-1})$-free (and higher-weight signatures come earlier in our ordering), we know that $\phi(P^{e,f})$ has \emph{at most} $w$ ``$\line$''s. 
However, by \cref{lemma: composition of two weak patterns}(ii), in every position where $\phi(P^{e,f})$ or $\phi(P^{f,g})$ have a ``$\line$'', there is also a ``$\line$'' in $\phi(\mc P^{e,g})$. 
The only way this can happen is if $\phi(P^{e,f})$, $\phi(P^{f,g})$ and $\phi(P^{e,g})$ each have exactly $w$ ``$\line$''s, in exactly the same positions. 
We have proved that $\mc P^{e,g}\in \mc P_i$, as desired.
\end{proof}
\begin{proof}[Proof of \cref{lem:key-ES}(B)]
Suppose $M$ is a $\mc P_i$-clique of size $n$.
We wish to show that $L(M) \ge n/(r-1)$.% for some $P \in \mc{P}_i$.
% Now, we wish to show that for each $i$, every $\mc P_i$-clique $M$ of size $n$ has $L_P(M)\ge n/r$.

Consider any three edges $e,f,g$ with $e[1]<f[1]<g[1]$. Let $P^{e,f},P^{f,g},P^{e,g}$ be the patterns formed by the pairs $\{e,f\}$, $\{f,g\}$ and $\{e,g\}$ respectively, so $P^{e,f},P^{f,g},P^{e,g}\in \mc P_i$. By the considerations in the above proof of \cref{lem:key-ES}(A), each of $\phi(P^{e,f}),\phi(P^{f,g}),\phi(P^{e,g})$ must have their ``$\stack$''s and ``$\line$''s (and therefore their ``$\wave$''s) in exactly the same positions, which means that $e,f,g$ actually form a $W$-clique for some weak pattern $W$. 
But if every three edges form a $W$-clique, then the whole of $M$ must be a $W$-clique for some weak $r$-pattern $W$ with signature $\sigma_i$.
By \cref{lemma: large weak clique implies large clique}, we have $L_{\psi(W)}(M)\ge n/(r-1)$.
\end{proof}

\begin{comment}
% \mk{Zhihan's old stuff here:}
% \begin{proof}
%   We enumerate all $\sigma\in\Sigma$ in order $\sigma_1,\dots,\sigma_{\abs{\Sigma}}$ such that the second component of $\sigma$ is non-increasing, i.e. the number of $\line$'s in the corresponding weak $r$-patterns is non-increasing.
%   Set $M_0=M$.
%   For each, $i=1,\dots,\abs{\Sigma}$, we are going to construct a submatching $M_i$ of $M_{i-1}$ such that 
%   \begin{itemize}
%     \item $M_i$ is an $W$-clique for some weak $r$-pattern $W$ that has signature $\sigma_i$, and $\abs{M_i}\ge x_{\sigma_i}$;
%     \item or $M_i$ is free of all weak $r$-patterns $W$ with $\sigma(W)\in\{\sigma_1,\dots,\sigma_i\}$, and $\abs{M_i}\ge\prod_{j>i} x_{\sigma_j}$.
%   \end{itemize}
%   We stop the procedure whenever we arrive at the former case.
%   If we arrive at the latter case for all $i=1,\dots,\abs{\Sigma}$, then we are left with $M_{\abs{\Sigma}}$, nonempty but free of all weak $r$-patterns, which is impossible.

%   We will prove by induction on $i$, and the base case when $i=0$ is clear.
%   Now that we have constructed $M_{i-1}$ such that $M_{i-1}$ is free of all weak $r$-patterns $W$ with $\sigma(W)\in\{\sigma_1,\dots,\sigma_{i-1}\}$, and $\abs{M_{i-1}}\ge\prod_{j\ge i} x_{\sigma_j}$.
%   Denote $(S,L)=\sigma_i \in 2^{[r-1]}\times\{0,\dots,r-1\}$.
%   We build a binary relation $\prec_i$ on $E(M_{i-1}))$ as follows.
%   For $e,f\in M_{i-1}$, $e\prec_i f$ if and only if $e=f$, or $e[1]<f[1]$ and $e,f$ form some weak $r$-pattern $W$ with $\sigma(W)=\sigma_i$.
%   We claim that $\prec_i$ is a partial order. It suffices to prove its transitivity.
%   Let $e,f,g$ be three edges in $M_{i-1}$ with $e[1]<f[1]<g[1]$.
%   Suppose $e,f$ form a weak $r$-pattern $W$, $f,g$ form a weak $r$-pattern $X$ and $e,g$ form a weak $r$-pattern $Y$ such that $\sigma(W)=\sigma(X)=\sigma_i$.
%   As $W$ and $X$ share the same positions of $\stack$'s, \cref{lemma: composition of two weak patterns} indicates that the number of $\line$'s in $Y$ is at least $\abs{\{i: W_i=\line \text{ or } X_i=\line\}}$.
%   Also, $W$ and $X$ each have exactly $L$ $\line$'s.
%   If $W\neq X$, then $Y$ will have at least $(L+1)$ $\line$'s.
%   But then, $\sigma(Y) \in \{\sigma_1,\dots,\sigma_{i-1}\}$, which is impossible.
%   Thus, $W=X$ must hold.
%   Again, by \cref{lemma: composition of two weak patterns}, either $W=X=Y$ or $Y$ have at least $(L+1)$ $\line$'s, which is again impossible.
%   In fact, this not only shows the transitivity of $P_i$, but also shows any chain of $P_i$ is in fact an $W$-clique for some weak $r$-pattern $W$ with $\sigma(W)=\sigma_i$.
%   By \cref{thm:mirsky}, $P_i$ contains a chain of length at least $x_{\sigma_i}$, or an antichain of size at least $\prod_{j>i}x_{\sigma_j}$.
%   In the former case, we set $M_i$ to be the chain, i.e. an $W$-clique of size larger than $x_{\sigma_i}$ for some weak $r$-pattern $W$ with $\sigma(W)=\sigma_i$; in the latter case, we set $M_i$ to be the antichain, i.e. a submatching of $M_{i-1}$ that is free of all weak $r$-patterns $W$ with $\sigma(W)\in\{\sigma_1,\dots,\sigma_i\}$, and that $\abs{M_i}\ge\prod_{j> i} x_{\sigma_j}$.
%   This closes the proof.
% \end{proof}
\end{comment}

\subsection{Proof of the lower bound in \cref{thm:ES}}
Now, it is easy to derive the lower bound in \cref{thm:ES} by iteratively applying \cref{lem:key-ES} and Mirsky's theorem.
\begin{proof}[Proof of the lower bound in \cref{thm:ES}]
Let $M$ be a size-$n$ matching, and recall the subsets of patterns $\mc P_1,\dots,\mc P_b$ in the proof of \cref{lem:key-ES}. First, combining Mirsky's theorem (\cref{thm:mirsky}) and \cref{lem:key-ES}(A), it is easy to prove by induction that for each $i< b$:
\begin{enumerate}
    \item [$(*)$]Either for some $j\le i$ we can find a $\mc P_j$-clique of size at least $n^{1/b}$, or we can find a $(\mc P_1\cup \dots\cup P_{i})$-free sub-matching of size at least $n^{1-i/b}$.
\end{enumerate}

A $(\mc P_1\cup \dots\cup P_{b-1})$-free matching is nothing more than a $\mc P_b$-clique, so $(*)$ with $i=b-1$ actually implies that we can always find a $\mc P_j$-clique $M'$ of size at least $n^{1/b}$, for some $j\le b$. Then, by \cref{lem:key-ES}(B), we have $L(M)\ge L(M')\ge n^{1/b}/(r-1)$, as desired.
%inside this $P_j$-clique we can find a $P$-clique of size at least $\frac{1}{r-1}n^{1/b}$. This proves the desired bound.
  % Let $M$ be an ordered $r$-matching of size $n$.
  % Applying \cref{theorem: asymmetric general erdos-szekeres} on $M$ with $x_{\sigma}=n^{\frac{1}{|\Sigma|}}$ for all $\sigma\in\Sigma$, we know that for some weak $r$-pattern $W$, $M$ contains an $W$-clique $M'$ of size at least $n^{\frac{1}{|\Sigma|}}$.
  % Denote $P=\psi(W)$ to be the unique collectable $r$-pattern corresponding to $W$.
  % \cref{lemma: large weak clique implies large clique} implies that $M'$ contains a $P$-clique of size at least $\frac{1}{\max(1,\delta(W))}n^{\frac{1}{|\Sigma|}}\ge \frac{1}{r}n^{\frac{1}{(r+1)2^{r-2}}}$.
  % Then, $L(M) \ge \frac{1}{r}\abs{M}^{\frac{1}{(r+1)2^{r-2}}}$ for all $M$, and thus $\ramsey{r}{N}\ge \frac{1}{r}n^{\frac{1}{(r+1)2^{r-2}}}$.
\end{proof}

\subsection{Further improvements}\label{subsec:further-ES}
Weak $r$-patterns are very convenient to work with, due to the fact that we can separately study how their constituent alignments, crossings and matchings interact (with \cref{lemma: composition of two weak patterns}). However, by directly considering how patterns can interact with each other, one can prove stronger bounds (specifically, one can prove an analogue of \cref{lem:key-ES} with a smaller value of $b$). In this subsection, we show how to do this to prove the essentially optimal %(up to constant multiplicative factor)
lower bound $L_3(n)\ge n^{1/7}/2$ featuring in \cref{thm:ES-3-4}. Note that \cref{thm:ES} only gives the non-optimal bound $\ramsey{3}{n}\ge n^{1/8}/2$.

The considerations in this $r=3$ proof can be generalised to larger $r$. %, and for general $r$, one can prove an analogue of \cref{lem:key-ES} for some $b$ of the form $(r-1)2^{r-1}+o_{r\to\infty}(2^r)$, which allows one to prove a general bound of the form $L_r(n)\ge \Omega(n^b)$. Also, 
%Actually, by bifurcating into cases in a more extreme way (that falls slightly outside the general framework of \cref{lem:key-ES})
For example, with some more sophisticated case analysis we can prove the optimal lower bound $L_4(n)\ge \Omega(n^{1/15})$. As the proof is quite technical, we refer the interested reader to
\ifarxiv
\cref{appendix: ES}.
\fi
\ifjournal
the appendix in the arXiv version of this paper.
\fi

\begin{proof}[Proof of the lower bound on $\ramsey{3}{n}$ in \cref{thm:ES-3-4}]
We use the notions of weak patterns and signatures introduced in \cref{subsec:weak,subsec:key-ES-proof}. Referring to \cref{table: 3-patterns}, we see that there are ten different 3-patterns, including one non-collectable pattern (which we call $P^*$; note that $\phi(P^*)=\line\line$).
%Note that there are exactly $(3\cdot 2)!/(2\cdot (3!)^2)=10$ different 3-patterns 
  %In this proof we refer to the ten different 3-patterns in \cref{table: 3-patterns} 
  %(one for each of the nine weak 3-patterns, and a single non-collectable pattern $P^*$, which has block representation $\AA\BA\BB$ and $\phi(P^*)=\line\line$).
  Define
  \[\mc P_1=\{\psi(\line \line),P^*\},\quad \mc P_2=\{\psi(\line \wave),\psi(\wave \line)\},\quad\mc P_3=\{\psi(\line \stack),\psi(\stack \line)\}.%,\;  \mc P_4=\{\psi(\stack\stack),\psi(\stack\wave),\psi(\wave\stack),\psi(\wave\wave)\};
  \]
We observe that these subsets can be used to define posets (we write $\preceq_i$ instead of $\preceq_{\mathcal P_i}$):
  % some simple case-checking (which can be reduced somewhat by appealing to the lemmas in \cref{subsec:weak}), it is easy to verify the following facts. \mk{is this ok?}
  \begin{compactitem}
      \item By the proof of \cref{lem:key-ES}(A), for any matching $M$, the relation $\preceq_1$ is a partial order. (Note that $\mc P_1$ corresponds to the signature $(2,\emptyset)$).
      \item Similarly, by the proof of \cref{lem:key-ES}(A), for any $\mc P_1$-free matching $M$, the relation $\preceq_2$ is a partial order. (Note that $\mc P_2$ corresponds to the signature $(1,\emptyset)$).
      \item One can also see that the relation $\preceq_3$ is always a partial order (despite $\mc P_3$ containing the patterns for two different signatures $(1,\{1\})$ and $(1,\{2\})$). 
      Observe that the two patterns $\psi(\line\stack),\psi(\stack\line)\in \mc P_3$ have block representations $|\AA\BB|\BA|$ and $|\AB|\BB\AA|$; informally speaking, these patterns are ``generalised nestings'', where all vertices of one edge are fully contained between two consecutive vertices of the other edge, and it is not hard to see that $\preceq_3$ is therefore a partial order. In detail: note that it suffices to show transitivity. 
        Suppose $e,f,g$ are three edges in a matching $M$ with $e[1]<f[1]<g[1]$ such that $e,f$ form pattern $\psi(\line\stack)$ and $f,g$ form pattern $\psi(\line\stack)$ or $\psi(\stack\line)$.
        Then, $e[2]<f[1]<f[3]<e[3]$ (as $e,f$ form pattern $\psi(\line\stack)$) and $f[1]<g[1]<g[3]<f[3]$ (as $f,g$ form pattern $\psi(\line\stack)$ or $\psi(\stack\line)$).
        So $e[2]<g[1]<g[3]<e[3]$, i.e., $e,g$ form pattern $\psi(\line\stack)$.
        Similarly, if $e,f$ form pattern $\psi(\stack\line)$ and $f,g$ form pattern $\psi(\line\stack)$ or $\psi(\stack\line)$, then $e,g$ form pattern $\psi(\stack\line)$.
      \item By the proof of \cref{lem:key-ES}(A), for any $(\mc P_1\cup \mc P_2\cup \mc P_3)$-free matching $M$, the relation $\preceq_P$ is a poset when $P$ is any of the four 3-partite patterns $\psi(\stack\stack),\psi(\stack\wave),\psi(\wave\stack),\psi(\wave\wave)$. (These correspond to the signatures $(0,S)$, for $S\subseteq \{1,2\}$).
  \end{compactitem}
  Proceeding in the same way as the proof of the lower bound in \cref{thm:ES}, we can therefore find a $\mc P_i$-clique of size at least $n^{1/7}$, for some $i\in \{1,2,3\}$, or we can find a $P$-clique of size at least $n^{1/7}$ for $P$ being one of the four 3-partite patterns $\psi(\stack\stack),\psi(\stack\wave),\psi(\wave\stack),\psi(\wave\wave)$.

  By the proof of \cref{lem:key-ES}(B), if we found a $\mc P_1$-clique or a $\mc P_2$-clique $M'$ of size at least $n^{1/7}$, then $L(M')\ge n^{1/7}/2$.
  
  Finally, it suffices to consider the case where we found a $\mc P_3$-clique $M'$ with at least $n^{1/7}$ edges. 
  Let $e_1,\dots,e_m$ be the edges in $M'$ (with $m \ge n^{1/7}$, and $e_1[1]<\dots<e_m[1]$). 
  As discussed in the third bullet point above, we see that for any $1 \le i < j < k \le m$, the edges $e_i,e_k$ always form the same 3-pattern as the edges $e_i,e_j$. 
  % By some direct case-checking, we see that for any $1 \le i < j < k \le m$, the edges $e_i,e_k$ always form the same 3-pattern as the edges $e_i,e_j$ do. 
  So, every index $i$ is of one of two types: we say it is of ``type $\psi(\line\stack)$'' if $e_i,e_j$ form $\psi(\line\stack)$ for each $i<j$, and we say it is of ``type $\psi(\stack\line)$'' if $e_i,e_j$ form $\psi(\stack\line)$ for each $i<j$. By the pigeonhole principle, at least half of the indices have the same type, and the corresponding edges give us a $\psi(\line\stack)$-clique or a $\psi(\stack\line)$-clique of size $n^{1/7}/2$.
\end{proof}

\section{Upper bounds on Ramsey parameters}\label{sec:ramsey-upper}
In this section we prove the upper bound in \cref{thm:ES} (which also implies the upper bounds in \cref{thm:ES-3-4}).
We consider the following notion of ``blow-up'' introduced by by Dudek, Grytczuk and Ruci\'nski~\cite{DGR23-later}.
\begin{definition}
  Suppose $M,M'$ are ordered $r$-matchings, and let $t$ be the size of $M'$.
  The \emph{$M'$-blow-up} of $M$, denoted by $M[M']$, is an ordered $r$-matching obtained from $M$ as follows.
  We replace every vertex $i\in V(M)$ by an ordered set $U_i$ of $t$ contiguous vertices. %For each $1\le i<j\le V(M)$, the vertices in $U_i$ occurs before the vertices in $U_j$;
  Then, for each edge $e\in E(M)$ we place on the vertex set $U_{e[1]}\cup\dots\cup U_{e[r]}$ a copy $M'_e$ of the matching $M'$.
  %on the vertex set $\bigcup_{j=1}^r U_{e[j]}$.%, in such a way that $M'_e$ is vertex-disjoint from $M'_{e'}$ for every distinct $e,e'\in E(M)$.
  %we replace each edge $e\in E(M)$ by a copy $M'_e$ of $M'$ on the vertex set $\bigcup_{j=1}^r U_{e[j]}$.%, in such a way that $M'_e$ is vertex-disjoint from $M'_{e'}$ for every distinct $e,e'\in E(M)$.
  Note that there are two kinds of pairs of edges $(f_1,f_2)$ in $M[M']$:
  \begin{compactitem}
      \item If $f_1,f_2$ both lie in the same $M_e'$ (for some $e\in E(M)$), then we say $f_1$ and $f_2$ comprise an~\emph{$M'$-pair}.
      \item If $f_1,f_2$ lie in different $M_e'$, then we say $f_1$ and $f_2$ comprise  an \emph{$M$-pair}.
      \end{compactitem}
\end{definition}

The above definition is useful typically when $M'$ is $r$-partite.
In this case, one can check that if two edges $f_1,f_2$ comprise an $M$-pair (say $f_1$ lie in $M_{e_1}'$ and $f_2$ lie in $M_{e_2}'$), then $f_1,f_2$ form the same $r$-pattern as $e_1,e_2$ do (in $M$).
Therefore, if $M$ contains no $r$-partite $r$-pattern and $M'$ is $r$-partite, then $L_P(M[M'])=L_P(M')$ if $P$ is $r$-partite and $L_P(M[M'])=L_P(M')$ otherwise, and thus $L(M[M'])=\max(L(M),L(M'))$.
% It is not hard to see that for any $r$-matchings $M,M'$ and any $r$-pattern $P$, the size of the largest $P$-clique in $M[M']$ is the product of the sizes of the largest $P$-cliques in $M$ and $M'$, i.e., $L_P(M[M'])=L_P(M)\cdot L_P(M')$.
% In particular, if $M$ contains no $r$-partite $r$-patterns and $M'$ contains only $r$-partite $r$-patterns (in other words, $M'$ is $r$-partite), then $L(M[M'])=\max(L(M),L(M'))$. 
This fact will be used in the proof of the upper bound in \cref{thm:ES} momentarily.
% As an example, suppose $M$ is a $P$-clique (i.e., all pairs of edges form pattern $P$) and $M'$ is a $P'$-clique (i.e., all pairs of edges form pattern $P'$). Then the edges in $M[M']$ form only patterns $P$ or $P'$. Specifically, the $M$-pairs form pattern $P$ and the $M'$-pairs form pattern $P'$.\mk{I rephrased this as an example, is that what the point of this was?}

For $r$-partite $r$-patterns, Dudek, Grytczuk and Ruci\'nski~\cite{DGR23-later} used a blow-up construction to prove the following tight upper bound (which is actually equivalent to the result of Burkill and Mirsky~\cite{BM73} and Kalmanson~\cite{Kal73} mentioned in the introduction, on $(r-1)$-tuples of permutations).
\begin{theorem}[Dudek, Grytczuk and Ruci\'nski~\cite{DGR23-later}]\label{theorem: partite matching erdos-szekeres construction}
  For $r \ge 2$, and $n \ge 1$, there exists an $r$-partite ordered $r$-matching $M$ of size $n^{2^{r-1}}$ such that for all ($r$-partite) $r$-patterns $P$, the largest $P$-clique is of size $n$.
\end{theorem}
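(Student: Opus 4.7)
The plan is to give an iterated blow-up construction using $P$-cliques as building blocks. Enumerate the $2^{r-1}$ $r$-partite $r$-patterns as $P_1,\dots,P_{2^{r-1}}$, and for each $i$ let $C_i$ denote the unique $P_i$-clique of size $n$ (on the vertex set $\{1,\dots,rn\}$). I would take
\[
    M \;:=\; C_1\bigl[C_2[\cdots[C_{2^{r-1}}]\cdots]\bigr],
\]
the iterated blow-up in which each $C_i$ is blown up by the remaining matchings in order.

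I would next record the structural properties. Directly from the definition of blow-up, whenever both ingredients are $r$-partite the blow-up is again $r$-partite: the $r$ parts of $A[B]$ are precisely the concatenations of the blocks $U_v$ coming from each part of the outer matching $A$, and for each edge $e\in E(A)$ the copy $B_e$ is placed $r$-partitely on $U_{e[1]}\cup\dots\cup U_{e[r]}$ because $e[1]<\dots<e[r]$ lie in successive parts of $A$. Since each $C_i$ is $r$-partite and blow-ups multiply sizes, $M$ is an $r$-partite $r$-matching of size $\prod_{i=1}^{2^{r-1}} n = n^{2^{r-1}}$.

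The analytic core is the multiplicativity
\[
    L_P(A[B]) \;=\; L_P(A)\cdot L_P(B)
\]
for any two $r$-partite $r$-matchings $A,B$ and any $r$-partite $r$-pattern $P$. The excerpt already observes that when $B$ is $r$-partite, an $M$-pair in $A[B]$ (a pair of edges lying in distinct copies $B_{e_1},B_{e_2}$) forms the same $r$-pattern as $\{e_1,e_2\}$ does in $A$. For the upper bound, given any $P$-clique $F$ in $A[B]$, the set of $e\in E(A)$ with $F\cap E(B_e)\ne\emptyset$ must itself be a $P$-clique in $A$ (hence has size at most $L_P(A)$), while each $F\cap E(B_e)$ is a $P$-clique inside the copy $B_e\cong B$ (hence has size at most $L_P(B)$). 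The lower bound is the obvious construction: choose a $P$-clique of size $L_P(A)$ in $A$ and, in each of the corresponding $B_e$'s, place a $P$-clique of size $L_P(B)$.

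Applying this multiplicativity iteratively, for each $r$-partite pattern $P_j$ I would obtain
\[
    L_{P_j}(M) \;=\; \prod_{i=1}^{2^{r-1}} L_{P_j}(C_i) \;=\; n,
\]
since $L_{P_j}(C_j)=n$ by definition, while $L_{P_j}(C_i)=1$ whenever $i\neq j$ (every pair of edges in the $P_i$-clique $C_i$ forms $P_i\ne P_j$, so no two edges form $P_j$). The only delicate points are the preservation of $r$-partiteness and the multiplicativity formula; both are straightforward from the definition of blow-up combined with the pattern-preservation fact already recorded in the excerpt, so I do not anticipate a genuine obstacle.
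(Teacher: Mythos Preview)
Your proposal is correct and follows essentially the same iterated blow-up construction as the paper (which cites the result from \cite{DGR23-later} and includes a commented-out proof sketch along the same lines). Your formulation via the explicit multiplicativity $L_P(A[B])=L_P(A)\cdot L_P(B)$ is a clean way to package the inductive step, but the underlying idea---nest the $2^{r-1}$ pattern-cliques and use that $M$-pairs inherit the outer pattern while $M'$-pairs inherit the inner one---is identical.
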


We are now ready to prove that $\ramsey{r}{n} \le \lceil n^{1/(2^r-1)}\rceil$.
\begin{proof}[Proof of the upper bound in \cref{thm:ES}]
Fix $r \ge 2$ and $n \ge 1$. We are going to construct an ordered $r$-matching $M$ of size $n^{2^{r}-1}$ such that any pair of edges form a collectable $r$-pattern, and such that the largest clique has size $n$.
  We will do this by induction on $r$.
  
  The base case $r=2$ already appears in \cite{DGR23-later,DGJR23-r-matching}. Indeed, consider $M:=M_1[M_2]$, where $M_1$ is an alignment-clique of size $n$ and $M_2$ is the 2-partite ordered 2-matching from \cref{theorem: partite matching erdos-szekeres construction}. Then, $M$ has size $n^3$. A pair of distinct edges $f_1,f_2\in E(M)$ form an alignment if $(f_1,f_2)$ is an $M_1$-pair, and they form a nesting or a crossing if $(f_1,f_2)$ is an $M_2$-pair. It is easy to check that 
  %As the copies of $M_2$ in $M$ are vertex-disjoint,
  there is no clique of size larger than $n$.
  
  For the inductive step, suppose we have constructed an ordered $(r-1)$-matching $M_1$ of size $n^{2^{r-1}-1}$ in which every pair of edges forms a collectable $(r-1)$-pattern, such that the largest clique has size $n$.
  For each edge $e \in E(M_1)$, we add a new vertex $v_e$ just to the right of $e[r-1]$, and extend $e$ to an edge $e'$ of uniformity $r$ by adding $v_e$ to $e$. Let the resulting ordered $r$-matching be $M_2$.

  Now, we claim that $M_2$ is free of $r$-partite $r$-patterns, and its largest clique has size $n$. 
  To see this, note that if $e,f\in E(M_1)$ form a pattern with block representation $|\block_1|\dots|\block_k|$, then $e',f'\in E(M_2)$ form the pattern whose block representation is $|\block_1|\dots|\block_{k-1}|\block_k'|$, where $\block_k'$ is obtained by extending the ``$\blockA$-run'' and the ``$\blockB$-run'' in $\block_k$ by one. (For example, if $e,f$ form a pattern with block representation $|\AA\BB|\AB|\BA|$, then $e',f'$ form a pattern with block representation $|\AA\BB|\AB|\BB\AA|$). This latter pattern is never $r$-partite, because it has at most $r-1$ blocks.

  %Now, let $P$ be any collectable $(r-1)$-pattern, and write $|\block_1|\dots|\block_k|$ for its block representation. So, $\block_k$ consists of a run of ``$\blockA$''s and a run of ``$\blockB$''s of the same length (in some order). Let $\block_k'$ be obtained from $\block_k$ by extending each of these runs by one symbol (e.g. if $\block_k=\BB\AA$ then $\block_k'=\BB\blockB\AA\blockA$). Let $f(P)$ be the pattern with block representation $|\block_1|\dots|\block_{k-1}|\block_k'|$.
  % Define $f(P)=J_1\cup\dots\cup J_{k}'$ where 
  % \begin{equation} \nonumber
  %   J_k'= \left\{
  %   \begin{array}{ccc}
  %      \underbrace{\AA\dots\blockA}_{(t+1)\text{ times } \blockA}\underbrace{\BB\dots\blockB}_{(t+1)\text{ times } \blockB}  & & \text{ if } J_k=\underbrace{\AA\dots\blockA}_{t\text{ times } \blockA}\underbrace{\BB\dots\blockB}_{t\text{ times } \blockB},\\
  %       \underbrace{\BB\dots\blockB}_{(t+1)\text{ times } \blockB}\underbrace{\AA\dots\blockA}_{(t+1)\text{ times } \blockA} & & \text{ if } J_k= \underbrace{\BB\dots\blockB}_{t\text{ times } \blockB}\underbrace{\AA\dots\blockA}_{t\text{ times } \blockA}.
  %   \end{array}
  %   \right.
  % \end{equation}
  %For example, $f(|\AA\BB|\AB|\AB|)=|\AA\BB|\AB|\AA\BB|$.
  %Clearly, $f(\cdot)$ is an injection from collectable $(r-1)$-patterns to collectable $r$-patterns that are not $r$-partite.
  
  %Then, whenever distinct edges $e_1,e_2 \in E(M_1)$ form some $(r-1)$-pattern $P$ (which is collectable by induction), the edges $e_1',e_2'$ form the $r$-pattern $f(P)$.
  %Thus, $M_2$ is free of $r$-partite $r$-patterns and the largest clique in $M_2$ has size $n$.
  
  Now, let $M_3$ be the $r$-partite ordered $r$-matching given by \cref{theorem: partite matching erdos-szekeres construction} (with size $n^{2^{r-1}}$, and whose largest clique has size $n$). Every pair of edges in $M_3$ form an $r$-partite $r$-pattern.
  Then, we take $M$ to be the blow-up $M_2[M_3]$, which has size $n^{2^{r-1}-1}\cdot n^{2^{r-1}}=n^{2^r-1}$. For any distinct $f_1,f_2\in E(M)$, the edges $f_1,f_2$ form an $r$-partite $r$-pattern if and only if $(f_1,f_2)$ is an $M_3$-pair.
  This means a clique in $M$ comes either from $M_2$ or from some copy of $M_3$. So, the largest clique in $M$ has size $n$, proving the induction step.
\end{proof}

\section{Variants on the longest increasing subsequence problem}\label{sec:BW-variants}
The famous Ulam--Hammersley problem (see \cite{Rom15} for a book-length treatment)
asks for the expected length of the longest monotone subsequence in
a random set of $n$ points in a box $[0,1]^{2}$. This was generalised
to higher dimensions by Steele~\cite{Ste77} (and studied further by Bollob\'as
and Winkler~\cite{BW88}). In order to prove \cref{thm:limit-exists}, we will need a further generalisation.
\begin{definition}\label{def:BW-variant}
Fix a partition $\mathcal{A}$ of $\{1,\dots,r\}$ into disjoint parts
$I_{1},\dots,I_{\ell}$. Then, for any vectors $(x_{1},\dots,x_{r}),(y_{1},\dots,y_{r})\in\mb R^{r}$,
write $(x_{1},\dots,x_{r})\preceq_{\mc A}(y_{1},\dots,y_{r})$ if $\max\left\{ x_{i}:i\in I_{j}\right\} \le\min\left\{ y_{i}:i\in I_{j}\right\} $
for all $j\in\{1,\dots,\ell\}$. Note that $\preceq_{\mc A}$ is a partial order on $\mb R^{r}$.

For a set of points $\mathcal{S}\subseteq\mb R^{d}$, write $L_{\mathcal{A}}(\mathcal{S})$
for the longest chain in $\mathcal{S}$ with respect to $\preceq_{\mc A}$.
\end{definition}

We will be interested in $L_{\mathcal{A}}(\mathcal{S}_{n})$, for
a set $\mathcal{S}_{n}$ of $n$ independent uniformly random points
in $[0,1]^{r}$.

%Note that it is ``easiest to be a chain'' in the case where $\mathcal{A}$
%is the partition of $\{1,\dots,r\}$ into $r$ singleton sets. This
%is precisely the case studied by Bollob\'as and Winkler.

\begin{theorem}\label{lem:poissonised}
Fix a partition $\mathcal{A}$ of $\{1,\dots,r\}$. For $m\in\mb N$,
let $\mathcal{T}_{m}\subseteq[0,m]^{r}$ be a set of points obtained
by a Poisson process of rate 1 in $[0,m]^{r}$, and let $L=L_{\mc A}(\mathcal{T}_{m})$. Then, as $m\to\infty$,
\begin{enumerate}
\item ${\displaystyle {\displaystyle \frac{\mb E L}{m}\to a_{\mathcal{A}}}}$,
and
\item \vspace{2pt}${\displaystyle \frac{L}{m} \overset{p}{\to}a_{\mathcal{A}}}$,
\end{enumerate}
for some $a_{\mathcal{A}}>0$ only depending on $\mathcal{A}$. (By
symmetry, in fact $a_{\mathcal{A}}$ only depends on the multiset
of sizes of the parts in $\mathcal{A}$).
\end{theorem}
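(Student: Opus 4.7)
The plan is to derive both parts from Kingman's subadditive ergodic theorem (\cref{thm:kingman}), after verifying that the family of longest chains associated with $\preceq_{\mathcal A}$ forms a super-additive, stationary, ergodic process with a linear upper bound.

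Let $\Pi$ be a rate-$1$ Poisson process on $\mathbb R^r$, so that $\Pi\cap [0,m]^r$ has the same distribution as $\mathcal T_m$. For integers $0\le a<b$, set
\[
X_{a,b}:=L_{\mathcal A}\bigl(\Pi\cap [a,b]^r\bigr).
\]
The key observation is that if $x\in[0,m]^r$ and $y\in[m,n]^r$ then $x\preceq_{\mathcal A} y$ automatically, since $\max_i x_i\le m\le \min_i y_i$. Hence the concatenation of a chain in $\Pi\cap [0,m]^r$ with one in $\Pi\cap [m,n]^r$ is a chain in $\Pi\cap [0,n]^r$, which gives $X_{0,n}\ge X_{0,m}+X_{m,n}$. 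Stationarity and independence across disjoint cubes follow immediately from translation invariance and the defining properties of $\Pi$. For the linear upper bound, note that $x\preceq_{\mathcal A} y$ implies $x_i\le y_i$ for every $i$ (each $i$ lies in some $I_j$, and $x_i\le\max_{k\in I_j}x_k\le\min_{k\in I_j}y_k\le y_i$), so $X_{0,n}$ is dominated by the length of the longest \emph{coordinatewise} chain in $\Pi\cap [0,n]^r$; by the classical $r$-dimensional Ulam--Hammersley/Bollob\'as--Winkler theorem, the expectation of the latter is $O_r(n)$. Kingman's theorem therefore produces a constant $a_{\mathcal A}\ge 0$ with $\mathbb E X_{0,m}/m\to a_{\mathcal A}$ and $X_{0,m}/m\to a_{\mathcal A}$ almost surely (hence in probability). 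Since $X_{0,m}\overset d= L$, this is exactly (1) and (2).

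It remains to show $a_{\mathcal A}>0$ and that it depends only on the multiset of part sizes. For positivity, partition $[0,m]$ into $m$ unit intervals and consider the diagonal cubes $C_k:=[k-1,k]^r$ for $k=1,\dots,m$. Any two points lying in different cubes are automatically $\preceq_{\mathcal A}$-comparable (by the same $\max\le\min$ argument as above), so picking one point from each nonempty $C_k$ produces a $\preceq_{\mathcal A}$-chain of length $|\{k:\Pi\cap C_k\ne\emptyset\}|$; this random variable has mean $m(1-e^{-1})$, whence $a_{\mathcal A}\ge 1-e^{-1}>0$. For symmetry, any permutation $\pi$ of the coordinates is a measure-preserving map on $\Pi$ and conjugates $\preceq_{\mathcal A}$ to $\preceq_{\pi(\mathcal A)}$; hence the law of $L$ depends on $\mathcal A$ only up to relabelling, i.e.\ only on the multiset of part sizes.

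The only mildly nontrivial step is verifying the linear upper bound in the hypothesis of Kingman's theorem; the cleanest route is the domination by the coordinatewise longest chain and an appeal to the higher-dimensional Ulam--Hammersley bound, as above. Once that is in place the remainder is bookkeeping of Kingman's hypotheses plus the two short direct arguments for positivity and symmetry.
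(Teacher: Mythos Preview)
Your proof is correct and essentially identical to the paper's: both set up a superadditive family $X_{a,b}=L_{\mathcal A}(\Pi\cap[a,b]^r)$ on a global rate-$1$ Poisson process and apply Kingman's theorem, with the linear upper bound obtained by dominating $\preceq_{\mathcal A}$ by the coordinatewise order and invoking the Bollob\'as--Winkler bound. You go slightly further than the paper by explicitly verifying $a_{\mathcal A}>0$ via the diagonal unit cubes and by spelling out the coordinate-permutation symmetry, both of which the paper's proof asserts in the statement but does not argue.
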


\begin{proof}
Let $\mathcal{T}\subseteq\mb R^{r}$ be a Poisson process with rate
1 in $\mb R^{r}$. Let $L_{m,n}=L(\mathcal{T}\cap[m,n]^{r})$ be the
longest chain in $\mathcal{T}\cap[m,n]^{r}$ with respect to $\mathcal{P}(\mc A)$.
Noting that $\mathcal{T}\cap[0,m]^{r}\overset{d}{=}\mathcal{T}_{m}$, it suffices to prove that the conditions of Kingman's subadditive ergodic theorem (\cref{thm:kingman}) are satisfied, as follows.
\begin{compactitem}
\item For $0\le m\le n$ we have $L_{0,n}\ge L_{0,m}+L_{m,n}$, because
if we have a chain in $\mathcal{T}\cap[0,m]^{r}$ and a chain in $\mathcal{T}\cap[m,n]^{r}$,
their union is always a chain in $\mathcal{T}\cap[0,n]^{r}$.
\item For any $k\ge1$, $(L_{nk,(n+1)k})_{n=0}^{\infty}$ is a sequence
of i.i.d.\ random variables.
\item For any $m\ge1$, we have $(L_{0,k})_{k=0}^{\infty}\overset{d}{=}(L_{m,m+k})_{k=0}^{\infty}$.
\item There exists a constant $M > 0$ such that $\mathbb{E}L_{0,m} \le Mm$ for all $m$.
% To see this, we first note that conditioned on $\abs{\mc{T}_m}=N$, the distribution of $\mc{T}_m$ is equivalent to $\mc{S}_N$, the distribution by picking $p_1,\dots,p_N$ points in $[0,m]^r$ independently and uniformly random.
% Then, 
To see this, we first note that if $\mathcal{S}_N$ is a set of $N$ independent
uniformly random points in $[0,m]^{r}$, then 
we have $\mb E L_{\mathcal{A}}(\mathcal{S}_N)\le\mb E L_{\mathcal{BW}}(\mathcal{S}_N)$,
where $\mathcal{BW}$ is the partition of $\{1,\dots,r\}$ into $r$
singleton sets. %\zj{Maybe the wrong direction}
Bollob\'as and Winkler~\cite{BW88} proved that $\mb E L_{\mathcal{BW}}(\mathcal{S}_N)\le(1+o(1))eN^{1/r}$,
so
\begin{align*}
\mb E L_{0,m}&=\mb E L_{\mathcal{A}}(\mathcal{T}_{m})=\mb E\left[\mb E\left[\vphantom{\sum}L_{\mathcal{A}}(\mathcal{T}_{m})\,\middle|\,|\mathcal{T}_{m}|\right]\right]=\mb E\left[\mb E\left[\vphantom{\sum}L_{\mathcal{A}}(\mathcal{S}_{|\mathcal{T}_{m}|})\,\middle|\,|\mathcal{T}_{m}|\right]\right]\\
&\le\mb E\left[\vphantom{\sum}(1+o(1))e\,|\mathcal{T}_{m}|^{1/r}\right]=(1+o(1))em,
\end{align*}
where the last equation holds as $\abs{\mc{T}_m}$ has distribution $\operatorname{Poisson}(m^r)$.%\mk{wait so this was all ok in the end? (I think BW consider a poisson process in their paper, so i think we can cite the poisson result)}
\end{compactitem}
\end{proof}
\begin{theorem}\label{thm:BW-variant}
Fix a partition $\mathcal{A}$ of $\{1,\dots,r\}$. Consider a set
$\mathcal{S}_{n}$ of $n$ independent uniformly random points in
$[0,1]^{r}$, and let $L=L_{\mathcal{A}}(\mathcal{S}_{n})$. Let $a_{\mathcal{A}}$ be as in \cref{lem:poissonised}. Then, the following hold.
\begin{enumerate}
\item ${\displaystyle \Pr[|L-\mb E L|\ge t]\le4\exp\left(-\frac{\gamma t^{2}}{\mb E L+t}\right)}$
for some universal constant $\gamma>0$.
\item ${\displaystyle \frac{L}{n^{1/r}}\overset{p}{\to}a_{\mathcal{A}}}$.
\item ${\displaystyle \frac{\mb E L}{n^{1/r}}\to a_{\mathcal{A}}}$.
\end{enumerate}
\end{theorem}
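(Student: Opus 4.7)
The plan is to establish (1) via Talagrand, deduce (3) from \cref{lem:poissonised} by a de-Poissonisation argument, and then derive (2) from (1) and (3).

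For (1), I would apply Talagrand's inequality (\cref{thm:talagrand}) to $X = f(Z_1, \ldots, Z_n) := L_\mc{A}(\{Z_1, \ldots, Z_n\})$, where $Z_1, \ldots, Z_n$ are iid uniform on $[0,1]^r$. The Lipschitz condition holds since modifying one point can alter any maximum $\preceq_\mc{A}$-chain by at most one element. For the certificate condition, I would take $J$ to be the indices of some fixed maximum chain in $z$; since the chain only depends on the coordinates indexed by $J$, any $y\in([0,1]^r)^n$ agreeing with $z$ on $J$ satisfies $f(y) \ge |J| = f(z)$, as required. Talagrand's inequality then yields the stated tail bound.

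For (3), I would use that $\preceq_\mc{A}$ is preserved under coordinate-wise rescaling, so with $m = n^{1/r}$ we have $L_\mc{A}(\mc{T}_m) \overset{d}{=} L_\mc{A}(\mc{S}_N)$, where $N \sim \mr{Poisson}(n)$ and $\mc{S}_N$ consists of $N$ iid uniform points in $[0,1]^r$ (conditional on $N$). Set $f(k) := \mb{E}\,L_\mc{A}(\mc{S}_k)$. The natural coupling $\mc{S}_k \subseteq \mc{S}_{k+1}$ shows $f$ is non-decreasing, and comparing $\preceq_\mc{A}$ with the finer order $\preceq_{\mc{BW}}$ (where $\mc{BW}$ is the partition of $\{1,\dots,r\}$ into singletons) together with the Bollob\'as--Winkler bound gives $f(k) \le (1+o(1))\,e\,k^{1/r}$. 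By \cref{lem:poissonised}, $\mb{E}\,f(N) = \mb{E}\,L_\mc{A}(\mc{T}_m) = (a_\mc{A} + o(1))\,n^{1/r}$. Chernoff concentration of $N$ around $n$, combined with the uniform bound $f(k) = O(k^{1/r})$, makes the contribution from $|N-n| \ge \eps n$ to $\mb{E}\,f(N)$ only $o(n^{1/r})$, so monotonicity of $f$ yields
\[
  (1-o(1))\,f((1-\eps)n) \;\le\; \mb{E}\,f(N) \;\le\; f((1+\eps)n) + o(n^{1/r}).
\]
Substituting the asymptotic for $\mb{E}\,f(N)$, replacing $n$ by $n/(1\pm\eps)$, and letting $\eps \to 0$ sandwiches $\liminf$ and $\limsup$ of $f(n)/n^{1/r}$ at $a_\mc{A}$, proving~(3).

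Finally, (2) follows immediately from (1) and (3): taking $t = \eps n^{1/r}$ in (1) and using $\mb{E}\,L = \Theta(n^{1/r})$ from (3) gives $\Pr[|L - \mb{E}\,L| \ge \eps n^{1/r}] \le 4\exp(-\Omega(\eps^2 n^{1/r})) \to 0$ for every $\eps > 0$. The main obstacle is the de-Poissonisation step: one must verify that atypical values of $N$ contribute negligibly to $\mb{E}\,f(N)$, which is where the universal upper bound $f(k) = O(k^{1/r})$ inherited from Bollob\'as--Winkler becomes essential.
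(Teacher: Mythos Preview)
Your argument is correct, but it routes through the three parts in a different order than the paper and uses a different de-Poissonisation technique for the key step. The paper proves (1) exactly as you do, then establishes (2) directly by a \emph{sandwiching} argument: choosing integers $m_1,m_2$ both of the form $(1+o(1))n^{1/r}$ with $|\mathcal T_{m_1}|\le n\le |\mathcal T_{m_2}|$ whp (via Chebyshev), one can couple $\mathcal S_n$, $\mathcal T_{m_1}$, $\mathcal T_{m_2}$ so that $L_{\mathcal A}(\mathcal T_{m_1})\le L\le L_{\mathcal A}(\mathcal T_{m_2})$ whp, and then \cref{lem:poissonised}(2) gives (2) immediately. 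Part (3) is then read off from (1) and (2). Your route instead proves (3) first by a de-Poissonisation of expectations (using monotonicity of $k\mapsto \mathbb E\,L_{\mathcal A}(\mathcal S_k)$ and the Bollob\'as--Winkler upper bound to control the tail contribution), and deduces (2) from (1) and (3). The paper's coupling is a bit cleaner: it sidesteps the need to bound $\mathbb E[f(N)\mathbf 1_{|N-n|\ge \varepsilon n}]$ and the minor nuisance that $n^{1/r}$ need not be an integer (your argument really only has \cref{lem:poissonised} available along $n=m^r$, so a small interpolation via monotonicity is needed). Conversely, your approach has the virtue of only invoking \cref{lem:poissonised}(1) (the convergence of expectations), not the stronger convergence-in-probability statement.
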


\begin{proof}
First, (1) follows directly from Talagrand's inequality (\cref{thm:talagrand}):
\begin{compactitem}
\item If we change a single point of $\mathcal{S}_n$, we change $L$ by at
most 1.
\item Whenever $L\ge r$ then there is a set of $r$ points which
certifies that $L\ge r$.
\end{compactitem}
Then, for (2), recall the random set $\mathcal{T}_{m}$ from \cref{lem:poissonised}.
Note that $|\mathcal{T}_{m}|$ has a $\operatorname{Poisson}(m^{r})$
distribution, so by Chebyshev's inequality we can choose $m_{1},m_{2}\in\mb N$,
both of the form $(1+o(1))n^{1/r}$, such that $|\mathcal{T}_{m_{1}}|\le n$
whp and $n\le|\mathcal{T}_{m_{2}}|$ whp. 

Note that if we condition on $|\mathcal{T}_{m}|$, then $\mathcal{T}_{m}$
is conditionally a set of that many independent uniformly random points
in $[0,m]^{r}$ (which is equivalent to taking random points in $[0,1]^{r}$
and rescaling them by a factor of $m$). So, there is a coupling of
$\mathcal{S}_{n},\mathcal{T}_{m_{1}},\mathcal{T}_{m_{2}}$ for which $L(\mathcal{T}_{m_{1}})\le L\le L(\mathcal{T}_{m_{2}})$ whp.
Then, (2) follows from \cref{lem:poissonised}(2).

Finally, (3) follows from (1) and (2).
\end{proof}
\begin{proposition}\label{prop:BW-limit-information}
Fix a partition $\mathcal{A}$ of $\{1,\dots,r\}$, and define the constant $a_{\mc A}$ as in \cref{lem:poissonised}. %Then, the following hold.
\begin{enumerate}
\item If $\mc A$ has a single part of size $r$, then we have ${\displaystyle a_{\mc A}=\frac{r}{\Gamma(1/r)}}$
\item If $\mc A$ has $r$ parts of size $1$, then we have
${\displaystyle a_{\mc A}>\frac{r^{2}}{(r!)^{1/r}\Gamma(1/r)}}$.
\end{enumerate}
\end{proposition}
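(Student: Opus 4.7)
The plan for part (1) is to analyze the greedy case-(1) chain as a renewal process. Define $T_0 = 0$ and inductively $T_i = \inf\{t : \mc T \cap \{x : T_{i-1} \le \min(x),\ \max(x) \le t\} \ne \emptyset\}$, letting $x^{(i)}$ be the witnessing Poisson point. Translation invariance and the memoryless property of the Poisson process will make the gaps $S_i := T_i - T_{i-1}$ i.i.d.\ with $\Pr(S_1 > t) = \Pr(\mc T \cap [0,t]^r = \emptyset) = e^{-t^r}$, so $\mathbb E[S_1] = \int_0^\infty e^{-t^r}\,dt = \Gamma(1+1/r) = \Gamma(1/r)/r$. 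A short induction then shows that any $\preceq_{\mc A}$-chain $y^{(1)}, y^{(2)}, \dots$ must satisfy $\max(y^{(i)}) \ge T_i$, so the greedy chain is optimal. By the strong law of large numbers its length in $[0,m]^r$ is $(1+o(1))\,rm/\Gamma(1/r)$, and \cref{lem:poissonised}(1) yields $a_{\mc A} = r/\Gamma(1/r)$.

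For part (2), the partial order coincides with coordinate-wise $\le$, and $a_{\mc A}$ equals the Bollob\'as--Winkler constant $c_r$. The target $r^2/((r!)^{1/r}\Gamma(1/r))$ factors as $(r/(r!)^{1/r})\cdot(r/\Gamma(1/r))$: the product of the Bollob\'as--Winkler lower bound on $c_r$ and the part-(1) constant. The strategy is to exploit $a_{\mc A} = \sup_m \mathbb E[L(\mc T_m)]/m$ (the superadditive form of Kingman's theorem behind \cref{lem:poissonised}) and, for some finite $m$, construct a random chain of expected length strictly greater than $r^2 m/((r!)^{1/r}\Gamma(1/r))$.

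The construction will combine the two greedy strategies multiplicatively. At a coarse scale, I will use the part-(1) greedy renewal to obtain a ``scaffold'' $x^{(1)}, \dots, x^{(K)}$ with $K \sim rm/\Gamma(1/r)$ elements along the diagonal. At a finer scale, for each $i$ I will consider the coordinate-wise insertion box $C_i := \prod_j[x^{(i)}_j, x^{(i+1)}_j]$ and apply a Bollob\'as--Winkler-type greedy inside $C_i$ to extract an auxiliary coord-wise chain of length on the order of $r V_i^{1/r}/(r!)^{1/r}$, where $V_i = \mathrm{vol}(C_i)$. Summing over $i$ and using that the $\mathbb E V_i$ are controlled by the renewal distribution (with mean of order $(\Gamma(1/r)/r)^r$), the total expected chain length will exceed the target provided the Bollob\'as--Winkler asymptotic is actually valid inside each $C_i$.

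The hard part will be reconciling the two scales: at the natural part-(1) scale $\Gamma(1/r)/r$, each $C_i$ contains only $O(1)$ Poisson points, so the Bollob\'as--Winkler asymptotic does not apply directly. One must therefore work at a larger scaffold scale $s\to\infty$ and carefully track the interaction between the number of scaffold blocks and the chain density inside each block, so that the multiplicative factor $r/\Gamma(1/r)$ is not lost. A cleaner alternative would bypass the two-scale argument by analysing a single greedy that uses a ``hybrid potential'' (some combination of $\max$ and $\sum$ on the coordinates); the strict inequality would then follow from the observation that neither the pure $\max$ greedy nor the pure $\sum$ greedy is optimal for coordinate-wise chains, leaving quantifiable slack which supplies the claimed multiplicative improvement.
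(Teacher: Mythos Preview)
Your argument for part~(1) is correct and is essentially the same as the paper's: both run the Justicz--Scheinerman--Winkler greedy, identify the increments $\max_i x_i^{(k+1)}-\max_i x_i^{(k)}$ as an i.i.d.\ sequence with tail $e^{-t^r}$ and mean $\Gamma(1/r)/r$, argue optimality of the greedy, and finish with the law of large numbers.

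For part~(2), however, your proposal is incomplete, and the two-scale plan has a real gap that you yourself flag but do not close. At the natural part-(1) scale the insertion boxes $C_i$ have $O(1)$ Poisson points, so the Bollob\'as--Winkler asymptotic is unavailable there; and if you coarsen the scaffold to scale $s\to\infty$ you lose the factor $r/\Gamma(1/r)$ that you need. Concretely: with a deterministic diagonal scaffold of step $s$ you get $m/s$ cubes of side $s$, and the Bollob\'as--Winkler greedy inside each gives $\sim (r^2/((r!)^{1/r}\Gamma(1/r)))\,s$ points, so the total is $\sim (r^2/((r!)^{1/r}\Gamma(1/r)))\,m$, which is exactly the target and not strictly larger. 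Using the random part-(1) scaffold instead does not help, because the extra scaffold density comes at the cost of correspondingly smaller (and wildly-shaped) boxes $C_i$, whose volumes you do not control; there is no mechanism by which the two factors multiply. Your fallback ``hybrid potential'' suggestion is only a sentence and does not constitute a proof.

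The paper's argument for part~(2) is much shorter and avoids all of this. It recalls (from \cite{BW88}) that the Bollob\'as--Winkler sum-greedy attains exactly the value $r^{2}/((r!)^{1/r}\Gamma(1/r))$, and then observes that this greedy is strictly suboptimal: if instead of choosing points one at a time one chooses them two at a time (at each step picking a comparable pair $\vec x^{(2k-1)}\preceq\vec x^{(2k)}$ minimising $\max(\sum_i x_i^{(2k-1)},\sum_i x_i^{(2k)})$), the expected increment per two points is strictly less than twice the one-step increment, so $a_{\mc A}=c_r$ is strictly larger than the target. This is close in spirit to the ``show the greedy is not optimal'' idea you gesture at at the end, but the specific two-step-lookahead device is what makes it a proof.
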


\begin{proof}
For (1), we use the algorithmic approach of Justicz, Scheinerman
and Winkler~\cite{JSW90}: we can build a longest
chain $\vec{x}^{(1)},\vec{x}^{(2)},\dots,\vec{x}^{(L)}$ by first
taking the point $\vec{x}^{(1)}=(x_{1}^{(1)},\dots,x_{r}^{(1)})\in\mathcal{T}_{m}$
with the smallest value of $\max_{i}x_{i}^{(1)}$, then throwing out
all other points $\vec{y}\in\mathcal{T}_{m}$ which do not satisfy
$\vec{x}\preceq\vec{y}$, and taking the point $\vec{x}^{(2)}=(x_{1}^{(2)},\dots,x_{r}^{(2)})$
with the next smallest value of $\max_{i}x_{i}^{(2)}$, and so on.

If we let $\mathcal{T}$ be the set of points corresponding to a Poisson
process of rate 1 in the semi-infinite box $[0,\infty)^{r}$ (instead
of a finite box $[0,m)^{r}$) then we obtain an infinite sequence
of points $(\vec{x}^{(k)})_{k=1}^{\infty}$ as above. Then, $(\vec{x}^{(k)})_{k=1}^{\infty}$ has the following two properties. First, for every $j\geq 1$ the sequence of points $\vec{x}^{(1)},\vec{x}^{(2)},\dots,\vec{x}^{(j)}$ is a longest
chain of points in $\mathcal{T} \cap [0,\max_i x_i^{(j)}]^r$, and second, the increments $\max_{i}x_{i}^{(k+1)}-\max_{i}x_{i}^{(k)}$
are independent and identically distributed (let $Z$ be a random variable with this common distribution).
We have $\Pr[Z> z]=\Pr[\mathcal{T}\cap[0,z]^{r}=\emptyset]=\exp(-z^{r})$
so the density of $Z$ is
\[
\frac{d}{dz}(1-\exp(-z^{r}))=rz^{r-1}\exp(-z^{r})
\]
and
\[
\mb E Z=\int_{0}^{\infty}z\cdot rz^{r-1}\exp(-z^{r})\,dz=\frac{\Gamma(1/r)}{r}.
\]
So, for any $\varepsilon>0$, taking $L_{1}\le(r/\Gamma(1/r)-\varepsilon)m$
and $L_{2}\ge(r/\Gamma(1/r)+\varepsilon)m$, we have $\max_{i}x_{i}^{(L_{1})}\le m\le\max_{i}x_{i}^{(L_{2})}$
whp, by the law of large numbers. Since $\varepsilon$ was arbitrary,
it follows that whp $L=(r/\Gamma(1/r)-o(1))m$; that is, $a=r/\Gamma(1/r)$.

For (2), we recall the algorithmic lower bound of Bollob\'as and
Winkler~\cite{BW88}. They build a chain $\vec{x}^{(1)},\vec{x}^{(2)},\dots,\vec{x}^{(Q)}$
by first taking the point $\vec{x}^{(1)}=(x_{1}^{(1)},\dots,x_{r}^{(1)})\in\mathcal{T}_{m}$
such that $\sum_{i}x_{i}^{(1)}$ is minimal, then throwing out all
other points $\vec{y}\in\mathcal{T}_{m}$ which do not satisfy $\vec{x}\preceq\vec{y}$,
and taking the point $\vec{x}^{(2)}=(x_{1}^{(2)},\dots,x_{r}^{(2)})$
with the next smallest value of $\sum_{i}x_{i}^{(2)}$, and so on.
It is proved in \cite{BW88}, by a very similar method as above, that this
algorithm produces a chain of length $r^{2}/(r!)^{1/r}\Gamma(1/r)$.

We then just need to observe that this algorithm is not optimal: for
example, instead of finding our chain point-by-point, we can build
our chain two points at a time, at step $k$ choosing two points $\vec{x}^{(2k-1)},\vec{x}^{(2k)}$
such that $\vec{x}^{(2k-1)}\preceq\vec{x}^{(2k)}$ and such that $\max\left(\sum_{i}x_{i}^{(2k-1)},\sum_{i}x_{i}^{(2k)}\right)$
is minimized. It is not hard to see that the expected ``distance
travelled'' in one such step is strictly less than two times the
distance travelled in a step of the Bollob\'as--Winkler algorithm.
\end{proof}

\section{Cliques in random ordered matchings}\label{sec:random}
In this section we prove \cref{thm:limit-exists,prop:limit-information}, using the results in \cref{sec:BW-variants}. To give a brief overview, the key insight is that every $P$-clique in an ordered matching $M$ can be interpreted as a chain in a certain point set (defined in terms of a partition of the vertices of $M$), with respect to a certain partial order $\preceq_{\mc A}$ (as in \cref{def:BW-variant}). Taking advantage of the strong concentration in \cref{thm:BW-variant}(1), we can take the union bound over \emph{all} appropriate vertex partitions of a random matching $M$, giving us control over the longest chains in the corresponding point sets.
\begin{proof}
[Proof of \cref{thm:limit-exists}] Suppose $P$ has block partition given by $J_{1}\cup\dots\cup J_{\ell}=\{1,\dots,2r\}$.
Let $\mathcal{A}$ be the partition of $\{1,\dots,r\}$ with parts
$I_{i}=\{j:2j\in J_{i}\}$ for $1\le i\le \ell$.
%(i.e., we ``compress'' our partition by a factor of two). 
Note that one can uniquely recover $I_{1},\dots,I_{\ell}$
from $J_{1},\dots,J_{\ell}$; we have $J_i=\bigcup_{j\in I_i}\{2j-1,2j\}$ for all $i\le \ell$.
% the blocks $J_{i}$ always have
% even size.

Observe that for every $P$-clique $C$ in $M$, there is some partition
$Q_{1}\cup\dots\cup Q_{\ell}$ of $V(M)=\{1,\dots,rn\}$ into contiguous
intervals, such that for every edge $e\in C$ and every $i\in\{1,\dots,\ell\}$,
we have $|e\cap Q_{i}|=|I_{i}|$. Say that $C$ is \emph{consistent}
with $Q_{1},\dots,Q_{\ell}$ if this is the case.

Fix a partition $\mathcal{B}$ of $\{1,\dots,rn\}$ into contiguous
intervals $Q_{1},\dots,Q_{\ell}$; we will only consider cliques consistent
with $\mathcal{B}$ (at the end we will take a union bound over all
partitions).

Let $M_{\mathcal{B}}$ be the sub-matching of $M$ containing the
edges of $M$ which are consistent with $\mathcal{B}$. Condition
on some outcome $N$ for the number of edges in $M_{\mathcal{B}}$.

We will realise the conditional distribution of $M_{\mathcal{B}}$
via an alternative construction, as follows.
\begin{compactenum}
\item Let $V=[0,\ell]\subseteq \mb R$ and for each $i\in\{1,\dots,\ell\}$, let $V_{i}=[i-1,i]$,
so $V_{1}\cup\dots\cup V_{\ell}=[0,\ell]$.
\item For each $j\in\{1,\dots,r\}$, let $i(j)$ be such that $j\in I_{i(j)}$.
\item Let $\mathcal{R}$ be a set of $N$ independent uniformly random points
in $\prod_{j=1}^{r}V_{i(j)}$.
\item For each $\vec{v}\in\mathcal{R}$, let $e(\vec{v})$ be the set of
coordinates of $\vec{v}$ (so $|e(\vec{v})\cap V_{i}|=|I_{i}|$ for
each $i$).
\item Let $M^{*}$ be the hypergraph on the vertex set $\bigcup_{\vec{v}\in\mathcal{R}}e(\vec{v})$
whose edges are the sets $e(\vec{v})$ for $\vec{v}\in\mathcal{R}$.
\end{compactenum}
Note that with probability 1, $M^{*}$ is a matching, and the distribution
of (the order-isomorphism class of) $M^{*}$ is the same as the conditional
distribution of (the order-isomorphism class of) $M_{\mathcal{B}}$,
by symmetry.

Our next goal is to realise the distribution of our random point set $\mathcal{R}\subseteq \prod_{j=1}^{r}V_{i(j)}$ in terms of a set $\mathcal{S}_{N}$ of $N$ independent uniformly random points in $[0,1]^{r}$, in such a way that the size of the largest $P$-clique $L_{P}(M^{*})$ in $M^{*}$ corresponds precisely to the length of the longest chain $L_{\mc A}(\mathcal{S}_{N})$ in $\mathcal{S}_{N}$ with respect to the partial order $\preceq{P}_{\mc A}$ defined in \cref{def:BW-variant}. 

Recall that in the block representation (\cref{definition: splittable}) of $P$, the incident vertices of one edge are assigned with label ``$\blockA$'' while those of the other edge are assigned with label ``$\blockB$''.
Now, consider an isometry $\phi:[0,1]^{r}\to\prod_{j=1}^{r}V_{i(j)}$
defined as follows.
\begin{compactenum}
% \item Assign the label ``$\blockA$'' to one of the edges in $P$, and assign the label ``$\blockB$'' to the other one.
\item For each $i\in\{1,\dots,r\}$, define the function $\sigma_i:\mb R\to\mb R$ by taking $\sigma_{i}(x)=x$ if the block $I_{i}$ consists of an $\blockA$-run followed by a $\blockB$-run,
and taking $\sigma_{i}(x)=1-x$ if the block $I_{i}$ consists of a $\blockB$-run
followed by an $\blockA$-run.
\item Let $\phi_0:[0,1]^{r}\to[0,1]^{r}$ be the isometry $(x_{1},\dots,x_{r})\mapsto(\sigma_{1}(x_{1}),\dots,\sigma_{r}(x_{r}))$.
%\mk{benny made a definition for $\sigma_i(x)$, but this was just supposed to be a product. I put a clarification.}.%, where $\sigma_i(x)=x$ if $\sigma_i=1$ and 
%$\sigma_i(x)=1-x$ if $\sigma_i=-1$. 
\item Let $\vec{t}$ be such that $[0,1]^r+\vec{t}=\prod_{j=1}^{r}V_{i(j)}$,
and let $\phi_{1}:[0,1]^r\to\prod_{j=1}^{r}V_{i(j)}$
be the translation $\vec{x}\mapsto\vec{x}+\vec{t}$.
\item Let $\phi=\phi_{1}\circ\phi_{0}$.
\end{compactenum}
It is easy to check that $L_{P}(M^{*})=L_{I_{1},\dots,I_{\ell}}(\mathcal{S}_{N})$.
% We can realise the distribution of $\mathcal{R}$ as $\phi(\mathcal{S}_{N})$, where $\mathcal{S}_{N}$ is a set of $N$ independent uniformly random points in $[0,1]^{n}$. 
% Moreover, the size of the largest $P$-clique $L_{P}(M^{*})$ in $M^{*}$ is precisely the length of the longest chain $L_{I_{1},\dots,I_{\ell}}(\mathcal{S}_{N})$ in $\mathcal{S}_{N}$ with respect to the poset $\mathcal{P}_{\{I_{1},\dots,I_{\ell}\}}$ defined in \cref{def:BW-variant}. 
So, still conditioning on an outcome $|M_{\mc B}|=N$, by \cref{thm:BW-variant}, with conditional probability $1-N^{-\omega(1)}$ we have that the random variable $L_{P}(M_{\mc B})$ is of the form $(a_{\mathcal{A}}+o(1))N^{1/r}$.

Also, by the concentration inequality \cref{thm:mcd-permutation} (see \cref{rem:permutation-matching}), with probability
at least $1-n^{-\omega(1)}$ we have
\[
|M_{\mc B}|=\left(f_{P}\left(\frac{|Q_{1}|}{rn},\dots,\frac{|Q_{\ell}|}{rn}\right)+o(1)\right)n,
\]

where $f_{P}(q_{1},\dots,q_{\ell})$ is the probability that for a
set of $r$ independent uniformly random points in $[0,1]$, exactly
$I_{i}$ of them fall between $q_{i-1}$ and $q_{i-1}+q_{i}$ (here we take $q_{0}=0$ for convenience). Note that $f_{P}(q_{1},\dots,q_{\ell})$ is a
homogeneous polynomial of degree $r$ in the variables $q_{1},\dots,q_{\ell}$.

Let $\alpha_{P}$ be the maximum value of $f_{P}(q_{1},\dots,q_{\ell})$
over all $q_{1},\dots,q_{\ell}\ge0$ summing to 1. Then, taking a
union bound over all possible partitions $\mathcal{B}$ (of which
there are at most $n^{\ell-1}$), we see that 
\[
\frac{L(Q)}{n^{1/r}}\overset{p}{\to}a_{\mathcal{A}}\alpha_{P}^{1/r}.
\]
So, the desired result follows with $b_{P}=a_{\mathcal{A}}\alpha_{P}^{1/r}$.
\end{proof}
\begin{proof}[Proof of \cref{prop:limit-information}] 
Recall the definitions of $f_{P}$ and $\alpha_{P}$ from the proof of \cref{thm:limit-exists}, and recall the estimates in that proof.
\begin{compactenum}
    \item Suppose $P$ has type $r$. As in the proof of \cref{thm:limit-exists}, let $\mathcal A$ be the trivial partition of $\{1,\dots,r\}$ into one part. Note that $\alpha_{P}=f_P(1)=1$, and by \cref{prop:BW-limit-information}, $a_{\mc A}=r/\Gamma(1/r)=1/\Gamma((r+1)/r)$. So, $b_P=a_{\mc A}\alpha_P^{1/r}=1/\Gamma((r+1)/r)$.
    \item Suppose $P$ has type $1+\dots+1$. As in the proof of \cref{thm:limit-exists}, let $\mathcal A$ be the partition of $\{1,\dots,r\}$ into $r$ singleton parts. Note that $\alpha_{P}=f_{P}(1/r,\dots,1/r)=r!/r^{r}$, and note that $a_{\mc A}$ is precisely the Bollob\'as--Winkler constant $c_r$, which by \cref{prop:BW-limit-information} is strictly larger than  $\frac{r^{2}}{(r!)^{1/r}\Gamma(1/r)}$. Therefore we have $b_P=c_r(r!/r^{r})^{1/r}=c_r(r!)^{1/r}/r>r/\Gamma(1/r)=1/\Gamma((r+1)/r)$.\qedhere
\end{compactenum}
% If $P$ has type $1+\dots+1$, then $\alpha_{P}=f_{P}(1/r,\dots,1/r)=r!/r^{r}$. 
% Let $c_r$ be the constant in \cref{prop:BW-limit-information} when $\mc A$ has $r$ parts of size 1.
% We have $b_P=c_r(r!/r^{r})^{1/r}=c_r(r!)^{1/r}/r>r/\Gamma(1/r)=1/\Gamma((r+1)/r)$.
% If $P$ has type $r$ then $\alpha_{P}=f_P(1)=1$, 
% and if $P$ has type $1+\dots+1$ then $\alpha_{P}=f_{P}(1/r,\dots,1/r)=r!/r^{r}$. The desired results follow, recalling the estimates in \cref{prop:BW-limit-information}.
% \zj{Not consistent with Proposition 1.11.}
\end{proof}

\section{Enumeration}\label{sec:counting}
In this section we estimate $\countClique{n}{P}{m}$ up to an exponential factor, proving  \cref{thm:count} and therefore \cref{thm:count-basic}.
The upper bound and the lower bound in \cref{thm:count} are proved separately. Despite \cref{thm:count} being about ordered matchings, both the upper and lower bound require the consideration of general ordered $r$-graphs; in particular, the upper bound requires the extremal results introduced in \cref{subsec:intro-extremal} (proved in \cref{sec:extremal}).

For the upper bound, we prove the following general estimate on the number of ordered matchings $M$ avoiding a particular sub-matching $Q$.

\begin{theorem}\label{thm:extremal-to-counting}
  Let $r\ge 2$ and $Q$ be an ordered $r$-matching with at least two edges.
  Suppose there exists some $n_Q>0$ such that $\ex_<(n,Q)\le \alpha n^{r-1}$ for $n\ge n_Q$.
  Then, if $n$ is sufficiently large in terms of $r$ and $n_Q$, the number of $Q$-free ordered $r$-matchings of size $n$ is at most
    \[e^{40n}(\alpha r^r n^{r-2})^{(r/(r-1))n}.\]
\end{theorem}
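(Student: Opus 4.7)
The plan is to adapt the block-partition strategy of Cibulka and Kyncl, which translates the extremal hypothesis $\ex_<(n, Q) \le \alpha n^{r-1}$ into a counting bound on $Q$-free matchings via an encoding argument.

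Choose a block-size parameter $k \asymp \alpha^{1/(r-1)} r^{(2r-1)/(r-1)} n^{(r-2)/(r-1)}$ and partition $[rn]$ into $s := \lceil rn/k \rceil$ contiguous blocks $B_1, \dots, B_s$ of size about $k$. For each edge $e$ of a $Q$-free matching $M$, define its \emph{profile} $\pi(e)$ to be the sorted $r$-tuple of block indices containing the vertices of $e$, and call $e$ \emph{dispersed} if these $r$ indices are pairwise distinct. The key observation is that the ordered $r$-graph $H$ on $[s]$ formed by the distinct dispersed profiles of $M$ is $Q$-free. Indeed, a copy of $Q$ in $H$ would involve $q := |E(Q)|$ profiles occupying $rq$ distinct block indices (hence pairwise disjoint), and choosing one dispersed edge of $M$ per profile would yield $q$ vertex-disjoint edges of $M$ whose joint pattern on $[rn]$ coincides with the block-pattern of the profiles — and is therefore $Q$, contradicting $Q$-freeness. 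So by the extremal hypothesis (valid once $s \ge n_Q$, which holds for $n$ large enough in terms of $r$ and $n_Q$), $|E(H)| \le \alpha s^{r-1}$.

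Counting is then carried out by an encoding: each $Q$-free $M$ is specified by an ordered list of $n$ labels, one per edge, recording the profile and the exact vertex layout of the edge within the $\le r$ blocks of its profile. A dispersed label has at most $|E(H)| \cdot k^r \le \alpha s^{r-1} k^r = \alpha r^{r-1} n^{r-1} k$ options (using $sk = rn$); concentrated-edge labels, whose profiles have $\le r - 1$ distinct block indices, contribute an additional $O_r(s^{r-1} k^{r-1})$ options per edge, absorbed by a constant factor. Taking the $n$-th power, dividing by $n!$ for the ordering of edges, and bounding $n! \ge (n/e)^n$ gives
\[
\#\{Q\text{-free matchings of size }n\} \;\le\; \bigl(C_r \cdot \alpha r^{r-1} e \, n^{r-2} k\bigr)^n
\]
for an $r$-dependent constant $C_r$. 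Substituting the prescribed $k$ simplifies the right-hand side to $e^{O_r(n)} \cdot (\alpha r^r n^{r-2})^{(r/(r-1)) n}$; the $40n$ in the exponent of $e$ in the claim absorbs the $O_r(n)$ overhead for fixed $r$.

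The main obstacle is justifying the encoding step rigorously, since $H$ depends on $M$: specifying profiles as ``indices into $H$'' implicitly requires transmitting $H$, and naive enumeration of $H$ would introduce an unacceptable $\binom{\binom{s}{r}}{\alpha s^{r-1}}$ factor. The workaround, following Cibulka and Kyncl, is to enumerate label-sequences directly, with profiles specified as $r$-subsets of $[s]$, and to bound the number of valid sequences — those whose profile-image in $\binom{[s]}{r}$ is $Q$-free — via a container-style summation where the dominant contribution comes from profile-image sets of maximal size $\alpha s^{r-1}$, yielding an effective per-edge count matching the above. The hypothesis $\ex_<(s, Q) \le \alpha s^{r-1}$ is applied only for $s \ge n_Q$, which constrains how large $n$ must be taken in terms of $r$ and $n_Q$.
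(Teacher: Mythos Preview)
Your block-partition strategy is exactly the one the paper uses, and your per-edge label count for dispersed edges ($|E(H)|\cdot k^r\le \alpha s^{r-1}k^r$) is correct. But the obstacle you yourself flag --- that the profile hypergraph $H$ depends on $M$ --- is a genuine gap that your proposal does not close. Your suggested workaround (``enumerate label-sequences directly \dots\ via a container-style summation'') is too vague to be a proof, and a direct summation cannot work: summing $|E(H)|^n$ over all $Q$-free $H\subseteq\binom{[s]}{r}$ reintroduces precisely the $\binom{\binom{s}{r}}{\alpha s^{r-1}}$ factor you are trying to avoid. The extremal hypothesis bounds the \emph{size} of each $Q$-free $H$, not the \emph{number} of them.

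The missing ingredient, which the paper (following Cibulka--Kyncl) supplies as a separate lemma, is exactly that count: the number of $Q$-free ordered $r$-graphs on $[s]$ is at most $\exp\bigl(2^{r+2}\ex_<(\lceil s/2\rceil,Q)\bigr)$. This is proved by a \emph{second}, iterated contraction: merge the vertices of $[s]$ in consecutive pairs to get a $Q$-free graph on $\lceil s/2\rceil$ vertices, bound the number of preimages of each such graph by $2^{2^{r+1}\ex_<(\lceil s/2\rceil,Q)}$ (one factor of $2^{2^r\ex_<}$ for deciding which of the $\le 2^r$ lifts of each contracted edge are present, and another for the ``non-contractible'' edges, i.e.\ those with two vertices in the same pair), and repeat down to constant size using the halving property $\ex_<(\lceil s/2^k\rceil,Q)\le 2^{-(k-1)}\ex_<(\lceil s/2\rceil,Q)$. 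With your block size one has $\alpha s^{r-1}=O(n)$, so this yields at most $e^{O(n)}$ choices for $H$; after fixing $H$, your encoding (choose a profile multiset from $E(H)\cup\{\text{non-dispersed tuples}\}$, a set of total size $O(n)$, then place each edge inside its blocks in at most $k^r$ ways) finishes exactly as you outlined. Without this graph-counting lemma the argument does not go through.
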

\begin{comment}
    next sentence was added 
\end{comment}
In the above statement, we allow $\alpha$ to depend on $n$. 
To obtain the upper bound in \cref{thm:count}, we can simply apply \cref{thm:extremal-to-counting} with $Q=\clique P m$ and the estimate in \cref{thm:extremal-clique}(2).

In order to prove the general statement in \cref{thm:extremal-to-counting}, we need a few basic facts about extremal numbers of general ordered $r$-matchings. 
First, we need the fact that extremal numbers always have order of magnitude at least $n^{r-1}$. (This is not hard to prove; in particular it follows from the $m=2$ case of \cref{theorem: lower bound for all patterns}, which we prove in \cref{sec:extremal}).
\begin{proposition} \label{claim: turan number is not small}
  Let $r \ge 2$, and let $Q$ be any $r$-matching with at least two edges. Then
  \[\ex_<(n,Q) \ge \binom{n}{r}-\binom{(n-r)_+}{r}.\]
\end{proposition}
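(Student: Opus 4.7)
The plan is to reduce the claim for a general $r$-matching $Q$ to the corresponding claim for a 2-pattern, which is exactly the $m=2$ case of \cref{theorem: lower bound for all patterns}. Once that theorem is in hand, the whole argument is essentially one line of monotonicity plus a citation.

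First, since $Q$ has at least two edges, I would pick any two of them (call them $e_1$ and $e_2$) and let $P$ denote the 2-pattern they form on the $2r$ vertices $e_1 \cup e_2$. The key monotonicity observation is that any $P$-free ordered $r$-graph $H$ is automatically $Q$-free: if $H$ contained an ordered copy of $Q$, then the images of $e_1$ and $e_2$ in that copy would be two edges of $H$ forming $P$, contradicting $P$-freeness. This immediately gives
\[
\ex_<(n, Q) \,\ge\, \ex_<(n, P) \,=\, \ex_<(n, \clique{P}{2}).
\]

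Second, I would apply the $m=2$ case of \cref{theorem: lower bound for all patterns} to the 2-pattern $P$ to obtain
\[
\ex_<(n, \clique{P}{2}) \,\ge\, \binom{n}{r} - \binom{(n-r)_+}{r},
\]
which, combined with the previous display, finishes the proof.

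The only non-routine ingredient is the invoked lower bound from \cref{sec:extremal}, which supplies, for every 2-pattern $P$, a $P$-free ordered $r$-graph with the target number of edges. Concretely, for $r$-partite $P$ the construction is exactly the one yielding equality in \cref{theorem: turan number for m=2}; for non-$r$-partite $P$ one can instead take a balanced $r$-partite $r$-graph on $\{1,\dots,n\}$ with contiguous parts of size $\lfloor n/r\rfloor$, which is automatically $P$-free (any $r$-partite $r$-graph can only realise $r$-partite 2-patterns) and has $\lfloor n/r\rfloor^r$ edges, exceeding $\binom{n}{r}-\binom{(n-r)_+}{r}\le r\binom{n-1}{r-1}$ once $n$ is sufficiently large in terms of $r$ (the small-$n$ regime is handled trivially by the complete $r$-graph). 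Thus I expect no real obstacle in the reduction itself; any difficulty is deferred to (and already handled by) the theorem being invoked.
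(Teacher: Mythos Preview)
Your argument is correct and is exactly the paper's approach: reduce to a single pattern $P$ formed by two edges of $Q$ via monotonicity, then apply the $m=2$ case of \cref{theorem: lower bound for all patterns}. The final paragraph is superfluous---that theorem already treats \emph{all} $r$-patterns uniformly when $m=2$ via a single construction---and your alternative $r$-partite construction for non-$r$-partite $P$ actually falls short for some intermediate $n$ (e.g.\ $r=2$, $n=4$, where $\lfloor n/r\rfloor^r=4<5=\binom{4}{2}-\binom{2}{2}$ and the complete graph is no longer $P$-free), but this is irrelevant once you invoke the theorem as stated.
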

% \begin{proof}
% Since removing edges from $Q$ cannot decrease $\ex_<(n,Q)$, we can assume that $Q$ has exactly 2 edges (i.e., it is an $r$-pattern). If $Q$ is an $r$-partite $r$-pattern, then the desired result follows from \cref{theorem: turan number for m=2}.
% Otherwise, if $Q$ is not $r$-partite, it is easy to see that $\ex_<(n,M)\ge \floor{n/r}^r$ by considering the ``balanced complete $r$-partite $r$-graph on $n$ vertices'': to be precise, consider an ordered $r$-graph $G$ with its vertices divided into $r$ contiguous parts of size at least $\floor{n/r}$, including every possible edge that intersects all the parts. This graph is $Q$-free, and one can check that  $\floor{n/r}^r\ge \binom{n}{r}-\binom{(n-r)_+}{r}$ for sufficiently large $n$ (in terms of $r$).
% \end{proof}

We also need a basic monotonicity property of extremal numbers of ordered matchings.

\begin{proposition} \label{claim: turan number doubles}
  Let $r,n\ge 2$ and let $Q$ be an ordered $r$-matching with more than one edge.
  Then, \[\ex_<(n,Q) \ge 2\ex_<(\ceil{n/2},Q).\]
\end{proposition}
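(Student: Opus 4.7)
My plan is to obtain a $Q$-free ordered $r$-graph on $[n]$ with $2\ex_<(\ceil{n/2},Q)$ edges by pasting together two copies of an extremal construction on $m:=\ceil{n/2}$ vertices. Take $H$ to be any $Q$-free ordered $r$-graph on $m$ vertices achieving $e(H)=\ex_<(m,Q)$, and form an ordered $r$-graph $H^{*}$ on $[n]$ as the union of a copy $H_1$ of $H$ placed on $\{1,\dots,m\}$ and a second copy $H_2$ of $H$ placed on $\{n-m+1,\dots,n\}$. For even $n$ the two vertex sets partition $[n]$; for odd $n$ they meet only in the single vertex $\ceil{n/2}$, but since $r\ge 2$ no edge can sit inside this singleton intersection, so $E(H_1)\cap E(H_2)=\emptyset$ and $e(H^{*})=2e(H)=2\ex_<(\ceil{n/2},Q)$, as desired.

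Next I would verify that $H^{*}$ is $Q$-free. Assume for contradiction that an order-isomorphic copy $Q^{*}$ of $Q$ sits inside $H^{*}$, and partition its edges according to whether they come from $H_1$ or from $H_2$. If one side of the partition is empty, then that copy of $H$ itself contains $Q$, contradicting the $Q$-freeness of $H$. Otherwise both sides are non-empty; using that $Q^{*}$ is a matching (its edges are pairwise vertex-disjoint), at most one edge of $Q^{*}$ can contain the potentially shared boundary vertex $\ceil{n/2}$, and vertex-disjointness then forces every edge of $Q^{*}$ in $H_1$ to precede every edge of $Q^{*}$ in $H_2$ strictly in the order on $[n]$. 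This would exhibit $Q$ itself as a non-trivial concatenation $Q = Q_1 \oplus Q_2$ of two non-empty sub-matchings with all vertices of $Q_1$ preceding those of $Q_2$.

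The main effort then goes into ruling out this splitting. When $Q$ is indecomposable --- in particular, whenever some pair of edges of $Q$ crosses or nests in a way that bridges every candidate split point --- no such $Q_1 \oplus Q_2$ decomposition exists and the contradiction is immediate; this already covers nesting-cliques, crossing-cliques, and more generally any $Q$ whose edges pairwise interleave. For splittable $Q$ (most notably generalised alignment-cliques) the naive side-by-side construction can genuinely contain $Q$, and to close the argument one has to refine it: I would try choosing $H$ more carefully so that at its right-hand boundary it also avoids every admissible prefix $Q_1$ of $Q$ (and symmetrically $H_2$ at its left-hand boundary avoids every suffix $Q_2$), or alternatively enlarge the overlap between $H_1$ and $H_2$ just enough to block every possible cross-decomposition of $Q$ without forfeiting edges. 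I expect handling this splittable case uniformly to be the main technical obstacle.
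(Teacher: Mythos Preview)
Your handling of the indecomposable case is essentially the paper's argument: glue two extremal copies at (or near) a single boundary vertex, and observe that any embedded copy of $Q$ would have to lie entirely in one side. That part is fine.

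The gap is in the decomposable case, and you have correctly identified it as the obstacle without actually resolving it. Neither of your suggested fixes is likely to work cleanly: ``choosing $H$ more carefully'' would require $H$ to avoid every possible prefix sub-matching of $Q$ at its right boundary, but there is no reason an extremal $Q$-free graph should have this additional property (and forcing it could cost you edges); ``enlarging the overlap'' runs into the same issue, and also risks creating double-counted edges. The paper sidesteps this entirely by abandoning the two-copies construction when $Q$ splits. Instead it takes the ordered $r$-graph on $\{1,\dots,n\}$ consisting of \emph{all} edges $e$ with $e[1]\le\lceil n/2\rceil<e[r]$. This graph is trivially $Q$-free, since every edge crosses the midpoint while a splittable $Q$ must contain some edge lying entirely to the left of some other edge. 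One then just counts:
\[
\binom{n}{r}-\binom{\lceil n/2\rceil}{r}-\binom{\lfloor n/2\rfloor}{r}\ \ge\ 2\binom{\lceil n/2\rceil}{r}\ \ge\ 2\ex_<(\lceil n/2\rceil,Q),
\]
the last step being trivial. So the missing idea is that in the splittable case you do not need to build from an extremal $H$ at all; a single explicit ``midpoint-crossing'' construction already has enough edges.
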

\begin{proof}
Write the edges of $Q$ as $e_1,\dots,e_m$, with $e_1[1]<\dots<e_m[1]$. Say that $Q$ \emph{splits into sub-matchings} if there is an index $\ell\in\{1,\dots,m-1\}$ such that $e_i[r]<e_j[1]$ for any $1 \le i \le \ell<j \le m$ (i.e., $Q$ consists of two matchings placed side-by-side). We proceed differently depending on whether $Q$ splits into sub-matchings.

If $Q$ does not split into sub-matchings, then given any $Q$-free ordered $r$-graph $G$, we can ``glue together two copies of $G$'' to make a larger $Q$-free ordered $r$-graph $G'$. Specifically, we can take two copies of $G$, and identify the last vertex of the first copy with the first vertex of the second copy (so except for the single identified vertex, all vertices of the first copy come before all vertices of the second copy). Clearly, if the resulting graph $G'$ were to contain some copy of $Q$, then this copy must completely lie in one of the two copies of $G$.
But $G$ is $Q$-free, meaning that $G'$ is also $Q$-free. 
In addition, if $G$ had exactly $\ceil{n/2}$ vertices and $\ex_<(\ceil{n/2},Q)$ edges, then $G'$ has $2\ceil{n/2}-1\le n$ vertices and $2\ex_<(\ceil{n/2},Q)$ edges, proving the desired inequality.

On the other hand, if $Q$ does split into sub-matchings, then we can consider the $r$-graph $G$ on the vertex set $\{1,\dots,n\}$ containing all possible edges $e\in E(K_n^{(r)})$ with $e[1]\le\ceil{n/{2}}<e[r]$. 
Note that $G$ is $Q$-free. We deduce
\[
    \ex_<(n,Q)\ge e(G)\ge \binom{n}{r}-\binom{\ceil{n/2}}{r}-\binom{\floor{n/2}}{r}\ge 2\binom{\ceil{n/2}}{r}\ge 2\ex_<(\ceil{n/2},Q),
\]
as desired.
\end{proof}

Now, our proof of \cref{thm:extremal-to-counting} % the upper bound in \cref{thm:count}
proceeds by a ``contraction'' argument, similar to an argument for $r$-dimensional orders by Cibulka and Kyncl~\cite{CK17}. 
We execute this argument in two steps: first we use an inductive contraction argument to estimate the number of $Q$-free graphs on $n$ vertices, then we use this bound and a different contraction argument to upper bound the number of $Q$-free ordered matchings of size $n$. 
The first of these steps is encapsulated in the following lemma.

\begin{lemma} \label{lemma: count ordered graphs without M}
Fix a constant $r\ge 2$, and suppose $n$ is sufficiently large in terms of $r$. For any ordered $r$-matching $Q$ with more than one edge, the number of ordered $Q$-free $r$-graphs on the vertex set $\{1,\dots,n\}$ is at most
\[\exp\left(\vphantom \sum 2^{r+2}\ex_<(\ceil{n/2},Q)\right).\]
\end{lemma}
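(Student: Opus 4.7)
The plan is a contraction argument in the spirit of Cibulka--Kyncl~\cite{CK17}. Partition $\{1,\dots,n\}$ into $\ceil{n/2}$ consecutive blocks $P_i=\{2i-1,2i\}$ (with $P_{\ceil{n/2}}=\{n\}$ if $n$ is odd). Given an ordered $r$-graph $G$ on $\{1,\dots,n\}$, call an edge of $G$ a \emph{transversal} if it contains one vertex from each of $r$ distinct pairs $P_{i_1},\dots,P_{i_r}$, and call it \emph{bad} otherwise (i.e.\ it contains both vertices of some pair). Define the \emph{shadow} $G'$, an ordered $r$-graph on $\{1,\dots,\ceil{n/2}\}$, by declaring $\{i_1,\dots,i_r\}$ to be an edge of $G'$ iff $G$ has some transversal using one vertex from each of $P_{i_1},\dots,P_{i_r}$.

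The key step is that if $G$ is $Q$-free then so is $G'$. Suppose to the contrary that $G'$ contained $m$ disjoint edges forming pattern $Q$, and lift each to an arbitrary transversal of $G$. The pair-blocks used by these $m$ lifts are pairwise disjoint, so the lifts themselves are vertex-disjoint in $G$, and the order between any two of their vertices in $G$ equals the order of the corresponding blocks, which in turn matches the order in $G'$. So the lifts form pattern $Q$ in $G$, contradicting $Q$-freeness; hence $e(G')\le\ex_<(\ceil{n/2},Q)$.

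Next one counts fibres. For a fixed shadow $G'$, reconstructing $G$ amounts to choosing, for each edge of $G'$, a nonempty subset of the $\le 2^r$ possible transversals (at most $2^{2^r}$ options per edge), and independently deciding which of the at most $\floor{n/2}\binom{n-2}{r-2}$ potential bad edges to include. Letting $f(m)$ denote the number of $Q$-free ordered $r$-graphs on $m$ vertices, this yields the recursion
\[
 f(n)\le f(\ceil{n/2})\cdot 2^{2^r\,\ex_<(\ceil{n/2},Q)+\floor{n/2}\binom{n-2}{r-2}}.
\]
Iterating with $n_0=n$, $n_{k+1}=\ceil{n_k/2}$ down to a constant threshold $n_K=O_r(1)$ (where $f(n_K)\le 2^{O_r(1)}$), \cref{claim: turan number doubles} gives $\sum_{k\ge 1}\ex_<(n_k,Q)\le 2\,\ex_<(\ceil{n/2},Q)$, and \cref{claim: turan number is not small} combined with elementary binomial estimates gives $\floor{m/2}\binom{m-2}{r-2}\le 2^{r-2}\ex_<(\ceil{m/2},Q)$ for $m$ sufficiently large. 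The sum of bad-edge contributions is therefore $\le 2^{r-1}\ex_<(\ceil{n/2},Q)+O_r(1)$, and altogether
\[
 \log_2 f(n)\le (2^{r+1}+2^{r-1})\,\ex_<(\ceil{n/2},Q)+O_r(1).
\]
Since $5\ln 2 / 2\approx 1.73<4$, converting to natural log gives $\ln f(n)\le 2^{r+2}\ex_<(\ceil{n/2},Q)$ for $n$ sufficiently large in terms of $r$ (absorbing the additive $O_r(1)$ using $\ex_<(\ceil{n/2},Q)\to\infty$).

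The main obstacle is the first step, verifying that the shadow inherits $Q$-freeness; this relies crucially on $Q$ being a \emph{matching} (so lifts of disjoint edges of $G'$ are disjoint in $G$) and on the pair-blocks being consecutive intervals (so the induced order on lifted vertices matches the order on $G'$). The remainder is routine bookkeeping, and the factor $2^{r+2}$ in the target bound leaves comfortable slack over the $2^r$ contribution from transversal choices and the $2^{r-2}$-order bad-edge contribution.
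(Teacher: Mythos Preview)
Your proof is correct and follows essentially the same contraction-and-iteration argument as the paper: partition into consecutive pairs, show the shadow remains $Q$-free (the paper states this without the detailed lifting argument you give), bound the fibre by separately counting transversal and non-transversal (``bad'') edges, and iterate using \cref{claim: turan number doubles} and \cref{claim: turan number is not small}. The only cosmetic differences are that the paper bounds the non-contractible edges by $\binom{n}{r}-\binom{(n-r)_+}{r}\le 2^r\ex_<(\ceil{n/2},Q)$ rather than your $\lfloor n/2\rfloor\binom{n-2}{r-2}\le 2^{r-2}\ex_<(\ceil{n/2},Q)$, and uses the cruder bound $2^x\le e^x$ at each step rather than tracking $\log_2$ and converting at the end; both routes land comfortably inside the target constant $2^{r+2}$.
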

Note that if $\ex_<(n,Q)\le \alpha n^{r-1}$ for $n\ge n_Q$ (as in the statement of \cref{thm:extremal-to-counting}), the bound in \cref{lemma: count ordered graphs without M} is at most $\exp(O(\alpha n^{r-1}))$ for $n\ge 2n_Q$.
\begin{proof}
For any $n\in \mb N$, let $\mathcal G_n$ be the set of all $r$-graphs on the vertex set $\{1,\dots,n\}$ which are $Q$-free.

First, we describe how to ``contract pairs of vertices'' to transform an $n$-vertex ordered graph $G\in \mathcal G_n$ into an $\ceil{n/2}$-vertex ordered graph $\phi(G)\in \mathcal G_{\ceil{n/2}}$. For $G\in \mathcal G_n$, partition its vertex set $\{1,\dots,n\}$ into $\ceil{n/2}$ contiguous intervals $I_1,\dots,I_{\ceil{n/2}}$, where each $I_i$ has size $2$ except possibly $I_{\ceil{n/2}}$ (which has size $1$ if $n$ is odd). Then, $\phi(G)$ is obtained by ``contracting'' each $I_i$ to a single vertex. Specifically, we include $e \in E(K_{\ceil{n/2}}^{(r)})$ as an edge of $\phi(G)$ if and only if $\{i_1,\dots,i_r\} \in E(G)$ for some $i_1\in I_{e[1]}, i_2\in I_{e[2]},\dots,i_r\in I_{e[r]}$. Note that this contraction operation cannot create copies of $Q$.

Now, fixing $G' \in \mc{G}_{\ceil{n/2}}$, we are going to estimate the number of $G \in \mc{G}_n$ such that $\phi(G)=G'$. For all possible edges $e\in E(K_n^{(r)})$ of $G$, say that $e$ is \emph{contractible} if $\ceil{e[1]/2}<\dots<\ceil{e[r]/2}$ (i.e., if the vertices of $e$ lie in different $I_i$, in the definition of $\phi(G)$).

First, every edge $e\in E(G')$ arose by contracting some edge $f_e\in E(G)$. There are $2^r$ possibilities for $f_e$, and at least one must have been present in $G$, so given that $\phi(G)=G'$ there are at most $(2^{2^r}-1)^{e(G')}<2^{2^r\ex_<(n',Q)}$ ways to choose the contractible edges in $G$.

On the other hand, if $e\in E(G)$ is not contractible, then $e[k]+1=e[k+1]$ for some $1 \le k < r$. By \cref{lemma: count the number of r-tuples with given gap,claim: turan number is not small}, the number of $e\in E(K_n^{(r)})$ which have this property is
\[\binom{n}{r}-\binom{(n-r)_+}{r}<2^r\left(\binom{\ceil{n/2}}{r}-\binom{(\ceil{n/2}-r)_+}{r}\right)\le 2^r \ex_<(\ceil{n/2},Q),\]
assuming that $n$ is sufficiently large in terms of $r$ (say, $n\ge n_r$). So, the total number of ways to choose the non-contractible edges in $G$ is at most $2^{2^r\ex_<(\ceil{n/2},Q)}$.
An $r$-graph $G$ is specified by its contractible and non-contractible edges, so we have
\[\abs{\mc{G}_n} \le \abs{\mc{G}_{\ceil{n/2}}}\cdot 2^{2^r\ex_<(\ceil{n/2},Q)}\cdot 2^{2^r\ex_<(\ceil{n/2},Q)}\le \abs{\mc{G}_{\ceil{n/2}}}\cdot \exp\left(\vphantom\sum 2^{r+1}\ex_<(\ceil{n/2},Q)\right).\]
Iterating this inequality, and using that $\ex_<(\lceil n/2^k\rceil,Q) \leq 2^{-(k-1)} \ex_<(\ceil{n/2},Q)$ (by Proposition \ref{claim: turan number is not small}), we have
% applying \cref{claim: turan number doubles}, yields
\[
    \abs{\mc{G}_n} 
    \le \abs{\mc{G}_{n_r}} \prod_{k=1}^{\ceil{\log_2 (n/n_r)}} \exp\left(\vphantom\sum 2^{r+1}\ex_<(\lceil n/2^k\rceil,Q)\right)
    \le \abs{\mc{G}_{n_r}} \cdot \exp\left(\vphantom\sum (2^{r+2}-1)\ex_<(\ceil{n/2},Q)\right).
  \]
  % for some $n'\le n_r$. The desired result follows (note that $|\mc G_{n'}|=O_r(1)$).
  When $n$ is sufficiently large in terms of $r$, \cref{claim: turan number is not small} implies $\abs{\mc{G}_{n_r}} \le 2^{\binom{n_r}{r}} \le \exp\left( \ex_<(\ceil{n/2},Q) \right)$.
  Then, $\abs{\mc{G}_n} < \exp\left(\vphantom\sum 2^{r+2}\ex_<(\ceil{n/2},Q)\right)$, as desired.
\end{proof}
We are now ready to complete the proof of \cref{thm:extremal-to-counting}.
\begin{proof}[Proof of \cref{thm:extremal-to-counting}]
Let $\mc{M}_n$ be the set of ordered $r$-matchings on the vertex set $\{1,\dots,rn\}$ that are $Q$-free, and for any $n'$ let $\mc G_{n'}$ be the set of $r$-graphs on the vertex set $\{1,\dots,n'\}$ that are $Q$-free.

As in the proof of \cref{lemma: count ordered graphs without M}, we will define a ``contraction'' operation (though this time we will contract larger intervals, and we will not need to iterate our operation). For some $B$, whose value we will specify shortly, let $n'=\lceil rn/B\rceil$ and partition $\{1,\dots,rn\}$ into $n'$ contiguous intervals $I_1,\dots,I_{n'}$, where each $I_i$ has size $B$ except possibly $I_{n'}$ (which is smaller if $n$ is not divisible by $B$). For $M \in \mc{M}_n$ let $\psi(M)\in \mc G_{n'}$ be the graph on the vertex set $\{1,\dots,n'\}$ obtained by contracting each interval $I_i$ to a single vertex (specifically, we include $e \in E(K_{n'}^{(r)})$ as an edge of $\psi(G)$ if and only if $(i_1,\dots,i_r)\in E(M)$ for some $i_1 \in I_{e[1]},\dots,i_r \in I_{e[r]}$).
Now, we specify $B$ to be the largest integer such that $B^{r-1}\le \alpha r^r n^{(r-2)}$. 
Note that $B^{r-1}\ge\alpha r^rn^{r-2}/2$ and $(n')^{r-1}\le 2(rn/B)^{r-1}= 2{r^{r-1}n^{r-1}}/{B^{r-1}}\le4n/(r\alpha)$; and $(n')^{r-1}\ge (rn/B)^{r-1}= {r^{r-1}n^{r-1}}/{B^{r-1}} \ge  n/(r\alpha)$.
\begin{comment}
%; The last series of inequalitites  was added
\end{comment}
By \cref{lemma: count ordered graphs without M} we have
\[\abs{\mc{G}_{n'}} \le \exp\left(\vphantom\sum 2^{r+2}\ex_<(\lceil n'/2\rceil,Q)\right) \le  \exp\left(\vphantom\sum 2^{r+3} \alpha(n'/2)^{r-1}\right) \le \exp\left(\vphantom\sum 16\alpha(n')^{r-1}\right)\le e^{64n/r}\le e^{32n},\]
assuming $n$ is sufficiently large with respect to $r$. (In this deduction we assumed that $\alpha\leq 4n^{3/4}/r$ and $n$ is sufficiently large with respect to $n_Q$ so that $  (n/(r\alpha))^{1/(r-1)}/2\ge n_Q$, thus $\lceil n'/2\rceil\ge (n/(r\alpha))^{1/(r-1)}/2\ge n_Q$.
Note that if $\alpha> 4n^{3/4}/r$ then $(n')^{r-1} \leq  4n/r\alpha < n^{1/4}$, hence $\abs{\mc{G}_{n'}}\le 2^{\binom{n'}{r}} \leq 2^{(n')^r}<e^{32n}$).
\begin{comment}
 (In this deduction we assumed $\lceil n'/2\rceil\ge n_Q$, but note that the above inequality $\abs{\mc{G}_{n'}}\le e^{32n}$ holds trivially if $\lceil n'/2\rceil < n_Q$, assuming $n$ is sufficiently large with respect to $n_Q$).
\end{comment}
Now, fixing $G\in\mc{G}_{n'}$, let us estimate the number of $M\in\mc{M}_n$ with $\psi(M)=G$. Viewing each edge $e\in E(G)$ as an ordered $r$-tuple $(e[1],\dots,e[r])$, let
\[\mc{T}=E(G)\cup\{(t_1,\dots,t_r)\in[n']^r: t_1\le t_2\le\dots\le t_r \text{ and there exists $k$ such that } t_k=t_{k+1} \}.\]
The idea is that $\mc T$ specifies the ``possible places where edges of $M$ can lie'': if $\psi(M)=G$ then for each edge $e\in M$ there must be some tuple $(t_1,\dots,t_r)\in \mathcal T$ such that $e[1]\in I_{t_1},\dots,e[r]\in I_{t_r}$ (say that $e$ is \emph{consistent} with $\vec t$). 
% By \cref{claim: turan number is not small}, we have
Then,
  \begin{equation} \nonumber
    \begin{aligned}
      \abs{\mc{T}} 
      =&\, |E(G)| + \binom{n'+r-1}{r}-\binom{n'}{r} 
      \le \ex_<(n',Q) + 2\binom{n'}{r}-2\binom{(n'-r)_+}{r} \\
      \le&\, 3\ex_<(n',Q) \le 3\alpha (n')^{r-1} 
      \le \frac{12n}{r}\le 6n.
    \end{aligned}
  \end{equation}
  % \[
  %   \abs{\mc{T}} 
  %   \le \ex_<(n',Q) + \binom{n'+r-1}{r}-\binom{n'}{r} 
  %   \le 2\ex_<(n',Q) \le 2\alpha (n')^{r-1} 
  %   \le \frac{8n}{r}\le 4n.
  % \]
(In the deduction above  we assumed that $\alpha\le 4 n^{3/4}/r$ and $n$ is sufficiently large with respect to $n_Q$ and $r$ so that the first inequality holds, and $n'\geq (n/(r\alpha))^{1/(r-1)} \ge n_Q$ so that the third inequality holds. In the second inequality we used \cref{claim: turan number is not small}. Also note that the inequality $\abs{\mc{T}}\le 6n$ holds trivially if $\alpha > 4n^{3/4}/r$ as this implies that $n'< n^{1/(4(r-1))}$ and thus $\abs{\mc T}\le (n')^r<n$.)
 \begin{comment}
(In the first and third inequalities, we assumed that $n'\ge n_Q$, but $\abs{\mc{T}}\le 6n$ holds trivially if $n'<n_Q$, assuming $n$ is sufficiently large with respect to $n_Q$; in the second inequality we used \cref{claim: turan number is not small}.)
 \end{comment}

Now, for every $M\in\mc{M}_n$ with $\psi(M)=G$, we consider the number $a_{\vec{t}}$ of edges $e\in M$ which are consistent with $\vec t$. Since $\sum_{\vec{t}\in\mc{T}} a_{\vec{t}}=n$, the number of possibilities for $(a_{\vec{t}}:\vec t\in \mc T)$ is at most
\[\binom{n+\abs{\mc{T}}-1}{n}<\binom{n+6n}{n}\le 2^{7n}\le e^{5n}.\]
Then, given any choice of $(a_{\vec{t}}:\vec t\in \mc T)$, the number of ways to choose the edges of $M$ is at most
\[\prod_{\vec{t}\in\mc{T}} ({B^r})^{a_{\vec{t}}}\le B^{rn}.\]
Thus,
\[\abs{\mc{M}_n}
    \le \abs{\mc{G}_{n'}}\cdot e^{5n}\cdot B^{rn} \le e^{37n}\left(\alpha r^r n^{r-2} \right)^{(r/(r-1))n},
    \]
    as desired.
\end{proof}

Now, we turn our attention to the lower bound in \cref{thm:count}. Actually, we are able to prove the desired bound even when restricting our attention to \emph{$r$-partite} $r$-matchings (i.e., $r$-dimensional orders). Let $N_{P,m}^{\mr{part}}(n)$ be the number of $r$-partite $r$-matchings $M$ on the vertex set $\{1,\dots,rn\}$ with $L_P(M)<m$.
\begin{theorem}\label{thm:count-r-partite}
  Let $r \ge 2$ and $P$ be an $r$-partite $r$-pattern. 
  Also consider any $m\ge 2$ such that $m-1$ is divisible by $r-1$, and consider any integer $b\ge r(m-1)/(r-1)$. 
  Then, defining
  \[
  n=\binom{b+r-(m-1)/(r-1)}{r}-\binom{b+r-r(m-1)/(r-1)}{r},
  \]
  we have
  \[N_{P,m}^{\mr{part}}(n)\ge e^{-rn}(r!)^{-(r/(r-1))n} (m-1)^{(r/(r-1))n} n^{(r-1-1/(r-1))n}.\]
\end{theorem}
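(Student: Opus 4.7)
The plan is to lower-bound $N_{P,m}^{\mr{part}}(n)$ via a blow-up construction that manufactures many $\clique P m$-free $r$-partite matchings from a well-chosen auxiliary $r$-partite $r$-graph $H$. Set $s = (m-1)/(r-1)$ (an integer, by hypothesis) and $N = b + r - s$, so that $n = \binom{N}{r} - \binom{N-(m-1)}{r}$. By reversing the internal order of each part $V_i$ for which $P$'s $i$-th block is ``$|\BA|$'', I reduce to the case where $P$ is the generalised crossing $|\AB|\cdots|\AB|$; a $\clique P m$ in an $r$-partite matching is then simply $m$ edges that are coordinate-wise strictly increasing. The first (and hardest) step is to construct an $r$-partite $r$-graph $H$ on vertex set $V_1 \sqcup \cdots \sqcup V_r$ (each $V_i$ identified with $[N]$), with $e(H) = n$ edges, that admits a decomposition into $m-1$ antichains in the coordinate-wise order on $[N]^r$. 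Such $H$ is plausible because $n$ fits comfortably into the total size of the $m-1$ largest antichains in $[N]^r$: the hockey-stick identity gives $n = \sum_{k=N-m+1}^{N-1}\binom{k}{r-1} = \Theta_r((m-1)N^{r-1})$, while the level-sets $\{(v_1,\dots,v_r) \in [N]^r : v_1+\cdots+v_r = k\}$ (each an antichain) have size $\Theta_r(N^{r-1})$ near the middle. A natural choice for $H$ is thus a union of $m-1$ carefully selected (possibly partial) level-sets, with a boundary adjustment to hit $n$ exactly.

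Given such $H$, I blow it up by replacing each $v \in V(H)$ with a consecutive interval $U_v$ of $d_v := \deg_H(v)$ vertices; the resulting vertex set has total size $\sum_v d_v = rn$ and a natural $r$-partite structure with parts of size $n$. For each edge $e = (v_1,\ldots,v_r) \in E(H)$, I associate a matching-edge using one vertex from each $U_{v_i}$; the requirement that every $U_v$-vertex is used in exactly one matching-edge gives exactly $\prod_v d_v!$ valid assignments, each a distinct $r$-partite matching of size $n$. Each such matching is $\clique P m$-free: any $\clique P m$ in the matching would project (via $U_v \mapsto v$) to $m$ distinct $H$-edges $e_1,\dots,e_m$ satisfying $e_j[i] \le e_{j+1}[i]$ coordinate-wise, i.e.\ a weak $m$-chain in $[N]^r$; distinctness of these projections follows from the bijection between $H$-edges and matching-edges in a given assignment, and then pigeonhole forces two of the $e_j$'s to lie in the same antichain of $H$, contradicting the antichain property.

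For the final count, Jensen's inequality applied to the convex function $d \mapsto d\log d$ gives
\[\prod_v d_v! \ge \prod_v (d_v/e)^{d_v} \ge (n/N)^{rn}/e^{rn},\]
and substituting $N = b+r-s$ together with the asymptotic $n = \Theta_r((m-1)N^{r-1}/(r-1)!)$ shows, after a short calculation, that the right-hand side exceeds the target bound $e^{-rn}(r!)^{-rn/(r-1)}(m-1)^{rn/(r-1)} n^{(r-1-1/(r-1))n}$ by a factor of $r^{rn/(r-1)} \ge 1$ (which more than absorbs the $(r-1)!$ vs.\ $r!$ slack in the denominator). The main technical obstacle is the first step, namely constructing $H$ with \emph{exactly} $n$ edges while preserving the antichain decomposition; small-$b$ corner cases and the need to align with the precise combinatorial identity $n = \binom{N}{r}-\binom{N-(m-1)}{r}$ will likely require a careful boundary adjustment or a mild deformation of the level-set choice.
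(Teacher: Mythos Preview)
Your approach is the paper's approach: reduce to the all-$\AB$ pattern, build an $r$-partite $r$-graph $H$ whose edges decompose into $m-1$ antichains for the coordinatewise order (taken as level sets of the coordinate sum), blow up each vertex into an interval of length equal to its degree, count the $\prod_v d_v!$ resulting matchings, and finish with convexity. Your freeness argument---project a hypothetical $m$-chain in the matching to a strict $m$-chain in $H$ and pigeonhole across the $m-1$ antichains---is exactly the paper's argument in different words.

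The gap is precisely the step you flag as the ``main technical obstacle'', and it bites harder than you suggest. Working in $[N]^r$ with $N=b+r-s$ (where $s=(m-1)/(r-1)$) and hoping to trim level sets to hit $e(H)=n$ is not the problem; the problem is that your Jensen step then needs $N^{r-1}\le r!\,n/(m-1)$, and this can \emph{fail} near the boundary $b=rs$. For instance $r=4$, $m=4$, $b=4$ gives $N=7$, $n=\binom{7}{4}-\binom{4}{4}=34$, and $7^3=343>272=r!\,n/(m-1)$. So the claimed slack factor $r^{rn/(r-1)}$ is an asymptotic illusion that disappears for small $b$.

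The paper resolves both issues simultaneously by a sharper choice of box: take each part to be $\{a,\ldots,b\}$ with $a=s=(m-1)/(r-1)$ (so each part has size $b-a+1$, not your $N=(b-a+1)+(r-1)$), and let $H$ be the union of the $m-1$ level sets $\{(j_1,\ldots,j_r):j_1+\cdots+j_r=k\}$ for $k\in\{b+1,\ldots,b+m-1\}$. The substitution $x_i=j_i-a$ and the constraint $b\ge ra$ turn the edge count into
\[
\#\{\vec x\in\mathbb Z_{\ge 0}^r:\,\textstyle\sum x_i\le b-a\}-\#\{\vec x\in\mathbb Z_{\ge 0}^r:\,\textstyle\sum x_i\le b-ra\}=\binom{b+r-a}{r}-\binom{b+r-ra}{r}=n,
\]
so no trimming is required. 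And because each part now has only $b-a+1$ vertices, the convexity step needs only $(b-a+1)^{r-1}<r!\,n/(m-1)$, which follows directly from the inequality $n>\frac{m-1}{r!}(b-a+1)^{r-1}$ obtained by expanding the formula for $n$. In the example above this becomes $4^3=64<272$, with room to spare.
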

\begin{remark}\label{rem:common-increasing}
    %Recall from the proof of \cref{lemma: turan number for r-partite graphs} that 
    If $P$ is an $r$-partite $r$-pattern with block representation $|\block_1|\block_2|\dots|\block_r|$, and $M$ is an $r$-partite $r$-matching (with edges viewed as $r$-tuples in $\{1,\dots,n\}$), then the condition $L_P(M)<m$ is equivalent to the condition that there is no sequence of edges $e_1,\dots,e_m$ which is increasing in all coordinates $i$ with $\block_{i}=\AB$, and
decreasing in all coordinates $i$ with $\block_{i}=\BA$. So, $N_{P,m}^{\mr{part}}(n)$ does not actually depend on $P$, and is precisely equal to the number of $(r-1)$-tuples of permutations of $\{1,\dots,n\}$ which contain no common increasing subsequence of length $m$.
\end{remark}
\begin{remark}
    The bound in \cref{thm:count-r-partite} is only for $n$ of a certain form, but it's easy to deduce almost as strong of a bound for all $n$ (if we view $r$ as a constant).

Indeed, first note that if (say) $b\ge 2m$, the resulting value of $n$ scales like $m b^{r-1}$. So, for any desired order of magnitude (greater than $m^r$) we can choose $b$ such that $n$ has our desired order of magnitude.
    
    Then, note that $N_{P,m}^{\mr{part}}(n+1)\ge N_{P,m}^{\mr{part}}(n)$ for all $n$ (i.e., $N_{P,m}^{\mr{part}}(n)$ is monotone increasing in $n$). This is because for any $(r-1)$-tuple of permutations of $\{1,\dots,n\}$ which contain no common increasing subsequence of length $m$, we can simply prepend each permutation with ``$n+1$'' to obtain an $(r-1)$-tuple of permutations of $\{1,\dots,n+1\}$ which still contain no common increasing subsequence of length $m$.
\end{remark}
\begin{proof}[Proof of \cref{thm:count-r-partite}]
As we have just discussed (in \cref{rem:common-increasing}), it suffices to consider the case where~$P$ has representation $|\AB|\AB|\dots|\AB|$, in which case the condition $L_P(M)< m$ is equivalent to the condition that there is no sequence of $m$ edges that is increasing in every coordinate.

We start by constructing an $r$-partite ordered $r$-graph $G$, which we will ``uncontract'' to obtain our desired matchings $M$ (by analogy with the contraction operation in the proof of \cref{thm:extremal-to-counting} earlier in this section). 
Let $a=(m-1)/(r-1)$, and let $U_1,\dots,U_r$ be disjoint sets of size $b-a+1$ (we write $U_i=\{u_a^i,\dots,u_b^i\}$). Define $G$ to be the graph on the vertex set $U_1\cup\dots\cup U_r$ (ordered such that $U_1<U_2<\dots<U_r$), where $\{u_{j_1}^1,u_{j_2}^2,\dots, u_{j_r}^r\}$ is included as an edge if and only if 
\[b+1\le j_1+\dots+j_r\le b+m-1.\]
By symmetry, for any $j \in \{a,\dots,b\}$, the vertices $u_j^1,\dots,u_j^r$ have the same degree, which we denote as~$d_j$. Note that each $d_j\ge 1$, since for any integer $k$ in the range $[a,b]=[(m-1)/(r-1),b]$, there is always some way to add $r-1$ more integers in this range to $k$, to obtain a number in the range $[b+1,b+m-1]$. 
Also, observe that $b \ge r(m-1)/(r-1)= ra$.
% as $b \ge r(m-1)/(r-1)$, it holds that $b-ra \ge 0$.
Writing $n=d_a+\dots+d_b=e(G)$, we have 
\begin{align}
      n
      =&\; \#\left( \vphantom\sum(j_1,\dots,j_r)\in\{a,\dots,b\}^r\;:\; b+1\le  j_1+\dots+j_r \le b+(m-1) \right) \notag\\
      =&\; \#\left( \vphantom\sum(x_1,\dots,x_r)\in \{0,\dots,b-a\}^r\;:\; b+1-ra\le x_1+\dots+x_r \le b+(m-1)-ra=b-a\right)\notag \\
      =&\; \#\left( \vphantom\sum(x_1,\dots,x_r)\in \mathbb{Z}^r_{\ge 0}\;:\; x_1+\dots+x_r \le b-a\right)\notag\\
      &\qquad\qquad-\#\left( \vphantom\sum(x_1,\dots,x_r)\in \mathbb{Z}^r_{\ge 0}\;:\; x_1+\dots+x_r \le b-ra\right)\notag\\
      =&\;\binom{b+r-(m-1)/(r-1)}{r}-\binom{b+r-r(m-1)/(r-1)}{r}.\label{eq:n-formula}
  \end{align}
% Now, let $n=d_a+\dots+d_b=e(G)$.
Starting from the vertex set $U_1\cup\dots\cup U_r$ of $G$, ``uncontract'' each vertex $u_j^i$ into an interval $I_j^i$ of length $d_j$, to obtain an ordered set $V$ of $rn$ vertices such that $I^1_a<\dots<I^1_b<I^2_a<\dots<I^{r-1}_b<I^r_a<\dots<I^r_b$. 
Then, we consider each matching on the vertex set $V$ which ``yields $G$ after contracting the intervals $I_j^i$ to single vertices''. Specifically, we consider each matching $M$ on $V$ with the property that for $e\in E(M)$ there is a unique edge $f\in E(G)$ with $e[1] \in I_{f[1]}^1,e[2]\in I_{f[2]}^2,\dots,e[r]\in I_{f[r]}^r$ (and every $f \in E(G)$ corresponds to precisely one $e \in E(M)$).

It is not hard to see there are exactly $\prod_{v\in V(G)}d_G(v)=\left(\prod_{j} d_j!\right)^r$ ways to choose ordered $r$-matchings $M$ as above.
% The number of ways to choose matchings $M$ as above is exactly $\prod_{i,j}d_j!$. 
Note that each such matching $M$ satisfies $L_P(M)<m$. 
To see this, recall that every edge of $G$ (and therefore every edge of $M$) can be identified by a unique $r$-tuple $(j_1,\dots,j_r)$ with $b+1\le j_1+\dots+j_r\le b+m-1$. 
For two edges $e,e'\in E(M)$ (corresponding to two different $r$-tuples $(j_1,\dots,j_r)$ and $(j_1',\dots,j_r')$), we can only have $e[i]>e'[i]$ when $j_i\ge j_i'$. In particular, if $e[i]>e'[i]$ for all $i$, then $j_1+\dots+j_r>j_1'+\dots+j_r'$ (note that $(j_1,\dots,j_r)\neq(j_1',\dots,j_r')$ when $e\ne e'$). For our $r$-tuples $(j_1,\dots,j_r)$ under consideration, $j_1+\dots+j_r$ is constrained to an interval of $m-1$ integers, so there cannot be any sequence of $m$ edges which is increasing in every coordinate (i.e., $L_P(M)< m$).

It remains to lower-bound $\left(\prod_{j} d_j!\right)^r$ in terms of $n,m,r$. 
Recall that the Gamma function $\Gamma$ is log-convex and satisfies $\Gamma(x)\ge (x/e)^{x-1}$.
Using that $d_a+\dots+d_b=n$, we have
\begin{align}
    \left(\prod_{j=a}^b d_j!\right)^r
    = \left( \prod_{j=a}^b\Gamma(d_j+1)\right)^r
    &\ge \left( \Gamma\left(\frac{n}{b-a+1} +1\right)\right)^{(b-a+1)r} \notag\\
    &> \left(\left(\frac{n}{e(b-a+1)}\right)^{n/(b-a+1)}\right)^{(b-a+1)r} \notag\\
    &= e^{-rn} n^{rn} \left({b-\frac{m-1}{r-1}+1}\right)^{-rn}.\label{eq:dj-factorial-intermediate}
  \end{align}
On the other hand, recalling \cref{eq:n-formula},
\begin{align*}
    n&=\binom{b+r-(m-1)/(r-1)}{r}-\binom{b+r-r(m-1)/(r-1)}{r} \\
    &= \frac{1}{r!}\left[{\prod_{i=1}^r\left(b-\frac{m-1}{r-1}+i \right) - \prod_{i=1}^r\left(b-\frac{r(m-1)}{r-1}+i\right)}\right] \\
    &> \frac{1}{r!}\cdot (m-1)\prod_{i=1}^{r-1} \left(b-\frac{m-1}{r-1}+i \right)\\
    &> \frac{m-1}{r!}{\left(b-\frac{m-1}{r-1}+1\right)^{r-1}}.
  \end{align*}
In other words, $b-(m-1)/(r-1)+1 < (r!n/(m-1))^{1/(r-1)}$.
% The last inequality gives an upper bound on $b-(m-1)/(r-1)+1$. 
Plugging this into \cref{eq:dj-factorial-intermediate}, we have
\begin{align*}
    N_{P,m}^{\mr{part}}(n)
    \ge \prod_{i=1}^r\prod_{j=a}^b d_j!
    &\ge e^{-rn}n^{rn} \left(\frac{r!n}{m-1}\right)^{-(rn)/(r-1)}\\
    &=e^{-rn}(r!)^{-(r/(r-1))n} (m-1)^{(r/(r-1))n} n^{(r-1-1/(r-1))n},
\end{align*}
as desired.
\end{proof}
\section{Partitioning ordered hypergraphs}\label{sec:partitioning}
A well-known result of Erd\H os and Kleitman~\cite{EK68} says that every $r$-graph with $m$ edges has an $r$-partite subgraph with at least $(r!/r^r)m$ edges (here ``$r$-partite'' means that we can divide the vertex set into $r$ parts such that every edge has exactly one vertex in each part). This theorem is enormously useful in extremal hypergraph theory, as it allows one to prove a result about $r$-partite $r$-graphs and ``transfer'' it to general $r$-graphs.

It would be very useful if the same would be possible in the setting of ordered hypergraphs (where our notion of $r$-partiteness should impose that the $r$ parts are each \emph{intervals} according to our ordering). Although it is not possible to prove a direct analogue of the Erd\H os--Kleitman theorem, F\"uredi, Jiang, Kostochka, Mubayi and  Verstra\"ete~\cite{FJKMV21-partition} managed to prove a result that is almost as good: in a certain sense every ordered $r$-graph $G$ has a subgraph $G'$ which is ``nearly'' $r$-partite, in such a way that the ``relative density'' of $G'$ is not too much less than the relative density of $G$.

In this section we prove a refinement of the F\"uredi--Jiang--Kostochka--Mubayi--Verstra\"ete theorem, with essentially optimal dependence on $r$, which might be of independent interest.
This will be used in our proof of \cref{thm:extremal-clique}(2). 
% \mk{benny said that this is analogous to some stuff in coding theory? let's briefly talk about that.} 
To state our theorem we need some definitions.

\begin{definition}\label{def:equipartite}
Let $1 \le k \le r$ and $\vec{a}$ be a positive integer-valued vector of length $k$ with sum of entries $\|\vec{a}\|_1=r$.
We say an ordered $r$-graph $G$ is \emph{$\vec{a}$-equipartite} if the following holds.
% Let $\mathcal{A}_{r}$ be the set of positive integer vectors $\vec{a}\in\bigcup_{\ell=1}^{r}\mb Z_{>0}^{\ell}$ with sum of entries $\|\vec{a}\|_{1}=r$. 
% For $\vec{a}\in\mathcal{A}_{r}$, say an ordered $r$-graph $G$ is \emph{$\vec{a}$-equipartite} \emph{with order $n$} if the following properties hold.
\begin{compactenum}
\item 
%Write $k=\|\vec{a}\|_{0}$ for the number of entries of $\vec{a}$.
$G$ must have exactly $kn$ vertices.
\item Partition the vertex set of $G$ into contiguous intervals $I_{1},\dots,I_{k}$
of size $n$. For each $i\in\{1,\dots,k\}$ and each edge $e$
of $G$, we must have $|e\cap I_{i}|=a_{i}$.
\end{compactenum}
If $G$ is $\vec{a}$-equipartite with order $n$, then we define
its \emph{$\ell$-density} to be its number of edges divided by $n^{\ell}/\prod_{i}a_{i}!$
(this is a twisted notion of density, which can take values
between $0$ and $n^{r-\ell}$).

Moreover, say that $G$ is $k$-equipartite if it is $\vec{a}$-equipartite
for some $\vec{a}$ with $\|\vec{a}\|_{0}=k$, and say that $G$ is
$(\ge t)$-equipartite if it is $k$-equipartite for some $k\ge t$.
\end{definition}

Note in particular that there is only one possible $\vec a$ with $\|\vec{a}\|_{0}=r$: namely, the all-ones vector of length $r$. There are $r-1$ essentially equivalent $\vec a$ with $\|\vec{a}\|_{0}=r-1$: namely, the the vectors of length $r-1$, whose all entries are ``1'' except one ``2''. So, being $(\ge r-1)$-equipartite corresponds to being $\vec{a}$-equipartite for one of the above  two types of $\vec{a}$.

Now, our refinement of the F\"uredi--Jiang--Kostochka--Mubayi--Verstra\"ete theorem is as follows.

\begin{theorem}\label{thm:mk-partitioning}
Fix $r\in\mb N$ and $\varepsilon>0$, and suppose $n$ is sufficiently
large in terms of $r$ and $\varepsilon$. 
Let $p\ge\varepsilon$ and let $G$ be an ordered $r$-graph with of $n$ vertices and at least $(p/n)\binom{n}{r}$
edges.
% (i.e., as an $(r)$-equipartite ordered hypergraph, $G$ has $(r-1)$-density at least $p$).
Then, $G$ contains a $(\ge r-1)$-equipartite
subgraph $G'$ of order $n^{\Omega_{r}(1)}$ with $(r-1)$-density at least $\Omega(p/r^2)$.% (i.e., the corresponding~$\vec{a}$ for~$G'$ in \cref{def:equipartite} is either the all-ones vector of length $r$ or a vector of length $(r-1)$ with all but one entry to be 1 and the one entry to be 2).
\end{theorem}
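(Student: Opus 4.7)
The plan is to adapt the dyadic partitioning argument of F\"uredi--Jiang--Kostochka--Mubayi--Verstra\"ete~\cite{FJKMV21-partition}, with the crucial refinement that the target profile $\vec a$ is allowed to have one doubled entry (a ``type-$(r{-}1)$'' profile). This relaxation is exactly what lets us avoid the factor-$r^r$ loss that a purely $r$-equipartite extraction would incur, replacing it by the much milder factor-$r^2$ loss claimed in the theorem.

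First I would partition $V(G)$ dyadically, tracking at each step how the current set of edges distributes across the split. At each split, either (a)~a majority of edges sits on one side, in which case I recurse on that side losing only a constant factor in density per level, or (b)~the edges are approximately balanced across the split, in which case I record a ``split profile'' $(b_1,b_2)\in\mathbb{Z}_{\ge 0}^2$ with $b_1+b_2=r$ giving the number of edge-vertices on each side. A pigeonhole over the $r+1$ possible split profiles costs a single factor of $r$. After at most $r-1$ levels of this, I arrive at a partition of some sub-interval into contiguous pieces $I_1,\dots,I_k$ together with a profile $\vec a$ such that the surviving edges are $\vec a$-equipartite on this partition. Crucially, I would terminate the recursion as soon as $\|\vec a\|_0=r-1$, even when some $a_i=2$; this ensures at most two ``profile-pigeonhole'' steps are ever performed (one to fix the overall type -- type $r$, or type $r-1$ with collision at one of $r-1$ adjacent positions, costing a factor of $r$ -- and one to fix the specific tuple of interval indices among the at most $O(r)$ candidates, costing another factor of $r$), giving an $O(r^2)$ total density loss. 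A final trimming step equalises the interval sizes at a constant cost, yielding $G'$ of order $n^{\Omega_r(1)}$.

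The main obstacle, and the delicate part of the argument, is the density bookkeeping: one must verify that the $(r-1)$-density normalisation $\prod_i a_i!$ compensates exactly the discrepancy between the type-$r$ case ($\prod a_i!=1$, but $r$ distinct intervals chosen, giving a combinatorial factor $r!$) and the type-$(r-1)$ case ($\prod a_i!=2$, with $r-1$ distinct intervals, giving $(r-1)!$). A short calculation with $k\in\{r-1,r\}$ intervals of common size $n'$ packed into total length $\Theta_r(n)$ shows that in both cases the resulting $(r-1)$-density is $\Omega(p/r^2)$, but one has to be careful not to accumulate the factor $r$ multiplicatively across the dyadic levels (which would reproduce the original $r^r$ loss). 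The assumption $p\ge\varepsilon$ is used throughout to ensure that the boundary effects from the dyadic partition -- in particular, edges straddling multiple dyadic cuts -- form a strict minority and can be discarded, and that the base case of the recursion has enough edges to start with.
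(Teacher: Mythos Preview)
Your proposal has a genuine gap at its core: the claim that ``at most two profile-pigeonhole steps are ever performed'' is not correct for the dyadic recursion you describe. In your scheme, each case-(b) step increases the number of parts by exactly one (a single interval splits in two), so to pass from the trivial $1$-equipartite structure to a $(\ge r-1)$-equipartite one you still need $r-2$ such steps in the worst case, each costing a factor of $r$ by pigeonhole over the split profiles. That gives a density loss of order $r^{r-2}$, which is essentially the original F\"uredi--Jiang--Kostochka--Mubayi--Verstra\"ete bound; allowing one doubled coordinate in $\vec a$ saves only a single factor of $r$, not $r^{r-3}$. The sentence about ``one pigeonhole to fix the overall type and one to fix the tuple of interval indices'' conflates the final choice of which $r-1$ intervals to keep with the many intermediate pigeonhole steps needed to reach a structure where such a choice is even meaningful.

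The paper's argument is genuinely different in structure. Rather than splitting in two at each step, it splits the current $k$-equipartite graph into $R=R(k)=C(r-k+1)^2$ equal sub-intervals per part, and then pigeonholes over \emph{all} resulting $t$-equipartite profiles at once (for every $t\ge k$). The key point is a combinatorial inequality (involving Stirling numbers of the second kind) showing that the total weight of profiles with $t$ nonzero entries is at most $R^t((r-k+1)^2/2)^{r-t}$; with the above choice of $R$ this means that either the number of parts jumps immediately to $r-1$, or it stays at $k$ but the $(r-1)$-density \emph{increases} by a factor roughly $C^{r-k}$ (because the normalisation $\prod_i a_i!$ grows when parts remain coalesced). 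This density amplification is what bounds the number of steps and what converts the naive $r^{\Theta(r)}$ loss into $O(r^2)$. Your sketch contains no analogue of this mechanism; the density bookkeeping you flag as ``delicate'' is in fact the entire content of the refinement.
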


We also show that the multiplicative factor $O(r^{-2})$ in the ``loss of density''  in \cref{thm:mk-partitioning} is essentially best possible.

\begin{theorem}\label{thm:mk-partitioning-best-possible}
Fix a constant $r\in\mb N$. There exists an ordered $r$-graph $G$ with $(1+o(1))r(r-1)n^{r-1}/r!$ edges such that any order-$m$ $(\ge r-1)$-equipartite subgraph $G'\subseteq G$ has $(r-1)$-density at most $2+o_{m\to \infty}(1)$.
\end{theorem}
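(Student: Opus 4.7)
The construction I would use is simple: let $V(G)=\{1,\dots,n\}$ with the natural order, and declare $\{v_1<v_2<\dots<v_r\}$ to be an edge of $G$ if and only if $v_{i+1}=v_i+1$ for some $i\in\{1,\dots,r-1\}$. Equivalently, the edges are the $r$-sets containing at least one pair of consecutive integers. Writing $E_i$ for the edges with $v_{i+1}=v_i+1$, a hockey-stick identity (summing over the position of $v_i$) gives $|E_i|=\binom{n-1}{r-1}$, and the pairwise intersections $E_i\cap E_j$ have size only $O(n^{r-2})$ since forcing a second consecutive pair eliminates one degree of freedom. Inclusion--exclusion then yields
\[
  |E(G)|=(r-1)\binom{n-1}{r-1}+O(n^{r-2})=(1+o(1))\,\frac{r(r-1)n^{r-1}}{r!},
\]
matching the target.

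The density bound is where the real work is. Fix a $(\ge r-1)$-equipartite subgraph $G'\subseteq G$ of order $m$ with intervals $I_1<\dots<I_k$ (where $k\in\{r-1,r\}$). Since ``gaps'' in $V(G')\subseteq[n]$ can only destroy consecutive pairs (and hence edges of $G$), I would assume the worst case in which $V(G')$, and therefore each $I_j$, is contiguous in $[n]$. For each $i$ I would then bound $|E(G')\cap E_i|$ by counting the number of legal placements of the forced pair $(v_i,v_{i+1})=(v_i,v_i+1)$ within the interval structure, multiplied by the number of ways to choose the remaining $r-2$ vertices from their respective intervals.

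In the $r$-equipartite case ($k=r$), for each $i$ the pair is forced to be $(\max I_i,\min I_{i+1})$, yielding at most $m^{r-2}$ edges per $i$ and thus $(r-1)$-density at most $(r-1)/m=o_{m\to\infty}(1)$. In the $(r-1)$-equipartite case with $a_k=2$, the counting splits into three subcases according to the position of $i$ relative to the doubled index $k$: (a) the \emph{inner} case $i=k$, where $(v_k,v_{k+1})$ is any of the $m-1$ consecutive pairs inside $I_k$ and the other $r-2$ vertices range freely over the single intervals, contributing $(m-1)m^{r-2}\sim m^{r-1}$; (b) the \emph{boundary} cases $i\in\{k-1,k+1\}$, where the pair straddles $I_k$ and a neighbor (hence is forced) while the remaining vertex of $I_k$ has $m-1$ choices, contributing $(m-1)m^{r-3}$ each; and (c) the \emph{far} cases $|i-k|\ge2$, where the pair is forced but the two free vertices inside $I_k$ provide a $\binom{m}{2}$ factor, contributing $\binom{m}{2}m^{r-4}$ each. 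Summing these and dividing by $m^{r-1}/\prod_j a_j!=m^{r-1}/2$ gives an $(r-1)$-density of $2+O(1/m)$.

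The main point that requires care is the subcase analysis (a)--(c): one must verify that only the inner case (a) yields a term of order $m^{r-1}$, while both (b) and (c) contribute only $O(m^{r-2})$ in aggregate (the number of far cases is at most $r-3$, so the total bound is still $O(m^{r-2})$ for each fixed $r$). This ensures that the density limits to exactly $2$ and not to some larger constant. Combining the two cases produces the claimed $(r-1)$-density bound of $2+o_{m\to\infty}(1)$, completing the construction.
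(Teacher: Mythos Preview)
Your proposal is correct and follows essentially the same approach as the paper: the same construction (edges are $r$-sets containing a consecutive pair), the same case split into $r$-equipartite versus $(r-1)$-equipartite, and the same counting idea that the consecutive pair is either forced at a part boundary or lies inside the doubled part. Two minor remarks: the contiguity reduction is unnecessary, since your counting in cases (a)--(c) already gives upper bounds valid for arbitrary $V(G')\subseteq[n]$ (there are at most $m-1$ consecutive pairs inside any $m$-element set, and at most one straddling each boundary); and the paper simply lumps your cases (b) and (c) together into the cruder bound $(r-2)m^{r-2}$, which suffices.
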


\begin{remark}
    Instead of making an $(r-1)$-density assumption, one could make an $\ell$-density assumption for any $\ell\le r$. In general, such an assumption would allow one to prove an analogue of \cref{thm:mk-partitioning} in which we guarantee a $(\ge\ell)$-equipartite subgraph with reasonably large $\ell$-density. (Such a result was previously proved in \cite{FJKMV21-partition}, and our proof approach can always attain a better dependence on $r$). However, since $(r-1)$-density is the relevant parameter in most applications, we prefer to keep our proof as simple as possible, and only treat the case $\ell=r-1$.
\end{remark}

To prove \cref{thm:mk-partitioning}, the following lemma turns out to be crucial.
% Now, the key lemma for the proof of \cref{thm:mk-partitioning} is as follows.
Roughly speaking, given any $\vec a$-equipartite $G$ (for some $\vec a$), we can either obtain a subgraph of $G$ with more parts or a subgraph of $G$ with the same number of parts but higher density.
\begin{lemma}\label{lem:zhihan}
Suppose $1 \le k \le r$ and $\vec{a}$ is a positive integer-valued vector of length $k$ with $\|\vec{a}\|_1=r$.
Let $G$ be an $\vec{a}$-equipartite ordered $r$-graph with $(r-1)$-density~$p$, and consider any $R\in\mb N$. 
% Consider $\vec{a}\in\mathcal{A}_{r}$ with $\|\vec{a}\|_{0}=k$, let $G$ be an $\vec{a}$-equipartite ordered $r$-graph with $(r-1)$-density~$p$, and consider any $R\in\mb N$. 
Suppose that $n$ is sufficiently large in terms of $r,R$. 
Then, for some $t\in\mb N$ satisfying $\|\vec{a}\|_{0}\le t\le r$, we can find a $t$-equipartite subgraph of order $\floor{n/R}$ in~$G$, with $(r-1)$-density at least 
\[
\frac{p}{4R}\left(\frac{R}{(r-k+1)^{2}}\right)^{r-t}.
\]
\end{lemma}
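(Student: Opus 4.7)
The plan is to partition each part $I_i$ of $G$ into $R$ equal contiguous sub-intervals of size $\lfloor n/R\rfloor$, and then to extract a $t$-equipartite subgraph via a dichotomy combined with iterated pigeonhole. For each edge $e\in E(G)$ and each $i\in\{1,\ldots,k\}$, I would record the ordered composition $\vec{\mu}_i(e)$ of $a_i$ describing how the $a_i$ vertices of $e$ in $I_i$ distribute among the occupied sub-intervals of $I_i$, and write $s_i(e)=|\vec{\mu}_i(e)|\in\{1,\ldots,a_i\}$, so that $t(e):=\sum_i s_i(e)\in\{k,\ldots,r\}$ measures how ``spread out'' $e$ is across the refinement.

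The dichotomy is based on the number $N_{\mathrm{conc}}$ of \emph{concentrated} edges, i.e.\ those with $s_i(e)=1$ for every $i$. If $N_{\mathrm{conc}}\ge e(G)/\bigl(4(r-k+1)^{2(r-k)}\bigr)$, then averaging over a uniformly random choice $(j_1,\ldots,j_k)\in[R]^k$ captures a $1/R^k$ fraction of those concentrated edges, and a direct density computation shows that the best such choice yields an $\vec{a}$-equipartite subgraph of order $\lfloor n/R\rfloor$ whose $(r-1)$-density is at least $\tfrac{p}{4R}\bigl(\tfrac{R}{(r-k+1)^{2}}\bigr)^{r-k}$, fulfilling the lemma with $t=k$. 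Otherwise more than half of the edges have $s_i(e)\ge 2$ for at least one $i$, and I would pigeonhole in turn on (i) a common index $i^*$ with $s_{i^*}(e)\ge 2$, (ii) the value $s^*\in\{2,\ldots,a_{i^*}\}$, (iii) the composition $\mu^*_{i^*}$, and (iv) an ordered sub-interval tuple $T^*_{i^*}\in\binom{[R]}{s^*}$. The total multiplicative loss here is bounded in terms of $r-k+1$ and $\binom{R}{s^*}$, which is exactly what is needed to produce the factor $(r-k+1)^2/R$ per unit increase in $t$. Retaining only the surviving edges replaces $I_{i^*}$ with $s^*$ sub-intervals of size $\lfloor n/R\rfloor$ and yields an intermediate $(k+s^*-1)$-equipartite structure.

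I would then iterate the procedure on the remaining unsplit parts, or equivalently induct on $r-k$: each refinement strictly increases the number of parts, each contributes a multiplicative density loss that matches the factor $(r-k+1)^2/R$ per unit gain in $t$, and the whole process terminates after at most $r-k$ rounds. The main obstacle is the density bookkeeping across iterations in the presence of non-uniform part sizes, since after one partial refinement the unrefined parts are still of size $n$ while the refined ones are of size $\lfloor n/R\rfloor$. The cleanest way to overcome this is to state and prove a slightly stronger form of the lemma allowing arbitrary part sizes, with the appropriately normalised $(r-1)$-density defined via the product of the part sizes, prove it by induction on $r-k$, and verify that the pigeonhole losses telescope to give precisely the bound $\tfrac{p}{4R}\bigl(\tfrac{R}{(r-k+1)^{2}}\bigr)^{r-t}$ at the final value of $t$.
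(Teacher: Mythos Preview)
Your subdivision of each part into $R$ sub-intervals and the idea of classifying edges by how they distribute is exactly right and matches the paper. The concentrated case (yielding $t=k$) is also fine. The gap is in the non-concentrated branch: the sequential pigeonhole on $(i^*,s^*,\mu^*,T^*)$ incurs a loss from the $s^*$-step that the target bound cannot absorb. Concretely, take $k=1$, $\vec a=(r)$, so no iteration is even needed. The pigeonhole on $s^*\in\{2,\ldots,r\}$ costs a factor $r-1$ regardless of where you land; if you then land in $s^*=r$ (so $t=r$, the composition is forced to all ones, and $\prod_j a'_j!=1$), your argument only delivers $(r-1)$-density about $p/((r-1)R)$, against a target of $p/(4R)$. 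This is short by a factor of $(r-1)/4$ once $r\ge 6$. Since the failure already occurs in a single step with $k=1$, the proposed stronger lemma with mixed part sizes cannot rescue it, and the claim that the losses ``telescope to give precisely the bound'' is unsupported.

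The paper sidesteps iteration entirely. After trimming $G$ so its order is divisible by $R$, it partitions the edges in one shot by the full $k\times R$ matrix $b$ with $b_{ij}=|e\cap I_{i,j}|$; each class $G_b$ is $t$-equipartite with $t=\|b\|_0$. Assuming for contradiction that no $G_b$ meets its $t$-dependent threshold, one sums those thresholds over all $b$ and all $t$ and shows the total is below $e(G)$. The entire combinatorial content is the inequality
\[
\sum_{b:\|b\|_0=t}\frac{\prod_i a_i!}{\prod_{i,j}b_{ij}!}\ \le\ R^{t}\left(\frac{(r-k+1)^2}{2}\right)^{r-t}
\]
for each $t$, proved by interpreting the sum via tuples of set partitions, injecting those into partitions of an $(r-k+1)$-element set, and bounding the resulting Stirling number of the second kind. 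This is exactly what makes the $t$-weighted thresholds mutually compatible; any correct version of your approach would have to establish something equivalent.
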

We remark that in practice we will apply \cref{lem:zhihan} with $R$ a large constant.

\begin{proof}
Write $I_1,\dots,I_k$ for the $k$ parts of $G$. First, it is convenient to reduce to a subgraph whose order is divisible by $R$. 
So, let $n'=R\floor{n/R}$. By averaging, there's a way to delete $n-n'<R$ vertices from each $I_i$ to obtain an $\vec{a}$-equipartite subgraph $G'$, such that at most $(rR/n)e(G)$ edges are deleted.
So, the $(r-1)$-density of $G'$ is at least $p(1-rR/n)\ge p/2$
for $n \ge 2rR$.

Now, we divide each $I_i$ into $R$ contiguous intervals $I_{i,1},\dots,I_{i,R}$ of the same length $n/R$. 
Write $\mbm 1\in \mb R^R$ be the all-ones vector of length $R$, and for every $t\in\{k,k+1,\dots,r\}$, define $\mathcal{B}_{\vec{a},t}^{R}$ to be the set of $k\times R$ matrices $b\in\mb Z_{\ge0}^{k\times R}$ with $\|b\|_{0}=t$ nonzero entries, satisfying $b\mbm1=\vec{a}$.
Then, for each $b\in \mathcal B_{\vec a,t}^R$, we define a $t$-equipartite subgraph $G_b$ of order $\floor{n/R}$, as follows.
The vertex set of $G_b$ is obtained by including part $I_{i,j}$ whenever $b_{ij}\ne 0$, and the edge set is obtained by including each $e\in G'$ if $|e\cap I_{i,j}|=b_{ij}$ for all $i,j$. 
Note that the subgraphs $G_b$ (as $b$ varies in $\mathcal B_{\vec a,t}^R$, while $t$ varies in $\{k,\dots,r\}$) partition the edges of $G'$.

% Recall the definition of $\mathcal{B}_{\vec{a},t}^{R}$ from \cref{lem:partitioning-inequality}.
% For each $b\in \mathcal B_{\vec a,t}^R$, we define a $t$-equipartite subgraph $G_b$, as follows. 
% The vertex set of $G_b$ is obtained by including part $I_{i,j}$ whenever $b_{ij}\ne 0$, and the edge set is obtained by including each $e\in G$ if and only if $|e\cap I_{i,j}|=b_{ij}$ for all $i,j$. 
% Note that the subgraphs $G_b$ (as $b$ varies in $b\in \mathcal B_{\vec a,t}^R$, and $t$ varies in $\{k,\dots,r\}$) partition the edges of $G$.

Now, if none of the subgraphs $G_b$ satisfied the conclusion of the lemma, the total number of edges in $G'$ would be 
\[
e(G')
< \sum_{t=k}^{r}\sum_{b\in\mathcal{B}_{\vec{a},t}^{R}} \frac{p}{4R}\left(\frac{R}{(r-k+1)^{2}}\right)^{r-t}\frac{(n/R)^{r-1}}{\prod_{i,j}b_{ij}!}
= \sum_{t=k}^{r} \frac{p\cdot R^{-t}}{4(r-k+1)^{2(r-t)}} \frac{n^{r-1}}{\prod_{i}a_i!} \sum_{b\in\mathcal{B}_{\vec{a},t}^{R}} \frac{\prod_{i}a_i!}{\prod_{i,j}b_{ij}!}.
\]
To show that this leads to contradiction, we need the following combinatorial inequality.
%But then, we can use the following claim, which will be proved later.
\begin{claim}\label{claim:partitioning-inequality}
    For every $k \le t \le r$, we have 
    $
    \sum_{b\in\mathcal{B}_{\vec{a},t}^{R}} \left({\prod_{i}a_{i}!}/{\prod_{i,j}b_{i,j}!}\right)
    \le R^{t}\left({(r-k+1)^{2}}/{2}\right)^{r-t}.
    $
%     \[
% \sum_{b\in\mathcal{B}_{\vec{a},t}^{R}}\frac{\prod_{i}a_{i}!}{\prod_{i,j}b_{i,j}!}\le R^{t}\left(\frac{(r-k+1)^{2}}{2}\right)^{r-t}.
% \]
\end{claim}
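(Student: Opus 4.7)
The plan is to group the matrices $b \in \mathcal{B}_{\vec{a},t}^{R}$ by their support profile $\vec{s} = (s_1,\ldots,s_k)$, where $s_i$ denotes the number of nonzero entries in the $i$-th row of $b$. Every such matrix then satisfies $\sum_i s_i = t$ and $1 \le s_i \le a_i$. Viewing $\prod_i a_i!/\prod_j b_{ij}!$ row-by-row as a product of multinomial coefficients, I first observe that for each row $i$,
\[
\sum_{\substack{b_{i,1}+\dots+b_{i,R}=a_i \\ |\operatorname{supp}|=s_i}} \binom{a_i}{b_{i,1},\ldots,b_{i,R}} = \binom{R}{s_i}\, s_i!\, S(a_i, s_i),
\]
because the LHS counts functions $f_i\colon [a_i]\to [R]$ whose image has size exactly $s_i$, and the RHS writes this as (choice of image $\times$ surjection onto image), with $S(\cdot,\cdot)$ the Stirling number of the second kind. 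Summing over $\vec{s}$ and using $\binom{R}{s_i}s_i! = (R)_{s_i} \le R^{s_i}$ pulls out a factor $R^t$ and reduces the claim to the inequality
\[
\sum_{\vec{s}:\ \sum s_i = t,\ 1\le s_i\le a_i} \prod_{i=1}^{k} S(a_i, s_i) \le \left(\frac{(r-k+1)^{2}}{2}\right)^{r-t}.
\]

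The main new ingredient is the Stirling-number bound $S(n,n-b) \le \binom{n}{2}^b/b!$. I will prove this by associating to each partition $\pi$ of $[n]$ into $n-b$ blocks its canonical path-spanning-forest: within each block $B=\{i_1<\dots<i_m\}$, take the $m-1$ edges $\{i_1,i_2\},\{i_2,i_3\},\ldots,\{i_{m-1},i_m\}$. The resulting set of exactly $b$ edges determines $\pi$ via its connected components, so the map $\pi \mapsto F(\pi)$ is an injection into the $b$-element subsets of $\binom{[n]}{2}$, yielding $S(n,n-b)\le \binom{\binom{n}{2}}{b}\le \binom{n}{2}^b/b!$.

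Setting $b_i := a_i - s_i \ge 0$ (so $\sum_i b_i = r-t$), this bound gives $\prod_i S(a_i, s_i) \le \prod_i \binom{a_i}{2}^{b_i}/b_i!$. Relaxing the sum to all non-negative $\vec{b}$ with $\sum b_i = r-t$ and invoking the multinomial identity $\sum_{\vec{b}:\ \sum b_i = N} \prod_i x_i^{b_i}/b_i! = (\sum_i x_i)^N/N!$ yields the upper bound $(\sum_i \binom{a_i}{2})^{r-t}/(r-t)!$. Finally, convexity of $x\mapsto\binom{x}{2}$ applied to $\sum_i a_i = r$ with $a_i\ge 1$ and exactly $k$ terms shows $\sum_i \binom{a_i}{2} \le \binom{r-k+1}{2}\le (r-k+1)^2/2$ (the extremum is attained by concentrating all excess on a single part). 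Combining everything and discarding the benign factor $1/(r-t)!\le 1$ proves the claim.

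The main obstacle is the Stirling inequality $S(n,n-b)\le \binom{n}{2}^b/b!$ — the canonical path-forest injection is the crucial structural input. Once this is in hand, everything else reduces to a routine reindexing by support profile together with standard convexity and multinomial-identity manipulations.
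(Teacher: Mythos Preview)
Your proof is correct. Both you and the paper reduce, via the same surjection-counting interpretation, to bounding
\[
\sum_{\vec s:\,\sum s_i=t}\prod_{i=1}^k S(a_i,s_i)=|\mathcal P_{\vec a,t}|,
\]
the number of tuples of set partitions $(P_1,\ldots,P_k)$ with $P_i\vdash[a_i]$ and total part-count $t$; from there the arguments diverge. The paper concatenates $P_1,\ldots,P_k$ into a single partition of $[r]$ with $t$ parts and then collapses $k-1$ pairs of parts to produce an injection $\mathcal P_{\vec a,t}\hookrightarrow\mathcal P_{r-k+1,t-k+1}$, giving $|\mathcal P_{\vec a,t}|\le\stirling{r-k+1}{t-k+1}\le\binom{r-k+1}{2}^{r-t}$. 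You instead keep the product structure, bound each factor by $S(a_i,a_i-b_i)\le\binom{a_i}{2}^{b_i}/b_i!$ via your path-forest injection, sum with the multinomial identity, and then invoke convexity of $x\mapsto\binom{x}{2}$ to obtain $|\mathcal P_{\vec a,t}|\le\binom{r-k+1}{2}^{r-t}/(r-t)!$. Your bound is in fact tighter by a factor of $(r-t)!$ (which you then discard), and your route avoids the collapsing injection entirely; the paper's single injection into one Stirling number is arguably more structural, but your approach is more self-contained and the convexity step makes the role of the vector $\vec a$ transparent.
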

We will prove \cref{claim:partitioning-inequality} momentarily, but first we see how to use it to conclude the proof of \cref{lem:zhihan}: recalling our estimate for $e(G')$ above, we have 
\[
e(G')
< \sum_{t=k}^{r} \frac{p\cdot R^{-t}}{4(r-k+1)^{2(r-t)}} \frac{n^{r-1}}{\prod_{i}a_i!} \cdot R^{t}\left(\frac{(r-k+1)^{2}}{2}\right)^{r-t}
= \sum_{t=k}^{r}\frac{p}{4}\cdot2^{-(r-t)}\cdot\frac{n^{r-1}}{\prod_{i}a_{i}!}
< \frac{(p/2)n^{r-1}}{\prod_{i}a_{i}!},
\]
% \[
% e(G)<\sum_{t=k}^{r}\sum_{b\in\mathcal{B}_{\vec{a},t}^{R}} \frac{p}{4R}\left(\frac{R}{(r-k+1)^{2}}\right)^{r-t}\frac{(n/R)^{r-1}}{\prod_{i,j}b_{ij}!}
% \le\sum_{t=k}^{r}\frac{p}{4}\cdot2^{-(r-t)}\cdot\frac{n^{r-1}}{\prod_{i}a_{i}!}< \frac{(p/2)n^{r-1}}{\prod_{i}a_{i}!},
% \]
contradicting the fact that $G'$ has $(r-1)$-density at least $p/2$. 
% The second inequality follows by \cref{lem:partitioning-inequality}.
\end{proof}

% To prove \cref{thm:mk-partitioning} we will need the following inequality.
% \begin{lemma}\label{lem:partitioning-inequality}
% Consider $\vec{a}\in\mathcal{A}_{r}$ with $\|\vec{a}\|_{0}=k$, and $R,t\in\mb \mb{Z}_{>0}$ satisfying
% $\|\vec{a}\|_{0}\le t\le r$. Let $\mbm 1\in \mb R^R$ be the all-ones vector of length $R$, and let $\mathcal{B}_{\vec{a},t}^{R}$ be
% the set of $k\times R$ matrices $b\in\mb Z_{\ge0}^{k\times R}$ with
% $\|b\|_{0}=t$ nonzero entries, satisfying $b\mbm1=\vec{a}$.
% (We think of $\mathcal{B}_{\vec{a},t}^{R}$ as the set of possible
% $t$-partite ``refinements'' of $\vec{a}$, after splitting each of the $k$ parts
% into $R$ sub-parts). Then
% \[
% \sum_{b\in\mathcal{B}_{\vec{a},t}^{R}}\frac{\prod_{i}a_{i}!}{\prod_{i,j}b_{i,j}!}\le R^{t}\left(\frac{(r-k+1)^{2}}{2}\right)^{r-t}.
% \]
% \end{lemma}
Now we prove \cref{claim:partitioning-inequality}.
\begin{proof}[Proof of \cref{claim:partitioning-inequality}]
We prove this inequality combinatorially, relating both sides of the inequality to certain counts of tuples of set partitions.

In this proof we write $[n]=\{1,\dots,n\}$, and write $\|f\|_{0}$
for the number of elements in the image of a function $f$. For $1\le q\le n$,
let $\mathcal{P}_{n,q}$ be the collection of (unlabelled) partitions
of $[n]$ into $q$ nonempty subsets (so $|\mathcal{P}_{n,q}|=\stirling nq$
is a Stirling number of the second kind). Let $\mathcal{P}_{n}=\bigcup_{q=1}^{n}\mathcal{P}_{n,q}$
be the collection of partitions of $[n]$ into any number of subsets,
and let $\|P\|_{0}$ be the number of parts of a partition $P\in\mathcal{P}_{n}$.
Then, define 
\[
\mathcal{P}_{\vec{a},t}=\left\{ (P_{1},\dots,P_{k})\in\mathcal{P}_{a_{1}}\times\dots\times\mathcal{P}_{a_{k}}\;\middle|\;\sum_{i=1}^{k}\|P_{i}\|_{0}=t\right\} .
\]
Note that
\begin{equation}
\begin{aligned}
  \sum_{b\in\mathcal{B}_{\vec{a},t}^{R}}\frac{\prod_{i}a_{i}!}{\prod_{i,j}b_{i,j}!}
  = \sum_{b\in\mathcal{B}_{\vec{a},t}^{R}}\prod_{i}\binom{a_i}{b_{i,1}\dots b_{i,R}} \le R^{t}|\mathcal{P}_{\vec{a},t}|.
  % = \#\left(f_{1}:[a_{1}]\to[R],\;\dots,\;f_{k}:[a_{k}]\to [R]\;\middle|\;\sum_{i=1}^{k}\|f_{i}\|_{0}=t\right)\le R^{t}|\mathcal{P}_{\vec{a},t}|.
\end{aligned}\label{eq:functions-partitions}
\end{equation}
(To see this, observe that the right-hand side of \cref{eq:functions-partitions} can be interpreted as the number of ways to choose a tuple of partitions $(P_1, \dots, P_k)\in \mathcal{P}_{\vec{a},t}$ together with a labelling of the parts in these partitions, in such a way that for each $1\le i\le k$, each of the parts in $P_i$ is assigned a distinct label from $[R]$).

Now, note that every $(P_{1},\dots,P_{k})\in\mathcal{P}_{\vec{a},t}$
gives rise to a partition $P\in\mathcal{P}_{r,t}$: first, $P_{1}$
describes a partition of $\{1,\dots,a_{1}\}$, then $P_{2}$ describes
a partition of $\{a_{1}+1,\dots,a_{1}+a_{2}\}$, and $P_{3}$ describes
a partition of $\{a_{1}+a_2+1,\dots,a_{1}+a_{2}+a_3\}$, and so on. 
We can then ``collapse'' $P$ into a partition $P'\in\mathcal{P}_{r-k+1,t-k+1}$, by sequentially merging the parts containing $a_{1}$ and $a_{1}+1$, then merging the parts containing $a_{1}+a_{2}$ and $a_{1}+a_{2}+1$, and so on.
% (Note that all these $k-1$ pairs of parts are distinct, by the construction of $P$). 
Since this mapping from $(P_{1},\dots,P_{k})$ to $P'$
is injective, we deduce that 
\[
|\mathcal{P}_{\vec{a},t}|\le|\mathcal{P}_{r-k+1,t-k+1}|=\stirling{r-k+1}{t-k+1}.
\]
Then, note that for all $1\le q\le n$ we have
\[
\stirling nq\le\prod_{i=0}^{n-q-1}\binom{n-i}{2}\le\binom{n}{2}^{n-q},
\]
because we can generate a partition $P\in\mathcal{P}_{n,q}$ by starting
with the trivial partition $P_{0}\in\mathcal{P}_{n,n}$ into $n$
parts, and iteratively merging pairs of parts ($n-q$ times). We deduce
that
\[
|\mathcal{P}_{\vec{a},t}|\le\binom{r-k+1}{2}^{r-t}<\left(\frac{(r-k+1)^{2}}{2}\right)^{r-t}.
\]
Combining this with \cref{eq:functions-partitions}, the desired result follows.
\end{proof}

\begin{proof}
[Proof of \cref{thm:mk-partitioning}]

Let $C$ be a large constant ($C=200$ will do).
Let $R(k)=C\cdot (r-k+1)^2$.

The plan is to repeatedly apply \cref{lem:zhihan} until we find the desired subgraph.
First, let $G_{0}=G$, $p_{0}=p/2$, $n_{0}=n$ and $k_{0}=1$. 
Observe that $G_0$ (like any ordered $r$-graph) is $\vec a$-equipartite for $\vec a=(r)$ (i.e., 1-equipartite). It has order $n$ and $(r-1)$-density at least $(p/n)\binom{n}{r}/(n^{r-1}/r!)>p/2$ (assuming $n$ is sufficiently large in terms of $r$).
Then, for each $i\ge1$ (``at step $i$''), we take the $k_{i-1}$-equipartite subgraph $G_{i-1}$ with $(r-1)$-density $p_{i-1}$, and apply \cref{lem:zhihan} with $R=R(k_{i-1})$ to obtain a
$k_{i}$-equipartite subgraph $G_{i}$ with order $n_{i}=\floor{n_{i-1}/R}$ %\ge(2R(k_{i-1}))^{-i}n$
and with $(r-1)$-density at least
\begin{equation} \label{eq:pi-recurrence}
p_{i}
=\frac{p_{i-1}}{4R} \left(\frac{R}{(r-k_{i-1}+1)^2}\right)^{r-{k_i}}
=\frac{p_{i-1}}{4(r-k_{i-1}+1)^{2}}C^{r-k_{i}-1},
\end{equation}
for some $k_{i}\ge k_{i-1}$. We stop when $k_{i}\ge r-1$ (i.e., when we have found a $(\ge r-1)$-equipartite subgraph), and we also abort the process if at any point $G_{i-1}$ has too few vertices to apply \cref{lem:zhihan} with
the desired $R$. Let $\tau$ be the total number of steps taken.

We first prove that $p_{j}\ge p/(8Cr^{2})$ for $j\leq \tau$. In fact, we prove, by reverse induction, the stronger statement that for each
step $i<j$ we have
\[p_{j}\ge \frac{p_{i}}{4C(r-k_{i}+1)^{2}}.\]
The base case (where $i=j-1$) follows from \cref{eq:pi-recurrence}, so consider some $i<j-1$ and suppose as our induction hypothesis that the statement is true for $i+1$. Since $i+1 \leq j-1\leq \tau-1$, we have $k_{i+1}\leq r-2$. Hence,  
\[
p_{j}\ge\frac{p_{i+1}}{4C(r-k_{i+1}+1)^{2}}=\frac{p_{i}}{(4C)^2(r-k_{i+1}+1)^{2}(r-k_{i}+1)^{2}}C^{r-k_{i+1}}\ge\frac{p_{i}}{(4C)(r-k_{i}+1)^{2}},
\]
using the induction hypothesis and \cref{eq:pi-recurrence} (for $i+1$), and the fact that $C^{r-k}/(4C(r-k+1)^2)\ge 1$ for $k\le r-2$ and $C \ge 36$. 

It now suffices to show that this process cannot continue for too many steps (in particular, we want to make sure that the  graphs $G_{i}$ never get too small). 
For each $1\le k\le r-2$, let $i(k)$ be the first step $i$ such that $k_{i}\ge k$, and let $N(k)$ be the number of steps $i>i(k)$ such that $k_{i}=k$. 
Note that for each step $i(k)<i\le i(k)+N(k)$ we have $p_{i}\ge(C/40)p_{i-1}$ by \cref{eq:pi-recurrence} (because $C^{r-k}(r-k+1)^{-2}\ge C^2/10$ for $k\le r-2$) and $n_{i}\le n_{i-1}/R(k)$.
This gives $p_{i(k)+N(k)} \ge (C/40)^{N(k)} p_{i(k)}$ and $n_{i(k)+N(k)}\le n_{i(k)}/R(k)^{N(k)}$.
But then, due to \cref{def:equipartite}, the $(r-1)$-density of $G_{i(k)+N(k)}$ is at most $n_{i(k)+N(k)}$, so $p_{i(k)+N(k)} \le n_{i(k)+N(k)}$.
This, plus the fact that $p_j\ge p/(8Cr^2)\ge \varepsilon/(8Cr^2)$ for $j \le \tau$, implies
$$
    (C/40)^{N(k)}\cdot\varepsilon/(8Cr^2) 
    \le (C/40)^{N(k)} p_{i(k)} \le p_{i(k)+N(k)} \le n_{i(k)+N(k)} \le n_{i(k)}/R(k)^{N(k)}.
$$
So we must have 
$N(k) \le \log\left(8Cr^2 n_{i(k)}/\varepsilon\right) / \log\left( R(k)\,C/40 \right)
    \le \log(n_{i(k)})/\log(4R(k))$,
where in the latter inequality we assume $n_{i(k)}$ is sufficiently large in terms of $r$ and $\varepsilon$ and $C$ is a large constant.
Consequently, $n_{i(k)+N(k)}\ge (2R(k))^{-N(k)}\cdot n_{i(k)}\ge n_{i(k)}^{\alpha(k)}$, where 
$\alpha(k)=1-\log(2R(k))/\log(4R(k))>0$, and $n_{i(k+1)}=n_{i(k)+N(k)+1}\ge n_{i(k)+N(k)}/(2R(k)) \ge n_{i(k)}^{\alpha(k)}/(2R(k))$.

% It now suffices to show that this process cannot continue for too many steps (in particular, we want to make sure that the $G_{i}$ never get too small). 
% For each $k$, let $i(k)$ be the first step $i$ such that $k_{i}\ge k$, and let $N(k)$ be the number of steps $i>i(k)$ such that $k_{i}=k$. 
% Note that for each step $i(k)<i\le i(k)+N(k)$ we have $p_{i}\ge(C/40)p_{i-1}$ by \cref{eq:pi-recurrence}, since $C^{r-k}(r-k+1)^{-2}\ge C^2/10$ for $k\le r-2$. 
% So, $p_{i(k)+\Delta}\ge (C/40)^{\Delta} p_{i(k)}$ for $\Delta\le N(k)$. On the other hand, note
% that for each such $\Delta$, the $(r-1)$-density of $G_{i(k)+\Delta}$ is
% in the range between $p/(4Cr^2) \ge \varepsilon/(4Cr^2)$ and $n_{i(k)+\Delta}\le n_{i(k)}/R(k)^{\Delta}$,
% so we must have $N(k)\le\log((4Cr^2)n_{i(k)}/\varepsilon)/\log(R(k)C/40)\le\log(n_{i(k)})/\log(4R(k))$ (assuming $n_{i(k)}$ is sufficiently large in terms of $r$ and $\varepsilon$). This means that $n_{i(k)+N(k)}\ge (2R(k))^{-N(k)}n_{i(k)}\ge n_{i(k)}^{\alpha(k)}$,
% where 
% \[
% \alpha(k)=1-\frac{\log(2R(k))}{\log(4R(k))}>0.
% \]

So, $n_{i}$ is at least $n^{\alpha(1)\dots\alpha(r-2)}/(2^{r-2}R(1)\dots R(r-2))$ for each $i$. Note that $\alpha(1)\dots\alpha(r-2)>0$ and $R(1)\dots R(r-2)$ only depend on $r$, %\mk{someone had previously written $\alpha(k)<1$, I think you meant to write $>0$?}
so if $n$ is sufficiently large in terms of $r$ then we can guarantee that each $n_i$ is always large enough (in terms of $r$ and $R(k_{i})$) to apply \cref{lem:zhihan} (i.e., the process never aborts).
%(Recall that  since $G_{i-1}$ always has enough (i.e., some fixed power of $n$) vertices to apply
%\cref{lem:zhihan}.
The process therefore terminates with the $(\ge r-1)$-equipartite subgraph $G_{\tau}$ with $(r-1)$-density $p_{\tau}\geq p/(8Cr^2)$. This concludes the proof.
\end{proof}

Now we prove \cref{thm:mk-partitioning-best-possible}.
\begin{proof}[Proof of \cref{thm:mk-partitioning-best-possible}]
    Let $G$ be the ordered $r$-graph on the vertex set $\{1,\dots,n\}$, with every possible edge that contains two consecutive vertices.
    By \cref{lemma: count the number of r-tuples with given gap}, the number of edges in $G$ is $$\binom{n}{r}-\binom{(n-r+1)_+}{r}=(1+o(1))r(r-1)\frac{n^{r-1}}{r!}.$$
    \begin{compactenum}
        \item If $G'$ is an $r$-equipartite subgraph with parts $I_1<\dots<I_r$ of size $m$, then for every edge $e$ there must be some pair of parts $I_i,I_{i+1}$ such that $e$ contains the last element of $I_i$ and the first element of $I_{i+1}$ (and these two elements must be consecutive). So, the number of edges in $G'$ is at most $(r-1)m^{r-2}$, meaning that $G'$ has $(r-1)$ density at most $(r-1)/m=o_{m\to\infty}(1)$.
        \item \smallskip If $\vec a$ satisfies $\|\vec a\|_0=r-1$ and $G'$ is an $\vec a$-equipartite subgraph with parts $I_1<\dots<I_{r-1}$ of size $m$, then we can divide the edges of $G'$ into two types.
            \begin{compactitem}
            \item First, we could have some edges $e$ containing the last element of some $I_i$ and the first element of $I_{i+1}$, as in (1). The number of such edges is at most $(r-2)m^{r-2}$.
            \item Second, writing $j$ for the single index with $a_j=2$, we could have some edges $e$ containing two consecutive vertices in $I_j$. The number of such edges is at most $(m-1)m^{r-2}$.
            \end{compactitem}
        All in all, the number of edges in $G'$ is at most $(1+o_{m\to \infty}(1))m^{r-1}$, meaning that $G'$ has $(r-1)$-density at most $2+o_{m\to \infty}(1)$.\qedhere
    \end{compactenum}
\end{proof}
\section{Extremal numbers for \texorpdfstring{$P$}{}-cliques}\label{sec:extremal}
In this section, we prove \cref{theorem: turan number for m=2,thm:extremal-clique}. First, the following theorem is a generalisation of \cref{thm:extremal-clique}(1). It will also be used for the lower bound in \cref{theorem: turan number for m=2}.
%Indeed, we prove the following lower bound for all $r$-pattern $P$ ($P$ is not necessarily $r$-partite).
% \zj{For non-$r$-partite $r$-patterns, this bound is loose, but maybe useful when $n$ is small. At least, this will make the \cref{claim: turan number is not small} nicer and the length of the proof is not longer.}

\begin{theorem}\label{theorem: lower bound for all patterns}
  Let $r,n\ge 1$, let $P$ be an $r$-pattern, and let $m\ge 1$.
  % Suppose that $P$ is collectable, or that $m\in\{1,2\}$.
%  Suppose that $m \ge 1$, with the restriction that $m \in\{1,2\}$ if $P$ is non-collectable.
  If $P$ is collectable or $m \in \{1,2\}$
  %($P$ can be non-collectable in this case),
  then $$\ex_<(n,\clique{P}{m}) \ge \binom{n}{r}-\binom{(n-r(m-1))_+}{r}.$$
\end{theorem}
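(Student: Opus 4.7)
The plan is to construct, for each pattern $P$ and each $m$ under consideration, an explicit $P^{(m)}$-free ordered $r$-graph $G_{\vec\delta}$ on the vertex set $\{1,\ldots,n\}$ of the form
\[
G_{\vec\delta} = \left\{\{a_1 < \cdots < a_r\} \subseteq \{1,\ldots,n\} \;:\; a_i - a_{i-1} \le \delta_{i-1} \text{ for some } 1 \le i \le r\right\},
\]
with the convention $a_0 := 0$, parametrised by a non-negative integer vector $\vec\delta = (\delta_0,\ldots,\delta_{r-1})$ with $\sum_{i=0}^{r-1} \delta_i = r(m-1)$. For any such $\vec\delta$, the complement of $G_{\vec\delta}$ in $\binom{\{1,\ldots,n\}}{r}$ consists of those $r$-subsets whose initial gap $a_1$ and all consecutive gaps $a_i-a_{i-1}$ strictly exceed the corresponding $\delta_i$, and the shift $b_i := a_i - \sum_{j<i}\delta_j$ bijects this complement onto $\binom{\{1,\ldots,n-r(m-1)\}}{r}$ (alternatively this follows from \cref{lemma: count the number of r-tuples with given gap}). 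Hence $|G_{\vec\delta}| = \binom{n}{r} - \binom{(n-r(m-1))_+}{r}$ regardless of the specific choice of $\vec\delta$, and the only task is to choose $\vec\delta$ depending on $P$ so that $G_{\vec\delta}$ avoids $P^{(m)}$-cliques.

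To specify $\vec\delta$, I will use the minimal $P^{(m)}$-clique $C^*$ realised on $\{1,\ldots,rm\}$. Among its $m$ edges there is a unique one $e^*$ whose $r$-th vertex equals $rm$; concretely, $e^* = e_m$ if the last block of $P$ has orientation $\AA\cdots\blockA\BB\cdots\blockB$, and $e^* = e_1$ otherwise. I set $\delta_{i-1} := e^*[i] - e^*[i-1] - 1$ for $1 \le i \le r$ (with $e^*[0] := 0$); these entries are non-negative and sum to $e^*[r]-r = r(m-1)$. The hypothesis of the theorem ensures that $C^*$ exists: if $P$ is collectable, $P^{(m)}$-cliques exist for every $m$, and for $m \in \{1,2\}$ existence is trivial ($P^{(1)}$ is a single edge and $P^{(2)} = P$). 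The remaining case of non-collectable $P$ with $m \ge 3$ is vacuous, since then there are no $P^{(m)}$-cliques at all and one may take $G = K_n^{(r)}$.

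The main step is to verify that $G_{\vec\delta}$ is $P^{(m)}$-free. Suppose some $P^{(m)}$-clique $C \subseteq G_{\vec\delta}$ exists in $\{1,\ldots,n\}$, and list its $rm$ vertices in increasing order as $p_1 < \cdots < p_{rm}$ (with $p_0 := 0$). The key point, which I expect to be the main obstacle, is the \emph{rigidity} of $P^{(m)}$-cliques: since every pair of edges of $C$ must form $P$, the combinatorial type of $C$—in particular, which edge each vertex belongs to and its position within that edge—is completely determined by $P$ and hence is isomorphic to that of $C^*$. Concretely, the vertex at global position $j$ in $C^*$ corresponds to $p_j$ in $C$ with edge-memberships preserved, and so the edge $e^*(C) \subseteq C$ corresponding to $e^*$ satisfies $e^*(C)[i] = p_{e^*[i]}$ for all $i$. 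Combined with the obvious monotonicity $p_j - p_{j'} \ge j - j'$ for $j \ge j' \ge 0$, this gives
\[
e^*(C)[i] - e^*(C)[i-1] \;\ge\; e^*[i] - e^*[i-1] \;=\; \delta_{i-1} + 1 \;>\; \delta_{i-1}
\]
for every $1 \le i \le r$, so $e^*(C)$ fails all the gap conditions defining $G_{\vec\delta}$, contradicting $e^*(C) \in G_{\vec\delta}$. Once this is in hand, one concludes $\ex_<(n,\clique{P}{m}) \ge |G_{\vec\delta}| = \binom{n}{r}-\binom{(n-r(m-1))_+}{r}$.
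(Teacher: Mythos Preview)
Your proof is correct and is essentially a mirror image of the paper's: both constructions define $G$ via $r$ gap constraints with total weight $r(m-1)$ and show that a distinguished edge of any $P^{(m)}$-clique violates every constraint, but the paper anchors on $e_1$ (the edge containing vertex $1$) and uses a final gap-to-$n$ condition, whereas you anchor on $e^*$ (the edge containing vertex $rm$) and use an initial gap-from-$0$ condition. One small difference in the freeness argument is that the paper only uses that each pair $(e_1,e_j)$ forms $P$ (which enables the stronger Remark following their proof), whereas you invoke the full rigidity of $P$-cliques; both suffice for the theorem as stated.
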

(Note that \cref{theorem: lower bound for all patterns} allows $P$ to be non-collectable if $m\in \{1,2\}$. We do not need to worry about non-collectable patterns in our proofs of \cref{theorem: turan number for m=2,thm:extremal-clique} the extra generality is so that \cref{claim: turan number is not small} (which we stated without proof in \cref{sec:counting}) is a direct corollary.
\begin{proof}
  Note that $\ex_<(n,\clique{P}{1})=0$ and $\ex_<(n,\clique{P}{m})=\binom{n}{r}$ if $rm > n$.
  We may assume $2 \le m \le n / r$ for the rest of the proof.
  
  Write $f_1,f_2$ for the two edges of $P$, with $f_1[1]<f_2[1]$.
  For each $1\le i < r$, let $s_i=\#(j\in[r]:f_1[i]<f_2[j]<f_1[i+1])$ be the number of vertices of $f_2$ lying between the $i$-th and $(i+1)$-th vertices of $f_1$. 
  Let $s_r=\#(j\in[r]:f_2[j]>f_1[r])$ be the number of vertices of $f_2$ after all the vertices of $f_1$.
  Clearly, $s_1+\dots+s_r=r$.

  Let $G$ be the ordered $r$-graph with vertex set $\{1,2,\dots,n\}$, where we include an edge $e$ whenever $e[i+1]-e[i] \le s_i\cdot(m-1)$ for some $1\le i<r$ or $e[r] \ge n-s_r\cdot(m-1)+1$.
  We first claim that $G$ is $\clique P m$-free. 
  Indeed, suppose for the purpose of contradiction that the edges $e_1,\dots,e_m$ in $G$ form $\clique P m$, with $e_1[1]<\dots<e_m[1]$.
  Fix $2 \le j \le m$. 
  As $e_1,e_j$ form the pattern $P$, we know that for all $1 \le i < r$, there are $s_i$ vertices of $e_j$ lying between $e_1[i],e_1[i+1]$, and that there are $s_r$ incident vertices of $e_j$ lying after $e_1[r]$.
  By considering all $j$, it holds that $e_1[i+1]-e_1[i]>s_i\cdot(m-1)$ for $1 \le i < r$ and $e_1[r] \le n-s_r\cdot(m-1)$. But then, $e_1$ should not have been put into $G$, a contradiction.

  Now, it suffices to count the number of edges in $G$. 
  It is convenient to instead count \emph{non-edges}: an $r$-subset $e$ of $\{1,\dots,n\}$ is a non-edge of $G$ if and only if $e[i+1]-e[i]>s_i\cdot(m-1)$ for all $1 \le i < r$ and $e[r]\le n-s_r\cdot(m-1)$.
  By \cref{lemma: count the number of r-tuples with given gap}, the number of such $e$ is exactly $$\binom{n-s_r(m-1)-\sum_{i=1}^{r-1}s_i(m-1)}{r}=\binom{n-r(m-1)}{r}.$$
  Thus, the number of edges in $G$ is
  \[\binom{n}{r}-\binom{n-r(m-1)}{r},\]
  so this is a lower bound on $\ex_<(n,\clique{P}{m})$, as desired.
\end{proof}
\begin{remark}
  In fact, the above lower bound works in a stronger setting: given any $r$-pattern $P$ and any $r$-matching $Q$ with edges $e_1,\dots,e_m$ (ordered such that $e_1[1]<e_2[1]<\dots<e_m[1]$), such that $e_1,e_i$ form the pattern $P$ for all $2\le i\le m$, we have $$\ex_<(n,Q) \ge \binom{n}{r}-\binom{(n-r(m-1))_+}{r}.$$
\end{remark}

We next prove \cref{thm:extremal-clique}(3).    
\begin{proof}[Proof of \cref{thm:extremal-clique}(3)]
  Without loss of generality, we may assume that $m \le n/r$, as otherwise $\ex_<(n,\clique{P}{m})=\binom{n}{r}$, as desired.
  Let $P$ be the ``alternating pattern'' represented by $|\AB|\BA|\AB|\BA|\cdots|$, and let $\tilde G$ be the ordered $r$-graph in the proof of \cref{thm:extremal-clique}(1) above, i.e., $e$ is an edge of $G$ if $e[2i]-e[2i-1] \le 2(m-1)$ for some $1 \le i \le r/2$ or if $e[r]\ge n-(m-1)+1$ and $r$ is odd.
  The number of edges in $\tilde{G}$ is $$\binom{n}{r}-\binom{n-r(m-1)}{r}.$$
  Consider any $n$-vertex $\clique P m$-free ordered $r$ graph $G$.
  We are going to show that $e(G) \le e(\tilde{G})$.
  Our strategy is to partition the collection of all possible edges into subsets, and to separately show that in each subset the number of edges of $G$ is at most the number of edges of $\tilde{G}$.

  Let $E(K_n^{(r)})$ be the collection of all subsets of $\{1,\dots,n\}$ (i.e., all edges of the complete $r$-graph). 
  We define an equivalence relation $\sim$ on $E(K_n^{(r)})$: for $e,f\in E(K_n^{(r)})$, we write $e\sim f$ if there is $\ell\in \mb Z$ such that $f[i]=e[i]+\ell$ whenever $i$ is odd, and $f[i]=e[i]-\ell$ whenever $i$ is even. 
  Fix an equivalence class $A$ of $\sim$.
  It is easy to see that any two edges in $A$ form the pattern $P$, and thus all the edges (in $K_n^{(r)}$) in $A$ form a $P$-clique\footnote{This is the crucial fact we are using about the alternating pattern (for all other patterns $P$, one can define similar equivalence classes, but it is not true that all equivalence classes are $P$-cliques).}. 
  
  Now, it suffices to show that $\abs{E(G)\cap A} \le |{E(\tilde{G})\cap A}|$ for every equivalence class $A$ of $\sim$.
  Suppose there exists some $e \in A\cap (E(G)\setminus E(\tilde G))$ (otherwise $\abs{E(G)\cap A} \le |{E(\tilde{G})\cap A}|$ holds trivially). Since $e\notin E(\tilde G)$, we have $e[2i]-e[2i-1]> 2(m-1)$ for all $1 \le i\le r/2$, and $e[r]\le n-(m-1)$ if $r$ is odd.
  For $\ell\in \mb N$, let
    \[f_\ell(e):=\{e[1]+\ell,e[2]-\ell,e[3]+\ell,e[4]-\ell,\dots\}.
    \]
  As we increase $\ell$, we are ``pushing each $e[2i-1]$ and $e[2i]$ towards each other'', and ``pushing $e[r]$ towards $n$'' (if $r$ is odd). 
  At some point, there will be a ``collision'': let $\ell^\star$ be the ``time just before this collision'', i.e., the largest $\ell$ for which $e[r] +(-1)^{r+1}\ell \le n$ and $e[2i-1]+\ell< e[2i]-\ell$ for all $1 \le i \le r/2$.
  Note that $\ell^\star\ge m-1$ (since $e\notin E(\tilde G)$) and  that $f_{\ell}(e)\in E(\tilde G)$ for all $\ell\in \{\ell^\star-(m-2),\dots,\ell^\star\}$ (the condition for $f$ to be an edge of $\tilde G$ is the condition that $f$ is ``within $m-1$ steps of a collision''). Note that $f_{\ell}(e)\sim e$ for each $\ell\le \ell^\star$, so $|{E(\tilde{G})\cap A}|\ge m-1$. 
  On the other hand, $E(G)\cap A$ is a $P$-clique as every two edges in $A$ form the pattern $P$. Since $G$ is $\clique{P}{m}$-free, we have $\abs{E(G)\cap A} \le m-1$. This proves that $\abs{E(G)\cap A} \le |{E(\tilde{G})\cap A}|$, as desired.
\end{proof}

Now, we prove \cref{theorem: turan number for m=2} (giving the exact extremal numbers for all $r$-partite $r$-patterns $P$).

\begin{proof}[Proof of \cref{theorem: turan number for m=2}]
  We only need to prove the upper bound, as the lower bound is given by the $m=2$ case of \cref{thm:extremal-clique}(1).
  
  We proceed by induction on $r$ (the base case $r=1$ is trivial). Fix an $r$-partite $r$-pattern $P$, and write $|\block_1|\cdots|\block_r|$ for its block representation, where $\block_1 = \AB$ and $\block_i\in\{\AB,\BA\}$ for $2 \le i \le r$. Let $G$ be an $n$-vertex $P$-free ordered $r$-graph.  We may assume that $n \ge r$; our goal is to prove that
  \[e(G)\le \binom{n}{r}-\binom{n-r}{r}.\]

  Let $\hat P$ be the pattern with block representation $|\block_1|\block_2|\cdots|\block_{r-1}|$, and for an edge $e=\{e[1],\dots,e[r]\}\in E(K_n^{(r)})$, let $\hat e=\{e[1],\dots,e[r-1]\}\in E(K_{n-1}^{(r-1)})$.
  Let $G'$ be obtained from $G$ as follows. For each $f\in E(K_{n-1}^{(r-1)})$, consider all edges $e\in E(G)$ with $\hat e=f$. If there are any such edges, then delete the one with the largest value of $e[r]$. Note that $e(G') \ge e(G)-\binom{n-1}{r-1}$.
  
  Then, for $v \in \{1,\dots,n\}$, let $\hat G_v$ be the graph with vertex set $\{1,\dots,v-1\}$ and edge set \[\{\hat e: e \in E(G'), e[r]=v\}.\] We claim that $\hat G_v$ is $\hat P$-free. To see this, suppose for the purpose of contradiction that $\hat e_1,\hat e_2 \in E(\hat G_v)$ form pattern $P'$ with $e_1[1]<e_2[1]$.
  By the definition of $G'$, there are $x, y\in\{v+1,\dots,n\}$ such that $\{e_1[1],\dots,e_1[r-1],x\}, \{e_2[1],\dots,e_2[r-1],y\} \in E(G)$. But note that
  \begin{compactitem}
      \item if $\block_r = \AB$ then $\{e_1[1],\dots,e_1[r-1],v\},\{e_2[1],\dots,e_2[r-1],y\}$ form pattern $P$;
      \item if $\block_r = \BA$ then $\{e_1[1],\dots,e_1[r-1],x\},\{e_2[1],\dots,e_2[r-1],v\}$ form pattern $P$.
  \end{compactitem}
  In either case, we have found $P$ in $G$, which is a contradiction.
  So, each $\hat G_v$ is $\hat P$-free and by induction we have
  \[
    e(\hat G_v)
    \le \binom{v-1}{r-1}-\binom{(v-1-(r-1))_+}{r-1}
    = \binom{v-1}{r-1}-\binom{(v-r)_+}{r-1}.
  \]
  Also, note that $e(\hat G_n)=0$, because any $e \in E(G)$ with $e[r]=n$ must have been deleted in the construction of $G'$.
  Consequently, 
  \begin{equation} \nonumber
    \begin{aligned}
      e(G) &\le \binom{n-1}{r-1} + e(G')=\binom{n-1}{r-1} + \sum_{v=1}^{n-1}e(\hat G_v)\\
      &\le \binom{n-1}{r-1} + \sum_{v=1}^{n-1} \left[\binom{v-1}{r-1}-\binom{(v-r)_+}{r-1} \right]\\
      &\le \binom{n-1}{r-1} + \sum_{k=0}^{n-2} \binom{k}{r-1} - \sum_{k=0}^{n-r-1} \binom{k}{r-1}\\
      &= \binom{n-1}{r-1} + \binom{n-1}{r} - \binom{n-r}{r}
      =\binom{n}{r}-\binom{n-r}{r},
    \end{aligned}
  \end{equation}
  as desired.
\end{proof}

The last result we prove in this section is \cref{thm:extremal-clique}(2). Starting from a $\clique{P}{m}$-free $r$-graph $G$, we will apply \cref{thm:mk-partitioning} to pass to a $(\ge r-1)$-equipartite subgraph (whose $(r-1)$-density is comparably large, and which is still $\clique{P}{m}$-free).
To this end, we need a generalisation of $\ex_<(n,F)$.
\begin{definition}
Let $r \ge 1$ and $\vec{a}$ be a positive integer-valued vector with sum of entries $\|\vec{a}\|_{1}=r$.
For an ordered $r$-graph $H$, we write $\ex_<^{\vec{a}}(n, H)$ for the maximum number of edges in an $H$-free $\vec{a}$-equipartite $r$-graph of order $n$.
% Consider a positive integer vector $\vec{a}\in\bigcup_{\ell=1}^{r}\mb Z_{>0}^{\ell}$ with sum of entries $\|\vec{a}\|_{1}=r$, and consider an ordered $r$-graph $H$. We write $\ex_<^{\vec{a}}(n, H)$ for the maximum number of edges in an $H$-free $\vec{a}$-equipartite $r$-graph with order $n$.
\end{definition}
We separately consider the $r$-equipartite ($\|\vec a\|_0=r$) and $(r-1)$-equipartite ($\|\vec a\|_0=r-1$) cases. First, we consider the $r$-equipartite case.
\begin{lemma}\label{lemma: turan number for r-partite graphs}
  Let $n,m,r \ge 1$.
  Let $\vec{a}=(1,1,\dots,1)$ be the all-one vector of length $r$, and $P$ be an $r$-partite $r$-pattern.
  Then,
  \[\ex_<^{\vec{a}}(n,\clique{P}{m})=n^r-\big((n-m+1)_+\big)^r\le r(m-1)n^{r-1}.\]
\end{lemma}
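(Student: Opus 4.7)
The plan is to identify any $\vec{a}$-equipartite ordered $r$-graph $G$ of order $n$ with a subset $S\subseteq[n]^r$: each edge is specified by its $r$-tuple of indices, one in each part of size $n$. By reversing the vertex order inside the $i$-th part whenever $\block_i=\BA$, I would pass to an equivalent problem in which $P$ is the ``generalized crossing'' $|\AB|\cdots|\AB|$; this preserves the edge count of $G$ and only relabels which pattern is forbidden. In this normalized setting, a copy of $\clique{P}{m}$ in $G$ corresponds precisely to a \emph{strict} chain $x^{(1)}<\cdots<x^{(m)}$ in $S$, meaning $x^{(j)}_i<x^{(j+1)}_i$ in every coordinate $i$ (the strictness simultaneously encodes both the $\AB$-monotonicity in each block and the vertex-disjointness of the underlying matching).

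For the lower bound I would take $S=\{x\in[n]^r:\min_i x_i\le m-1\}$, whose complement in $[n]^r$ is $\{m,\dots,n\}^r$, giving $|S|=n^r-((n-m+1)_+)^r$. This $S$ is free of strict chains of length $m$ because the last element of any such chain would satisfy $x^{(m)}_i\ge m$ in every coordinate and hence lie in the complement.

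For the matching upper bound, the key is to partition $[n]^r$ not into antichains (a Mirsky-style decomposition would lose a constant factor in general) but into the diagonal chains $C_y=\{y,y+\mathbf{1},y+2\mathbf{1},\dots\}\cap[n]^r$, one for each $y\in[n]^r$ with $\min_i y_i=1$; these partition $[n]^r$ into $n^r-(n-1)^r$ chains. Each $C_y$ is itself a strict chain, so a $\clique{P}{m}$-free $S$ satisfies $|S\cap C_y|\le m-1$. Writing $N_\ell$ for the number of diagonal chains of length exactly $\ell$, this gives $|S|\le\sum_\ell\min(\ell,m-1)N_\ell=n^r-\sum_{\ell\ge m}(\ell-m+1)N_\ell$, and rewriting the latter sum as $\sum_{k=m}^{n}(\#\{\text{chains of length}\ge k\})=\sum_{k=m}^{n}\bigl((n-k+1)^r-(n-k)^r\bigr)$, it telescopes to exactly $((n-m+1)_+)^r$, matching the lower bound.

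The tail inequality $n^r-((n-m+1)_+)^r\le r(m-1)n^{r-1}$ is then immediate from $n^r-(n-m+1)^r=\int_{n-m+1}^{n}rx^{r-1}\,dx\le(m-1)\cdot rn^{r-1}$ for $m\le n$, with the case $m>n$ trivial. The main conceptual step---and what I expect to be the only nonobvious ingredient---is using this diagonal-chain partition rather than an antichain decomposition; once it is in place, the remainder reduces to the elementary telescoping above.
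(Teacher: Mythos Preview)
Your proof is correct and is essentially the same argument as the paper's. Your ``diagonal chains'' $C_y$ are exactly the equivalence classes of the relation $e\sim f\Leftrightarrow e=f+\ell\mathbf{1}$ that the paper uses, and both proofs bound $|S\cap C_y|\le m-1$ and compare to the same extremal set $\{x:\min_i x_i\le m-1\}$; the only cosmetic difference is that you finish with a telescoping sum while the paper compares class-by-class to the extremal construction $\tilde G$.
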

\begin{proof}
\newcommand{\turan}{{\ex_<^{\vec{a}}}}
Let $V_{1},\dots,V_{r}$ be disjoint copies of $\{1,\dots,n\}$, and
let $\mathcal{V}=V_{1}\times\dots\times V_{r}$. Note that an $\vec{a}$-equipartite
$r$-graph $G$ can be viewed as a set of tuples $e\in\mathcal{V}$.
From this perspective, a copy of $\clique P m$ (where $P$ has block representation
$|\block_1|\block_2|\cdots|\block_r|$) corresponds to a sequence of edges $e_{1},\dots,e_{m}\in E(G)$
which is increasing in all coordinates $i$ with $\block_{i}=\AB$, and
decreasing in all coordinates $i$ with $\block_{i}=\BA$. 
By symmetry, $\turan(n,\clique P m)$ depends only on $n$ and $m$ (i.e., it does not depend on $P$).
Therefore, it suffices to consider the case where $P$ has representation $|\AB|\AB|\cdots|\AB|$ (i.e., $P$ is
a ``generalised crossing'').

  For the lower bound, we simply consider the $\vec{a}$-equipartite $r$-graph $\tilde{G}$ consisting of all possible edges $e\in\mathcal{V}$ such that some $e[i]<m$. By the above discussion, any copy of $P^{(m)}$ must contain an edge with all coordinates at least $m$.
  Thus, $\tilde G$ is $\clique P m$-free, and it has the desired number of edges.

For the upper bound, consider a $\clique P m$-free $n$-vertex graph $G$ with
parts $V_{1},\dots,V_{r}$. Define an equivalence relation $\sim$ on $\mathcal{V}$ by taking
$e\sim f$ when there is some $\ell\in\mathbb{Z}$ such that each $e[i]=f[i]+\ell$. We proceed very similarly to the proof of \cref{thm:extremal-clique}(3), showing that $|A\cap E(G)|\le|A\cap E(\tilde{G})|$
for each equivalence class $A$ of $\sim$.

Consider some equivalence class $A$, and suppose there is $\ensuremath{e\in A\cap(E(G)\setminus E(\tilde{G}))}$
(otherwise trivially $|A\cap E(G)|\le|A\cap E(\tilde{G})|$). Since
$e\notin E(\tilde{G})$, each $e[i]\ge m$. Define $f_{\ell}(e)=(e[1]-\ell,\dots,e[r]-\ell)$,
and let $\ell^\star=\min_{i}e[i]-1\ge m-1$. Note that $f_{\ell}(e)\in E(\tilde{G})$
for all $\ell\in\{\ell^\star-(m-2),\dots,\ell^\star\}$, and note that
$f_{\ell}(e)\sim e$ for each $\ell\le\ell^\star$. So, $|A\cap E(\tilde{G})|\ge m-1$.
On the other hand, note that $A\cap E(G)$ is a $P$-clique so $|A\cap E(G)|\le m-1$.
This proves that $|A\cap E(G)|\le|A\cap E(\tilde{G})|$, as desired.
\end{proof}
For the $(r-1)$-partite case, we will make a reduction to the setting where $r=2$.
As mentioned in the introduction, in this setting the extremal number is known exactly, as follows.
\begin{theorem}\label{theorem: turan number for r=2}
Let $n, m \ge 1$ and let $P$ be the crossing 2-pattern or the nesting 2-pattern.
Then, \[\ex_<(n, \clique{P}{m}) = \binom{n}{2}-\binom{(n-2(m-1))_+}{2}\le 2(m-1)n.\]
\end{theorem}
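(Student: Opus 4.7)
The lower bound is immediate from \cref{theorem: lower bound for all patterns} applied with $r=2$: both the crossing and nesting $2$-patterns are collectable, and the construction given there produces an $n$-vertex $\clique{P}{m}$-free graph with exactly $\binom{n}{2} - \binom{(n-2(m-1))_+}{2}$ edges.

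For the upper bound, both cases are classical results cited in the introduction, and my plan is simply to invoke them. For the crossing case ($P=|\AB|\AB|$), Capoyleas and Pach~\cite{CP92} proved exactly this extremal equality. Their proof proceeds by a shifting argument: one iteratively replaces each edge with a shorter edge whenever doing so preserves the $\clique{P}{m}$-free property, and then analyses the resulting canonical configuration to match the claimed bound.

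For the nesting case ($P=|\AB|\BA|$), my plan is to pass through the classical equivalence with queue layouts. Since the nesting relation is transitive, it defines a genuine partial order on the edges of any ordered graph; by Mirsky's theorem (\cref{thm:mirsky}), a graph with no $m$-nesting-clique decomposes into $m-1$ antichains, i.e., $m-1$ sets of pairwise non-nesting edges (a \emph{queue layout} of width $m-1$). Pemmaraju~\cite{Pemmaraju92} and Dujmovi\'c--Wood~\cite{DW04} proved that an $n$-vertex ordered graph admitting a width-$q$ queue layout has at most $2qn - \binom{2q+1}{2}$ edges; taking $q=m-1$ and using the identity
\[
\binom{n}{2} - \binom{(n-2(m-1))_+}{2} = 2(m-1)n - \binom{2m-1}{2}
\]
(valid for $n\ge 2m-1$; the bound $\binom{n}{2}$ is trivial otherwise) recovers the desired formula.

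The main obstacle in each case is pinning down the \emph{exact} extremal constant rather than just the $\Theta((m-1)n)$ order of magnitude. Both the Capoyleas--Pach shifting argument and Pemmaraju's tight queue-layout analysis are genuinely delicate, and a unified from-scratch proof seems unlikely to be substantially shorter than simply citing these two classical sources.
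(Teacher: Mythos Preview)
Your proposal is correct and matches the paper's treatment almost exactly: the paper does not give a self-contained proof of this theorem either, but simply attributes the crossing case to Capoyleas--Pach and the nesting case to the queue-number literature, exactly as you do.

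One small difference worth noting: for the nesting case the paper's \emph{primary} attribution is not to the queue-number results but to its own \cref{thm:extremal-clique}(3), observing that for $r=2$ the ``alternating'' pattern $|\AB|\BA|\cdots|$ is precisely the nesting pattern. That result is proved in the paper by a direct equivalence-class argument (partitioning $E(K_n^{(2)})$ into classes on which the extremal graph $\tilde G$ and any $\clique{P}{m}$-free $G$ can be compared class by class). The queue-layout route you chose is mentioned by the paper only as a parenthetical alternative. Both approaches are equally valid here; the equivalence-class argument has the advantage of generalising to higher uniformities, while your Mirsky-plus-queue-layout route is arguably more conceptual in the graph case.
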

The crossing case ($|\AB|\AB|$) of \cref{theorem: turan number for r=2} is due to Capoyleas and Pach~\cite{CP92} while the nesting case ($|\AB|\BA|$) is a special case of \cref{thm:extremal-clique}(3). (The nesting case also follows from results on queue-numbers of graphs due to Pemmaraju~\cite{Pemmaraju92} and to Dujmovi\'c and Wood~\cite{DW04}; see \cite[Lemma~8]{DW04}).
Now, our result in the $(r-1)$-partite setting is as follows.
\begin{lemma}\label{lemma: turan number for r-1 partite}
  Let $n,m,r \ge 1$. Let $\vec{a}=\{1,2\}^{r-1}$ be a vector with exactly one ``2'' (so all other entries are ``1'').
  Let $P$ be an $r$-partite $r$-pattern.
  Then, 
  \[\ex_<^{\vec{a}}(n,\clique{P}{m})\le 4r(m-1)n^{r-1}.\]
\end{lemma}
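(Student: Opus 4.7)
The plan is to reduce the $(r-1)$-equipartite problem to the $r$-partite case (\cref{lemma: turan number for r-partite graphs}) by duplicating the doubled part. Concretely, write each edge of $G$ as $e=(x_1,\dots,x_{j-1},y,z,x_{j+1},\dots,x_{r-1})$ with $y<z$ in $I_j$. Introduce two disjoint copies $I_j^{(1)}, I_j^{(2)}$ of $I_j$ (each of size $n$) and form an $r$-partite $r$-graph $\tilde G$ on parts $I_1,\dots,I_{j-1},I_j^{(1)},I_j^{(2)},I_{j+1},\dots,I_{r-1}$ by replacing each $e$ with $\tilde e=(x_1,\dots,x_{j-1},y^{(1)},z^{(2)},x_{j+1},\dots,x_{r-1})$, where $y^{(1)},z^{(2)}$ are the copies of $y,z$ in $I_j^{(1)},I_j^{(2)}$. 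This bijection gives $e(\tilde G)=e(G)$. Let $\tilde P$ be the $r$-partite pattern obtained from $P$ by splitting the two $I_j$-blocks $\block_k\block_{k+1}$ into two separate parts.

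The key step is to analyse how $\tilde P$-cliques in $\tilde G$ relate to $P$-cliques in $G$, and this splits into two cases. In the \emph{nesting case}, $\block_k\block_{k+1}\in\{\AB\BA,\BA\AB\}$, so a $\clique{\tilde P}{m}$ in $\tilde G$ requires (say) $y_1<\cdots<y_m$ in $I_j^{(1)}$ and $z_1>\cdots>z_m$ in $I_j^{(2)}$; combined with $y_k<z_k$, these inequalities force a true nesting clique $y_1<y_2<\cdots<y_m<z_m<\cdots<z_1$ in $I_j$, so $\clique{\tilde P}{m}$-cliques of $\tilde G$ are in exact correspondence with $\clique P m$-cliques of $G$. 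Hence $\tilde G$ is $\clique{\tilde P}{m}$-free, and \cref{lemma: turan number for r-partite graphs} yields the desired bound $e(G)\le r(m-1)n^{r-1}$. In the \emph{crossing case}, $\block_k\block_{k+1}\in\{\AB\AB,\BA\BA\}$, and now a $\clique{\tilde P}{m}$-clique in $\tilde G$ only guarantees $y_1<\cdots<y_m$ and $z_1<\cdots<z_m$, which in $G$ can correspond either to a crossing-clique (when $y_m<z_1$) or to an alignment-clique (when $z_k<y_{k+1}$ for some $k$), or to a mixture. Since alignment-cliques are not forbidden by $\clique P m$-freeness, the clean correspondence breaks down.

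To overcome this obstacle in the crossing case, I plan to partition $I_j=I_j^L\cup I_j^R$ into two contiguous halves of size $\lceil n/2\rceil$ and classify edges as \emph{straddling} ($y\in I_j^L$ and $z\in I_j^R$) or \emph{same-half}. For straddling edges the $y$'s and $z$'s live in disjoint intervals, so pair alignments are impossible and only crossings or nestings can occur between two such edges; the straddling subgraph is therefore a genuine $r$-partite $\clique{\tilde P}{m}$-free graph on parts of sizes $n,\dots,n,\lceil n/2\rceil,\lceil n/2\rceil,n,\dots,n$, bounded by $O(r(m-1)n^{r-1})$ via \cref{lemma: turan number for r-partite graphs}. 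The hard part—and the main obstacle—is bounding the same-half contributions without accumulating a $\log n$ factor under naive recursion on $|I_j|$. I expect to handle this either by a Dilworth-type argument showing that any sufficiently long $\clique{\tilde P}{m'}$-clique in $\tilde G$ must contain a genuine crossing subclique of the forbidden size (thereby forcing $\tilde G$ itself to be $\clique{\tilde P}{O(m)}$-free), or by combining the straddling bound with an induction in which the extremal count $f(s,n)$ for $|I_j|=s$ is controlled using the $2$-uniform bound \cref{theorem: turan number for r=2} applied to the pair-projection of $G$. The nontrivial point is that the natural projection $H_{\vec x}$ onto $I_j$ need not itself be $\clique Q m$-free, so the $2$-uniform reduction must be combined with an averaging or deletion argument to absorb the recursive contribution into the target constant $4r$.
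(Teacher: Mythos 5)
Your reduction in the \emph{nesting case} is clean and correct, and your diagnosis of why the naive splitting fails in the \emph{crossing case} (alignments among the $I_j$-pairs are not forbidden) is accurate. But the crossing case is then not resolved: you offer two sketches and flag the second yourself as ``the main obstacle,'' so the proof is genuinely incomplete as written.

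Moreover, the worry you raise about the pair-projection route is unfounded, and recognizing this is exactly what makes the lemma go through. After the symmetry reductions (take $i^\star=r-1$ and assume $\block_i=\AB$ for $i\le r-2$, which is legitimate since, as in \cref{lemma: turan number for r-partite graphs}, $\ex_<^{\vec a}$ is invariant under exchanging $\blockA$ and $\blockB$ within each block), fix the \emph{offset vector} $\vec x=(x_1,\dots,x_{r-2})$ with $x_i=e[i]-e[r-1]$ and restrict to the subgraph $G_{\vec x}$ of edges sharing this offset. With $\vec x$ fixed, the first $r-2$ coordinates are rigid translates of the left endpoint $u=e[r-1]$ of the $I_j$-pair; in particular they all move together with $u$. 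Consequently, if two edges of $G_{\vec x}$ have $I_j$-pairs forming the $2$-pattern $Q=\block_{r-1}\block_r$, they automatically form $P$ in full. So the pair-projection $G_{\vec x}^{(2)}$ \emph{is} $\clique{Q}{m}$-free, and \cref{theorem: turan number for r=2} gives $e(G_{\vec x})\le 2(m-1)n$; no averaging, deletion, or straddling decomposition is needed, and no $\log$ factor appears. The constant $4r$ then comes purely from counting the offsets: $G_{\vec x}$ is nonempty only when $\max_i x_i-\min_i x_i<n$, so at most $2rn^{r-2}$ offsets contribute, giving $e(G)\le 2rn^{r-2}\cdot 2(m-1)n$. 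This argument also makes no distinction between the crossing and nesting cases, so the split into two cases and the entire straddling/same-half machinery can be dropped.
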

\begin{proof}
  Fix an $r$-partite $r$-pattern $P$, and write $|\block_1|\cdots|\block_r|$ for its block representation.
  Let $V_{1},\dots,V_{r-1}$ be disjoint copies of $\{1,\dots,n\}$, and let $i^{\star}$ be the unique index such that $a_{i^{\star}}=2$.
  Note that an $\vec a$-equipartite $r$-graph $G$ can be viewed as a set of tuples
  \[
    e\in V_{1}\times\dots\times V_{i^{\star}-1}\times E(K_{n})\times V_{i^{\star}+1}\times\dots\times V_{r-1},
  \]
where $K_{n}$ is the complete 2-graph on the vertex set $V_{i^{\star}}$.
From this perspective, a copy of $\clique P m$ corresponds to a sequence of
edges $e_{1},\dots,e_{m}\in E(G)$ which is increasing in all coordinates
$i\ne i^{\star}$ with $\block_{i}=\AB$, and decreasing in all coordinates
$i\ne i^{\star}$ with $\block_{i}=\BA$, and in coordinate $i^{\star}$,
the corresponding 2-edges have a consistent 2-partite pattern (the possibilities can be represented as $\AB\AB$, $\BA\BA$, $\AB\BA$ and $\BA\AB$, i.e., a crossing-clique going left, a crossing-clique going right, a nesting-clique going inwards, or a nesting-clique going outwards).%\mk{i rephrased this, because the way we defined the word ``pattern'', ABAB means the same thing as BABA}

%(they comprise a crossing-clique going left, a crossing-clique going right, a nesting-clique going inwards, or a nesting-clique going outwards).
 By symmetry, we may assume that $i^{\star}=r-1$. In addition, by exchanging the ``$\blockA$''s and the ``$\blockB$''s in each $\block_i$, if necessary, we may assume that $|\block_{r-1}|\block_r|=|\AB|\AB|$ or $|\AB|\BA|$, and finally (as in \cref{lemma: turan number for r-partite graphs}) we may assume that $\block_{i}= \AB$ for $i\leq r-2$.
% By symmetry,
%(as in the proof of \cref{lemma: turan number for r-partite graphs}), we may assume that $i^\star=r-1$, $|\block_{r-1}|\block_r|\in\{|\AB|\AB|,|\AB|\BA|\}$, and $\block_i=\AB$ for all $i\in[r-2]$.
In other words, $P$ has block representation $|\AB|\AB|\dots|\AB|\AB|$ or $|\AB|\AB|\dots|\AB|\BA|$.

Now, we want to reduce the upper bound of $\ex_<^{\vec{a}}(n,\clique{P}{m})$ to the graph ($r=2$) case and apply \cref{theorem: turan number for r=2}.
Fix an $\vec{a}$-equipartite $\clique P m$-free $r$-graph $G$ with parts $V_{1},\dots,V_{r-1}$. 
Write $Q$ for the 2-partite 2-pattern with block representation $|\block_{r-1}|\block_{r}|$.
Observe that if $G$ has two edges $e,f$ such that $e[i]-e[r-1]=f[i]-f[r-1]$ for all $i\in\{1,\dots,r-2\}$, and such that the last two coordinates of $e$ and the last two coordinates of $f$ (both are viewed as edges in $K_n$) form pattern $Q$, then $e,f$ form pattern~$P$.

Now, for each $\vec{x}=(x_{1},\dots,x_{r-2})\in\{1-n,2-n,\dots,n-1\}^{r-2}$, we define $G_{\vec{x}}$ to be the subgraph of $G$ consisting of all edges $e\in E(G)$ such that $e[i]-e[r-1]=x_i$ for all $i\in\{1,\dots,r-2\}$ (i.e., the edges of the form $(x_{1}+u,\,x_2+u,\,\dots,\,x_{r-2}+u,\,uv)$ with $u<v$).
% $\vec{x}=(x_{1},\dots,x_{r-2})\in(\mb Z\cap (-n,n))^{r-2}$ (i.e., each integer vector with coordinates between $1-n$ and $n-1$) we define $G_{\vec{x}}$ to be the subgraph of $G$ consisting of all edges $e\in E(G)$ such that $e[i]-e[r-1]=x_i$ for all $i\in\{1,\dots,r-2\}$ (i.e., the edges of the form $(x_{1}+u,\,x_2+u,\,\dots,\,x_{r-2}+u,\,uv)$ with $u<v$).
Write $G_{\vec{x}}^{(2)}$ for the $2$-graph consisting of all $uv$ such that $uv$ appears as the last coordinate of some edge in $G_{\vec{x}}$.
Note that there is a natural bijection between edges in $G_{\vec{x}}$ and edges in $G_{\vec{x}}^{(2)}$, and by the above discussion, any $Q$-clique in $G_{\vec{x}}^{(2)}$ corresponds to a $P$-clique in $G_{\vec{x}}$ of the same size.
Thus, $G_{\vec{x}}^{(2)}$ is $\clique{Q}{m}$-free.
Also, as  $G=\bigcup_{\vec{x}}G_{\vec{x}}$, we know $e(G)\le\sum_{\vec{x}}e(G_{\vec{x}})=\sum_{\vec{x}}e(G_{\vec{x}^{(2)}})$.
% that for two edges $e,f \in E(G_{\vec{x}})$ specified by $uv, xw\in E(G_{\vec{x}}^{(2)})$, respectively, if $uv$ and $xw$ form pattern $Q$
% Note that $G=\bigcup_{\vec{x}}G_{\vec{x}}$. 
% as $G_{\vec{x}}$ is $\clique P m$-free we have that $G_{\vec{x}}^{(2)}$ is $\clique{Q}{m}$-free.
% , for the 2-partite 2-pattern $Q$
% with block representation $|\block_{r-1}|\block_{r}|$.

% For each $\vec{x}=(x_{1},\dots,x_{r-2})\in(\mb Z\cap (-n,n))^{r-2}$ (i.e., each integer vector with coordinates between $1-n$ and $n-1$), let $G_{\vec{x}}$ be the $r$-graph consisting of all edges
% $(x_{1}+u,\,x_2+u,\,\dots,\,x_{r-2}+u,\,uv)\in E(G)$ with $u<v$, and $G_{\vec{x}}^{(2)}$ be the $2$-graph consisting of the projections of all edges of $G_{\vec{x}}$ to their last $2$ coordinates. 
% Note that $G=\bigcup_{\vec{x}}G_{\vec{x}}$. 
% Also, as $G_{\vec{x}}$ is $\clique P m$-free we have that $G_{\vec{x}}^{(2)}$ is $\clique{Q}m$-free, for the 2-partite 2-pattern $Q$
% with block representation $|\block_{r-1}|\block_{r}|$.

Then, \cref{theorem: turan number for r=2} implies $e(G_{\vec{x}})=e(G_{\vec{x}}^{(2)})\le2(m-1)n$ for all possible $\vec{x}$.
Na\"ively, there are $(2n-1)^{r-2}$ possible choices for $\vec{x}$,
but we can do better. Note that for each edge $uv\in G_{\vec{x}}^{(2)}$,
we must have $\max_{i}x_{i}+u\le n$ and $\min_{i}x_{i}+u\ge 1$. So,
for $G_{\vec{x}}$ to be nonempty, we must have $\max_{i}x_{i}-\min_{i}x_{i}<n$.
The number of choices of $\vec{x}$ which satisfy this inequality
is at most $(2n-1)rn^{r-3}<2rn^{r-2}$ (enumerating over all choices
of $\min_{i}x_{i}$, and all choices of an $i$ for which $x_{i}$
is minimal). So,
\[
e(G)\le 2rn^{r-2}\cdot  2(m-1)n =4r(m-1)n^{r-1},
\]
and the desired result follows.
%
%Now, fix an $\vec{a}$-equipartite $P$-free $r$-graph $G$ with parts $V_{1},\dots,V_{r-1}$. For each $\vec{x}=(x_{1},\dots,x_{r-2})\in(\mb Z\cap (-n,n))^{r-2}$ (i.e., each integer vector with coordinates between $1-n$ and $n-1$), let $G_{\vec{x}}$ be the $r$-graph consisting of all edges
%\begin{itemize}
%\item $(x_{1}+u,\,x_2+u,\,\dots,\,x_{r-2}+u,\,uv)\in E(G)$, if $\block_{r}=\block_{r-1}$, or;
%\item $(x_{1}-u,\,x_2-u,\,\dots,\,x_{r-2}-u,\,uv)\in E(G)$, if $\block_{r}\ne \block_{r-1}$,
%\end{itemize}
%and $G_{\vec{x}}^{(2)}$ be the $2$-graph consisting of the projections of all edges of $G_{\vec{x}}$ to their last $2$-coordinates. Note that $G=\bigcup_{\vec{x}}G_{\vec{x}}$. Also, as $G_{\vec{x}}$ is $P$-free we have that $G_{\vec{x}}^{(2)}$ is $\clique{P'}m$-free, for the 2-partite 2-pattern $P'$
%with block representation $|\block_{r-1}|\block_{r}|$.
%
%Now, \cref{theorem: turan number for r=2} tells us that each $e(G_{\vec{x}}^{(2)})\le2(m-1)n.$
%Na\"ively, there are $(2n-1)^{r-2}$ possible choices for $\vec{x}$,
%but we can do better. Note that for each edge $uv\in G_{\vec{x}}^{(2)}$,
%we must have $\max_{i}x_{i}+u\le n$ and $\min_{i}x_{i}-u\ge0$. So,
%for $G_{\vec{x}}$ to be nonempty, we must have $\max_{i}x_{i}-\min_{i}x_{i}<n$.
%The number of choices of $\vec{x}$ which satisfy this inequality
%is at most $(2n-1)rn^{r-3}=2n^{r-2}$ (enumerating over all choices
%of $\min_{i}x_{i}$, and all choices of an $i$ for which $x_{i}$
%is minimal). So,
%\[
%e(G)\le4r(m-1)n,
%\]
%and the desired result follows.
\end{proof}
\begin{remark}
  Let $i^\star$ be the only $i\in[r-1]$ with $a_i=2$ and let $|\block_1|\block_2|\cdots|\block_r|$ be the block representation of $P$.
  Similarly to the proof of \cref{thm:extremal-clique}(3), one can show that if $\block_{i^\star}\neq \block_{i^\star+1}$, then 
  \[\ex_<^{\vec{a}}(n,\clique{P}{m})=n^{r-2}\binom{n}{2}-(n-(m-1))^{r-2}\binom{(n-2(m-1))_+}{2}.\]
  We wonder if a more general statement is true.
  Let $n,m,r \ge 1$, let $P$ be an $r$-partite $r$-pattern, and let $\vec{a}$ be a vector of positive integers with sum of entries $\|\vec{a}\|_{1}=r$.
  Then, is it always the case that
  \[\ex_<^{\vec{a}}(n,\clique{P}{m})=\prod_{i=1}^k\binom{n}{a_i}-\prod_{i=1}^k\binom{(n-(m-1)a_i)_+}{a_i}?\]
  (Cf.\ \cref{conjecture: turan numbers are the same}).
\end{remark}

We are finally ready to prove \cref{thm:extremal-clique}(2) (giving a general upper bound on $\ex_<(n,\clique{P}{m})$ using \cref{thm:mk-partitioning} to reduce to the $(\ge r-1)$-equipartite case).
\begin{proof}[Proof of \cref{thm:extremal-clique}(2)]
  Let $G$ be an $n$-vertex $\clique{P}{m}$-free ordered $r$-graph, with $(p/n) \binom n r$ edges. Our goal is to prove that $p=O(r^3(m-1))$.

  By \cref{thm:mk-partitioning}, we can find a $(\ge r-1)$-equipartite subgraph $G'\subseteq G$, of order $n'=n^{\Omega_r(1)}$, which has $(r-1)$-density at least $\Omega(p/r^2)$. Note that $G'$ is still $\clique P m$-free.
\begin{compactitem}
    \item If $G'$ is $r$-partite, then by \cref{lemma: turan number for r-partite graphs}, we have $e(G')\le r(m-1)n^{r-1}$. That is to say, $G'$ has $(r-1)$-density at most $r(m-1)$.
    \item If $G'$ is $(r-1)$-partite, then by \cref{lemma: turan number for r-1 partite}, we have $e(G')\le 4r(m-1)n^{r-1}$. That is to say, $G'$ has $(r-1)$-density at most $8r(m-1)$.
\end{compactitem}
In both cases, $G'$ has $(r-1)$-density $O(r(m-1))$; the desired result follows.
\end{proof}

\bibliographystyle{amsplain_initials_nobysame_nomr}
\bibliography{main}

\ifarxiv
\appendix
\section{Further lower bounds on Ramsey parameters
%\texorpdfstring{$\ramsey r n$}{}
} \label{appendix: ES}

In this appendix we prove that $\ramsey 4 n\ge n^{1/15}/4$, completing the
proof of \cref{thm:ES-3-4}. It is convenient to introduce some notation.
\begin{definition}
For a pair of $r$-patterns $P,Q$, we write $P\circ Q$ for the set of all $r$-patterns that can be obtained as a ``composition'' of $P$ and $Q$. 
Specifically, we put $R\in P\circ Q$ if it is possible to have three edges $e,f,g$ (with $e[1]<f[1]<g[1]$) in an ordered $r$-matching such that $e,f$ form~$P$, $f,g$ form~$Q$ and $e,g$ form~$R$.
% there exists
% an ordered $r$-matching consisting of three edges $e,f,g$ (with
% $e[1]<f[1]<g[1]$) such that $e,f$ form $P$ and $f,g$ form $Q$ and
% $e,g$ form $R$.
\end{definition}

For example, in the case $r=2$, writing $\line$ for the alignment
pattern $\AA\BB$, we clearly have $\line\circ\line=\{\line\}$. As a few more examples, writing $\stack$ for the nesting pattern
$\AB\BA$ and $\wave$ for the crossing pattern $\AB\AB$, it is not hard to see that
$\line\circ\stack=\{\line\}$ and $\stack\circ\line=\{\line,\stack,\wave\}$.
\begin{definition}
Say that a collection of $r$-patterns $\mathcal{P}$ is \emph{left-dominated}
if $(P\circ Q)\cap\mathcal{P}\subseteq\{P\}$ for all $P,Q\in\mathcal{P}$.
Say that $\mathcal{P}$ is \emph{right-dominated} if $(P\circ Q)\cap\mathcal{P}\subseteq\{Q\}$
for all $P,Q\in\mathcal{P}$.

Roughly speaking, the idea is that, if $\mc P$ is left-dominated, then in a $\mc P$-clique, if we compose a pattern~$P$ with any pattern we always get $P$. (Similarly, if $\mc P$ is right-dominated, then when we compose any pattern with $P$ we always get $P$).
For example, as implicitly observed in the proof of the lower bound on $\ramsey 3 n$ in \cref{subsec:further-ES}, the set $\{\psi(\line\stack),\psi(\stack\line)\}$
is left-dominated. The key property of a left- or right-dominated
set $\mathcal{P}$ (which also implicitly appeared in the proof of the lower bound on $\ramsey 3 n$) is that every $\mathcal{P}$-clique always contains a very large
$P$-clique for some $P\in\mathcal{P}$, as follows.
\end{definition}

\begin{lemma}\label{lem:dominated-pigeonhole}
If $\mathcal{P}$ is left-dominated or right-dominated, then for every
$\mathcal{P}$-clique $M$ of size $n$ we have $L(M)\ge n/|\mathcal{P}|$.
\end{lemma}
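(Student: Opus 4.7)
The plan is to show that left-domination (the right-dominated case being entirely symmetric, e.g.\ by reversing the vertex order) forces every edge of a $\mathcal{P}$-clique to have a well-defined ``type'', after which the statement drops out of the pigeonhole principle. Let me label the edges of $M$ as $e_1,\dots,e_n$ in order of their first vertex, and for $1\le i<j\le n$ write $P_{i,j}\in\mathcal{P}$ for the pattern formed by the pair $(e_i,e_j)$ (this lies in $\mathcal{P}$ because $M$ is a $\mathcal{P}$-clique).

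The key step is to observe that $P_{i,j}$ does not actually depend on $j$. Indeed, fixing $i<j<k$, the three patterns $P_{i,j},P_{j,k},P_{i,k}$ all lie in $\mathcal{P}$, and by the very definition of the composition $P_{i,j}\circ P_{j,k}$ (applied to the witness $(e_i,e_j,e_k)$) we have $P_{i,k}\in P_{i,j}\circ P_{j,k}$. Left-domination then forces $P_{i,k}=P_{i,j}$. Iterating this, for each $1\le i\le n-1$ the values $P_{i,i+1},P_{i,i+2},\dots,P_{i,n}$ coincide, so I may define the \emph{type} of $e_i$ to be this common pattern $P_i^{*}\in\mathcal{P}$.

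To finish, I apply the pigeonhole principle: since we have assigned a type to each of the $n-1$ edges $e_1,\dots,e_{n-1}$ from a palette of $|\mathcal{P}|$ possibilities, there exist some $P^{*}\in\mathcal{P}$ and a set $S\subseteq\{1,\dots,n-1\}$ with $|S|\ge\lceil(n-1)/|\mathcal{P}|\rceil$ such that $P_i^{*}=P^{*}$ for every $i\in S$. I claim that the sub-matching $\{e_i:i\in S\}\cup\{e_n\}$ is a $P^{*}$-clique: for any two of its indices $i<j$, we have $i\in S$ (since $j=n$ is the only index in the set that is not in $S$), hence $P_{i,j}=P_i^{*}=P^{*}$. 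This clique has size at least
\[
\Bigl\lceil\frac{n-1}{|\mathcal{P}|}\Bigr\rceil+1\;\ge\;\frac{n-1}{|\mathcal{P}|}+1\;=\;\frac{n+|\mathcal{P}|-1}{|\mathcal{P}|}\;\ge\;\frac{n}{|\mathcal{P}|},
\]
which gives $L(M)\ge n/|\mathcal{P}|$ as required.

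I do not anticipate any real obstacle in carrying out this plan; the entire argument rests on a single observation, namely that within a $\mathcal{P}$-clique the defining inclusion of left-domination immediately collapses the composition of any two patterns from $\mathcal{P}$ back to the left one, and this is exactly the rigidity needed to assign each edge a canonical type.
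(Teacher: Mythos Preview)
Your proof is correct and follows essentially the same approach as the paper: assign each edge a ``type'' (the common pattern it forms with all later edges, which is well-defined by left-domination), then pigeonhole. Your treatment is actually slightly more careful than the paper's, since you handle $e_n$ (which has no later edge and hence no type) explicitly by adjoining it to the clique at the end.
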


\begin{proof}
Suppose $\mathcal{P}$ is left-dominated (the right-dominated case
is similar), and let $M$ be a $\mathcal{P}$-clique of size $n$.
Write the edges of $M$ as $e_{1},\dots,e_{n}$ with $e_{1}[1]<\dots<e_{n}[1]$.
Now, for every index $i$, there is a single pattern $P\in\mathcal{P}$
such that $e_{i}$ and $e_{j}$ form $P$ for each $j>i$; say that
$i$ is of ``type $P$''. By the pigeonhole principle, at least
$n/|\mathcal{P}|$ of the indices have the same type (say, $P$);
the corresponding edges form a $P$-clique.
\end{proof}
We also need to introduce a new notion of ``one-sided'' $P$-freeness.
\begin{definition}
Given an $r$-matching $M$ and an $r$-pattern $P$, say that an
edge $e\in E(M)$ is \emph{left-$P$-free} if there is no $f\in E(M)$
with $f[1]<e[1]$ such that $f$ and $e$ form pattern $P$. Similarly,
say that $e$ is \emph{right-$P$-free }if there is no $f\in E(M)$
with $f[1]>e[1]$ such that $e$ and $f$ form pattern $P$.
\end{definition}

Note that if we have a matching $M$ whose every edge is left-$P$-free,
or whose every edge is right-$P$-free, then $M$ itself is $P$-free.

Now we are ready to complete the proof of \cref{eq:ES-3-4}.
\begin{proof}[Proof of the lower bound on $\ramsey{4}{n}$ in \cref{thm:ES-3-4}]
We use the notions of weak patterns and signatures introduced in
\cref{sec:ramsey-lower}. There are 35 different 4-patterns,
including eight non-collectable patterns (which are all given names
in \cref{table: 4-patterns}).

\begin{table}
\centering \setlength{\tabcolsep}{7mm} % \resizebox{\textwidth}{7mm}
\begin{tabular}{|c|c|}
\hline 
Pattern name & Representation\tabularnewline
\hline 
$\psi(\line\line\line)$ & $|\AA\AA|\BB\BB|$\tabularnewline
\hline 
$P_{\line\line\line}^{*}$ & $\AAA\BA\BBB$\tabularnewline
\hline 
$P_{\line\line\line}^{**}$ & $\AA\AB\BA\BB$\tabularnewline
\hline 
$P_{\line\line\line}^{***}$ & $\AA\blockB\AA\BBB$\tabularnewline
\hline 
$P_{\line\line\line}^{****}$ & $\AA\BA\BA\BB$\tabularnewline
\hline 
$\psi(\line\line\stack)$ & $|\AAA\BBB|\BA|$\tabularnewline
\hline 
$P_{\line\line\stack}^{*}$ & $\AA\BA\BB\BA$\tabularnewline
\hline 
$\psi(\line\line\wave)$ & $|\AAA\BBB|\AB|$\tabularnewline
\hline 
$P_{\line\line\wave}^{*}$ & $\AA\BA\BB\AB$\tabularnewline
\hline 
$\psi(\line\stack\line)$ & $|\AA\BB|\BB\AA|$\tabularnewline
\hline 
$\psi(\line\stack\stack)$ & $|\AA\BB|\BA|\AB|$\tabularnewline
\hline 
$\psi(\line\stack\wave)$ & $|\AA\BB|\BA|\BA|$\tabularnewline
\hline 
$\psi(\line\wave\line)$ & $|\AA\BB|\AA\BB|$\tabularnewline
\hline 
$\psi(\line\wave\stack)$ & $|\AA\BB|\AB|\BA|$\tabularnewline
\hline 
$\psi(\line\wave\wave)$ & $|\AA\BB|\AB|\AB|$\tabularnewline
\hline 
$\psi(\stack\line\line)$ & $|\AB|\BBB\AAA|$\tabularnewline
\hline 
$P_{\stack\line\line}^{*}$ & $\AB\BB\AB\AA$\tabularnewline
\hline 
$\psi(\stack\line\stack)$ & $|\AB|\BB\AA|\AB|$\tabularnewline
\hline 
\end{tabular}\enskip{}%
\begin{tabular}{|c|c|}
\hline 
Pattern name & Representation\tabularnewline
\hline 
$\psi(\stack\line\wave)$ & $|\AB|\BB\AA|\BA|$\tabularnewline
\hline 
$\psi(\stack\stack\line)$ & $|\AB|\BA|\AA\BB|$\tabularnewline
\hline 
$\psi(\stack\stack\stack)$ & $|\AB|\BA|\AB|\BA|$\tabularnewline
\hline 
$\psi(\stack\stack\wave)$ & $|\AB|\BA|\AB|\AB|$\tabularnewline
\hline 
$\psi(\stack\wave\line)$ & $|\AB|\BA|\BB\AA|$\tabularnewline
\hline 
$\psi(\stack\wave\stack)$ & $|\AB|\BA|\BA|\AB|$\tabularnewline
\hline 
$\psi(\stack\wave\wave)$ & $|\AB|\BA|\BA|\BA|$\tabularnewline
\hline 
$\psi(\wave\line\line)$ & $|\AB|\AAA\BBB|$\tabularnewline
\hline 
$P_{\wave\line\line}^{*}$ & $\AB\AA\BA\BB$\tabularnewline
\hline 
$\psi(\wave\line\stack)$ & $|\AB|\AA\BB|\BA|$\tabularnewline
\hline 
$\psi(\wave\line\wave)$ & $|\AB|\AA\BB|\AB|$\tabularnewline
\hline 
$\psi(\wave\stack\line)$ & $|\AB|\AB|\BB\AA|$\tabularnewline
\hline 
$\psi(\wave\stack\stack)$ & $|\AB|\AB|\BA|\AB|$\tabularnewline
\hline 
$\psi(\wave\stack\wave)$ & $|\AB|\AB|\BA|\BA|$\tabularnewline
\hline 
$\psi(\wave\wave\line)$ & $|\AB|\AB|\AA\BB|$\tabularnewline
\hline 
$\psi(\wave\wave\stack)$ & $|\AB|\AB|\AB|\BA|$\tabularnewline
\hline 
$\psi(\wave\wave\wave)$ & $|\AB|\AB|\AB|\AB|$\tabularnewline
\hline 
\multicolumn{1}{c}{} & \multicolumn{1}{c}{}\tabularnewline
\end{tabular}

\caption{All 35 different 4-patterns. The collectable patterns are named as
$\psi(W)$ for their corresponding weak pattern $W$ (as in \cref{sec:ramsey-lower}). The
non-collectable patterns are given ad-hoc names with superscript ``$*$''s.}
\label{table: 4-patterns}
\end{table}

Define
\begin{align*}
\mathcal{P}_{1}&=\phi^{-1}(\line\line\line)=\{\psi(\line\line\line),P_{\line\line\line}^{*},&&\!\!\!\!\!P_{\line\line\line}^{**},P_{\line\line\line}^{***},P_{\line\line\line}^{****}\},\quad\mathcal{P}_{2}=\{\psi(\line\line\wave),P_{\line\line\wave}^{*},\psi(\line\wave\line),\psi(\wave\line\line),P_{\wave\line\line}^{*}\},\\
\mathcal{P}_{3}&=\{\psi(\line\line\stack),\psi(\line\stack\line),\psi(\stack\line\line)\},&&\mathcal{P}_{4}=\{P_{\line\line\stack}^{*},P_{\stack\line\line}^{*}\},\quad \mathcal{P}_{5}=\{\psi(\line\wave\wave),\psi(\wave\line\wave),\psi(\wave\wave\line)\},\\
\mathcal{P}_{6}&=\{\psi(\line\stack\stack),\psi(\stack\line\stack),\psi(\stack\stack\line)\}, 
&&\mathcal{P}_{7}=\{\psi(\line\wave\stack),\psi(\line\stack\wave),\psi(\stack\line\wave),\psi(\stack\wave\line),\psi(\wave\line\stack),\psi(\wave\stack\line)\}.
\end{align*}
For each $i\in\{1,\dots,7\}$ we write $\preceq_{i}$ to denote the
relation $\preceq_{\mathcal{P}_{i}}$(see \cref{def:calP-clique} for the definition of $\preceq_{\mc{P}}$). 
Some of these relations give
rise to posets, as follows.
\begin{compactenum}
\item Consider any matching $M$. By the proof of \cref{lem:key-ES}(A):
\begin{compactitem}
\item $\preceq_{1}$ is always a partial order (with corresponding signature
$(3,\emptyset)$);
\item $\preceq_{2}$ is a partial order if $M$ is $\mathcal{P}_{1}$-free
(with corresponding signature $(2,\emptyset)$);
\item $\preceq_{5}$ is a partial order if $M$ is $\mathcal{P}_{1}\cup\mathcal{P}_{2}$-free
(with corresponding signature $(1,\emptyset)$).
\end{compactitem}
\item In any matching, $\preceq_{3}$ is a partial order (this can be observed
by direct case-checking, analogous to $\mathcal{P}_{3}$ in the proof for $r=3$ case of \cref{thm:ES-3-4}; the patterns in $\mathcal{P}_{3}$ can be interpreted as
``generalised nestings'' where one of the two edges of the pattern
is fully contained between two consecutive vertices of the other edge).
\end{compactenum}
Also, according to \cref{lemma: large weak clique implies large clique}, %recall from the proof of \cref{lem:key-ES}(B) that\zj{why?}:
\begin{compactenum}\addtocounter{enumi}{2}
\item  for $i\in\{1,2,5\}$ (which each correspond
to specific signatures), every $\mathcal{P}_{i}$-clique $M$ of size
$n$ has $L(M)\ge n/3$.
\end{compactenum}
Now, consider any matching $M=M_{1}$ of size $n$. We show how to
find a $P$-clique of size at least $n^{1/15}/4$, proceeding in stages.

\medskip\noindent\textit{Stage 1. }By Mirsky's theorem applied to
$\preceq_{1}$ on $M_{1}$ (recalling (1)), there is either a $\mathcal{P}_{1}$-clique
of size at least $n^{1/15}$, or a $\mathcal{P}_{1}$-free submatching
$M_{2}$ of size at least $n^{14/15}$. In the former case, we are
done by (3); in the latter case move on to Stage 2.

\medskip\noindent\textit{Stage 2. }By Mirsky's theorem applied to
$\preceq_{2}$ on $M_{2}$ (recalling (1)), there is either a $\mathcal{P}_{2}$-clique
of size at least $n^{1/15}$, or a $(\mathcal{P}_{1}\cup\mathcal{P}_{2})$-free
submatching $M_{3}$ of size at least $n^{13/15}$. In the former
case, we are done by (3); in the latter case move on to Stage 3.

\medskip\noindent\textit{Stage 3. }By Mirsky's theorem applied to
$\preceq_{3}$ on $M_{3}$ (recalling (2)), there is either a $\mathcal{P}_{3}$-clique
of size at least $n^{1/15}$, or a $(\mathcal{P}_{1}\cup\mathcal{P}_{2}\cup\mathcal{P}_{3})$-free
submatching $M_{4}$ of size at least $n^{12/15}$. In the former
case, we observe that $\mathcal{P}_{3}$ is left-dominated, so we
are done by \cref{lem:dominated-pigeonhole}; in the latter case move on to stage 4.

\medskip\noindent\textit{Stage 4. }In this stage we wish to eliminate
the non-collectable patterns $P_{\line\line\stack}^{*}$ and $P_{\stack\line\line}^{*}$
in $\mathcal{P}_{4}$ without seriously decreasing the size of our
matching. List all the edges of $M_{4}$ as $e_{1},\dots,e_{m}$ with
$m=|M_{4}|\ge n^{12/15}$ and $e_{1}[1]<\dots<e_{m}[1]$. We claim
that for any $1\le i<j<k\le m$, at most one of the pairs $(e_{i},e_{j})$
$(e_{j},e_{k})$ can form $P_{\line\line\stack}^{*}$. Indeed, if
both pairs were to form $P_{\line\line\stack}^{*}$, then $(e_{i},e_{k})$
would also form $P_{\line\line\stack}^{*}$ by \cref{lemma: composition of two weak patterns} (recalling that
$M_{4}$ is $\psi(\line\line\stack)$-free). But then, $e_{i},e_{j},e_{k}$
would form a $P_{\line\line\stack}^{*}$-clique, which is impossible
as $P_{\line\line\stack}^{*}$ is not collectable.

By the above claim, every edge of $M_{4}$ is left-$P_{\line\line\stack}^{*}$-free
or right-$P_{\line\line\stack}^{*}$-free. So, by the pigeonhole principle
we can find a submatching $M_{5}'$ of at least $n^{12/15}/2$ edges
which are either all left-$P_{\line\line\stack}^{*}$-free, or all
right-$P_{\line\line\stack}^{*}$-free, meaning that $M_{5}'$ itself
is $P_{\line\line\stack}^{*}$-free.

Repeating all this analysis with the pattern $P_{\stack\line\line}^{*}$,
we can find a matching $M_{5}$ of size at least $n^{12/15}/4$ that
is $(\mc P_1\cup\dots\cup\mathcal{P}_{4})$-free.

We have now eliminated all non-collectable patterns, so we will refer to each pattern by its corresponding weak pattern (e.g., we simply write $\line\stack\stack$ instead of $\psi(\line\stack\stack)$). Move on to stage 5.

\medskip\noindent\textit{Stage 5. } 
By Mirsky's theorem applied to
$\preceq_{5}$ on $M_{5}$ (recalling (1)), there is either a $\mathcal{P}_{5}$-clique
of size at least $n^{1/15}$, or a $(\mathcal{P}_{1}\cup\dots\cup\mathcal{P}_{5})$-free
submatching $M_{6}$ of size at least $n^{11/15}/4$. In the former
case, we are done by (3); in the latter case move on to Stage 6.

\medskip\noindent\textit{Stage 6. }The analysis now starts to get
a bit more intricate, and more tedious casework becomes necessary.
Recall that $\mathcal{P}_{6}=\{\line\stack\stack,\stack\line\stack,\stack\stack\line\}.$
It is not necessarily the case that $\preceq_{6}$ is a poset; in
\cref{table: 5-11-13} we tabulate the possible compositions of patterns in $\mathcal{P}_{6}$
(among patterns that can be present in $M_{6}$). These can all be determined by direct case-checking. Crucially, we have
\[
(\line\stack\stack\circ\stack\stack\line)\setminus(\mathcal{P}_{1}\cup\dots\cup\mathcal{P}_{5})=\emptyset.
\]
That is to say, every edge in $M_{6}$ is left-$\line\stack\stack$-free
or right-$\stack\stack\line$-free. So, by the pigeonhole principle
we can find a submatching $M_{6}'$ of size at least $n^{11/15}/8$
which is $\line\stack\stack$-free or $\stack\stack\line$-free.

Suppose that $M_{6}'$ is $\line\stack\stack$-free (the $\stack\stack\line$-free
case is handled basically symmetrically), and let $\mathcal{P}_{6}'=\{\stack\line\stack,\stack\stack\line\}$.
\cref{table: 5-11-13} shows that $\preceq_{\mathcal{P}_{6}'}$ is a poset on $M_{6}'$
(consider the sub-table induced by $\stack\line\stack$ and
$\stack\stack\line$). So, by Mirsky's theorem we can find a
$\mathcal{P}_{6}'$-clique of size $n^{1/15}$ or a $(\mathcal{P}_{1}\cup\dots\cup\mathcal{P}_{6})$-free
submatching $M_{7}$ of size at least $n^{10/15}/8$. In the former
case, \cref{table: 5-11-13} shows that $\mathcal{P}_{6}'$ is right-dominated and we
are done by \cref{lem:dominated-pigeonhole}. In the latter case, move on to stage 7.

\begin{table}
\centering %
\begin{tabular}{|c|c|c|c|}
\hline 
\diagbox{$P$}{$Q$} & $\line\stack\stack$ & $\stack\line\stack$ & $\stack\stack\line$\tabularnewline
\hline 
$\line\stack\stack$ & $\line\stack\stack$ & $\line\stack\stack$ & $\emptyset$\tabularnewline
\hline 
$\stack\line\stack$ & $\stack\line\stack$ & $\stack\line\stack$ & $\stack\stack\line$\tabularnewline
\hline 
$\stack\stack\line$ & ? & $\stack\line\stack$ & $\stack\stack\line$\tabularnewline
\hline 
\end{tabular}\caption{Possible compositions of the patterns in $\mathcal{P}_{6}$. The cell
indexed by $(P,Q)$ indicates the patterns in $P\circ Q$, apart from
the patterns in $\mathcal{P}_{1}\cup\dots\cup\mathcal{P}_{5}$. For
each $P,Q\in\mathcal{P}_{6}$, there is only a single pattern in $(P\circ Q)\setminus(\mathcal{P}_{1}\cup\dots\cup\mathcal{P}_{5})$,
unless $(P,Q)=(\protect\line\protect\stack\protect\stack,\protect\stack\protect\stack\protect\line)$
or $(P,Q)=(\protect\stack\protect\stack\protect\line,\protect\line\protect\stack\protect\stack)$.
(In the former case there are no valid possibilities, and in the latter
case there are several (including some not in $\mathcal{P}_{6}$),
which we do not need to carefully enumerate.}
\label{table: 5-11-13}
\end{table}

\medskip\noindent\textit{Stage 7. }Recall that $\mathcal{P}_{7}=\{\line\wave\stack,\line\stack\wave,\stack\line\wave,\stack\wave\line,\wave\line\stack,\wave\stack\line\}.$
We tabulate in \cref{table: 6-8-12-16-20-22} the possible compositions of patterns in $\mathcal{P}_{7}$
(among patterns that can be present in $M_{7}$). Note in particular
that $\preceq_{P}$ is a partial order for each $P\in\mathcal{P}_{7}$.

\begin{table}
\centering %
\begin{tabular}{|c|c|c|c|c|c|c|}
\hline 
\diagbox{$P$}{$Q$} & $\line\stack\wave$ & $\line\wave\stack$ & $\stack\line\wave$ & $\stack\wave\line$ & $\wave\line\stack$ & $\wave\stack\line$\tabularnewline
\hline 
$\line\stack\wave$ & $\line\stack\wave$ & $\line\stack\wave,\line\wave\stack$ & $\line\stack\wave$ & $\emptyset$ & $\line\stack\wave,\line\wave\stack$ & $\emptyset$\tabularnewline
\hline 
$\line\wave\stack$ & $\line\stack\wave,\line\wave\stack$ & $\line\wave\stack$ & $\line\stack\wave$ & $\line\stack\wave,\line\wave\stack$ & $\emptyset$ & $\line\stack\wave,\line\wave\stack$\tabularnewline
\hline 
$\stack\line\wave$ & $\stack\line\wave$ & $?$ & $\stack\line\wave$ & $\emptyset$ & $?$ & $\emptyset$\tabularnewline
\hline 
$\stack\wave\line$ & $\stack\wave\line,\wave\stack\line$ & $\stack\wave\line,\wave\stack\line$ & $\emptyset$ & $\stack\wave\line$ & $\wave\stack\line$ & $\stack\wave\line,\wave\stack\line$\tabularnewline
\hline 
$\wave\line\stack$ & $\emptyset$ & $\emptyset$ & $?$ & $?$ & $\wave\line\stack$ & $\wave\line\stack$\tabularnewline
\hline 
$\wave\stack\line$ & $\emptyset$ & $\emptyset$ & $\stack\wave\line,\wave\stack\line$ & $\stack\wave\line,\wave\stack\line$ & $\wave\stack\line$ & $\wave\stack\line$\tabularnewline
\hline 
\end{tabular}\caption{Possible compositions of the patterns in $\mathcal{P}_{7}$. The cell
indexed by $(P,Q)$ indicates the patterns in $P\circ Q$, apart from
the patterns in $\mathcal{P}_{1}\cup\dots\cup\mathcal{P}_{6}$. There
are ten pairs $(P,Q)$ for which there are no valid compositions,
and four cells in which there are many valid compositions (which we
do not need to carefully enumerate).}
\label{table: 6-8-12-16-20-22}
\end{table}

Note that there are ten different pairs of patterns $P,Q\in\mathcal{P}_{7}$
such that 
\[
(P\circ Q)\setminus(\mathcal{P}_{1}\cup\dots\cup\mathcal{P}_{6})=\emptyset.
\]
(Call such pairs ``special pairs''). For each special pair $(P,Q)$,
we know that every edge in $M_{7}$ is either left-$P$-free or right-$Q$
free. By the pigeonhole principle, we can find a submatching $M_{7}'$
of size at least $(n^{10/15}/8)/2^{10}$ such that for each special
pair $(P,Q)$, our submatching $M_{7}'$ is either $P$-free or $Q$-free.
Let $\mathcal{P}_{7}'$ be the collection of all patterns present
in $M_{7}'$. By carefully considering all cases, we can check that
either $|\mathcal{P}_{7}'|\le2$, or $\mathcal{P}_{7}'=\{\line\stack\wave,\line\wave\stack,\stack\line\wave\}$,
or $\mathcal{P}_{7}'=\{\stack\wave\line,\wave\stack\line,\wave\line\stack\}$.
The second and third cases are basically symmetric to each other, so we
just describe how to handle the first two cases.
\begin{compactitem}
\item If $|\mathcal{P}_{7}'|\le2$, then we can apply Mirsky's theorem once or
twice to obtain a $P$-clique of size at least $n^{1/15}/4$ (for
some $P\in\mathcal{P}_{7}'$) or a $(\mathcal{P}_{1}\cup\dots\cup\mathcal{P}_{7})$-free
submatching $M_{8}$ of size at least $n^{8/15}/2^{9}$. In the former
case we are done; in the latter case move on to stage 8.
\item If $\mathcal{P}_{7}'=\{\line\stack\wave,\line\wave\stack,\stack\line\wave\}$,
then let $\mathcal{P}_{7}''=\{\line\stack\wave,\stack\line\wave\}$.
Observe from \cref{table: 6-8-12-16-20-22} that $\mathcal{P}_{7}''$ is left-dominating and that
$\preceq_{\mathcal{P}_{7}''}$ is a partial order. Applying Mirsky's
theorem twice, $M_{7}'$ has a $\line\wave\stack$-clique of
size at least $n^{1/15}/4$ or a $\mathcal{P}_{7}''$-clique of size
at least $n^{1/15}/2$ or a $(\mathcal{P}_{1}\cup\dots\cup\mathcal{P}_{7})$-free
submatching $M_{8}$ of size at least $n^{8/15}/2^{10}$. In the first
case we are immediately done, in the second case we are done by \cref{lem:dominated-pigeonhole},
and in the third case we move on to stage 8.
\end{compactitem}

\medskip\noindent\textit{Stage 8. }Now, we have a matching $M_{8}$
of size at least $n^{8/15}/2^{10}\ge n^{8/15}/4^{8}$ which contains
only the eight 4-partite 4-patterns. By the proof of \cref{lem:key-ES}(A), for each 4-partite 4-pattern $P$, the relation $\preceq_{P}$
is a partial order (each pattern corresponds to a signature of the form $(0,S)$
for $S\subseteq\{1,2,3\}$). So, by Mirsky's theorem we have $L(M_{8})\ge n^{1/15}/4$,
and we are done.
\end{proof}

\fi
\end{document}